\documentclass[a4paper]{amsart}

\usepackage[a4paper, margin=2cm]{geometry}
\usepackage[british]{babel}

\usepackage{amsmath}
\usepackage{amssymb}
\usepackage{calc}
\usepackage{enumitem}
\usepackage{graphicx}
\usepackage{tikz-cd}
\usepackage{url}
\usepackage{xcolor}
\usepackage{etoolbox}
\usepackage{tabularx}
\usepackage{mathtools}
\usepackage{xcolor}
\usepackage{hyperref}

\usepackage[f]{esvect}

\hypersetup{
  colorlinks   = true, 
  urlcolor     = {blue!50!black}, 
  linkcolor    = {blue!50!black}, 
  citecolor   = {red!50!black} 
}

\usepackage{tikz}
\usepackage{mathdots}
\usepackage{yhmath}
\usepackage{cancel}
\usepackage{color}
\usepackage{array}
\usepackage{multirow}
\usepackage{amssymb}
\usepackage{tabularx}
\usepackage{extarrows}
\usepackage{booktabs}
\usetikzlibrary{fadings}
\usetikzlibrary{patterns}
\usetikzlibrary{shadows.blur}
\usetikzlibrary{shapes}

\numberwithin{equation}{section}

\usepackage{cleveref}

\makeatletter
\def\subsubsection{\@startsection{subsubsection}{3}%
  \z@{.5\linespacing\@plus.7\linespacing}{-.5em}%
  {\normalfont\bfseries}}
\makeatother

\theoremstyle{plain}
\newtheorem{thm}{Theorem}[section]

\newtheorem{prop}[thm]{Proposition}
\newtheorem{lem}[thm]{Lemma}
\newtheorem{fact}[thm]{Fact}
\newtheorem*{lem*}{Lemma}
\newtheorem*{cor*}{Corollary}
\newtheorem*{thm*}{Theorem}
\newtheorem*{fact*}{Fact}

\newtheorem{manualthminner}{Theorem}
\newenvironment{manualthm}[1]{%
  \IfBlankTF{#1}
    {\renewcommand{\themanualthminner}{\unskip}}
    {\renewcommand\themanualthminner{#1}}%
  \manualthminner
}{\endmanualthminner}

\newtheorem{manualfactinner}{Fact}
\newenvironment{manualfact}[1]{%
  \IfBlankTF{#1}
    {\renewcommand{\themanualfactinner}{\unskip}}
    {\renewcommand\themanualfactinner{#1}}%
  \manualfactinner
}{\endmanualfactinner}

\theoremstyle{definition}
\newtheorem{defn}[thm]{Definition}
\newtheorem{eg}[thm]{Example}
\newtheorem{notn}[thm]{Notation}
\newtheorem{qn}[thm]{Question}

\newtheorem*{notn*}{Notation}
\newtheorem*{term*}{Terminology}
\newtheorem*{qn*}{Question}

\theoremstyle{remark}
\newtheorem{rem}[thm]{Remark}

\DeclareMathOperator{\Age}{Age}

\DeclareMathOperator{\Aut}{Aut}

\DeclareMathOperator{\dom}{dom}

\DeclareMathOperator{\GL}{GL}
\DeclareMathOperator{\id}{id}
\DeclareMathOperator{\im}{im}

\DeclareMathOperator{\qftp}{qftp}

\DeclareMathOperator{\supp}{supp}

\DeclareMathOperator{\tp}{tp}

\newcommand{\N}{\mathbb{N}}
\newcommand{\Q}{\mathbb{Q}}
\newcommand{\R}{\mathbb{R}}
\newcommand{\T}{\mathbb{T}}
\newcommand{\Z}{\mathbb{Z}}
\newcommand{\tw}{\frac{2\pi}{n}}
\newcommand{\ic}{\mathbin{\bot}}
\newcommand{\mb}[1]{\mathbb{#1}}
\newcommand{\mc}[1]{\mathcal{#1}}
\newcommand{\ov}[1]{\overline{#1}}

\newcommand{\all}{\forall\,}
\newcommand{\ex}{\exists\,}
\newcommand{\sub}{\subseteq}

\newcommand{\tld}[1]{\tilde{#1}}

\newcommand{\fin}{\subseteq_{\mathrm{fin\!}}}
\newcommand{\fg}{\subseteq_{\mathrm{f.g.\!}}}

\newcommand{\ri}{\circ}
\newcommand{\ra}{\rightarrow}

\newcommand{\ua}{\uparrow}
\newcommand{\la}{\leftarrow}
\newcommand{\di}[1]{\lfloor{#1}\rfloor}

\newcommand{\Mod}[1]{\ (\mathrm{mod}\ #1)}
\newcommand{\nrm}{\trianglelefteq}
\newcommand{\Dn}{\mathbb{D}_n}
\newcommand{\Dpn}{\mathbb{D}'_n}
\newcommand{\Sn}{\mathbb{S}(n)}
\newcommand{\Sth}{\ddot{\mathbb{S}}(2)}
\newcommand{\Srho}{\mathbb{S}_\rho}
\newcommand{\ovST}{\overline{\mathcal{S}}_{+}}
\newcommand{\ST}{\mathcal{S}_{+}}
\newcommand{\ovSTT}{\overline{\mathcal{S}}_{++}}
\newcommand{\STT}{\mathcal{S}_{++}}
\newcommand{\Tn}{\mathbb{T}_n}
\newcommand{\Pthr}{\mathbb{P}(3)}
\newcommand{\ZT}{\mathbb{Z} / 3\mathbb{Z}}
\newcommand{\eps}{\varepsilon}
\newcommand{\emp}{\varnothing}
\newcommand{\Sym}{\mathrm{Sym}}
\newcommand{\Alt}{\mathrm{Alt}}

\def\ind{\mathrel{\raise0.2ex\hbox{\ooalign{\hidewidth$\vert$\hidewidth\cr\raise-0.9ex\hbox{$\smile$}}}}}

\newcommand{\Fr}{Fra\"{i}ss\'{e} }
\newcommand{\Frn}{Fra\"{i}ss\'{e}}

\allowdisplaybreaks

\author{Thomas Bernert}
\address{\parbox{\linewidth}{Thomas Bernert\\
School of Mathematics, University of Leeds\\
Leeds\\
LS2 9JT\\
United Kingdom
}}
\email{ll16tb@leeds.ac.uk}

\author{Rob Sullivan}
\address{\parbox{\linewidth}{Rob Sullivan\\
Institute of Computer Science, Czech Academy of Sciences\\
Pod Vodárenskou věží 271/2\\
182 00 Prague\\
Czech Republic
}}
\email{robertsullivan1990+maths@gmail.com}

\author{Jeroen Winkel}
\address{Jeroen Winkel}
\email{winkeljeroen+maths@gmail.com}

\author{Shujie Yang}
\address{\parbox{\linewidth}{Shujie Yang\\
Institut f\"{u}r Mathematische Logik und Grundlagenforschung\\
Universit\"{a}t  M\"{u}nster\\
Einsteinstra{\ss}e 62\\
48149 M\"{u}nster\\
Germany}}
\email{syang2@uni-muenster.de} 

\thanks{Thomas Bernert is funded by EPSRC Studentship 2712596. Rob Sullivan is funded by Project 24-12591M of the Czech Science Foundation (GA\v{C}R). Shujie Yang is funded by the Deutsche Forschungsgemeinschaft (DFG, German Research Foundation) under Germany’s Excellence Strategy EXC 2044–390685587, Mathematics M\"{u}nster: Dynamics–Geometry–Structure; by CRC 1442 Geometry: Deformations and Rigidity; and by the China Scholarship Council (CSC) 202204910109.}

\subjclass[2020]{03C15, 20B27, 20E32, 03C45}

\keywords{automorphism group, simple, independence relation, hypertournament, semigeneric tournament}

\title{Determining the normal subgroups of the automorphism groups of ultrahomogeneous structures via stabilisers}

\date{\today}

\begin{document}

\begin{abstract}
    We show the simplicity of the automorphism groups of the generic $n$-hypertournament and the semigeneric tournament, and determine the normal subgroups of the automorphism groups of several other ultrahomogeneous oriented graphs. We also give a new proof of the simplicity of the automorphism group of the dense $\frac{2\pi}{n}$-local order $\mathbb{S}(n)$ for $n \geq 2$ (a result due to Droste, Giraudet and Macpherson). Previous techniques of Li, Macpherson, Tent and Ziegler involving stationary weak independence relations (SWIRs) cannot be applied directly to these structures; our approach involves applying these techniques to a certain expansion of each structure, where the expansion has a SWIR and its automorphism group is isomorphic to a stabiliser subgroup of the automorphism group of the original structure.
\end{abstract}

\maketitle

\section{Introduction}

Recall that a first-order structure $M$ is \emph{ultrahomogeneous} if every isomorphism between finitely generated substructures extends to an automorphism of $M$. We call a countably infinite ultrahomogeneous structure a \emph{\Fr structure}, and we assume the reader is familiar with the basic theory of these -- see \cite{Hod93}, \cite{Mac11} for more background. In this paper, we determine the normal subgroups of the automorphism groups of certain \Fr structures: the generic $n$-hypertournament, the semigeneric tournament and several other ultrahomogeneous oriented graphs from Cherlin's list (\cite{Che98}). We also give a new proof that the automorphism group of the dense $\frac{2\pi}{n}$-local order is simple; this is a result due to Droste, Giraudet and Macpherson. We use a new variation on existing techniques of Li, Macpherson, Tent and Ziegler, as explained below.

\subsection{Background}

\subsubsection*{Classical results}

There is a long history of results determining the normal subgroups of automorphism groups of \Fr structures. Some of the main structures considered are as follows:
\begin{itemize}
    \item a countably infinite pure set: the non-trivial proper normal subgroups of the symmetric group $\Sym_\omega$ are the group of finite-support permutations and its subgroup of even permutations  (\cite{Ono29}, \cite{SU33}, \cite{Bae34});
    \item $(\Q, <)$, the dense linear order without endpoints (\cite{Hig54}, \cite{Llo64} -- see also \cite{Gla82}): the non-trivial proper normal subgroups of $\Aut(\Q, <)$ consist of:
    \begin{align*}
        \text{the subgroup of left-bounded automorphisms } L &= \{g \in \Aut(\Q, <) \mid \ex a \in \Q, g|_{(-\infty, a)} = \id\},\\
        \text{the subgroup of right-bounded automorphisms } R &= \{g \in \Aut(\Q, <) \mid \ex a \in \Q, g|_{(a, \infty)} = \id\},\\
        \text{the subgroup of bounded automorphisms } B &= L \cap R;
    \end{align*}
    \item the $\omega$-dimensional vector space $V_\omega$ over a finite field $K$: see \cite{Ros58} for the normal subgroups of $\GL(V_\omega, K)$;
    \item the Rado graph: its automorphism group is simple (\cite{Tru85});
    \item the generic $K_n$-free graph, the random tournament: their automorphism groups are simple (\cite{Rub88}).
    \item $2$-homogeneous trees: the automorphism group of each countably infinite $2$-homogeneous tree has $2^{2^{\aleph_0}}$ normal subgroups which are pairwise $\sub$-incomparable (\cite{DHM89a});
    \item the generic poset: its automorphism group is simple (\cite{GMR93}).
\end{itemize}

\subsubsection*{Modern techniques using stationary independence relations (SIRs)}

The most recent techniques used to determine the normal subgroups of automorphism groups of \Fr structures are descended from an approach of Lascar from the early nineteen-nineties via descriptive set theory. Lascar used this to show the simplicity of $\Aut(\mb{C}/\mb{Q}^{\text{alg}})$ -- see \cite{Las92}, \cite{Las97}. This approach was adapted by Macpherson and Tent in \cite{MT11} to show that the automorphism group of any transitive \Fr structure with free amalgamation is simple (assuming the structure is not an indiscernible set), as are the automorphism groups of the random tournament and the generic $n$-anticlique-free oriented graph. The techniques of Macpherson and Tent were then further extended by Tent and Ziegler in \cite{TZ13a}, \cite{TZ13b} to show the (abstract) simplicity of the quotient of the isometry group of the complete Urysohn space by the normal subgroup of bounded-displacement isometries, as well as the simplicity of the isometry group of the bounded complete Urysohn space. Further results using the methods of \cite{TZ13a} can be found in \cite{GT14}, \cite{EGT16}, \cite{EHK21}, \cite{Yam23}.

One component of the Tent-Ziegler approach is the axiomatisation of a certain independence relation inspired by forking independence for stable theories, called a \emph{stationary independence relation (SIR)}: this is a ternary relation $\ind$ on the set of finitely generated substructures of a \Fr structure $M$, written $B \ind_A C$ for $A, B, C \fg M$, capturing the following notion: $B \ind_A C$ holds exactly when the substructures $B \cup A$ and $C \cup A$ are \emph{canonically amalgamated} over $A$. Here, the canonical amalgam is required to be symmetric: we have $B \ind_A C \Leftrightarrow C \ind_A B$. See Definition \ref{d:SWIR} and Definition \ref{d:SIR} for the formal description.

For example, in the Rado graph the canonical amalgam is the free amalgam: we define that $B \ind_A C$ holds when there is no edge between $B \setminus A$ and $C \setminus A$. In the rational Urysohn space $\mb{U}_\Q$ (the \Fr limit of the class of all finite metric spaces with rational distances), the canonical amalgam is that which has the maximum possible distances while still satisfying the triangle inequality: for $A, B, C \fin \mb{U}_\Q$ with $A \neq \varnothing$, we define $B \ind_A C$ if, for $b \in B \setminus A$, $c \in C \setminus A$, we have $d(b, c) = \min_{a \in A} (d(b, a) + d(c, a))$. Note an important distinction between these two examples which will be particularly relevant in the current paper: for the Rado graph, the independence relation $\ind$ was defined over the empty set, but for the rational Urysohn space we only defined $\ind$ over non-empty bases $A$. In the case where $\ind$ is defined for non-empty base substructures $A$, we call $\ind$ a \emph{local} SIR.

The second main component of the Tent-Ziegler approach is the notion of an automorphism which moves maximally. Let $M$ be a \Fr structure with a (local) SIR. An automorphism $g \in \Aut(M)$ \emph{moves maximally} if, for every type $p(\bar{x}/A)$ over a finitely generated substructure $A \fg M$ which is realisable in $M$, there is a realisation $\bar{b} \models p$ in $M$ with $\bar{b} \ind_A g(A) g(\bar{b})$ and $\bar{b} A \ind_{g(A)} g(\bar{b})$. (Here, we use the following notation: we write $UV$ to mean the substructure generated by $U \cup V$.) The main result of \cite{TZ13a} is as follows:

\begin{fact}[{\cite[Theorem 2.7, Lemma 2.8]{TZ13a}} -- rephrased] \label{f:TZ main thm}
    Let $M$ be a \Fr structure satisfying one of the following two conditions:
    \begin{enumerate}[label=(\roman*)]
        \item $M$ has a SIR;
        \item $M$ has a local SIR and $\Aut(M)$ has a dense conjugacy class.
    \end{enumerate}
    Suppose $g \in \Aut(M)$ moves maximally. Then each element of $\Aut(M)$ is a product of $\leq 8$ conjugates of $g$.
\end{fact}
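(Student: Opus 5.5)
Since this is the main theorem of \cite{TZ13a}, I would follow the Tent--Ziegler strategy, in the form adapted from Lascar and Macpherson--Tent. The central construction builds, for a maximally moving $g$, a product of conjugates of $g$ that is \emph{generic*}, meaning its conjugacy class is comeagre (or at least dense $G_\delta$) in a suitable sense. A Baire category argument then shows that every element is a product of two such generic elements.

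\textbf{Step 1: stability under products.} If $g_1, g_2$ move maximally, then for a comeagre set of $h$ the product $g_1 (g_2)^h$ again has a strong property. Here one uses stationarity and the monotonicity, symmetry and transitivity axioms of the (local) SIR. The key idea is that $\bar b \ind_A g(A)g(\bar b)$ means the orbit of $\bar b$ under $g$ is "freely amalgamated". This lets one build, by a back-and-forth over finite pieces, conjugates of a product of conjugates of $g$ that realise any prescribed finite partial isomorphism pattern. One shows in order:
\begin{enumerate}[label=(\alph*)]
\item if $g$ moves maximally, then $[g,h]$, or a product of $g$ with a conjugate of $g^{-1}$, is "generic" for $h$ in a comeagre set;
\item such generic elements have the following property: for any finite partial isomorphisms $f_1,f_2$ there is a conjugate extending them in a controlled way.
\end{enumerate}

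\textbf{Step 2: the Baire category step.} Show that the set of $h$ for which the conjugacy class of a generic element $k$ is comeagre on a basic open set is itself comeagre. Given any $f\in\Aut(M)$, the sets $\{k^{x}: x\}$ and $\{f (k^{y})^{-1}: y\}$ are then both comeagre on some open set, so they intersect. This gives $f=k^xk^y$. Counting gives $2$ (from the first product) times $2$ (for $k$ being a product of two conjugates of $g$ and $g^{-1}$), times $2$ for converting $g^{-1}$ into conjugates of $g$, at most $8$ in total. More precisely, Lemma 2.8 of \cite{TZ13a} shows that if $g$ moves maximally then some product of $\le 4$ conjugates of $g^{\pm}$ behaves generically. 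Theorem 2.7 shows that a generic element squared covers $G$. Combining the two gives $8$.

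\textbf{Step 3: the local case (ii).} There one cannot work over $A=\emptyset$. Instead one fixes a base point and uses a dense conjugacy class to start the back-and-forth: the first step realises a finite partial isomorphism with non-empty domain, after which all types are over non-empty bases and the local SIR applies.

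\textbf{Main obstacle.} The main obstacle is Step 1(a). This is the inductive back-and-forth showing that the product of two maximally moving conjugates is sufficiently generic. It requires careful use of transitivity of $\ind$ to amalgamate the independence conditions $\bar b\ind_A g(A)g(\bar b)$ and $\bar bA\ind_{g(A)}g(\bar b)$ across successive stages. I would first try to prove an extension lemma: any finite partial isomorphism compatible with the independence pattern extends to a conjugate. That lemma would be derived from stationarity plus ultrahomogeneity.
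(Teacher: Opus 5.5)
The paper gives no proof of this statement; it quotes it from \cite{TZ13a}. Judged on its own, your proposal has a genuine gap at its centre.

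Everything hinges on Step 1 producing a product $k$ of conjugates of $g$ whose conjugacy class is comeagre. But (a) and (b) are not arguments: you never say what finite property of $k$ is to be verified, or how the back-and-forth goes. Worse, this is the wrong target. A comeagre conjugacy class is far stronger than the dense conjugacy class available here: by the Kechris--Rosendal criterion it needs a weak amalgamation property for partial automorphisms, not just the joint embedding property of Fact~\ref{f:KR dense ccl}. You give no reason why the (local) SIR axioms should provide one, and the theorem does not need one. What has to be shown large is a conjugation-invariant \emph{set} of products of a fixed number of conjugates of $g$, which is a union of possibly meagre classes. The real content of the theorem is exactly the construction you postpone as the ``main obstacle''. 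There the conjugating maps are built by back-and-forth, using stationarity to pin down the types of newly added points, and using the two independence conditions in the definition of moving maximally to keep the accumulated constraints amalgamable.

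The other steps also fail as written.
\begin{itemize}
\item \emph{Baire category step.} In Step 2, two sets each comeagre on \emph{some} open set need not meet; you need comeagreness in the whole group. That is precisely where the dense conjugacy class enters (in case (i) it comes for free, as in Lemma~\ref{l: SWIR implies dense ccl}). Suppose a conjugation-invariant set $C$ is comeagre on a nonempty open $U$. Then it is comeagre on $\bigcup_h U^h$ (countably many conjugates suffice), and this open set is dense when some conjugacy class is dense. Then $fC^{-1}$ is also comeagre, so $C \cap fC^{-1} \neq \emptyset$ and $f \in C \cdot C$.
\item \emph{Role of the dense conjugacy class in case (ii).} It is not a device to start a back-and-forth; a base point can always be fixed. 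Its role is global, and it cannot be dropped. A local SIR does not rule out a proper open normal subgroup $N$, and once a maximally moving $g$ lies in $N$, so do all products of its conjugates. The paper notes that \cite[Example 2.11]{TZ13a} shows this hypothesis is necessary.
\item \emph{The count.} The statement concerns conjugates of $g$ alone. Commutators $[g,h]=g^{-1}g^h$ and conjugates of $g^{\pm1}$ are not allowed, and ``converting $g^{-1}$ into conjugates of $g$'' has no justification short of the theorem itself. If $C$ is the set of products of $m$ conjugates of $g$, the squaring argument yields $2m$. So to reach $8$ you must show that the set of products of four conjugates of $g$ is comeagre on some nonempty open set.
\end{itemize}
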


By the above result, given such a structure $M$, in order to show that $\Aut(M)$ is simple it suffices to show that any non-trivial normal subgroup contains an automorphism which moves maximally. This is in general not immediate, and one usually produces such automorphisms via a back-and-forth construction, requiring some effort. However, if the SIR satisfies an additional axiom (see Definition \ref{d:free SWIR}) that we call \emph{freeness} (following \cite{Con17}), then one does not need to produce maximally-moving automorphisms by hand:

\begin{fact}[{\cite[Lemma 5.1]{TZ13a}}] \label{f:TZ free SIR move max}
    Let $M$ be a \Fr structure with a free SIR. Let $g \in \Aut(M)$. Suppose that $g$ satisfies the following condition: for each $1$-type $p(x/A)$ over $A \fg M$ with infinitely many realisations in $M$, some realisation of $p(x/A)$ is not fixed by $g$. Then there exists $h \in \Aut(M)$ such that the commutator $[g, h]$ moves maximally.
\end{fact}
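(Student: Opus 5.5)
We build $h$ by a back-and-forth construction, following the strategy of \cite[Lemma 5.1]{TZ13a}. Enumerate all pairs $(A, p)$ with $A \fg M$ and $p = p(\bar{x}/A)$ a type realised in $M$. We construct $h$ as the union of an increasing chain of finite partial isomorphisms $h_0 \subseteq h_1 \subseteq \cdots$ (taking $\dom(h_k)$ finitely generated), interleaving three kinds of task. The first two are the usual forth and back steps, which use ultrahomogeneity to make $h$ total and surjective. The third kind handles the pair $(A, p)$ scheduled at that stage. We must ensure that $p$ has a realisation $\bar{b}$ with
\[
\bar{b} \ind_A f(A) f(\bar{b}) \quad\text{and}\quad \bar{b} A \ind_{f(A)} f(\bar{b}),
\]
where $f = [g,h] = g^{-1}h^{-1}gh$. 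Since $f$ depends on all of $h$, we commit at this stage to finitely many values of $h$ and $h^{-1}$, namely those on $A\bar{b}$ and on the relevant $g$-images, so that $f$ is already determined on $A\bar{b}$.

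For the third kind of step, let $h_k$ be the current finite approximation and let $D$ be a finitely generated substructure containing $A$, $\dom h_k$, $\im h_k$ and their $g$-images. We may assume $A$ is already inside the domains on which $f$ is determined; otherwise we first extend $h_k$ so that $f(A)$ is defined. We then choose $\bar{b} \models p$ independent from $D$ over $A$, that is, with $\bar{b} \ind_A D$, using the existence axiom of the SIR. We extend $h$ to $\bar{b}$ by sending $\bar{b}$ to a realisation $\bar{c}$ of $h_k(p)$ over $h_k(A)$ chosen independent over $h_k(A)$ from everything built so far, including $g$-images. We then define $h^{-1}$ on $g(\bar{c})$ similarly, via a realisation independent over the base. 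The symmetry, monotonicity and transitivity axioms of the SIR should then give the two required independences for $f(\bar{b})$.

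Freeness is what lets us do this without a construction tailored to the structure. The definition of freeness (Definition \ref{d:free SWIR}) gives $B \ind_A C$ together with $B \cap C \subseteq A$ implying $B \ind_{A'} C$ for suitable bases $A'$. This lets us shrink or enlarge the base from $A$ or $f(A)$ to the bases the conditions require, provided that the new realisations meet the old ones only inside the base.

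The main obstacle is guaranteeing disjointness: $\bar{b} \cap f(\bar{b}) \subseteq f(A)$ and $\bar{b} \cap D \subseteq A$. This is exactly where the hypothesis on $g$ enters. If a coordinate $b_i \notin A$ had a $1$-type over the relevant base with only finitely many realisations, then $b_i$ lies in the algebraic closure. In free amalgamation-like settings, freeness together with stationarity forces such a $b_i$ to be in the base, so it is handled automatically. If instead $b_i$ has infinitely many realisations, the hypothesis supplies realisations not fixed by $g$. We would use these, together with a choice of image under $h$ avoiding the finitely many elements already used, to force $f(b_i) \neq b_i$ and, more generally, to avoid unwanted coincidences with previously built points.

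I expect this coordinate-by-coordinate disjointness argument, carried out while keeping the independences intact through the conjugation by $g$, to be the delicate part. The rest is routine bookkeeping with the SIR axioms.
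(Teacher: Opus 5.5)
Your scaffolding matches the paper's proof of the SWIR analogue, Lemma \ref{l: free SWIR commutator move maximally}; the paper itself only cites the SIR statement. That scaffolding is a back-and-forth in which, at the stage for $p(\bar x/A)$, you define $h$ on a realisation $\bar b$ and $h^{-1}$ on $gh(\bar b)$, using the hypothesis on $g$ to avoid coincidences. There is a genuine gap, however: the step that produces the independence of $f(\bar b)$, where $f=[g,h]$, is missing, and the route you indicate for it cannot work.

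Let $U$ be the current domain (arranged to contain $gA$ and $gf(A)=h^{-1}gh(A)$), and set $\bar c=h(\bar b)$ and $\bar d=h^{-1}(g\bar c)$, so that $f(\bar b)=g^{-1}(\bar d)$. For $h$ to stay a partial isomorphism, the type of $\bar d$ over $U\bar b$ is forced: it must be $h^{-1}\cdot\tp(g\bar c/h(U)\bar c)$. This type encodes $\tp(g\bar c/\bar c)$, which you do not control. Consequently:
\begin{itemize}
\item The independences you impose on $\bar b$ over $A$ and on $\bar c$ over $h(A)$ say nothing about how $\bar d$ sits relative to $g\bar b$. That relation is exactly what $\bar b \ind_{\cdots} f(\bar b)$ becomes after applying $g$.
\item (Ex) can only be applied over $U\bar b$, since that is where the type of $\bar d$ is prescribed.
\item Symmetry, monotonicity and transitivity give no way to remove $\bar b$ and the rest of $U$ from that base.
\end{itemize}
The missing idea is as follows. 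Choose $\bar d$ by (Ex) with $g\bar b\ind_{U\bar b}\bar d$, and arrange $(g\bar b\,\bar d)\cap U\bar b=\emp$. Then use freeness to shrink the base from $U\bar b$ to $gA\cup gf(A)$. Applying (Inv) with $g^{-1}$ gives $\bar b\ind_{A f(A)} f(\bar b)$. Freeness is used exactly here and only in this direction: it shrinks a base to any intermediate base containing the intersection, and it never enlarges one.

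This also corrects the disjointness you aim for. What is needed is not $\bar b\cap f(\bar b)\subseteq f(A)$. It is $\bar b\cap g\bar b=\emp$, $\bar b\cap g^{-1}U=\emp$ and $g\bar c\cap(h(U)\cup\bar c)=\emp$, which is what the hypothesis on $g$ supplies, as in Lemma \ref{l: realisation disjoint}.

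Your choice $\bar b\ind_A D$ is, however, genuinely needed for the full SIR notion of moving maximally; the paper's lemma only targets the weaker almost R-/L-maximal conditions. First extend $h$ so that $f(A)$ and $f^{-1}(A)$ are determined, and take $D\supseteq f(A)\cup f^{-1}(A)$. Then pick a disjoint realisation of the free extension of $p$ to $D$. This gives $\bar b\ind_A f(A)$. Applying $f$ and symmetry to $\bar b\ind_A f^{-1}(A)$ gives $A\ind_{f(A)}f(\bar b)$. Transitivity combines these with $\bar b\ind_{Af(A)}f(\bar b)$ to yield both $\bar b\ind_A f(A)f(\bar b)$ and $\bar bA\ind_{f(A)}f(\bar b)$. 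The extra independence you impose on $\bar c$ is unnecessary.
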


Any \Fr structure $M$ with free amalgamation has a free SIR, and if $\Aut(M)$ is transitive and $M$ is not an indiscernible set then all non-trivial $g \in \Aut(M)$ satisfy the above condition (see \cite[Section 5]{TZ13a}, \cite[Corollary 2.10]{MT11}). So we have:

\begin{fact}[{\cite[Corollary 5.2]{TZ13a}}] \label{f:TZ free amalg simple}
    Let $M$ be a \Fr structure with free amalgamation which is not an indiscernible set, and suppose that $\Aut(M)$ is transitive. Then, for all $g \in \Aut(M) \setminus \{\id\}$, each element of $\Aut(M)$ is a product of $\leq 16$ conjugates of $g$ and $g^{-1}$, and thus $\Aut(M)$ is simple.
\end{fact}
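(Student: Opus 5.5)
The plan is to combine Fact \ref{f:TZ main thm}(i) with Fact \ref{f:TZ free SIR move max}, using the free SIR that free amalgamation provides.

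\textbf{Setting up the SIR.} For $A, B, C \fg M$, declare $B \ind_A C$ to hold exactly when $BC$ is the free amalgam of $AB$ and $AC$ over $A$. That is, no relation of $M$ holds on a tuple that meets both $B \setminus A$ and $C \setminus A$. I would check the SIR axioms in turn:
\begin{itemize}
    \item invariance is clear;
    \item existence follows from ultrahomogeneity together with the fact that the age is closed under free amalgams;
    \item stationarity holds because the free amalgam is unique up to isomorphism over $A$;
    \item monotonicity, transitivity and symmetry are immediate from the definition.
\end{itemize}
The freeness axiom (Definition \ref{d:free SWIR}) should also be routine, since the condition only refers to tuples crossing between $B \setminus A$ and $C \setminus A$.

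\textbf{Verifying the hypothesis of Fact \ref{f:TZ free SIR move max}.} Fix $g \neq \id$, a finitely generated $A$, and a $1$-type $p(x/A)$ with infinitely many realisations. Suppose for contradiction that $g$ fixes every realisation of $p$. This is the main step, and the argument I have in mind is the one in \cite[Corollary 2.10]{MT11}. The first key fact is that, under free amalgamation, the realisations of $p$ outside $A$ are "generic enough": for any finite $D$ there is a realisation $c$ with $c \ind_A D$. Since $M$ is transitive and not an indiscernible set, some relation holds on a tuple involving distinct points; this is what lets us separate points. Choose $a$ with $g(a) \neq a$. Using freeness, find a realisation $c$ of $p$ with $c \ind_A a\,g(a)$, together with a suitable extension. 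The aim is to extend the type so that some realisation is related to $a$ (via a relation realised by transitivity and non-indiscernibility) but not to $g(a)$. Such a realisation $c'$ of $p$ then satisfies $g(c') \neq c'$, contradicting the assumption. I expect this step to need the most care: we must make sure the configuration "$c'$ realises $p$, is related to $a$, and is free from $g(a)$" is actually amalgamable, and we must handle the case where $a$ or $g(a)$ lies in $A$.

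\textbf{Concluding.} Fact \ref{f:TZ free SIR move max} gives $h$ such that $[g,h] = g^{-1}(h^{-1}gh)$ moves maximally. Applying Fact \ref{f:TZ main thm}(i), every element of $\Aut(M)$ is a product of $\leq 8$ conjugates of $[g,h]$. Each conjugate of $[g,h]$ is a product of one conjugate of $g^{-1}$ and one conjugate of $g$, so every element is a product of $\leq 16$ conjugates of $g$ and $g^{-1}$. Hence any nontrivial normal subgroup is all of $\Aut(M)$, and $\Aut(M)$ is simple.
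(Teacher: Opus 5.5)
Your proposal is correct and is the route the paper itself indicates: free amalgamation yields a free SIR, \cite[Corollary 2.10]{MT11} supplies the hypothesis of Fact \ref{f:TZ free SIR move max}, and Fact \ref{f:TZ main thm}(i) applied to $[g,h]=g^{-1}g^{h}$ turns $\leq 8$ conjugates of $[g,h]$ into $\leq 16$ conjugates of $g$ and $g^{-1}$. The one caveat concerns your sketch of the middle step, which you (like the paper) defer to \cite{MT11}. If the witnessing relation has arity $\geq 3$, a single realisation $c$ cannot be ``related to $a$'' on its own. In that case the auxiliary points should themselves be realisations of $p$, hence fixed by $g$ under the contradiction hypothesis. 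You should also take $a \in \supp(g)$ outside $A \cup g^{-1}(A)$ and free from $A$, so that the configuration amalgamates.
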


\subsubsection*{The recent generalisation of Li}

In \cite{Li19} and her PhD thesis \cite{Li21}, Yibei Li generalised the main theorem of \cite{TZ13a}, Theorem 2.7, so as to be able to consider asymmetric structures. She defined the notion of a \emph{stationary weak independence relation (SWIR)}: this is an independence relation satisfying the same axioms as a SIR but excluding the assumption of symmetry (see Definition \ref{d:SWIR}). For example, in $(\Q, <)$ we define $B \ind_A C$ if, for all $b \in B \setminus A$, $c \in C \setminus A$ such that there is no $a \in A$ with $(b < a < c) \vee (b > a > c)$, we have $b < c$. Again, a SWIR corresponds to a notion of canonical amalgamation (see \cite[Section 2]{KSW25}).

Li likewise generalised the definition of a maximally-moving automorphism as follows. Let $g$ be an automorphism of a \Fr structure $M$ with SWIR $\ind$. We say that $g$ \emph{moves almost R-maximally} if every type $p(\bar{x}/A)$ over $A \fg M$ realisable in $M$ has a realisation $\bar{b} \sub M$ with $\bar{b} \ind_A g(\bar{b})$, and we say that $g$ \emph{moves almost L-maximally} if every type $p(\bar{x}/A)$ over $A \fg M$ realisable in $M$ has a realisation $\bar{b}$ with $g(\bar{b}) \ind_A \bar{b}$. The generalisation of the main theorem of Tent and Ziegler is then:

\begin{fact}[{\cite[Thm.\ 4.0.4, Cor.\ 4.2.2, Cor.\ 4.2.4]{Li21}}] \label{f:Li main thm}
    Let $M$ be a \Fr structure with a SWIR. Let $g \in \Aut(M)$. Suppose that at least one of the following conditions holds:
    \begin{enumerate}[label=(\roman*)]
        \item $g$ moves almost R-maximally and $g^{-1}$ moves almost L-maximally; or
        \item $g$ moves both almost R-maximally and almost L-maximally.
    \end{enumerate}
    Then each element of $\Aut(M)$ is a product of $\leq 16$ conjugates of $g$ and $g^{-1}$. 
\end{fact}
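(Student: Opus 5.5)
The plan is to follow the Tent--Ziegler strategy of \cite{TZ13a}, replacing every use of symmetry of $\ind$ by a careful bookkeeping of which side of the independence each tuple sits on. The argument has three stages, which multiply to the bound $2 \cdot 2 \cdot 4 = 16$ conjugates of $g$ and $g^{-1}$.

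\emph{Stage 1: from almost maximal to maximal motion.} First I would pass from ``almost'' maximal motion to a strong notion. Say $f$ \emph{moves R-maximally} if every realisable type $p(\bar x/A)$ has a realisation $\bar b$ with
\[
\bar b \ind_A f(A)f(\bar b) \quad\text{and}\quad \bar b A \ind_{f(A)} f(\bar b),
\]
and define L-maximal motion with the independences reversed. Under hypothesis (i) or (ii), I would show that a suitable product of two conjugates, $f = g^{h_1}(g^{\pm1})^{h_2}$, moves R-maximally, and dually that some product of two conjugates moves L-maximally.

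The conjugators $h_1,h_2$ are built by a back-and-forth over an enumeration of all pairs $(A,p)$. At each step we have a finite partial isomorphism, and we use almost R-maximality of $g$ (for the first factor) and almost L-maximality of $g^{-1}$, or of $g$ in case (ii) (for the second factor). This produces realisations independent over the relevant base. We then extend the partial isomorphisms using stationarity and the existence/extension axioms of the SWIR. Transitivity and monotonicity of $\ind$ let us chain the two one-sided independences into the required two-base condition.

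\emph{Stage 2: products of maximally moving automorphisms.} Next I would prove the analogue of \cite[Proposition 2.13 / Theorem 2.7]{TZ13a}. Suppose $f_1$ moves R-maximally and $f_2$ moves L-maximally. Then every sufficiently generic $k \in \Aut(M)$ can be written as $f_1^{u} f_2^{v}$. Again this is a back-and-forth: at each step we extend the partial conjugators $u,v$ so that the equation $u^{-1}f_1u\,v^{-1}f_2v = k$ holds on a growing finite set. The maximal-motion realisations are what allow a new point to be mapped freely, and stationarity guarantees that the type we need is uniquely determined and hence consistent.

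\emph{Stage 3: arbitrary elements.} Finally, as in \cite{TZ13a}, write an arbitrary element of $\Aut(M)$ as a product of two elements of the form handled in Stage 2. This uses that the set of maximally moving automorphisms is comeagre, or directly that $k = (k f^{-1}) f$ with both factors suitably generic.

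\emph{Main obstacle.} I expect Stage 2 to be the hardest part. Without symmetry, one must check that each extension step only ever requires independences of the form available, namely R-type for $f_1$ and L-type for $f_2$. The order of the factors and the choice of which side the new element is added on must be arranged so that the asymmetric transitivity axiom applies. My first attempt would be to treat the ``forth'' steps using $f_1$ and the ``back'' steps using $f_2^{-1}$. I would then check that these reduce exactly to the two one-sided hypotheses.
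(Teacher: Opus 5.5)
The paper does not prove this statement; it quotes it from Li, who adapts Tent--Ziegler. There, a two-conjugate product of $g^{\pm1}$ (a commutator) moves maximally, and a maximally moving element gives every element of $\Aut(M)$ as a product of eight of its conjugates (compare Fact~\ref{f:TZ main thm}), so $16 = 2\cdot 8$. Your outline has this broad shape, but its bookkeeping does not close. Stage~3 gives two generics, each of which Stage~2 writes as two conjugates of maximal movers, each of which Stage~1 writes as two conjugates of $g^{\pm1}$; that totals $2\cdot 2\cdot 2 = 8$, not $2\cdot 2\cdot 4$. So your Stages~2--3 would give four conjugates of a maximally moving element where Tent--Ziegler get eight, and nothing in your sketch accounts for this. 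The unexplained factor $4$ shows that Stage~2 is asserted rather than argued.

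The genuine gap is how Stage~2 is meant to produce a comeagre set. A back-and-forth against a fixed target $k$ cannot work for arbitrary $k$: for $k=\id$ it would force $f_2$ to be conjugate to $f_1^{-1}$. The obstruction is concrete. When you treat a new point $x$, the point $k(x)$ is already fixed in $M$. Its type over the auxiliary points you have placed so far (the intermediate points $f_2^{v}(x')$ in $\dom u$) is therefore dictated by $k$, whereas maximal motion only lets you produce independent configurations. Nothing guarantees that the partial isomorphisms you need can be extended. Saying ``sufficiently generic'' does not help unless you exhibit a comeagre set of $k$ on which every extension step succeeds. Moreover, Stage~3 needs comeagreness in all of $\Aut(M)$, not just in some open set.

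A standard remedy, which explains why Tent--Ziegler require a dense conjugacy class in the local case, is to prove less and then globalise. First, a back-and-forth started from a fixed finite configuration shows that the relevant product $X$ of conjugacy classes is comeagre in some nonempty basic open set; $X$ is conjugation-invariant and analytic. Second, a SWIR gives a dense conjugacy class (Lemma~\ref{l: SWIR implies dense ccl}), so conjugation is topologically transitive. Hence an invariant non-meagre set with the Baire property is comeagre, and $X\cap kX^{-1}\neq\varnothing$ for every $k$. This is exactly where independence over the empty base is used. Your outline never identifies such a use, although for local relations the dense conjugacy class cannot be dropped (\cite[Example 2.11]{TZ13a}, as noted in the introduction).
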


(Li only considered relational languages, but her proof works for any first-order language.)

Li used her more general result to show simplicity of the automorphism groups of several \Fr structures:
\begin{enumerate}[label=(\roman*)]
    \item \label{ex nLO} the generic $n$-linear order for $n \geq 2$ (that is, the \Fr limit of the class of all finite structures consisting of a set with $n$ linear orders on it);
    \item \label{ex orderexpfree} the generic order expansion of a transitive \Fr structure with free amalgamation which is not an indiscernible set;
    \item \label{ex semi-free} most of the ``semi-free" examples of Cherlin (see the appendix of \cite{Che98}).
\end{enumerate}

Li did not consider local SWIRs, and did not consider freeness. (We believe Li's main result should straightforwardly generalise to local SWIRs analogously to the Tent-Ziegler result, though we have not checked this in detail.) In \cite{KRR25}, Kaplan, Riahi and Rodr\'{i}guez Fanlo applied Li's techniques to show the simplicity of the automorphism group of the Rado meet-tree: that is, the generic meet-tree expansion of the Rado graph (in fact they show simplicity for the generic meet-tree expansion of any non-unary free amalgamation structure).

A second approach for asymmetric structures can be found in \cite{CKT21}, where Calderoni, Kwiatkowska and Tent also showed simplicity for \ref{ex orderexpfree} in the above list; simplicity for \ref{ex nLO} was likewise proved by Mei{\ss}ner in her master's thesis (\cite{Mei21}) using the machinery of \cite{CKT21}. The paper \cite{CKT21} introduces the notion of a \emph{weakly stationary independence relation}, whose definition differs from that of a SWIR. The authors also show the simplicity of $\Aut(M)$ for $M$ equal to the generic tournament expansion of a non-trivial transitive free amalgamation structure, as well as for $M$ being the generic order/tournament expansion of the rational Urysohn space or the generic poset. We do not use this second approach in this paper.

\subsection{New results: \texorpdfstring{$n$}{n}-hypertournaments, a new proof for \texorpdfstring{$\frac{2\pi}{n}$}{2pi/n}-local orders}

One focus of this paper will be on two key families of \Fr structures: the generic $n$-hypertournament and the dense $\frac{2\pi}{n}$-local order, for $n \geq 2$. We define these below.

\begin{figure}
    \centering
    \begin{tikzpicture}[x=0.75pt,y=0.75pt,yscale=-1,xscale=1]

\draw    (246.2,93.8) -- (303.2,230.8) ;
\draw    (168.2,214.8) -- (246.2,93.8) ;
\draw    (168.2,214.8) -- (303.2,230.8) ;
\draw    (246.2,93.8) -- (330.2,174.8) ;
\draw    (303.2,230.8) -- (330.2,174.8) ;
\draw  [dash pattern={on 0.84pt off 2.51pt}]  (168.2,214.8) -- (330.2,174.8) ;

\draw (245,180.92) node [anchor=north west][inner sep=0.75pt]  [rotate=-184]  {$\curvearrowleft $};
\draw (300,155) node [anchor=north west][inner sep=0.75pt]  [rotate=-113.3]  {$\curvearrowleft $};
\draw (260,150) node [anchor=north west][inner sep=0.75pt]  [rotate=-167]  {$\textcolor[rgb]{0.3,0.3,0.3}{\curvearrowright }$};
\draw (259.61,199.38) node [anchor=north west][inner sep=0.75pt]  [rotate=-8]  {$\textcolor[rgb]{0.3,0.3,0.3}{\curvearrowright }$};
\draw (159,216.4) node [anchor=north west][inner sep=0.75pt]    {$v_{0}$};
\draw (294,232.8) node [anchor=north west][inner sep=0.75pt]    {$v_{1}$};
\draw (331,170) node [anchor=north west][inner sep=0.75pt]    {$v_{2}$};
\draw (238,82) node [anchor=north west][inner sep=0.75pt]    {$v_{3}$};
\end{tikzpicture}

    \caption{An example of a $3$-hypertournament, usually denoted by $H_4$ (see \cite{CHKN21}). The curved arrows denote anticlockwise/clockwise orientations of each face (the grey arrows are for the back faces): the anticlockwise edges are $(v_0, v_1, v_3), (v_1, v_2, v_3), (v_2, v_0, v_3), (v_0, v_2, v_1)$ and their cyclic permutations.}
    \label{f: H_4}
\end{figure}
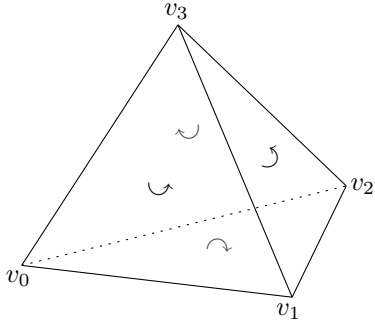

\begin{defn} \label{d:Tn}
    Let $n \geq 2$, and let $\mc{L}_n^{\textnormal{T}}$ be a language consisting of a single $n$-ary relation symbol $R$. An \emph{$n$-hypertournament} $T$ is an $\mc{L}_n^{\textnormal{T}}$-structure where the automorphism group of each $n$-element substructure is the alternating group $\Alt_n$. So, a $2$-hypertournament is a tournament in the usual sense (an oriented complete graph), and a $3$-hypertournament is a structure where each triple of points has a cyclic orientation anticlockwise or clockwise (see Figure \ref{f: H_4}). We let $\mc{T}_n$ denote the class of finite $n$-hypertournaments; this is a \Fr class (one just adds edges oriented arbitrarily). We let $\mb{T}_n$ denote the \Fr limit of $\mc{T}_n$, and call $\mb{T}_n$ the \emph{generic $n$-hypertournament}.
\end{defn}

\begin{defn} \label{d:Sn}
    Let $n \geq 2$, and let $\mc{L}^\ri_n = \{S_j \mid j < n\}$ be a binary relational language. Let $D$ be the set of points on the unit circle with rational argument (this is a countable dense subset of the unit circle). We define an $\mc{L}^\ri_n$-structure $\Sn$ with domain $D$: for distinct $u, v \in D$ and $j < n$, define that $S_j(u, v)$ holds when the angle $\alpha(u, v)$ subtended at the origin by the anticlockwise arc from $u$ to $v$ satisfies $\frac{2\pi j \vphantom{2\pi (j+1)}}{n} < \alpha(u, v) < \frac{2\pi (j+1)}{n}$. We call $\Sn$ the \emph{dense $\frac{2\pi}{n}$-local order}. See Figure \ref{f: dense local orders}. The structure $\Sn$ is ultrahomogeneous: see Lemma \ref{l: Sn ultrahomog}.
\end{defn}

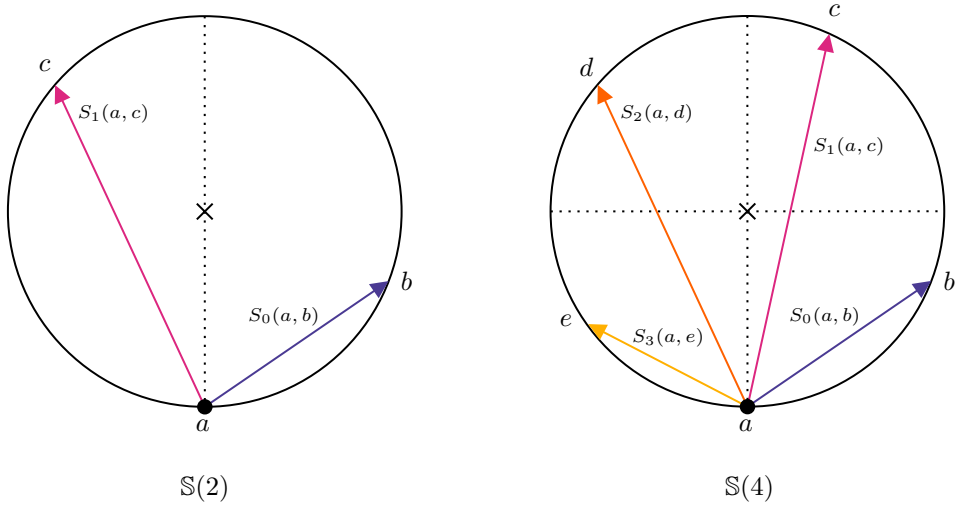
\begin{figure}
\centering

\tikzset{every picture/.style={line width=0.75pt}} 

\begin{tikzpicture}[x=0.75pt,y=0.75pt,yscale=-1,xscale=1]

\draw   (61.99,136.21) .. controls (61.98,81.94) and (105.95,37.94) .. (160.22,37.92) .. controls (214.49,37.9) and (258.49,81.87) .. (258.51,136.14) .. controls (258.53,190.41) and (214.55,234.41) .. (160.29,234.43) .. controls (106.02,234.45) and (62.01,190.48) .. (61.99,136.21) -- cycle ;
\draw [color={black}  ,draw opacity=1 ]   (249.53,172.88) -- (160.29,234.43) ;
\draw [shift={(252,171.18)}, rotate = 145.41] [fill={black}  ,fill opacity=1 ][line width=0.08]  [draw opacity=0] (8.93,-4.29) -- (0,0) -- (8.93,4.29) -- cycle    ;
\draw [color={black!70!white}  ,draw opacity=1 ]   (86.81,75.29) -- (160.29,234.43) ;
\draw [shift={(85.55,72.56)}, rotate = 65.22] [fill={black!70!white}  ,fill opacity=1 ][line width=0.08]  [draw opacity=0] (8.93,-4.29) -- (0,0) -- (8.93,4.29) -- cycle    ;
\draw   (332.87,136.12) .. controls (332.85,81.86) and (376.82,37.85) .. (431.09,37.83) .. controls (485.36,37.81) and (529.36,81.79) .. (529.38,136.05) .. controls (529.4,190.32) and (485.43,234.33) .. (431.16,234.35) .. controls (376.89,234.37) and (332.89,190.39) .. (332.87,136.12) -- cycle ;
\draw  [dash pattern={on 0.84pt off 2.51pt}]  (332.87,136.12) -- (431.13,136.09) -- (529.38,136.05) ;
\draw [color={black!40!white}  ,draw opacity=1 ]   (357.68,75.2) -- (431.16,234.35) ;
\draw [shift={(356.43,72.48)}, rotate = 65.22] [fill={black!40!white}  ,fill opacity=1 ][line width=0.08]  [draw opacity=0] (8.93,-4.29) -- (0,0) -- (8.93,4.29) -- cycle    ;
\draw [color={black!20!white}  ,draw opacity=1 ]   (354.26,194.19) -- (431.16,234.35) ;
\draw [shift={(351.6,192.8)}, rotate = 27.57] [fill={black!20!white}  ,fill opacity=1 ][line width=0.08]  [draw opacity=0] (8.93,-4.29) -- (0,0) -- (8.93,4.29) -- cycle    ;
\draw [color={black!70!white}  ,draw opacity=1 ]   (471.35,49.78) -- (431.16,234.35) ;
\draw [shift={(471.99,46.85)}, rotate = 102.28] [fill={black!70!white}  ,fill opacity=1 ][line width=0.08]  [draw opacity=0] (8.93,-4.29) -- (0,0) -- (8.93,4.29) -- cycle    ;
\draw [color={black}  ,draw opacity=1 ]   (520.4,172.8) -- (431.16,234.35) ;
\draw [shift={(522.87,171.09)}, rotate = 145.41] [fill={black}  ,fill opacity=1 ][line width=0.08]  [draw opacity=0] (8.93,-4.29) -- (0,0) -- (8.93,4.29) -- cycle    ;
\draw  [dash pattern={on 0.84pt off 2.51pt}]  (160.22,37.92) -- (160.29,234.43) ;
\draw [shift={(160.29,234.43)}, rotate = 89.98] [color={rgb, 255:red, 0; green, 0; blue, 0 }  ][fill={rgb, 255:red, 0; green, 0; blue, 0 }  ][line width=0.75]      (0, 0) circle [x radius= 3.35, y radius= 3.35]   ;
\draw [shift={(160.25,136.18)}, rotate = 134.98] [color={rgb, 255:red, 0; green, 0; blue, 0 }  ][line width=0.75]    (-5.59,0) -- (5.59,0)(0,5.59) -- (0,-5.59)   ;
\draw  [dash pattern={on 0.84pt off 2.51pt}]  (431.09,37.83) -- (431.16,234.35) ;
\draw [shift={(431.16,234.35)}, rotate = 89.98] [color={rgb, 255:red, 0; green, 0; blue, 0 }  ][fill={rgb, 255:red, 0; green, 0; blue, 0 }  ][line width=0.75]      (0, 0) circle [x radius= 3.35, y radius= 3.35]   ;
\draw [shift={(431.13,136.09)}, rotate = 134.98] [color={rgb, 255:red, 0; green, 0; blue, 0 }  ][line width=0.75]    (-5.59,0) -- (5.59,0)(0,5.59) -- (0,-5.59)   ;

\draw (256.7,164.92) node [anchor=north west][inner sep=0.75pt]  [rotate=-359.98]  {$b$};
\draw (75.86,58.11) node [anchor=north west][inner sep=0.75pt]  [rotate=-359.98]  {$c$};
\draw (146.71,267.29) node [anchor=north west][inner sep=0.75pt]  [rotate=-359.98]  {$\mathbb{S}( 2)$};
\draw (95.33,80.68) node [anchor=north west][inner sep=0.75pt]  [font=\scriptsize,rotate=-359.98]  {$S_1( a,c)$};
\draw (345.73,57.02) node [anchor=north west][inner sep=0.75pt]  [rotate=-359.98]  {$d$};
\draw (418.59,267.2) node [anchor=north west][inner sep=0.75pt]  [rotate=-359.98]  {$\mathbb{S}( 4)$};
\draw (527.58,164.83) node [anchor=north west][inner sep=0.75pt]  [rotate=-359.98]  {$b$};
\draw (470.17,30.71) node [anchor=north west][inner sep=0.75pt]  [rotate=-359.98]  {$c$};
\draw (335.94,186.51) node [anchor=north west][inner sep=0.75pt]  [rotate=-359.98]  {$e$};
\draw (372.08,193.32) node [anchor=north west][inner sep=0.75pt]  [font=\scriptsize,rotate=-359.98]  {$S_3(a,e)$};
\draw (462.38,97.28) node [anchor=north west][inner sep=0.75pt]  [font=\scriptsize,rotate=-359.98]  {$S_1(a,c)$};
\draw (366.12,80.59) node [anchor=north west][inner sep=0.75pt]  [font=\scriptsize,rotate=-359.98]  {$S_2(a,d)$};
\draw (180.45,182.63) node [anchor=north west][inner sep=0.75pt]  [font=\scriptsize,rotate=-359.98]  {$S_0(a,b)$};
\draw (450.45,182.55) node [anchor=north west][inner sep=0.75pt]  [font=\scriptsize,rotate=-359.98]  {$S_0(a,b)$};
\draw (425.29,239.2) node [anchor=north west][inner sep=0.75pt]  [rotate=-359.98]  {$a$};
\draw (154.29,239.2) node [anchor=north west][inner sep=0.75pt]  [rotate=-359.98]  {$a$};

\end{tikzpicture}
      
\caption{Examples of dense $\frac{2\pi}{n}$-local orders.}
\label{f: dense local orders}
\end{figure}

We show the following:

\begin{manualthm}{A}
    Let $n \geq 3$. The automorphism group of the generic $n$-hypertournament $\mb{T}_n$ is simple.
\end{manualthm}

We also give a new proof of the following, originally shown by Droste, Giraudet and Macpherson in \cite{DGM95}:

\begin{manualfact}{B}
    Let $n \geq 2$. The automorphism group of the dense $\frac{2\pi}{n}$-local order $\Sn$ is simple.
\end{manualfact}

(We label this as a Fact rather than a Theorem to emphasise that the result was already known, though our proof is new. In fact, Droste, Giraudet and Macpherson showed the simplicity of the automorphism groups of certain oriented graphs, which we denote by $S(n)$; as the binary structure $\Sn$ is interdefinable with the oriented graph $S(n)$, we have $\Aut(\Sn) = \Aut(S(n))$, giving Fact B. See the start of Section \ref{s: Sn}.)

\subsection{New results: oriented graphs, including the semigeneric tournament}

In \cite{Che98}, Cherlin gives a classification of all countable ultrahomogeneous oriented graphs. See Section \ref{s: ultrahomog or graphs} for the detailed descriptions of the oriented graphs from this classification that we consider. We show the following:

\begin{manualthm}{C}
    Recall the following structures from Cherlin's list, described in Section \ref{s: ultrahomog or graphs}:
    \begin{itemize}
        \item $\Sth$: the double cover of the dense local order $S(2)$;
        \item $\mb{D}_n$: the generic $n$-partite tournament;
        \item $\mb{F}$: the generic $\omega$-partite $\vec{C}_4$-tournament;
        \item $\mb{P}(3)$: the twisted partial order.
    \end{itemize}
    We have the following classification of the normal subgroups of the automorphism groups of these structures:
    \begin{enumerate}[label=(\roman*)]
        \item the only non-trivial proper normal subgroup of $\Aut(\Sth)$ is the subgroup generated by the involution sending each point to its antipodal point (\ref{ss: Sth});
        \item the normal subgroups of $\Aut(\Dn)$ are precisely $\{\pi^{-1}(K) \mid K \nrm \Sym_n\}$, where $\pi$ is the map sending each automorphism to the bijection it induces on the set of labels of the $n$ parts (\ref{ss: Dn});
        \item the only non-trivial proper normal subgroup of $\Aut(\mb{F})$ is generated by the involution which, for each part, swaps the pair of vertices within it (\ref{ss: F});
        \item $\Aut(\Pthr)$ is simple (\ref{ss: Pthr}).
    \end{enumerate}
\end{manualthm}

The proof of Theorem C uses similar techniques to our proof of Fact B, just involving a little additional detective work for each structure, and so we do not give more details in this introduction.

One of the oriented graphs from Cherlin's list, the semigeneric tournament, is more difficult to analyse, and we treat it in a separate section of the paper, Section \ref{s: sg}. The \emph{semigeneric tournament} $\mb{S}$ is the \Fr limit of the class of finite $\omega$-partite tournaments (oriented graphs where the ``non-edge" relation is an equivalence relation) satisfying an additional \emph{parity condition}: for any two pairs $\{u, u'\}$ and $\{v, v'\}$ of vertices in two different parts, the number of out-edges from $\{u, u'\}$ to $\{v, v'\}$ is even. We show the following:

\begin{manualthm}{D}
    The automorphism group $\Aut(\mb{S})$ of the semigeneric tournament $\mb{S}$ is simple.
\end{manualthm}

We also prove some results regarding automorphisms of $\mb{S}$ that may be of independent interest: in particular, we show that each non-trivial automorphism of $\mb{S}$ moves infinitely many parts (Lemma \ref{l: sg inf many parts moved}) and moves infinitely many points of each part (Lemma \ref{l: sg inf supp each part}). The semigeneric tournament $\mb{S}$ is a somewhat unusual structure: it is a finite relational \Fr structure with strong amalgamation which has a simple automorphism group, but it is imprimitive and does not have weak elimination of imaginaries (see Remark \ref{r: failure of WEI for sg}).

Note that Fact B and Theorems C and D, together with the previous results mentioned in this introduction (for the countably infinite pure set, $\Q$, the random tournament, the generic $n$-anticlique-free oriented graph, the generic poset and free amalgamation structures), give the normal subgroup lattice for the automorphism group of each countably infinite ultrahomogeneous oriented graph in Cherlin's list, with the exception of the oriented graphs resulting from lexicographic products of $\Q, \mb{T}_2, S(2)$ with an anticlique. The automorphism groups of these lexicographic products are wreath products of the automorphism groups of the factors, and we imagine it will not be difficult to determine the normal subgroups for these remaining cases -- we did not consider these.

\subsection{Why these particular structures? A new approach: SWIR expansions.} \label{ss: intro SWIR exps}

The starting point of this project was to consider the generic $n$-hypertournament $\mb{T}_n$ and the dense $\tw$-local order $\Sn$. These are relatively well-known \Fr structures whose relations are asymmetric, and the initial aim of the authors was to apply the techniques of Li to show simplicity of their automorphism groups (for $\Sn$, we only learnt of the prior result of Droste, Giraudet and Macpherson later on).

However, as became clear during the project, such a direct approach cannot work. The structures $\Tn$ for $n \geq 3$ and $\Sn$ for $n \geq 2$ do not have SWIRs (see Lemma \ref{l: Tn no SWIR}, Lemma \ref{l: Aut Sn no dense cong}), and so it is not possible to directly apply the main result of Li (Fact \ref{f:Li main thm}). In fact, it turns out that their automorphism groups do not even have dense conjugacy classes (Lemma \ref{l: Aut Tn no dense conj}, Lemma \ref{l: Aut Sn no dense cong}). This means that, even assuming a generalisation of the main result of Li to the case where $M$ has a local SWIR and $\Aut(M)$ has a dense conjugacy class by analogy with the main Tent-Ziegler result Fact \ref{f:TZ main thm} (indeed, the authors believe such a generalisation should be straightforward to prove), it is not possible to apply this to $\Aut(\Tn)$ (for $n \geq 3$) or $\Aut(\Sn)$ (for $n \geq 2$). Tent and Ziegler give an example showing that the assumption of a dense conjugacy class is necessary (\cite[Example 2.11]{TZ13a}). Thus, to show simplicity of $\Aut(\Tn)$ and $\Aut(\Sn)$, a different approach is required. The semigeneric tournament $\mb{S}$ in Theorem D does not have a local SWIR (Lemma \ref{l: sg no SWIR}), likewise meaning that the techniques of Li cannot be directly applied. (For the structures mentioned in Theorem C, some have local SWIRs and some do not, but we shall not use the local SWIRs that do exist. See Section \ref{s: ultrahomog or graphs}.)

\subsubsection*{The new approach: SWIR expansions and stabiliser subgroups}

The new approach we take in this paper, for a given \Fr structure $M$, is the following, which we refer to as the \emph{SWIR expansion method}.
\begin{itemize}
    \item \textbf{Step 1:} First determine an expansion $N$ of $M$ where $N$ has a SWIR and $\Aut(N)$ is isomorphic to a stabiliser subgroup $H \leq \Aut(M)$, and use the techniques of Li, Macpherson, Tent and Ziegler to find the normal subgroups of $\Aut(N) \cong H$.
    \item \textbf{Step 2:} Show that, for $1 \neq K \nrm \Aut(M)$, we have $K \cap H \neq 1$: here one uses the fact that $H$ is a stabiliser. As $K \cap H \nrm H$ and we know the normal subgroups of $H$ from Step 1, we know what $K \cap H$ can be.
    \item \textbf{Step 3:} Use this information about $K \cap H$ to determine what $K$ can be, again using the fact that $H$ is a stabiliser.
\end{itemize}

In the particular case where $H$ is simple, once we have shown $K \cap H \neq 1$ in Step 2, we have $H \sub K$.

We trace the lineage of this method as follows. In the initial stages of the project, when the authors were attempting to show the simplicity of $\Aut(\mb{S}(2))$, David Evans suggested to the second author to investigate point-stabiliser subgroups. This idea was the key inspiration for the development of the three-step approach above. One can also find the idea of considering point-stabilisers in the brief proof sketch of \cite[Prop.\ 4.2.8]{Mac11} (which predates the SIR technology of \cite{TZ13a}), where Macpherson sketches simplicity for the generic two-graph: he uses the fact that each point-stabiliser is isomorphic to the automorphism group of the Rado graph, and thus simple. Finally, the results of \cite{KRR25} giving the simplicity of the automorphism group of the Rado meet-tree $\mb{T}^{\mathrm{R}}$ are perhaps the closest relative in the literature to the SWIR expansion method. The Rado meet-tree does not have a SWIR, and Kaplan, Riahi and Rodr\'{i}guez Fanlo get around this by taking two SWIR expansions, one fixing a point and the other setwise-fixing a branch. They show that any automorphism of $\mb{T}^{\mathrm{R}}$ setwise-fixes a branch or is a fan (implying that it fixes a point), meaning one can work in one of the two expansions, and they then use the techniques of Li to show simplicity. A key difference from our approach is that \cite{KRR25} uses a ``divide-and-conquer" strategy, splitting all automorphisms into different cases and using a SWIR expansion for each one. In the present paper, we instead take the approach of finding a particular ``well-behaved" subgroup of automorphisms $H$, and then using group-theoretic techniques to show that this gives us enough information to determine the normal closure for automorphisms in general.

\subsection{The structure of the paper and an outline of the various examples}

\begin{itemize}
    \item \textbf{Section \ref{s: SWIRs}: SWIRs and automorphisms moving almost R-/L-maximally.} We introduce SWIRs, giving some examples and basic lemmas, and we introduce the notion of automorphisms which move almost R-maximally/L-maximally.
    \item \textbf{Section \ref{s: free SWIRs}: free SWIRs and the pointwise-stabiliser lemma.} We introduce the notion of a \emph{free} SWIR. When a SWIR is free, one does not have to do complicated ad hoc back-and-forths to obtain automorphisms which move almost R-/L-maximally: we have a general result giving these, Lemma \ref{l: free SWIR commutator move maximally}. This makes simplicity proofs for structures with a free SWIR much more straightforward (Proposition \ref{p:free SWIR simple}). We also prove a result which we refer to as the \emph{pointwise-stabiliser lemma}: this states that, under certain conditions on the structure, simplicity of pointwise-stabiliser subgroups gives simplicity of the whole automorphism group.
    \item \textbf{Section \ref{s: Tn}: the generic $n$-hypertournament $\Tn$.} This is our first example of the SWIR expansion method. Here we take the pointwise-stabiliser $H$ of a tuple $\bar{a}$ of $n-2$ points. One can use $\bar{a}$ to define a random tournament on $\dom(\Tn) \setminus \bar{a}$: for each pair $b, c \in \dom(\Tn) \setminus \bar{a}$, define $b \ra c$ if $(\bar{a}, b, c)$ is an edge of $\Tn$. We use this idea to show that $H$ is isomorphic to an expansion of $\Tn$ including a random tournament. We define a free SWIR on this expansion, using the additional tournament structure to decide when an edge of $\Tn$ is canonical, via sign maps and a small amount of algebra. This gives the simplicity of $H$, and we then carry out Steps 2 and 3 of the SWIR expansion method to show the simplicity of $\Aut(\Tn)$.
    \item \textbf{Section \ref{s: Sn}: the dense $\frac{2\pi}{n}$-local order $\Sn$.} Here each point-stabiliser is isomorphic to the automorphism group of the generic $n$-coloured linear order $\Q_n$. We then extend results of Li to find the normal subgroups of $\Aut(\Q_n)$, generalising the classical result of Higman for $\Aut(\Q)$, and use these to show simplicity of $\Aut(\Sn)$.
    \item \textbf{Section \ref{s: ultrahomog or graphs}: other oriented graphs.} Here we consider the four oriented graphs mentioned in Theorem C. The techniques are similar to those in the previous two sections, though the proofs are not entirely straightforward. (For $\Dn$ we consider setwise stabilisers, and for $\Pthr$ we use the result of Glass, McCleary and Rubin giving the simplicity of the automorphism group of the generic poset.)
    \item \textbf{Section \ref{s: sg}: the semigeneric tournament $\mb{S}$.} This is the most involved example of the paper. We find a SWIR expansion $\Srho$ of $\mb{S}$, and show that $\Aut(\Srho)$ is simple: as the SWIR is not free, this requires some extra work. We then show that $\Aut(\Srho)$ is isomorphic to the setwise-stabiliser of a \emph{generic transversal} of $\mb{S}$, and use this to carry out Steps 2 and 3 of the SWIR expansion method.
    \item \textbf{Section \ref{s: qns}: further questions.} Here we consider further questions regarding the scope of the SWIR expansion method.
\end{itemize}

\section{Stationary weak independence relations (SWIRs) and automorphisms moving almost R-/L-maximally} \label{s: SWIRs}

\begin{notn}
    Below, we write $AB$ to mean the substructure generated by $A \cup B$.

    Let $M$ be a \Fr structure, and let $A, B, B' \fg M$. We write $B \equiv_A B'$ if there exists $f \in \Aut(M)$ with $f(B) = B'$ and $f|_A = \id_A$, or equivalently if $B, B'$ have the same type over $A$ in some enumeration. In the below stationarity axiom, when we write $B \equiv_A B' \Rightarrow B \equiv_{AC} B'$, both automorphisms agree on $BA$.

    Throughout the rest of the paper, we will also be somewhat cavalier with our use of parentheses for the arguments of functions: we will often write $ga$, $gA$ rather than $g(a)$, $g(A)$ when this does not impede clarity.
\end{notn}

\begin{defn}[{\cite[Definition 3.1.1]{Li21}}] \label{d:SWIR}
    Let $M$ be a \Fr structure. A \emph{stationary weak independence relation (SWIR)} on $M$ is a ternary relation $\ind$ on the set of finitely generated substructures of $M$, written $B \ind_A C$, satisfying the following four properties:
    \begin{itemize}
        \item Invariance (Inv): for all $A, B, C \fg M$ and $g \in \Aut(M)$ we have $B \ind_A C \Rightarrow gB \ind_{gA} gC$;
        \item Existence (Ex): for all $A, B, C \fg M$,
        \begin{itemize}
            \item (LEx): there exists $B' \fg M$ with $B \equiv_A B'$ such that $B' \ind_A C$;
            \item (REx): there exists $C' \fg M$ with $C \equiv_A C'$ such that $B \ind_A C'$;
        \end{itemize}
        \item Stationarity (Sta): for all $A, B, C \fg M$,
        \begin{itemize}
            \item (LSta): if $B \ind_A C \wedge B' \ind_A C$ and $B \equiv_A B'$, then $B \equiv_{ AC } B'$;
            \item (RSta): if $B \ind_A C \wedge B \ind_A C'$ and $C \equiv_A C'$, then $C \equiv_{ AB } C'$;
        \end{itemize}
        \item Monotonicity (Mon): for all $A, B, C, D \fg M$,
        \begin{itemize}
            \item (LMon): $BD \ind_A C \Rightarrow B \ind_A C \,\wedge\, D \ind_{ AB } C$;
            \item (RMon): $B \ind_{A}  CD  \Rightarrow B \ind_A C \,\wedge\, B \ind_{ AC } D$;
        \end{itemize}
    \end{itemize}
    We call a ternary relation $\ind$ defined on the set of \emph{non-empty} finitely generated substructures of $M$ a \emph{local SWIR} if it satisfies the above axioms.

    Given a SWIR $\ind$, we extend $\ind$ to the set of finite subsets of $M$ by defining $B \ind_A C$ if $\langle B \rangle \ind_{\langle A \rangle} \langle C \rangle$.
\end{defn}

\begin{defn} \label{d:SIR}
    Let $M$ be a \Fr structure, and let $\ind$ be a (local) SWIR on $M$. If in addition $\ind$ is symmetric, i.e.\ for all $A, B, C \fg M$ we have $B \ind_A C \Leftrightarrow C \ind_A B$, then we call $\ind$ a (local) \emph{stationary independence relation} (SIR).
\end{defn}

We briefly give some examples of \Fr structures with SWIRs/SIRs (see \cite[Example 2.7]{KSW25}).

\begin{eg} \label{e:SWIRs} \hfill
    \begin{itemize}
        \item A relational \Fr structure $M$ with free amalgamation has a SIR: for $A, B, C \fin M$, define $B \ind_A C$ if $BA, CA$ are freely amalgamated over $A$.
        \item The rational Urysohn space has a local SIR: define $B \ind_A C$ if, for all $b \in B \setminus A$, $c \in C \setminus A$, we have $d(b, c) = \min_{a \in A} (d(b, a) + d(c, a))$.
        \item $\Q$ has a SWIR: define $B \ind_A C$ if, for all $b \in B \setminus A$, $c \in C \setminus A$ such that there is no $a \in A$ with $b < a < c$ or $b > a > c$, we have $b < c$.
        \item The random tournament has a SWIR: define $B \ind_A C$ if for all $b \in B \setminus A$, $c \in C \setminus A$ we have $b \ra c$.
        \item The dense $\tw$-local order $\Sn$ has a local SWIR: this is a straightforward generalisation of \cite[Proposition 5.15]{KSW25}.
    \end{itemize}
\end{eg}

Recall that a \Fr class $\mc{K}$ has \emph{strong amalgamation} if, for all pairs of embeddings $B_0 \la A \ra B_1$ in $\mc{K}$, there exists an amalgam $B_0 \ra C \la B_1$ in $\mc{K}$ such that the images of $B_0$, $B_1$ intersect exactly in the image of $A$. (Some authors refer to this as \emph{disjoint amalgamation}.) We also say that a \Fr limit has strong amalgamation if its age does.

We collect a number of basic properties of SWIRs in the below Lemma \ref{l: SWIR basic props}. For proofs (which are not difficult) see \cite[Section 2]{KSW25} and \cite[Remark 3.1.2]{Li21}.

\begin{lem} \label{l: SWIR basic props}
    Let $M$ be a \Fr structure with (local) SWIR $\ind$. Then we have:
    \begin{itemize}
        \item transitivity (Tr): for all $A, B, C, D \fg M$,
        \begin{itemize}
            \item $B \ind_A C \wedge B \ind_{AC} D \Rightarrow B \ind_A CD$,
            \item $B \ind_A C \wedge D \ind_{AB} C \Rightarrow BD \ind_A C$;
        \end{itemize}
        \item base triviality: for all $A, B \fg M$ we have $A \ind_A B \,\wedge\, B \ind_A A$, and for all $A, B, C \fg M$,
        \begin{itemize}
            \item $B \ind_A C \Rightarrow AB \ind_A C$,
            \item $B \ind_A C \Rightarrow B \ind_A AC$.
        \end{itemize}
    \end{itemize}
    In the case that $M$ has strong amalgamation: for all $A, B, C \fg M$, if $B \ind_A C$ then $(B \setminus A) \cap (C \setminus A) = \varnothing$.
\end{lem}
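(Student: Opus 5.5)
We derive each property directly from the axioms of Definition \ref{d:SWIR}, the main tools being stationarity and existence, with monotonicity used to pass to subsets. For the local case the same arguments go through verbatim, since every base used below contains $A$ and is therefore non-empty whenever $A$ is.

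\textbf{Base triviality.} To show $A \ind_A B$, apply (LEx) with $B$ replaced by $A$: there is $A' \equiv_A A$ with $A' \ind_A B$. An automorphism fixing $A$ pointwise and sending $A$ to $A'$ forces $A' = A$, so $A \ind_A B$. Symmetrically, (REx) gives $B \ind_A A$.

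For $B \ind_A C \Rightarrow AB \ind_A C$, take $B' \equiv_A B$ with $AB' \ind_A C$, using (LEx) applied to $AB$. By (LMon) this gives $B' \ind_A C$. Then (LSta) yields $B \equiv_{AC} B'$ via some $f$ fixing $AC$ with $f(B') = B$, where the automorphism agrees on $B'A$. Invariance then transports $AB' \ind_A C$ to $AB \ind_A C$, because $f$ fixes $A$ and $C$. The right-hand version $B \ind_A C \Rightarrow B \ind_A AC$ is proved symmetrically, using (REx), (RMon) and (RSta).

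\textbf{Transitivity.} Suppose $B \ind_A C$ and $B \ind_{AC} D$. By (LEx) choose $B' \equiv_{A} B$ with $B' \ind_A CD$. Then (RMon) gives $B' \ind_A C$ and $B' \ind_{AC} D$. By (LSta) over base $A$, we get $B' \equiv_{AC} B$. Now $B$ and $B'$ are both independent from $D$ over $AC$ and have the same type over $AC$, so (LSta) over base $AC$ gives $B' \equiv_{ACD} B$ via some $g$ fixing $ACD$ with $g B' = B$. Invariance then gives $B \ind_A CD$.

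The second transitivity statement is dual. Suppose $B \ind_A C$ and $D \ind_{AB} C$. By (REx) choose $C' \equiv_A C$ with $BD \ind_A C'$. Then (LMon) gives $B \ind_A C'$ and $D \ind_{AB} C'$. By (RSta), $C' \equiv_{AB} C$. A second application of (RSta), now over base $AB$, gives $C' \equiv_{ABD} C$. Invariance then finishes the argument.

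The one delicate point is bookkeeping in the stationarity notation: the automorphism witnessing $B \equiv_A B'$ must be the one matching $B$ to $B'$ compatibly with the enumeration, so that the two successive stationarity applications compose correctly. I expect this to be the main, mild, obstacle.

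\textbf{Strong amalgamation clause.} Suppose $B \ind_A C$ and $x \in (B\setminus A)\cap(C\setminus A)$. By (LMon) and base triviality we get $x \ind_A x$. By strong amalgamation, the orbit of $x$ over $A$ is infinite, since $x \notin A$ and $x$ is not algebraic over $A$. So we may pick $x' \equiv_A x$ with $x' \neq x$ and $x' \ind_A x$, by (LEx) followed by strong amalgamation to separate $x'$ from $x$; more precisely, we use (LEx) to get $x'' \ind_A x'$ for a suitable conjugate. Comparing $x \ind_A x$ with $x' \ind_A x$ via (LSta) gives $x' \equiv_{Ax} x$, which forces $x' = x$, a contradiction.

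To make this rigorous, I would choose, via strong amalgamation, some $y \equiv_A x$ with $y \neq x$. By (REx) there is $z \equiv_{A} x$ with $x \ind_A z$; applying invariance to an automorphism fixing $A$ and mapping $z$ to $x$ yields $x_0 \ind_A x$ for some $x_0 \equiv_A x$, and $x_0 \ne x$ can be arranged by the argument above. Then (LSta) gives the contradiction.
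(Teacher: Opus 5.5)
Your arguments for base triviality and for both transitivity statements are correct. In each case, the double use of stationarity (first over $A$, then over $AC$, resp.\ $AB$) followed by invariance is the standard derivation. The paper does not prove this lemma itself but refers to the literature.

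\textbf{The gap is in the strong amalgamation clause.} You correctly reduce to $x \ind_A x$ with $x \notin A$. You also correctly note that it would suffice to find some $x' \equiv_A x$ with $x' \neq x$ and $x' \ind_A x$. But you never actually produce such an $x'$.

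Applying (LEx) or (REx) to the single element $x$ gives some $x_0 \equiv_A x$ with $x_0 \ind_A x$, and nothing forces $x_0 \neq x$. In fact, once $x \ind_A x$ holds, (LSta) shows that \emph{every} such $x_0$ satisfies $x_0 \equiv_{Ax} x$, hence $x_0 = x$. So existence applied to $x$ alone can only ever return $x$. Nor can you ``separate $x'$ from $x$ by strong amalgamation'' afterwards: moving $x'$ to another conjugate over $A$ does not preserve independence from $x$. The phrase ``$x_0 \neq x$ can be arranged by the argument above'' therefore assumes exactly what is to be proved. The auxiliary point $y \equiv_A x$, $y \neq x$, which you obtain from strong amalgamation, is never used.

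\textbf{The missing idea is to apply existence to the pair $xy$ at once.}
\begin{enumerate}
\item By (Inv), applied with an automorphism over $A$ sending $x$ to $y$, you also have $y \ind_A y$.
\item By (LEx), take $x_1 \equiv_A x$ with $x_1 \ind_A xy$.
\item By (RMon), this gives both $x_1 \ind_A x$ and $x_1 \ind_A y$.
\item (LSta) against $x \ind_A x$ gives $x_1 \equiv_{Ax} x$, so $x_1 = x$.
\item Since $x_1 \equiv_A y$, (LSta) against $y \ind_A y$ gives $x_1 \equiv_{Ay} y$, so $x_1 = y$.
\end{enumerate}
Hence $x = y$, a contradiction. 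With this replacement the proposal is complete.
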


\begin{fact}[{\cite[Theorem 1.1]{KR07}}] \label{f:KR dense ccl}
    Let $M$ be a \Fr structure. Let $\Age(M)_{\textnormal{p}}$ be the class consisting of pairs $(A, f)$ where $A \in \Age(M)$ and $f$ is a partial automorphism of $A$ (that is, an isomorphism between finitely generated substructures of $A$). Then $\Aut(M)$ has a dense conjugacy class if and only if $\Age(M)_{\textnormal{p}}$ has the joint embedding property.
\end{fact}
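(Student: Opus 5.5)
This is the Kechris--Rosendal characterisation, cited as \cite[Theorem 1.1]{KR07}, so the plan follows the standard Baire-category argument. The first step is to set up the topology. Give $G = \Aut(M)$ the pointwise convergence topology; it is Polish. Basic open sets have the form
\[ [f] = \{g \in G \mid g \supseteq f\}, \]
where $f$ is a finite partial isomorphism of $M$. By ultrahomogeneity $[f] \neq \emptyset$ exactly when $f$ is an isomorphism between finitely generated substructures. A conjugacy class is dense exactly when it meets every nonempty $[f]$. For a pair $(A,f) \in \Age(M)_{\textnormal{p}}$, with $A \fg M$, we have $h[f]h^{-1} = [hfh^{-1}]$. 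So the union $U_{(A,f)}$ of all conjugates of $[f]$ is open, and it depends only on the isomorphism type of $(A,f)$.

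\textbf{Forward direction.} Suppose $g$ has a dense conjugacy class, and take $(A,f),(B,f') \in \Age(M)_{\textnormal{p}}$. Embed both in $M$. Density gives a conjugate $g_1$ of $g$ extending $f$, and a conjugate $g_2$ of $g$ extending $f'$. Write $g_2 = k g_1 k^{-1}$. Let $C$ be a finitely generated substructure containing $A \cup k^{-1}(B)$ that is closed under $g_1^{\pm 1}$ applied finitely often to the relevant points. More simply, take $C = \langle A \cup k^{-1}B\rangle$ with the partial automorphism $g_1|_{C \cap g_1^{-1}C}$. Then $(A,f)$ embeds into this pair by inclusion, and $(B,f')$ embeds by $k^{-1}$, since $k^{-1} f' k \subseteq k^{-1} g_2 k = g_1$. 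This gives joint embedding.

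\textbf{Converse.} Assume JEP. Each $U_{(A,f)}$ is open, and it is dense: given a nonempty $[f']$ with $f'$ living on $B$, JEP yields $(C,f'')$ into which both pairs embed. Realise $C$ in $M$ containing $B$ with the given copy of $f'$; this is possible by ultrahomogeneity, after moving the embedding of $B$ onto $B$ itself. Then any $g \in [f'']$ lies in $[f']$, and it also lies in a conjugate of $[f]$, obtained by transporting the copy of $A$ via an automorphism. Since there are countably many pairs up to isomorphism, Baire category gives a comeagre set
\[ \bigcap_{(A,f)} U_{(A,f)}. \]
Any element $g$ of this set has a conjugacy class meeting every nonempty $[f]$. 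Indeed, $g \in h[f]h^{-1}$ implies $h^{-1}gh \in [f]$. So the conjugacy class of $g$ is dense.

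\textbf{Main obstacle.} The delicate step is the forward direction. There one must check that the structure built from $g_1$ is finitely generated and that the restricted map is a genuine partial automorphism, with the inclusions respecting the partial maps. Taking $C$ generated by the finitely many relevant points, and restricting $g_1$ to $C \cap g_1^{-1}(C)$, handles this.
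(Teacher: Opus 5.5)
The paper gives no proof of this Fact; it is quoted from \cite[Theorem 1.1]{KR07}. Your argument is essentially the standard Baire-category proof from there: density of each $U_{(A,f)}$ is exactly topological transitivity of the conjugation action. It is correct, apart from one small repair in the forward direction. When the language has function symbols, $C \cap g_1^{-1}(C)$ need not be finitely generated, so $g_1|_{C \cap g_1^{-1}(C)}$ need not be a partial automorphism in the sense of the statement (in the relational case your version is fine). Instead take $g_1|_{\langle \dom f \cup k^{-1}\dom f'\rangle}$, which maps onto $\langle \im f \cup k^{-1}\im f'\rangle \subseteq C$ and still contains both $f$ and $k^{-1}f'k$.
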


\begin{lem} \label{l: SWIR implies dense ccl}
    Let $M$ be a \Fr structure with a SWIR. Then $\Aut(M)$ has a dense conjugacy class.
\end{lem}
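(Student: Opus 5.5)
We will use Fact \ref{f:KR dense ccl}, so the goal is to show that $\Age(M)_{\textnormal{p}}$ has the joint embedding property. Take two pairs $(A_0, f_0)$ and $(A_1, f_1)$ in $\Age(M)_{\textnormal{p}}$, and realise $A_0, A_1$ as finitely generated substructures of $M$. We need some $C \fg M$ and a partial automorphism $f$ of $C$, together with embeddings $A_i \to C$ under which $f$ extends both $f_0$ and $f_1$. The natural idea is to place copies of $A_0$ and $A_1$ independently over the empty set, and then use stationarity to glue the two partial automorphisms. Since the relation $\ind$ is a SWIR and not merely a local one, it is defined over the base $\varnothing$ (more precisely over $\langle\varnothing\rangle$, the substructure generated by the empty set).

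The steps are as follows.

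\begin{enumerate}[label=(\arabic*)]
    \item Apply (LEx) over the base $\langle\varnothing\rangle$. This replaces $A_0$ by a copy $A_0' \equiv A_0$ with $A_0' \ind_{\emptyset} A_1$. The copy is moved by an automorphism of $M$, so the partial automorphism $f_0$ is transported to $A_0'$; we rename and simply assume $A_0 \ind_{\emptyset} A_1$.
    \item Write $f_0 : D_0 \to E_0$ and $f_1 : D_1 \to E_1$, with $D_i, E_i \fg A_i$. By (LMon) and (RMon) we obtain $D_0 \ind_\emptyset D_1$ and $E_0 \ind_\emptyset E_1$.
    \item Extend $f_0$ to some $g_0 \in \Aut(M)$ and $f_1$ to some $g_1 \in \Aut(M)$, using ultrahomogeneity.
    \item Apply (Inv) to $D_0 \ind_\emptyset D_1$ with $g_0$. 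Since $g_0$ fixes $\langle\varnothing\rangle$ setwise, this gives $E_0 \ind_\emptyset g_0 D_1$.
    \item We now have $E_0 \ind_\emptyset g_0 D_1$ and $E_0 \ind_\emptyset E_1$, and moreover $g_0 D_1 \equiv D_1 \equiv E_1$, witnessed by $g_1 g_0^{-1}$ restricted to $g_0D_1$. Applying (RSta) yields some $h \in \Aut(M)$ that fixes $E_0$ pointwise and sends $g_0 D_1$ to $E_1$, agreeing there with $f_1 g_0^{-1}$. Here we use the convention in the paper's Notation that the automorphism witnessing $\equiv_{AB}$ agrees with the original one on the relevant set.
    \item Set $F = h g_0$. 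Then $F$ restricted to $D_0$ equals $f_0$, because $h$ fixes $E_0 = f_0 D_0$. Also $F$ restricted to $D_1$ equals $f_1$. Hence $F$ restricted to $\langle D_0 D_1 \rangle$ is a single partial isomorphism extending both $f_0$ and $f_1$.
    \item Take $C = \langle A_0 A_1 \rangle$ and $f = F|_{\langle D_0D_1\rangle}$. Then $(C, f)$ jointly embeds $(A_0,f_0)$ and $(A_1,f_1)$, provided the image $F\langle D_0D_1\rangle = \langle E_0E_1\rangle$ lies in $C$, which it does.
\end{enumerate}

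The main obstacle is the bookkeeping in step (5): the type equivalence has to be taken over $\langle\varnothing\rangle$, and (RSta) must produce an automorphism that fixes $E_0$ pointwise while realising the specific map $f_1 g_0^{-1}$ on $g_0D_1$, rather than merely some map between $g_0 D_1$ and $E_1$. This matters for two reasons:

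\begin{itemize}
    \item We must enumerate $D_1$ and $E_1$ compatibly with $f_1$, so that the types are types of tuples matched via $f_1 g_0^{-1}$.
    \item If the language has constants, $\langle\varnothing\rangle$ may be nonempty. Every automorphism fixes it pointwise, so the argument is unaffected.
\end{itemize}

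Finally, since $M$ is countable, Fact \ref{f:KR dense ccl} gives a dense conjugacy class.
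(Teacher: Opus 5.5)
Your proposal is correct and follows the paper's proof essentially step by step. The shared steps are: put the two structures independent over the empty base using (Ex), pass to domains and images with (Mon), transport the independence by an extension of $f_0$ via (Inv), and then glue with (RSta). Your explicit automorphism $F = hg_0$ is the paper's map $h^{-1}\circ g$ written with $h$ taken in the opposite direction.
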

\begin{proof}
    By Fact \ref{f:KR dense ccl}, it suffices to show that $\Age(M)_{\textnormal{p}}$ has the joint embedding property. Let $(A, f), (B, f') \in \Age(M)_{\textnormal{p}}$. As $A \in \Age(M)$, we may assume $A \sub M$, and by (Ex) there is $B' \sub M$, $B' \cong B$ with $A \ind B'$. Let $f''$ denote the partial automorphism of $B'$ corresponding to $f'$. By (Mon) we have $\dom(f) \ind \dom(f'')$ and $\im(f) \ind \im(f'')$. By ultrahomogeneity of $M$ there is $g \in \Aut(M)$ extending $f$, so by (Inv) we have $\im(f) \ind g(\dom(f''))$. So by (Sta) the isomorphism $g \circ (f'')^{-1}$ extends to an isomorphism $h : \im(f)\im(f'') \to \im(f)g(\dom(f''))$ with $\id_{\im(f)} \sub h$, and thus $f \cup f''$ extends to a partial automorphism of $AB'$.
\end{proof}
\begin{rem}
    Kaplan and Simon showed that if a \Fr structure $M$ has a weaker notion of independence relation called a CIR on its finitely generated substructures, then $\Aut(M)$ has a \emph{cyclically} dense conjugacy class, which implies Lemma \ref{l: SWIR implies dense ccl}. See \cite[Cor.\ 3.9, Th.\ 3.12]{KS19}.
\end{rem}

\begin{defn}
    Let $M$ be a \Fr structure. We call a type $p(\bar{x}/A)$ over $A \fg M$ \emph{realisable} if $p$ has a realisation in $M$. We say that a realisable type $p(\bar{x}/A)$ is \emph{exterior} if it contains the formula $x_i \neq a$ for each $x_i \in \bar{x}$ and $a \in A$.
\end{defn}

\begin{notn}
    When we write $p(\bar{x}/A)$, we will assume that $A$ is a finitely generated substructure of $M$ unless specified otherwise. Of course, one can also view $p(\bar{x}/A)$ as a type over a finite parameter set by taking a finite set of generators of $A$, and we are quite relaxed about the distinction here. As we only work with \Fr structures $M$, we have that $\bar{b}, \bar{b}' \sub M$ have the same type over $A \fg M$ $\iff$ $\bar{b}, \bar{b}'$ have the same quantifier-free type over $A$ $\iff$ $\bar{b}, \bar{b}'$ lie in the same orbit of the coordinate-wise action $\Aut(M)_{(A)} \curvearrowright M^{|\bar{x}|}$. (Here $\Aut(M)_{(A)}$ denotes the pointwise-stabiliser of $A$ in $\Aut(M)$.)
\end{notn}

\begin{defn}[{\cite[Definition 4.0.3]{Li21}}]
    Let $M$ be a \Fr structure with SWIR $\ind$. Let $g \in \Aut(M)$. Let $p(\bar{x}/A)$ be a realisable type over $A \fg M$.
    \begin{itemize}
        \item We say that $g$ \emph{moves $p$ almost R-maximally} if there is a realisation $\bar{b}$ of $p$ such that $\bar{b} \ind_A g\bar{b}$.
        \item We say that $g$ \emph{moves $p$ almost L-maximally} if there is a realisation $\bar{b}$ of $p$ such that $g\bar{b} \ind_A \bar{b}$.
    \end{itemize}
    
    We say that $g$ \emph{moves almost R-/L-maximally} if, for any realisable type $p(\bar{x}/A)$ (where $|\bar{x}|$ is arbitrary), we have that $g$ moves $p$ almost R-/L-maximally.
\end{defn}

\begin{fact}[{\cite[Lemma 5.0.1]{Li21}}] \label{f: move ext types}
    Let $\mc{M}$ be a \Fr structure with SWIR $\ind$ and $g \in \Aut(\mc{M})$. Suppose $g$ moves all exterior types almost R-/L-maximally. Then $g$ moves almost R-/L-maximally.
\end{fact}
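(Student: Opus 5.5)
The plan is to reduce an arbitrary realisable type to an exterior one by splitting off the coordinates that lie in $A$, and then use stationarity and monotonicity to transfer independence back. I treat the R-maximal case; the L-maximal case is symmetric, with left and right axioms swapped throughout.

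Let $p(\bar{x}/A)$ be realisable. Realisability of $p$ decides, for each coordinate $x_i$, whether $x_i = a$ for some $a \in A$ (and which $a$), and whether $x_i = x_j$ for each pair. Reorder and remove duplicate coordinates, which does not affect independence, since $\ind$ on tuples depends only on the generated substructure. Write $\bar{x} = \bar{y}\bar{z}$, where $\bar{y}$ collects the coordinates forced to be elements $\bar{a} \subseteq A$, and $\bar{z}$ the remaining ones. Then $q(\bar{z}/A) := p|_{\bar{z}}$ contains $z_i \neq a$ for every $a \in A$, so $q$ is exterior.

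By hypothesis there is a realisation $\bar{c}$ of $q$ with $\bar{c} \ind_A g\bar{c}$. Setting $\bar{b} = \bar{a}\bar{c}$ gives a realisation of $p$, since $p$ is determined by $q$ together with the equations $\bar{y} = \bar{a}$ (any tuple satisfying these, with $\bar{z}$ realising $q$, has the same type over $A$ as any realisation of $p$, via the coordinatewise orbit description). It remains to show $\bar{a}\bar{c} \ind_A g\bar{a}\, g\bar{c}$. Two subtleties arise here.

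First, on the left: base triviality gives $A\bar{c} \ind_A g\bar{c}$, and then (LMon) yields $\bar{a}\bar{c} \ind_A g\bar{c}$, since $\langle \bar{a}\bar{c}\rangle \subseteq \langle A \bar{c}\rangle$. Here one views monotonicity as passing to substructures, after writing $\langle A\bar{c}\rangle$ as generated by $\langle\bar{a}\bar{c}\rangle$ together with the rest.

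Second, on the right the tuple is $g\bar{a}\,g\bar{c}$ with $g\bar{a} \subseteq gA$, which need not lie in $A$. This is the main obstacle: we get independence from $g\bar{c}$ for free, but not from $g\bar{a}$. The fix is to strengthen the exterior-type step. Apply the hypothesis to the exterior type over the larger base $A' = \langle A\, gA\rangle$, namely any completion $q'$ of $q$ to an exterior type over $A'$ that is realisable; one exists, since $q$ is exterior and realisable, and genericity lets us avoid the finitely many points of $A'$, at least under strong amalgamation. This gives $\bar{c} \ind_{A'} g\bar{c}$. The difficulty is that independence over $A'$ does not immediately descend to independence over $A$.

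So instead I would argue with stationarity. Using (REx), choose $\bar{d} \equiv_{A g\bar{a}} g\bar{c}$ with $\bar{a}\bar{c} \ind_A g\bar{a}\bar{d}$. I would then try to show that $g\bar{c}$ has the same type as $\bar{d}$ over $A\bar{a}\bar{c}\,g\bar{a}$, using (RSta) together with the known independence $\bar{c} \ind_A g\bar{c}$. The cleanest route, if the paper's Fact~\ref{f: move ext types} is used as intended, is to note that it is applied with the type enlarged to include the parameters: the type of $\bar{a}\bar{c}$ over $A$ with $\bar{a}$ treated as new variables whose realisations are constrained to be in $A$.

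If this stalls, the fallback is to cite \cite[Lemma 5.0.1]{Li21} directly, since that is exactly the statement. I expect this right-hand bookkeeping with $g\bar{a}$ to be the only genuinely delicate point.
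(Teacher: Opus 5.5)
Your reduction is sound up to the point you flag. Writing $\bar b=\bar a\bar c$ with $\bar a\subseteq A$ and $q(\bar z/A)$ exterior is fine, and base triviality plus (LMon) handles the left-hand side correctly. But the step you identify as delicate, obtaining $\bar c\ind_A g\bar a\,g\bar c$, is never carried out, so the proposal has a genuine gap. An arbitrary exterior completion of $q$ over $\langle A\,gA\rangle$ fails for exactly the reason you give. Your (REx)/(RSta) manoeuvre is circular in two ways. First, a $\bar d\equiv_{Ag\bar a}g\bar c$ with $\bar a\bar c\ind_A g\bar a\bar d$ would already force $\bar c\ind_A g\bar a$ by (RMon), and (REx) does not provide such a $\bar d$ while keeping $g\bar a$ fixed. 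Second, identifying $\bar d$ with $g\bar c$ over $A\bar c\,g\bar a$ via (RSta) would need the very independence $\bar a\bar c\ind_A g\bar a\,g\bar c$ you are trying to prove. The remark about treating $\bar a$ as variables just restates the original non-exterior type. Falling back on \cite{Li21} is not a proof; the paper itself only cites this Fact and gives no argument.

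The missing idea is to choose the \emph{independent} completion and then go up with (Tr), rather than trying to descend from a larger base.

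By (LEx), take $\bar c_0\models q$ with $\bar c_0\ind_A g\bar a$, and put $q^*=\tp(\bar c_0/Ag\bar a)$. This type is exterior when $\mc{M}$ has strong amalgamation. Base triviality gives $\bar c_0\ind_A Ag\bar a$, so by the last clause of Lemma~\ref{l: SWIR basic props} the tuple $\bar c_0$ misses $Ag\bar a\setminus A$; it misses $A$ because $q$ is exterior.

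The argument then runs as follows:
\begin{enumerate}
\item The hypothesis yields $\bar c\models q^*$ with $\bar c\ind_{Ag\bar a}g\bar c$.
\item (Inv), applied to an automorphism fixing $Ag\bar a$ pointwise and sending $\bar c_0$ to $\bar c$, gives $\bar c\ind_A g\bar a$.
\item The first form of (Tr) combines these into $\bar c\ind_A g\bar a\,g\bar c$.
\item Base triviality and monotonicity then give $\bar a\bar c\ind_A g\bar a\,g\bar c$.
\end{enumerate}
The same argument works with $gA$ in place of $g\bar a$, which is what your choice of $A'$ was reaching for.

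The L-case is the mirror image: use (REx) to get $g\bar a\ind_A\bar c_0$, apply the L-hypothesis to $q^*$, and finish with the second form of (Tr).

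Exteriority of $q^*$ is where strong amalgamation enters. Without it, independent tuples may share points and the reduction needs more care. This is harmless for the paper: every application of this Fact (Lemma~\ref{l: free SWIR commutator move maximally}, Proposition~\ref{p: Aut Srho simple}) is to a strong \Fr structure.
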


\section{Free SWIRs and the pointwise-stabiliser lemma} \label{s: free SWIRs}

\begin{term*}
    We call a \Fr structure with strong amalgamation a \emph{strong \Fr structure}.
\end{term*}
\begin{notn*}
    When we write $\bar{a} \cap \bar{b}$ or $\bar{a} \cup \bar{b}$, we mean the intersection or union of the underlying domains of the tuples $\bar{a}$, $\bar{b}$. We use this notation throughout the rest of this paper.
\end{notn*}

\begin{lem}\label{l: realisation disjoint}
    Let $M$ be a strong \Fr structure. Let $g \in \Aut(M)$. Suppose that, for any exterior $1$-type $p$, some realisation of $p$ is not fixed by $g$. Then for any $V \fg M$, any $n \geq 1$ and any exterior $n$-type $q$, we can find a realisation $\bar{b} \models q$ in $M$ with $\bar{b} \cap g\bar{b} = \varnothing$ and $(\bar{b} \cup g\bar{b}) \cap V = \varnothing$.
\end{lem}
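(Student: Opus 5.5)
We build the realisation $\bar{b} = (b_1, \dots, b_n)$ of $q(\bar{x}/A)$ one coordinate at a time, by induction on $n$. Throughout, we keep a finite ``forbidden'' set $W$ containing $V$, $A$, $gA$, $g^{-1}A$, and the images under $g^{\pm 1}$ of every coordinate already chosen.

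The key preliminary claim concerns an exterior $1$-type $p(x/B)$ with $B \fg M$ and a finite set $W \sub M$. We claim that $p$ has a realisation $b$ with $b \neq gb$ and $b, gb \notin W$. We derive this in three steps.

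First, since $M$ is strong and $p$ is exterior, $p$ has infinitely many realisations. Indeed, by strong amalgamation we can amalgamate $Bb$ with a copy of itself over $B$ disjointly, and iterating gives infinitely many realisations.

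Second, we produce realisations outside $W$. Enlarge the parameter set to $B' = \langle B \cup W \cup g^{-1}W\rangle$. By strong amalgamation there is an exterior $1$-type $p'(x/B')$ extending $p$: amalgamate $Bb$ and $B'$ over $B$ disjointly to get a realisation outside $B'$. Any realisation $b$ of $p'$ satisfies $b \notin W$. It also satisfies $gb \notin W$, because $b \notin g^{-1}W$.

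Third, we apply the hypothesis to $p'$. It gives a realisation $b$ of $p'$ with $gb \neq b$. Then $b \models p$, $b \neq gb$, and $b, gb \notin W$, which proves the claim. The only care needed here is that the amalgam realising $p'$ must be exterior over $B'$; this is exactly what strong amalgamation provides. The step where $g^{-1}W$ is put into the base is what lets us control $gb$ as well as $b$.

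For the induction, suppose $b_1, \dots, b_{k}$ have been chosen. Consider the $1$-type over $\langle A b_1 \dots b_k\rangle$ of the next coordinate, as specified by $q$. Because $q$ is an exterior $n$-type, its coordinates are distinct from $A$. We may assume the coordinates are pairwise distinct: if two coordinates of $q$ are equal, we simply treat them as one variable. Hence this $1$-type is exterior over $\langle A b_1\dots b_k\rangle$, provided that base contains no new elements. If $M$ has function symbols, the base may contain new elements, but since they are generated we can still require $x \neq$ them by the strong-amalgamation argument in the second step.

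Apply the claim with $W = V \cup \{b_i, gb_i, g^{-1}b_i : i \leq k\}$. This produces $b_{k+1}$ with $b_{k+1}\neq gb_{k+1}$, with $b_{k+1}, gb_{k+1} \notin V$, with $b_{k+1} \neq gb_i$ (using $b_{k+1} \notin W$), and with $gb_{k+1} \neq b_i$ (using $gb_{k+1} \notin W$). Hence $\bar{b} \cap g\bar{b} = \varnothing$ and $(\bar{b}\cup g\bar{b})\cap V=\varnothing$.

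The main obstacle is the second step: getting a realisation avoiding a finite set while still being moved by $g$. The hypothesis only gives one moved realisation per exterior type, so we must pass to the larger base $B'$. The point to check is that the extended type $p'$ remains exterior and realisable, which is where strong amalgamation is used.
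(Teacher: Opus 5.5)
Your argument is correct and is essentially the paper's proof. You induct on the length of the tuple, and for the new coordinate you pass to its type over the base enlarged by the forbidden set and its $g^{-1}$-preimage (exterior by strong amalgamation), then apply the hypothesis to that type to get a realisation moved by $g$. One caveat: your aside on function symbols does not work, because if $q$ forces a coordinate to be a term in earlier coordinates, no amalgamation can separate them, and the statement itself then fails (e.g.\ in the Fra\"{i}ss\'{e} limit of finite sets with a fixed-point-free involution $f$, taking $g=f$ and $q=\tp(b,f(b))$); the paper's proof tacitly relies on the relational case in exactly the same way.
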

\begin{proof}
    We use induction on $n \geq 1$. Assume the statement for $k < n$, and let $q(\bar{x}, y / A)$ be an exterior $n$-type over some $A \fg M$ with $|\bar{x}| = n - 1$. Let $r(\bar{x} / A)$ be the projection of $q$ to $\bar{x}$. Note that $r$ is exterior. By the induction hypothesis $r$ has a realisation $\bar{b} \sub M \setminus A$ with $\bar{b} \cap g\bar{b} = \varnothing$ and $(\bar{b} \cup g\bar{b}) \cap V = \varnothing$. If $q$ contains the formula $y = x_i$ for some $i < |\bar{x}|$, we are done, so assume not. As $M$ has strong amalgamation, there is $c \in M$ with $\tp(\bar{b}, c / A) = q$ and $c \notin g(\bar{b})Vg^{-1}(\bar{b}V)$. Let $q' = \tp(c/A\bar{b}Vg(\bar{b})g^{-1}(\bar{b}V))$. Then by assumption there is a realisation $c' \in M$ of $q'$ with $gc' \neq c'$. We then have $\tp(\bar{b}, c' / A) = q$ and $(\bar{b}, c')$ is as required.
\end{proof}

\subsection{Free SWIRs}

\begin{defn} \label{d:free SWIR}
    Let $M$ be a \Fr structure with SWIR $\ind$. We say that $\ind$ is \emph{free} if, for all $A, B, C \fg M$ with $B \ind_A C$, we have $B \ind_{A'} C$ for all $(BC) \cap A \sub A' \sub A$.
\end{defn}

The above definition was given for SIRs in \cite[Lemma 5.1]{TZ13a}, and first named in \cite{Con17}.

\begin{eg}
    In Example \ref{e:SWIRs}, SWIRs are given for structures with free amalgamation and the random tournament: it is straightforward to check that these are free. The SWIR defined for $(\Q, <)$ is not free: let $a, b, c \in \Q$ with $b > a > c$, and note that $b \ind_a c$ but $b, c$ are not independent over $\varnothing$.
\end{eg}

The proof of the below lemma is inspired by \cite[Lemma 5.1]{TZ13a}; some modifications are required.

\begin{lem}\label{l: free SWIR commutator move maximally}
    Let $M$ be a strong Fraïssé structure with free SWIR $\ind$. Let $g \in \Aut(M)$. Suppose that for any exterior $1$-type $p$, some realisation of $p$ is not fixed by $g$. Then there is $h \in \Aut(M)$ such that $[g,h]$ moves almost $R$-maximally and $[g,h]^{-1}$ moves almost $L$-maximally.
\end{lem}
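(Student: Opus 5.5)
We build $h$ by a back-and-forth, following \cite[Lemma 5.1]{TZ13a}. By Fact \ref{f: move ext types} it suffices to handle exterior types. Let $(p_k(\bar{x}_k/A_k))_{k<\omega}$ enumerate all exterior types over finitely generated substructures, each repeated infinitely often. We build an increasing chain of finite partial isomorphisms $h_0 \sub h_1 \sub \cdots$ of $M$, with $h = \bigcup_k h_k \in \Aut(M)$, alternating three kinds of step. Two kinds are the usual back-and-forth steps that put the next element into the domain and into the image. The third kind is the dedicated step for $p_k$: it makes $f = [g,h] = g^{-1}h^{-1}gh$ satisfy $\bar{b} \ind_{A_k} f\bar{b}$ and $\bar{c} \ind_{A_k} f^{-1}\bar{c}$ for some $\bar{b},\bar{c} \models p_k$. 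The second condition says exactly that $f^{-1}$ moves $p_k$ almost L-maximally.

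The conditions in the dedicated step only involve finitely many values of $h$. For instance, $f\bar{b} = g^{-1}h^{-1}gh\bar{b}$ is determined once $h$ is defined on $\bar{b}$ and $h^{-1}$ is defined on $gh\bar{b}$. At stage $k$, let $D$ be the finite set of elements of $M$ that $h_k$ and $h_k^{-1}$ (together with $g^{\pm1}$) already mention. Using Lemma \ref{l: realisation disjoint} with $V$ a suitable finite set containing $D$, $A_k$, $gD$, $g^{-1}D$, we choose the following.
\begin{enumerate}[label=(\arabic*)]
\item First, use (LEx) to choose $\bar{b} \models p_k$ with $\bar{b} \ind_{A_k} D'$, where $D'$ is the current finite configuration. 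Since $M$ is strong, $\bar{b}$ is disjoint from $D'$.
\item Next, extend $h_k$ to $\bar{b}$ by choosing the image $\bar{b}_1 = h\bar{b}$ independent from everything over $h_k(A_k\cap\dom)$ and disjoint from $V$. Choose $\bar{b}_1$ so that $\bar{b}_1 \cap g\bar{b}_1 = \varnothing$; this is possible by the hypothesis on $g$ and Lemma \ref{l: realisation disjoint}.
\item Finally, define $h^{-1}$ on $g\bar{b}_1$ by choosing a fresh tuple $\bar{e}$ with $\bar{e} \models \tp(g\bar{b}_1/\ldots)$ pulled back appropriately. By (REx), $\bar{e}$ is chosen so that $\bar{b} \ind_{A_k} g^{-1}\bar{e}$.
\end{enumerate}
The third choice is possible because $g\bar{b}_1$ is new relative to the image of $h_k$. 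The assignment on it is therefore unconstrained except by the type over the finitely many elements already in the image, and we realise that type independently. The $\bar{c}$ part for $f^{-1} = h^{-1}g^{-1}hg$ is symmetric, using (LEx) in place of (REx).

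The main obstacle is that independence is required over $A_k$, whereas the natural choices give independence only over larger sets. These larger sets come from $A_k$ together with the relevant $h$-preimages and images, and from pieces of the current finite configuration. This is exactly where freeness of $\ind$ is used. Suppose we arrange $\bar{b} \ind_{A'} f\bar{b}$ for some $A' \supseteq A_k$ with $(\bar{b}\, f\bar{b}) \cap A' \sub A_k$. By Definition \ref{d:free SWIR} we can then shrink the base down to $A_k$. Disjointness from Lemma \ref{l: realisation disjoint} guarantees that $\bar{b}$ and $f\bar{b}$ avoid the extra part of the base.

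To produce independence over $A'$, we combine (Inv) along $h$ and $g$ with (Tr) and (Mon) applied to the chosen independences. Concretely, $g^{-1}\bar{e}$ is chosen independent over a base $A'$ containing $A_k$ and $\bar{b}$, with $\bar{b}$ free from the rest. Transitivity then gives $\bar{b} \ind_{A'} g^{-1}\bar{e}$, and freeness reduces this to $\bar{b} \ind_{A_k} f\bar{b}$. We must also check that the partial maps remain partial isomorphisms at each step. This follows from (Ex) and (Sta), together with the fact that the new tuples realise the correct types over the existing finite configuration.
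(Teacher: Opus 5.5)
Your overall architecture matches the paper's: a back-and-forth for $h$, reduction to exterior types via Fact~\ref{f: move ext types}, disjointness from Lemma~\ref{l: realisation disjoint}, freeness to shrink the base, and (Inv) to transport. However, the central step as you describe it would fail.

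When you choose $\bar e = h^{-1}(g\bar b_1)$, the back-and-forth prescribes its type over the current domain $U\bar b$, where $U$ is the domain before $\bar b$ is added. Translated by $g^{-1}$, what is prescribed is the type of $f\bar b = g^{-1}\bar e$ over $g^{-1}(U\bar b)$. So any application of (REx) compatible with the back-and-forth must use a base containing $g^{-1}(U\bar b)$, which contains $g^{-1}\bar b$, and must have $\bar b$ \emph{outside} that base. Your concrete recipe instead takes a base $A' \supseteq A_k \cup \bar b$. Then $\bar b \ind_{A'} g^{-1}\bar e$ holds trivially by base triviality, and freeness cannot shrink it to $A_k$, since $(\bar b\, g^{-1}\bar e) \cap A' \supseteq \bar b \not\subseteq A_k$. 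Routing through (Tr) does not help either. It would need something like $\bar b \ind_{A_k} g^{-1}(U\bar b)$, an independence between $\bar b$ and $g^{-1}\bar b$ that nothing in your construction provides. Your step (1), $\bar b \ind_{A_k} D'$, does not give it, because $g^{-1}\bar b \not\subseteq D'$.

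What is missing are the conditions that make a single freeness step legitimate; no transitivity is needed.
\begin{enumerate}
\item Before treating $p_k$, extend $h$ to $h'$ so that $gA_k \sub U = \dom(h')$.
\item Choose $\bar b \models p_k$ itself by Lemma~\ref{l: realisation disjoint}, with $\bar b \cap g\bar b = \emp$ and $\bar b \cap (U \cup g^{-1}U) = \emp$. The hypothesis on $g$ is needed for $\bar b$, not only for $h\bar b$.
\item Choose $\bar c = h\bar b$ with $\bar c \cap g\bar c = \emp$ and $\bar c \cap g^{-1}h'(U) = \emp$.
\item Use (REx) over $U\bar b$ to pick $\bar d = h^{-1}(g\bar c)$ with $g\bar b \ind_{U\bar b} \bar d$.
\end{enumerate}
Since $g\bar b$ and $\bar d$ are disjoint from $U\bar b \supseteq gA_k$, freeness gives $g\bar b \ind_{gA_k} \bar d$. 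Then (Inv) with $g^{-1}$ gives $\bar b \ind_{A_k} f\bar b$.

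Separately, you have the L-condition backwards. That $f^{-1}$ moves $p_k$ almost L-maximally means $f^{-1}\bar c \ind_{A_k} \bar c$, not $\bar c \ind_{A_k} f^{-1}\bar c$, and since $\ind$ is not symmetric this matters. The paper gets the correct condition from the same $\bar b$. By (Inv) it is equivalent to $\bar b \ind_{fA_k} f\bar b$. So one also arranges that $(h'^{-1}gh')(A_k)$ is defined (it then lies in $U$). One then applies freeness a second time to $g\bar b \ind_{U\bar b}\bar d$, shrinking to the base $(h^{-1}gh)(A_k) = g(fA_k)$. A separate step via (LEx), without the $g$-translation, as you suggest, can also work. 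But it only works for the correctly ordered condition, and only once $A_k$ lies in the domain of $h$ and the analogous disjointness is arranged.
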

\begin{proof}
    Let $v_0, \cdots$ be an enumeration of the elements of $M$, and let $p_0, \cdots$ be an enumeration of the exterior realisable types over finitely generated substructures of $M$, where each $p_i$ has finitely many free variables and has parameter set $A_i \fg M$. We will define an increasing chain $h_0 \sub \cdots$ of partial isomorphisms of $M$ such that, for $i < \omega$, we have $v_i \in \dom(h_i) \cap \im(h_i)$ and there is a realisation $\bar{b}$ of $p_i$ with $\bar{b} \ind_{A_i} [g, h_i](\bar{b})$ and $[g, h_i]^{-1}(\bar{b}) \ind_{A_i} \bar{b}$. Taking $h = \bigcup_{i < \omega} h_i$, by Fact \ref{f: move ext types} we will have that $h$ is as required. 

    Suppose that $h_0, \cdots, h_{i-1}$ are already given satisfying the above conditions. Extend $h_{i-1}$ using the ultrahomogeneity of $M$ to a partial isomorphism $h'$ with $v_i \in \dom(h') \cap \im(h')$, $gA_i \sub \dom(h')$ and $A_i \sub \dom(h'^{-1} \circ g \circ h')$. Let $U = \dom(h')$.

    By Lemma \ref{l: realisation disjoint}, there is a realisation $\bar{b}$ of $p_i$ with $\bar{b} \cap g\bar{b} = \emp$ and $\bar{b} \cap (U \cup g^{-1}U) = \varnothing$, and there is a realisation $\bar{c}$ of $h' \cdot \tp(\bar{b} / U)$ with $\bar{c} \cap g\bar{c} = \emp$ and $\bar{c} \cap g^{-1}(h'(U)) = \varnothing$. Let $h''$ be the extension of $h'$ by the map $\bar{b} \mapsto \bar{c}$ (extending so that $\dom(h'') \fg M$). By (Ex) there is $\bar{d} \sub M$ with $\tp(\bar{d} / U\bar{b}) = h''^{-1} \cdot \tp(g(\bar{c}) / h''(U)\bar{c})$ and $g\bar{b} \ind_{U\bar{b}} \bar{d}$. We let $h_i$ be the extension of $h''$ by $\bar{d} \mapsto g\bar{c}$ (again extending so that $\dom(h_i) \fg M$).

    We have $(g(\bar{b})\bar{d}) \cap (U\bar{b}) = \varnothing$. So by freeness we have $g\bar{b} \ind_{gA_i} \bar{d}$ and $g\bar{b} \ind_{(h_i^{-1}gh_i)(A_i)} \bar{d}$. By (Inv) we have $\bar{b} \ind_{A_i} g^{-1}(\bar{d})$ and $\bar{b} \ind_{[g, h_i](A_i)} g^{-1}(\bar{d})$, and as $g^{-1}(\bar{d}) = [g, h_i](\bar{b})$, we are done (where for the second independence we use (Inv) once more).
\end{proof}

Combining the above lemma with Fact \ref{f:Li main thm} we immediately have:

\begin{prop} \label{p:free SWIR simple}
    Let $M$ be a strong Fraïssé structure with free SWIR $\ind$. Suppose that:
    \begin{enumerate}
        \item[$(\ast)$] for any $g \in \Aut(M) \setminus \{\id\}$ and any exterior $1$-type $p$, some realisation of $p$ is not fixed by $g$.
    \end{enumerate}
    Then $\Aut(M)$ is simple.
\end{prop}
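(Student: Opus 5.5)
We prove Proposition \ref{p:free SWIR simple} directly from Lemma \ref{l: free SWIR commutator move maximally} and Fact \ref{f:Li main thm}(i).

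Let $K \nrm \Aut(M)$ with $K \neq 1$, and fix $g \in K \setminus \{\id\}$. We must show $K = \Aut(M)$.

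By $(\ast)$, $g$ satisfies the hypothesis of Lemma \ref{l: free SWIR commutator move maximally}: every exterior $1$-type has a realisation not fixed by $g$. The lemma therefore gives $h \in \Aut(M)$ such that $f := [g,h]$ moves almost R-maximally and $f^{-1}$ moves almost L-maximally.

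Next we check that $f \in K$. With either convention for the commutator, $[g,h]$ is a product of $g^{\pm1}$ and a conjugate of $g^{\mp1}$. For example, $[g,h] = g^{-1}h^{-1}gh = g^{-1}\cdot (h^{-1}gh)$. Since $K$ is normal and $g \in K$, it follows that $f \in K$.

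The pair $f, f^{-1}$ satisfies condition (i) of Fact \ref{f:Li main thm}. That fact then says every element of $\Aut(M)$ is a product of at most $16$ conjugates of $f$ and $f^{-1}$. All of these conjugates lie in $K$ by normality, so $\Aut(M) \sub K$.

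This covers every non-trivial normal subgroup. We still need $\Aut(M)$ itself to be non-trivial, so that "simple" is not vacuous. If $\Aut(M) = 1$, the statement is trivial under the convention in use, or excluded since $M$ is infinite and ultrahomogeneous. Hence $\Aut(M)$ is simple.

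The argument has no real obstacle; the substantive work was done in Lemma \ref{l: free SWIR commutator move maximally}. Two points need care. First, we need the commutator convention matching the lemma, so that $[g,h] \in K$. Second, we need to apply Fact \ref{f:Li main thm} with a genuine SWIR, not a local one, which is exactly what the hypothesis provides.
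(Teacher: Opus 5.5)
Your proof is correct and is exactly the paper's argument: the paper obtains the proposition immediately by applying Lemma \ref{l: free SWIR commutator move maximally} to a non-trivial $g \in K$, and then applying Fact \ref{f:Li main thm}(i) to $[g,h] \in K$. One small quibble concerns your aside on the degenerate case: an infinite ultrahomogeneous structure need not have a non-trivial automorphism group (for example, $\N$ with the constant $0$ and the successor function equals the substructure generated by $\varnothing$, and is a rigid strong Fra\"{i}ss\'{e} structure with the always-true relation as a free SWIR), although in a relational language strong amalgamation does force $\Aut(M) \neq 1$, and the paper does not address this edge case either.
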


\begin{rem}
    In \cite{MT11}, Theorem 3.4(a) states that the following structures have simple automorphism groups: transitive free amalgamation structures which are not an indiscernible set, the random tournament and the generic $n$-anticlique-free oriented graph. A proof is given for transitive non-trivial free amalgamation structures, and then the authors sketch how to tweak the proof for the other two structures (in fact, the independence relation for the random tournament already appears in \cite{MT11}, without the general axiomatisation of SWIRs). Proposition \ref{p:free SWIR simple} enables a (very slightly) more streamlined presentation of this result: it suffices to show condition $(\ast)$ for each of these structures. For transitive non-trivial free amalgamation structures, condition $(\ast)$ is given by \cite[Corollary 2.10]{MT11}. For the other two structures, see Lemma \ref{l: move realisation of type} (the proof of this lemma requires a minor adaptation for the generic oriented graph omitting $n$-anticliques: rather than taking $\neg R(u_m, gv)$ and $\neg R(b, gu)$, we take $R(gv, u_m)$ and $R(gu, b)$).
\end{rem}

\subsection{The pointwise-stabiliser lemma}

\begin{notn} \label{n:distinct elements stabs}
    Let $M$ be a relational structure and let $n \geq 1$. We write $(M)^n$ for the set of $n$-tuples in $M^n$ which consist of distinct elements. We write $[M]^n$ for the set of subsets of $M$ of size $n$.

    Let $G \curvearrowright X$ be a group action, and let $A \sub X$. We write $G_{(A)}$ for the pointwise-stabiliser of $A$: that is, $G_{(A)} = \{g \in G \mid g \cdot a = a \text{ for all } a \in A\}$.
\end{notn}

\begin{defn}
    Let $M$ be a relational structure, and let $n \geq 1$. We say that $M$ is \emph{$n$-transitive} if the action of $\Aut(M)$ on the domain of $M$ is $n$-transitive: that is, for all $\bar{a}, \bar{b} \in (M)^n$ there is $g \in \Aut(M)$ with $g(\bar{a})=\bar{b}$. (Note that this implies that all elements of $(M)^n$ have the same type over $\varnothing$.)
\end{defn}

The below Lemma \ref{l: simple G_A simple G} shows that, under certain conditions, Steps 2 and 3 of the SWIR expansion method are automatic (see Subsection \ref{ss: intro SWIR exps}). Note that in the below Lemma \ref{l: simple G_A simple G} we only consider relational structures.

\begin{lem} \label{l: simple G_A simple G}
    Let $M$ be an $n$-transitive strong relational \Fr structure, and let $G = \Aut(M)$. Suppose that the following hold:
    \begin{enumerate}[label=(\roman*)]
        \item \label{abc tps} for all $\bar{a},\bar{b} \in (M)^n$ with $\bar{a} \cap \bar{b} = \varnothing$, there is $\bar{c} \in (M)^n$ with $\tp(\bar{a}, \bar{c}) = \tp(\bar{b}, \bar{c})$;
        \item \label{simple stabs} for all $A \sub M$ with $|A| = n$, the pointwise-stabiliser $G_{(A)}$ is simple.
    \end{enumerate}
    Then $G$ is simple.
\end{lem}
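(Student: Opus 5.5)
Let $1 \neq K \nrm G$; we show $K = G$ by first proving $K \cap G_{(A)} \neq 1$ for some $A \in [M]^n$, and then using simplicity of stabilisers together with condition \ref{abc tps} to generate all of $G$.

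\textbf{Step 1: $K$ meets a stabiliser non-trivially.} Take $g \in K \setminus \{\id\}$. First we find $\bar{a} \in (M)^n$ with $\bar{a} \cap g\bar{a} = \varnothing$. If $g$ has infinite support, we can choose the points of $\bar{a}$ one at a time, each moved by $g$ and avoiding the finitely many points already chosen and their images and preimages. If $g$ has finite support, we use strong amalgamation and ultrahomogeneity. Since $M$ is a strong relational \Fr structure and $g \neq \id$, there is $x$ with $gx \neq x$. The qf-type of $x$ over the finite set $\supp(g)\setminus\{x\}$ has infinitely many realisations by strong amalgamation. A realisation $y$ outside $\supp(g)$ is fixed by $g$, so the map $x \mapsto y$ fixing the rest of the support, composed appropriately with $g$, gives a contradiction with $g$ being an automorphism. (Alternatively, conjugate $g$ to get something in $K$ with larger support, and replace $g$ by a commutator.)

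Once $\bar{a} \cap g\bar{a} = \varnothing$, apply \ref{abc tps} to $\bar{a}$ and $\bar{b} = g\bar{a}$. This gives $\bar{c}$ with $\tp(\bar{a},\bar{c}) = \tp(g\bar{a},\bar{c})$. By ultrahomogeneity there is $h \in G_{(\bar{c})}$ with $h\bar{a} = g\bar{a}$. Now consider $k = [g^{-1}, h] = g h^{-1} g^{-1} h$. This lies in $K$ by normality, though it need not fix anything.

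Better: set $f = h^{-1}g$. Then $f\bar{a} = \bar{a}$, but $f \notin K$ in general. Instead use the commutator $[g, u]$ for $u \in G_{(\bar{c})}$. Since $G_{(\bar{c})}$ is simple and non-abelian (it is an infinite simple group, being a pointwise stabiliser in a non-trivial structure), it suffices to find $u \in G_{(\bar{c})}$ such that $[g,u] \in K \cap G_{(D)}$ is non-trivial for some $D \in [M]^n$.

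The most direct route: pick $\bar{a}$ with $\bar{a}, g\bar{a}, g^{-1}\bar{a}$ pairwise disjoint. Let $\bar{c}$ be as in \ref{abc tps}, with $u \in G_{(\bar{c})}$ mapping $\bar{a}$ to $g^{-1}\bar{a}$ (apply \ref{abc tps} to $\bar{a}$ and $g^{-1}\bar{a}$). Then $g u$ fixes $\bar{a}$ pointwise, so $g u g^{-1} u^{-1}$ fixes $g\bar{a}$ only under further conditions. This is the main obstacle: arranging a non-trivial element of $K$ with $n$ fixed points. I would try $k = g^{-1} \cdot u g u^{-1} \in K$, choosing $u$ so that $u$ maps $g\bar{a}$ to $\bar{a}'$, where $u g u^{-1}$ agrees with $g$ on some $n$-set; non-triviality would be arranged via a further free choice of $\bar{c}$ using (i).

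\textbf{Step 2: from $K \cap G_{(A)} \neq 1$ to $K = G$.} Since $K \cap G_{(A)} \nrm G_{(A)}$ and $G_{(A)}$ is simple, we get $G_{(A)} \sub K$. By $n$-transitivity and normality, $G_{(B)} \sub K$ for every $B \in [M]^n$.

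Now let $g \in G$ be arbitrary, and pick $\bar{a}$ with $\bar{a} \cap g\bar{a} = \varnothing$ (as in Step 1, using a translate if $g$ is trivial). Apply \ref{abc tps} to get $\bar{c}$ and $h \in G_{(\bar{c})} \sub K$ with $h\bar{a} = g\bar{a}$. Then $h^{-1}g \in G_{(\bar{a})} \sub K$, so $g = h\,(h^{-1}g) \in K$. Hence $K = G$.

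The main difficulty is Step 1, namely producing a non-identity element of $K$ fixing $n$ points.
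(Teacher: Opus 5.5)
Your Step 2 is essentially the second half of the paper's argument. But Step 1, which you yourself flag as the main obstacle, is never completed, and it is the core of the lemma. None of the commutators you try is shown to be non-trivial and to fix $n$ points. For instance, with $u\bar a = g^{-1}\bar a$ you get $gu \in G_{(\bar a)}$, but $gu \notin K$ in general, and $g u g^{-1} u^{-1}$ has no evident fixed points.

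The missing idea is to commute with an element that fixes both a set \emph{and its image}. Condition (i) and disjoint tuples play no role here. Take $f \in K \setminus \{\id\}$ and $v$ with $fv \neq v$. Choose $C \in [M]^n$ avoiding $v, fv, f^{-1}v$, so that $\{v, fv\} \cap (C \cup fC) = \varnothing$. Since $M$ is relational with strong amalgamation, the type of $fv$ over the finite set $C \cup fC \cup \{v\}$ (which does not contain $fv$) has infinitely many realisations. By ultrahomogeneity there is therefore $g \in G$ fixing $C \cup fC \cup \{v\}$ pointwise with $g(fv) \neq fv$. Then $g^{-1}f^{-1}gf \in K$ by normality. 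It fixes each $c \in C$, because $g$ fixes both $fc$ and $c$. It moves $v$, because $f^{-1}gfv \neq v$ and $g$ fixes $v$. So $K \cap G_{(C)} \neq 1$, and from there your Step 2 (simplicity plus $n$-transitivity) takes over.

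Your handling of finite support is also wrong, and Step 2 depends on it. You argue that a non-trivial finitely supported automorphism contradicts strong amalgamation, but that argument never uses (ii), and without (ii) it is false. In the countable pure set, which is a strong relational Fra\"{i}ss\'{e} structure that is $n$-transitive and satisfies (i), transpositions are automorphisms. So in Step 2 you cannot just pick $\bar a$ with $\bar a \cap g\bar a = \varnothing$ for an arbitrary $g$.

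The fix, as in the paper, is to split into cases:
\begin{itemize}
\item If $g$ fixes at least $n$ points, then $g \in G_{(A)} \sub K$ for some $A \in [M]^n$ directly.
\item Otherwise $g$ fixes at most $n-1$ points. Then $g$ has infinite support, your greedy choice of $\bar a$ works, and the rest of your Step 2 goes through unchanged.
\end{itemize}
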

\begin{proof}
    Let $K \nrm G$, $K \neq 1$. Let $f \in K \setminus \{\id\}$, and let $v \in M$ with $v \neq f(v)$. Take $C \in [M]^n$ such that $\{v, f(v)\} \cap (C \cup f(C)) = \varnothing$. As $M$ has strong amalgamation and is relational, there is $g \in G$ fixing $C \cup f(C) \cup \{v\}$ pointwise with $g(f(v)) \neq f(v)$. Then $[g, f](v) \neq v$, and as $[g, f] \in K \cap G_{(C)}$ we have $K \cap G_{(C)} \neq 1$. As $M$ is $n$-transitive, for each $A \in [M]^n$ we have $h \in G$ with $h(A) = C$, so $[g, f]^h \in K \cap G_{(A)}$. Thus for all $A \in [M]^n$ we have $K \cap G_{(A)} \neq 1$, and as $G_{(A)}$ is simple we thus have $G_{(A)} \sub K$.

    Now let $g \in G$. If $g$ fixes $\geq n$ elements of $M$ then by the above we have $g \in K$. Otherwise $g$ fixes $\leq n - 1$ elements of $M$, and so has infinite support. So there exists $\bar{a} \in (M)^n$ with $\bar{a} \cap g\bar{a} = \varnothing$. By assumption, there is $\bar{c} \in (M)^n$ with $\tp(\bar{a}, \bar{c}) = \tp(g(\bar{a}), \bar{c})$. Take $f \in G$ with $f(\bar{a}, \bar{c}) = (g(\bar{a}), \bar{c})$. Then as $f(\bar{c}) = \bar{c}$ we have $f \in K$, and as $g^{-1}f\bar{a} = \bar{a}$ we have $g^{-1}f \in K$, so $g \in K$. Thus $K = G$, and hence $G$ is simple. 
\end{proof}

\section{The generic \texorpdfstring{$n$}{n}-hypertournament} \label{s: Tn}

Recall the definition of the generic $n$-hypertournament $\Tn$ from Definition \ref{d:Tn}. We first show that the techniques of Li, Macpherson, Tent and Ziegler cannot be applied to $\Tn$ directly.

\begin{lem} \label{l: Aut Tn no dense conj}
    For $n \geq 3$, the group $\Aut(\Tn)$ does not have a dense conjugacy class.
\end{lem}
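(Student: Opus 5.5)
By Fact \ref{f:KR dense ccl}, it suffices to show that $\Age(\Tn)_{\textnormal{p}}$ fails the joint embedding property. So the plan is to exhibit two pairs $(A, f), (B, f')$, where $A, B$ are finite $n$-hypertournaments and $f, f'$ are partial automorphisms, which cannot be jointly embedded. The natural source of an obstruction is parity: a partial automorphism that is a permutation of a set of $n$ points must induce an even permutation of those $n$ points (the automorphism group of each $n$-element substructure is $\Alt_n$). We will combine two partial automorphisms so that any common extension induces an odd permutation on some $n$-set.

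Concretely, take $(A, f)$ with $A$ an $n$-hypertournament on points $a_1, \dots, a_{n-1}, x$ and $f$ a partial automorphism fixing $a_1, \dots, a_{n-2}$ and swapping $a_{n-1}$ with $x$. This is legitimate: we only need $\dom(f) = \{a_1, \dots, a_{n-2}, a_{n-1}\}$ and $f(a_{n-1}) = x$. Any $(n-1)$-set has no $n$-ary relations, so this map is automatically a partial isomorphism. Take $(B, f')$ to be the identity on a single point, or more usefully an $n$-element substructure $B$ together with a transposition-like partial map of $n-1$ points.

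The key observation I plan to use is the following. In a joint embedding, let $h$ be a partial automorphism of some $C \in \mc{T}_n$ extending $f$, with images $\bar{a}, x$ placed in $C$. Choose a further point $y$ in $C$, and consider whether $h$ can be arranged to send $\{a_1, \dots, a_{n-2}, a_{n-1}, y\}$ to $\{a_1, \dots, a_{n-2}, x, y'\}$. This line alone gives no contradiction, so a pure parity trick on $(n-1)$-sets will not suffice; we need the second pair to force an interaction. I would take $(B, f')$ with $B = \{b_1, \dots, b_n\}$ an $n$-hypertournament (a single edge orbit) and $f'$ the partial map $b_i \mapsto b_i$ for $i \le n-2$ and $b_{n-1} \mapsto b_n$. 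Then $f'$ is a partial isomorphism by the same $(n-1)$-set reasoning.

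Any joint embedding $C$ with partial automorphism $k \supseteq f \cup f'$ is the step I expect to be the main obstacle. The goal is to arrange via JEP that, in the combined structure, $k$ restricted to an $n$-set $\{c_1,\dots,c_{n-2}, u, v\}$ acts as a transposition $u \leftrightarrow v$ fixing the $c_i$. That is an odd permutation of an $n$-set mapped onto itself, contradicting $\Alt_n$ being the automorphism group (for $n \ge 3$; $n=2$ fails since $\Alt_2$ is trivial and transpositions of a tournament edge are not automorphisms either, so the argument must genuinely use $n \ge 3$, presumably through the fixed points $c_i$). Since JEP does not identify points across $A$ and $B$, the forcing must come from within a single pair. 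So I will instead choose a single pair $(A,f)$ with $f$ defined on an $(n-1)$-set and designed so that, together with $(A', f')$ where $f'$ fixes an $(n-1)$-set pointwise and maps a point to another, any common structure makes the joint relation inconsistent. I will look for the precise configuration by considering $3$-hypertournaments first, checking orientations of triples $(a, u, v)$ versus $(a, v, u)$ to pin down the pair of incompatible partial automorphisms, and then generalise by adding $n-3$ common fixed points.
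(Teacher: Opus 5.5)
Your overall strategy (Fact \ref{f:KR dense ccl} plus a parity obstruction: a partial automorphism cannot act on an $n$-set as an odd permutation) is the right one. The configuration you name as the goal, $f_C$ acting on $\{c_1, \dots, c_{n-2}, u, v\}$ as the transposition $u \leftrightarrow v$, is exactly the contradiction the paper uses. But the proposal never produces a pair in $\Age(\Tn)_{\textnormal{p}}$ that witnesses failure of joint embedding, and the candidates you do write down cannot work.

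Your $f$ and $f'$ each fix $n-2$ points and send one further point to a point outside the domain. Take $f_C = f \cup f'$ on the disjoint union $A \sqcup B$. Every $n$-subset $S$ of $\dom(f_C)$ is then mixed, and either $f_C|_S = \id$ or $f_C(S) \not\subseteq \dom(f_C)$. So the constraints $R(\bar{s}) \Leftrightarrow R(f_C \bar{s})$ never chain back. You can orient each moved mixed $n$-set arbitrarily and push the orientation forward, which yields a joint embedding. More generally, an obstruction needs an $n$-set mapped onto itself (by an odd permutation), and maps that only push points to new points never create one.

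You also discarded the right idea with the remark that, since JEP does not identify points of $A$ and $B$, the forcing must come from within a single pair. In fact no obstruction can come from a single pair, because each $(A, f) \in \Age(\Tn)_{\textnormal{p}}$ is already consistent. The contradiction must come precisely from combining points of both images into one $n$-set on which the common $f_C$ acts.

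The missing step is to apply your own observation (sets of fewer than $n$ points carry no relations) to a set that is \emph{invariant}. Take $A$ with two points and $f_A$ the swap; this is an automorphism of $A$ exactly because $n \geq 3$, which is where that hypothesis enters, not through the fixed points. Take $B$ with $n-2$ points and $f_B = \id_B$. In any $(C, f_C)$ into which both embed, the images are disjoint, since a common point would be both fixed and moved by $f_C$. Their union is therefore an $n$-set on which $f_C$ acts as a transposition. Since each $n$-element substructure has automorphism group $\Alt_n$, an odd permutation of an $n$-set interchanges $R$ and its complement, a contradiction. This is the paper's proof.
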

\begin{proof}
    We use Fact \ref{f:KR dense ccl}. Let $A \in \Age(\Tn)$ consist of $2$ points, and let $f_A \in \Aut(A)$ be the automorphism swapping these points. Let $B \in \Age(\Tn)$ consist of $n - 2$ points, and let $f_B = \id_B$. Then there is no $(C, f_C) \in \Age(\Tn)_{\textnormal{p}}$ that $(A, f_A)$ and $(B, f_B)$ jointly embed into.
\end{proof}

\begin{lem} \label{l: Tn no SWIR}
    For $n \geq 3$, the generic $n$-hypertournament $\Tn$ does not have a SWIR.
\end{lem}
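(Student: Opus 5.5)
The quickest route is by contradiction via Lemma \ref{l: SWIR implies dense ccl}. Suppose $\Tn$ has a SWIR $\ind$. Then Lemma \ref{l: SWIR implies dense ccl} says $\Aut(\Tn)$ has a dense conjugacy class. This contradicts Lemma \ref{l: Aut Tn no dense conj}, which holds for $n \geq 3$. Note that a SWIR in the sense of Definition \ref{d:SWIR} is defined over all finitely generated substructures, including the empty one. This is exactly what the proof of Lemma \ref{l: SWIR implies dense ccl} uses, since it takes $A \ind B'$ over $\varnothing$. So the argument is really just the composition of those two earlier results.

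Because the statement is then almost immediate, I would also check that the cited lemma is sound in this case, by giving a direct argument that mirrors its proof. Take a $2$-element $A = \{a_0, a_1\}$ and an $(n-2)$-element $B$, as in the proof of Lemma \ref{l: Aut Tn no dense conj}. By (Ex) over the empty base, choose $B' \equiv B$ with $A \ind B'$. By (Mon) we also have $\{a_1, a_0\} \ind B'$, where the tuple is enumerated in reverse. Since $\tp(a_0, a_1) = \tp(a_1, a_0)$ (any two $2$-sets have the same type, as there are no relations of arity $2$ when $n \geq 3$), (LSta) yields an automorphism fixing $B'$ pointwise and swapping $a_0, a_1$.

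Such an automorphism is impossible. The $n$-set $A \cup B'$ carries an edge, so its induced automorphism group is $\Alt_n$, and this automorphism would induce a transposition on it. That contradiction finishes the direct argument.

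The only step needing care is the type bookkeeping in (LSta): one must make sure it is applied with the same $B'$ on the right and with the two enumerations of $A$ having equal type over $\varnothing$. I expect no real obstacle.
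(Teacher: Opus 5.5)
Your proposal is correct and follows the paper's proof. The paper likewise deduces the lemma from Lemma~\ref{l: SWIR implies dense ccl} and Lemma~\ref{l: Aut Tn no dense conj}, and your direct check is the paper's ``alternatively'' argument specialised to $f_B = \id_B$, where (LSta) alone suffices; the appeal to (Mon) is unnecessary, since $\ind$ relates substructures rather than enumerations. You did not justify $A \cap B' = \varnothing$, which is needed for $A \cup B'$ to have $n$ elements. It follows from Lemma~\ref{l: SWIR basic props} via strong amalgamation, or simply because a point of $A \cap B'$ would have to be both fixed and moved.
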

\begin{proof}
    This follows from Lemma \ref{l: Aut Tn no dense conj} and Lemma \ref{l: SWIR implies dense ccl}. Alternatively, we may show this directly. Suppose for a contradiction that $\Tn$ has a SWIR $\ind$. Take $(A, f_A), (B, f_B)$ from the proof of Lemma \ref{l: Aut Tn no dense conj}. By (Ex) we may assume $A, B \sub \Tn$ and $A \ind B$. But, as in the proof of Lemma \ref{l: SWIR implies dense ccl}, any pair of automorphisms of $A$ and $B$ must extend to an automorphism of $AB$, by (Inv), (Sta) and the ultrahomogeneity of $M$ -- contradiction.
\end{proof}

We now find an expansion of $\Tn$ with a free SWIR.

\begin{defn}
    Let $I$ be a finite set, and for each $i \in I$ let $n_i \geq 2$. Let $\vv{n}_I = (n_i)_{i \in I}$. Let $\mc{L}_I = \{R_i \mid i \in I\}$ be a relational language where each $R_i$, $i \in I$, has arity $n_i$. An \emph{$\vv{n}_I$-hypertournament} $T = (A, (R_i^T)^{}_{i \in I})$ is an $\mc{L}_I$-structure (writing $A$ for the domain and $(R_i^T)^{}_{i \in I}$ for the relations) such that, for each $i \in I$, the reduct $(A, R_i^T)$ is an $n_i$-hypertournament. We will not notationally distinguish the structure and its domain, and just write $T$ for both. Let $\mc{T}_{\vv{n}_I}$ denote the class of finite $\vv{n}_I$-hypertournaments; it is straightforward to see that $\mc{T}_{\vv{n}_I}$ has strong amalgamation. We call the \Fr limit of $\mc{T}_{\vv{n}_I}$ the \emph{generic $\vv{n}_I$-hypertournament}, denoted $\T_{\vv{n}_I}$.
\end{defn}

In the below definition, recall Notation \ref{n:distinct elements stabs}.

\begin{defn} \label{d:sign of ht edge}
    Let $T$ be an $\vv{n}_I$-hypertournament. Let $i \in I$. We define the \emph{sign map} $\rho^T_i : (T)^{n_i} \to \{+1, -1\}$ as follows:
    \[
        \rho^T_i(\bar{v}) =
        \begin{cases}
            +1 & \bar{v} \in R_i^T\\
            -1 & \bar{v} \notin R_i^T.
        \end{cases}
    \]
    When $T$ is clear from context we just write $\rho_i(\bar{v})$.

    It is immediate that, given two $\vv{n}_I$-hypertournaments $T, T'$ on the same domain $A$ with $\rho^T_i(\bar{v}) = \rho^{T'}_i(\bar{v})$ for all $i \in I$ and $\bar{v} \in (A)^{n_i}$, we have $T = T'$.
\end{defn}

\begin{notn}
    We write $\Sym_n$ and $\Alt_n$ for the symmetric group and the alternating group on $\{0, \cdots, n-1\}$, and for $\sigma \in \Sym_n$ we write $\eps(\sigma)$ for the sign of $\sigma$ in the usual permutation group sense: that is, $+1$ if $\sigma$ is an even permutation and $-1$ if $\sigma$ is odd.
\end{notn}

The following Lemma \ref{l: def ht from fn} is immediate.

\begin{lem} \label{l: def ht from fn}
    Let $A$ be a set, and let $n \geq 2$. Let $f : (A)^n \to \{+1, -1\}$ be a function such that, for all $\sigma \in \Sym_n$ and $(v_0, \cdots, v_{n-1}) \in (A)^n$, we have $f(v_{\sigma(0)}, \cdots, v_{\sigma(n-1)}) = \eps(\sigma) f(v_0, \cdots, v_{n-1})$. Then $(A, \{\bar{v} \in (A)^n \mid f(\bar{v}) = 1\})$ is an $n$-hypertournament (with sign map $f$).
\end{lem}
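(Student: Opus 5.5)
The plan is to verify directly that the relation $R := \{\bar{v} \in (A)^n \mid f(\bar{v}) = 1\}$ makes $(A, R)$ an $n$-hypertournament. That is, on every $n$-element subset $X \subseteq A$, the automorphism group of the induced structure is exactly $\Alt_n$ (acting on $X$ via a fixed enumeration). The sign-map claim then follows from the definition of $R$, since $\rho(\bar{v}) = +1$ iff $f(\bar{v}) = 1$, and $f$ takes only the values $\pm 1$.

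First fix an enumeration $\bar{v} = (v_0, \dots, v_{n-1})$ of $X$. Every tuple in $(X)^n$ has the form $\bar{v}_\tau := (v_{\tau(0)}, \dots, v_{\tau(n-1)})$ for a unique $\tau \in \Sym_n$. By hypothesis, $f(\bar{v}_\tau) = \eps(\tau) f(\bar{v})$. So $R \cap (X)^n$ is exactly one coset of $\Alt_n$ in $\Sym_n$, transported to tuples: it is $\{\bar{v}_\tau \mid \tau \in \Alt_n\}$ if $f(\bar{v}) = 1$, and $\{\bar{v}_\tau \mid \tau \notin \Alt_n\}$ otherwise. Since $\eps$ is surjective, each case gives exactly half of $(X)^n$.

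Next, a permutation $\sigma$ of $X$, viewed as an element of $\Sym_n$ via $v_i \mapsto v_{\sigma(i)}$, is an automorphism iff it preserves $R \cap (X)^n$. Applying $\sigma$ to $\bar{v}_\tau$ gives $\bar{v}_{\sigma\tau}$. Hence
\[
f(\sigma\bar{v}_\tau) = \eps(\sigma)\eps(\tau)f(\bar{v}) = \eps(\sigma) f(\bar{v}_\tau).
\]
So $\sigma$ preserves $R$ iff $\eps(\sigma) = 1$, meaning that even permutations preserve $R$. If $\sigma$ is odd, it maps every tuple of $R \cap (X)^n$ out of $R$, and $R \cap (X)^n$ is non-empty, so $\sigma$ is not an automorphism. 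Thus the automorphism group is exactly $\Alt_n$.

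There is no real obstacle here. The only point needing care is the bookkeeping of composition order in $\sigma\tau$ versus $\tau\sigma$, and this is harmless because $\eps$ is a homomorphism to an abelian group.
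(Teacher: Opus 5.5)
Your argument is correct, and it is exactly the routine check the paper has in mind: the paper gives no proof and simply calls the lemma immediate. Writing each tuple of $(X)^n$ as $\bar{v}_\tau$ and noting $\sigma\bar{v}_\tau = \bar{v}_{\sigma\tau}$ gives $f(\sigma\bar{w}) = \eps(\sigma)f(\bar{w})$, and using $R \cap (X)^n \neq \varnothing$ to rule out odd $\sigma$ is all that is needed. Since $R \subseteq (A)^n$, the induced relation on $X$ is indeed just $R \cap (X)^n$.
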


\begin{lem}\label{l: move realisation of type}
    Let $I$ be a finite set, and for each $i \in I$ let $n_i \geq 2$. Let $\vv{n}_I = (n_i)_{i \in I}$. Let $g \in \Aut(\T_{\vv{n}_I}) \setminus \{\id\}$. Let $p$ be an exterior $1$-type. Then some realisation of $p$ is not fixed by $g$.
\end{lem}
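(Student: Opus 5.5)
Since $g \neq \id$, fix a point $u$ with $gu \neq u$. Let $p = p(x/A)$ be an exterior $1$-type over a finite $A \fin \T_{\vv{n}_I}$. The strategy is to argue by contradiction: assume $g$ fixes every realisation of $p$, and use the genericity of $\T_{\vv{n}_I}$ to produce a realisation $b$ of $p$ whose relations with $u$ and the surrounding points are incompatible with $gb = b$.

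The key difficulty is that the relations $R_i$ have arity $n_i$. Comparing $b$ with $u$ alone is therefore not enough, and we must also supply $n_i - 2$ auxiliary points. The plan is as follows.

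\begin{enumerate}
\item Fix $i \in I$ (say the first index) and write $m = n_i$. The realisations of $p$ must be fixed by $g$, so we first find $m-2$ distinct realisations $u_1, \dots, u_{m-2}$ of $p$ which avoid $A \cup gA \cup \{u, gu\}$. Each $u_j$ is then fixed by $g$.

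To obtain them, note that $p$ is exterior and $\T_{\vv{n}_I}$ has strong amalgamation, so $p$ has infinitely many realisations. Indeed, we can amalgamate $A\cup\{x\}$ with itself over $A$ repeatedly and use the extension property to realise the result outside any given finite set.

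\item Let $B = A \cup gA \cup \{u, gu, u_1, \dots, u_{m-2}\}$. We now construct a one-point extension $B \cup \{b\}$ with the following properties:
\begin{itemize}
\item $\qftp(b/A) = p$;
\item $b \notin B$;
\item $R_i(u_1, \dots, u_{m-2}, u, b)$ holds;
\item $\neg R_i(u_1, \dots, u_{m-2}, gu, b)$ holds.
\end{itemize}

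This extension is consistent. The tuples $(\bar u, u, b)$ and $(\bar u, gu, b)$ are distinct as sets, since $u \neq gu$. Neither tuple lies inside $A \cup \{b\}$, because $u_1 \notin A$ (for $m=2$, use $u, gu$ themselves, noting that these lie in $A$ only if the freedom argument below still works). Every other relation involving $b$ can be chosen freely, subject only to the constraint that the orientation depends on the parity of the permutation (Lemma \ref{l: def ht from fn}). There is one subtle case: when $u$ or $gu$ lies in $A$ and $m=2$, the type $p$ may already decide one of these edges. To handle it, we choose $u$ to lie outside $A \cup g^{-1}A$. This is possible because $g$ has infinite support: it is a non-identity automorphism of a structure with strong amalgamation.

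\item By the extension property, $b$ can be realised in $\T_{\vv{n}_I}$. Then $b \models p$, so by assumption $gb = b$. Applying $g$ to the relation $R_i(\bar u, u, b)$ and using $g\bar u = \bar u$ and $gb = b$ gives $R_i(\bar u, gu, b)$. This contradicts the choice of $b$.
\end{enumerate}

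The main obstacle is the bookkeeping in steps 1 and 2. We need $g$ to have infinite support, so that $u$ can be chosen outside $A \cup g^{-1}A$. We also need the auxiliary points to be fixed realisations of $p$, which the contradiction hypothesis supplies. The infinite-support claim follows from strong amalgamation plus the standard argument: an automorphism fixing all but finitely many points, with $u$ moved, is impossible. To see this, let $F$ be the finite support. Realise a point $c$ outside $F$ whose relations with $u$ and $gu$ (together with other points outside $F$) differ in the way described above. Then $c$ is fixed by $g$ but has different relations to $u$ and $gu$, a contradiction. This argument is essentially the same construction as above with $p$ empty, so it can be carried out first.
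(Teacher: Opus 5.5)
Your argument is correct and rests on the same idea as the paper's proof: realise $p$ by a point $b$ whose $R_i$-relation to some configuration differs from its relation to the $g$-image of that configuration, which forces $gb \neq b$. The paper argues directly, using an $(n_j-1)$-tuple $\bar u$ with $\bar u$, $g\bar u$, $A$ pairwise disjoint, so it always needs infinite support. You instead argue by contradiction and pad the relation with $g$-fixed realisations of $p$, so only one moved point is required and infinite support is needed only when $n_i = 2$; in that case choose $u \notin A \cup g^{-1}A$ at the outset, before picking the $u_j$.
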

\begin{proof}
    Let $j \in I$. Let $v \in \T_{\vv{n}_I}$ with $gv \neq v$. As $\T_{\vv{n}_I}$ has strong amalgamation, there are infinitely many disjoint $(n_j - 1)$-tuples $\bar{u}_0, \cdots$ consisting of distinct elements of $\T_{\vv{n}_I}$ such that, for each $m < \omega$, we have $\T_{\vv{n}_I} \models R_j(\bar{u}_m, v) \wedge \neg R_j(\bar{u}_m, gv)$ and hence $g\bar{u}_m \neq \bar{u}_m$. So $g$ has infinite support.

    Let $A$ be the parameter set of $p$. As $g$ has infinite support, there exists an $(n_j - 1)$-tuple $\bar{u} \sub \T_{\vv{n}_I}$ with $\bar{u}, g\bar{u}, A$ pairwise disjoint. Taking $b \in \T_{\vv{n}_I}$ with $b \models p$ and $\T_{\vv{n}_I} \models R_j(b, \bar{u}) \wedge \neg R_j(b, g\bar{u})$, we have $gb \neq b$ as required.
\end{proof}

\begin{prop}\label{p:SWIR of n2tournament}
    Let $I$ be a finite set, and for each $i \in I$ let $n_i \geq 2$. Let $\vv{n}_I = (n_i)_{i \in I}$. Suppose that there exists $\delta \in I$ with $n_\delta = 2$. Then the generic $\vv{n}_I$-hypertournament $\T_{\vv{n}_I}$ has a free SWIR, and $\Aut(\T_{\vv{n}_I})$ is simple.
\end{prop}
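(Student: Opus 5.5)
We will define the independence relation directly from the sign maps, using the tournament $R_\delta$ to decide what the "canonical" sign of a mixed tuple should be. Write $T = \T_{\vv{n}_I}$. For $m \geq 2$ and $\bar{v} = (v_0, \cdots, v_{m-1}) \in (T)^m$, put
\[
    s_m(\bar{v}) = \prod_{j < k < m} \rho_\delta(v_j, v_k).
\]
Composing $\bar{v}$ with $\sigma \in \Sym_m$ reverses exactly those pairs $(j,k)$ that are inversions of $\sigma$. Since $\rho_\delta$ is antisymmetric, we get $s_m(v_{\sigma(0)}, \cdots, v_{\sigma(m-1)}) = \eps(\sigma)\, s_m(\bar{v})$. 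So by Lemma \ref{l: def ht from fn}, $s_{n_i}$ is a legitimate sign map of an $n_i$-hypertournament on any set carrying the tournament $R_\delta$. Note also that $s_2 = \rho_\delta$.

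For $A, B, C \fin T$, define $B \ind_A C$ to hold iff all of the following hold:
\begin{enumerate}[label=(\alph*)]
    \item $(B \setminus A) \cap (C \setminus A) = \varnothing$;
    \item $R_\delta(b, c)$ holds for all $b \in B \setminus A$ and $c \in C \setminus A$;
    \item for every $i \in I \setminus \{\delta\}$ and every $\bar{v} \in (ABC)^{n_i}$ meeting both $B \setminus A$ and $C \setminus A$, we have $\rho_i(\bar{v}) = s_{n_i}(\bar{v})$.
\end{enumerate}

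Next we check the SWIR axioms.
\begin{itemize}
    \item (Inv) is clear, since the definition uses only the relations.
    \item (Sta): the condition determines every relation on $ABC$ from the structures on $AB$ and $AC$. Indeed, $R_\delta$ between $B \setminus A$ and $C \setminus A$ is fixed by (b). Once $R_\delta$ is known on all of $ABC$, each $\rho_i$ on mixed tuples is fixed by (c). Hence any two independent configurations with the same types over $A$ are isomorphic over $AC$ (resp.\ $AB$).
    \item (Ex): given $AB$ and $AC$, we form their disjoint union over $A$. We orient $R_\delta$ by (b) and define each $R_i$ on mixed tuples via $s_{n_i}$. By Lemma \ref{l: def ht from fn}, applied to the function that equals $\rho_i$ on tuples inside $AB$ or $AC$ and equals $s_{n_i}$ on mixed tuples, this is a finite $\vv{n}_I$-hypertournament. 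Note that this function is antisymmetric because both pieces are, and being mixed is invariant under permutation. Embedding it in $T$ over $AC$ (resp.\ $AB$) by genericity gives (LEx) and (REx).
    \item (Mon): this reduces to containments such as $D \setminus AB \subseteq BD \setminus A$ and $ABD \subseteq ABCD$. Any tuple of $ABC$ meeting $D \setminus AB$ and $C \setminus A$ is a tuple of $ABDC$ meeting $BD \setminus A$ and $C \setminus A$. The other clauses are similar.
\end{itemize}

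For freeness, suppose $B \ind_A C$ and $(BC) \cap A \subseteq A' \subseteq A$. Then $B \setminus A' = B \setminus A$ and $C \setminus A' = C \setminus A$, because the points of $B$ (resp.\ $C$) lying in $A$ already lie in $A'$. Moreover $A'BC \subseteq ABC$. So every condition required for $B \ind_{A'} C$ is among those already guaranteed by $B \ind_A C$.

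Finally, $T$ is a strong relational \Fr structure, and Lemma \ref{l: move realisation of type} gives hypothesis $(\ast)$ for every non-identity automorphism. So Proposition \ref{p:free SWIR simple} yields the simplicity of $\Aut(T)$.

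The only step needing genuine care is (Ex): we must check that the mixed definition is consistent as a hypertournament. This is the reason for using the product sign $s_{n_i}$ built from $R_\delta$ rather than an arbitrary choice, and it is exactly where the hypothesis that some $n_\delta = 2$ is used.
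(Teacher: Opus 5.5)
Your proposal is correct and takes essentially the same approach as the paper: the same independence relation, the same proof of (Ex) via the free amalgam with $R_\delta$ oriented from $B \setminus A$ to $C \setminus A$ and mixed tuples signed by the product formula through Lemma \ref{l: def ht from fn}, and the same final appeal to Lemma \ref{l: move realisation of type} and Proposition \ref{p:free SWIR simple}. Restricting (c) to $i \neq \delta$ is harmless since $s_2 = \rho_\delta$, and the only slip is cosmetic: in your (Mon) check for $D \ind_{AB} C$ the relevant tuples lie in $ABCD$ rather than $ABC$.
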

\begin{proof}
    Let $A, B, C \fin \T_{\vv{n}_I}$. We define $B \ind_A C$ if $(B \setminus A) \cap (C \setminus A) = \emp$ and the following hold:
    \begin{itemize}
        \item for each $b \in B \setminus A$, $c \in C \setminus A$ we have $\rho_\delta(b, c) = 1$;
        \item for each $i \in I$ and each $n_i$-tuple $\bar{v} \in ((A \cup B \cup C))^{n_i}$ of distinct elements with $(\bar{v} \cap (B \setminus A) \neq \varnothing) \wedge (\bar{v} \cap (C \setminus A) \neq \varnothing)$, we have \[\rho_i(\bar{v}) = \prod_{m < m' < n_i} \rho_\delta(v_m, v_{m'}).\]
    \end{itemize}
    (Inv), (Sta), (Mon) and freeness are straightforward. We show (REx), with (LEx) similar. Let $A, B, C \fin \T_{\vv{n}_I}$. Let $D$ be the free amalgam of $BA, CA$ over $A$: we may assume that the amalgam embedding $BA \to D$ is the identity, and we write $C'A$ for the image of $CA$. Define a structure $E$ from $D$ by extending each relation of $D$ as follows. For $b \in B \setminus A$, $c \in C' \setminus A$, we specify $(b, c) \in R_\delta^E$. As we have specified $R_\delta^E$, we then have the map $\rho_\delta^E$ as in Definition \ref{d:sign of ht edge}. We now define, for each $i \in I$, a function $f_i : (E)^{n_i} \to \{+1, -1\}$ by 
    \[
        f_i(\bar{v}) = 
        \begin{cases}
            \rho_i^{BA}(\bar{v}) & \bar{v} \sub BA\\
            \rho_i^{C'A}(\bar{v}) & \bar{v} \sub C'A\\
            \prod_{m < m' < n_i} \rho_\delta^E(v_m, v_{m'}) & (\bar{v} \cap (B \setminus A) \neq \varnothing) \,\wedge\, (\bar{v} \cap (C' \setminus A) \neq \varnothing).
        \end{cases}\]
    (Note that $f_\delta^{} = \rho_\delta^E$.) Recall that the sign $\eps(\sigma)$ of a permutation $\sigma \in \Sym_n$ is the parity of the number of pairs $(x, y) \in (n)^2$ with $x < y$ and $\sigma(x) > \sigma(y)$. Thus the functions $f_i$ satisfy the condition of Lemma \ref{l: def ht from fn}, giving a $\vv{n}_I$-hypertournament $E$ extending the $\vv{n}_I$-hypertournaments $BA$, $C'A$. We may then assume $E$ is embedded over $BA$ in $\T_{\vv{n}_I}$ using the extension property, and it is clear that $B \ind_A C'$ as required.

    We then have simplicity of $\Aut(\T_{\vv{n}_I})$ by Lemma \ref{l: move realisation of type} and Proposition \ref{p:free SWIR simple}.
\end{proof}

\begin{lem} \label{l: ptstab of n-2 set in Tn is simple}
    Let $n \geq 2$. Let $A \fin \Tn$, $|A| = n - 2$. Then the pointwise-stabiliser $\Aut(\Tn)_{(A)}$ is simple. 
\end{lem}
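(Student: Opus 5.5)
Fix an enumeration $\bar{a} = (a_0, \cdots, a_{n-3})$ of $A$; note that $\Aut(\Tn)_{(A)}$ is the same group for every enumeration. The plan is to exhibit $\Aut(\Tn)_{(A)}$ as the automorphism group of a generic $\vv{n}_I$-hypertournament with a binary component, and then apply Proposition \ref{p:SWIR of n2tournament}. The case $n = 2$ is immediate: $A = \varnothing$, and $\T_2$ is the random tournament, which is $\T_{\vv{n}_I}$ with $I = \{\delta\}$ and $n_\delta = 2$.

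For $n \geq 3$, let $N = \Tn \setminus A$. On $N$ we define the following relations.
\begin{itemize}
    \item For each $k$ with $2 \leq k \leq n$ and each $k$-subset $S \sub \bar{a}$-positions, more precisely for each choice of $n-k$ elements of $A$ in their fixed order, we define a $k$-ary relation $R_{k}$ on $N$ by
    \[
        R_k(\bar{v}) \iff \Tn \models R(a_0, \cdots, a_{n-k-1}, \bar{v}).
    \]
    To keep it simple, take only the initial segments of $\bar{a}$.
    \item In addition, keep the relations obtained from all other subsets of $A$.
\end{itemize}
Concretely, let $I$ index the subsets $S \sub A$ with $|S| \leq n-2$. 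For $S \in I$, set $n_S = n - |S| \geq 2$ and
\[
    R_S(\bar{v}) \iff \Tn \models R(\bar{s}, \bar{v}),
\]
where $\bar{s}$ is $S$ listed in the order induced from $\bar{a}$. Because $\Aut$ of every $n$-set in $\Tn$ is $\Alt_n$, each $R_S$ is an $n_S$-hypertournament on $N$: permuting $\bar{v}$ by $\sigma$ corresponds to a permutation of the full $n$-tuple of the same sign. The index $\delta = A$ has $n_\delta = 2$.

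Let $N'$ denote this $\mc{L}_I$-structure on $N$. We then check two things.

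\begin{enumerate}[label=(\arabic*)]
    \item The map $g \mapsto g|_N$ is an isomorphism $\Aut(\Tn)_{(A)} \to \Aut(N')$.

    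It is injective and lands in $\Aut(N')$ because $g$ fixes $\bar{s}$. For surjectivity, every $n$-tuple of distinct elements of $\Tn$ is, up to a permutation of known sign, of the form $(\bar{s}, \bar{v})$ with $S = (\text{tuple}) \cap A$ and $\bar{v} \sub N$. Since $|S| \leq n-2$ is forced, as $|A| = n-2$, the relation $R$ of $\Tn$ is determined by the relations $R_S$ together with the fixed $R$-data on $A$. Hence any $h \in \Aut(N')$ extended by $\id_A$ preserves $R$.

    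\item $N'$ is the generic $\vv{n}_I$-hypertournament $\T_{\vv{n}_I}$.

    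$N'$ is countable. Its age consists of finite $\vv{n}_I$-hypertournaments, and conversely any finite $\vv{n}_I$-hypertournament $F$ yields a finite $n$-hypertournament on $A \cup F$ via the same dictionary; here we use Lemma \ref{l: def ht from fn} to verify sign compatibility, fixing $R$ on $A$ as in $\Tn$, which is vacuous since $|A| < n$. So the age is all of $\mc{T}_{\vv{n}_I}$.

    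For the extension property: given finite $F \sub F'$ in $\mc{T}_{\vv{n}_I}$ and an embedding $F \to N'$, translate $F'$ into an $n$-hypertournament on $A \cup F'$ extending the induced one on $A \cup F$. The extension property of $\Tn$ over $A \cup F$ then gives the required embedding, and translating back yields an embedding of $F'$ into $N'$.
\end{enumerate}

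Then Proposition \ref{p:SWIR of n2tournament} applies, since $I \ni \delta$ with $n_\delta = 2$. Hence $\Aut(N') \cong \Aut(\Tn)_{(A)}$ is simple.

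The main obstacle is the bookkeeping in the dictionary between $n$-hypertournaments on $A \cup F$ and $\vv{n}_I$-hypertournaments on $F$. Two points need care. First, the relations with $\bar{s}$ placed in the first coordinates must have the correct alternating behaviour, which comes from the sign of the permutation moving the $\bar{s}$ entries to the front. Second, the dictionary must be a bijection, i.e. every $n$-subset of $A \cup F$ must meet $F$; this holds because $|A| = n-2 < n$. With $n_S \geq 2$ for all $S$, every component is a genuine hypertournament, so the hypotheses of Proposition \ref{p:SWIR of n2tournament} are met.
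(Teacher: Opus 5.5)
Your proposal is correct and follows essentially the same route as the paper. On $\Tn \setminus A$ you put one relation $R_S$ of arity $n - |S|$ for each subset $S \subseteq A$, identify $\Aut(\Tn)_{(A)}$ with the automorphism group of this $\vv{n}_I$-hypertournament by restriction, check via the age and the extension property that it is the generic one, and apply Proposition \ref{p:SWIR of n2tournament} with the binary relation $R_A$. The only cosmetic point is the preliminary bullet list about taking only initial segments of $\bar{a}$, which should be deleted: your ``concrete'' definition correctly uses all subsets of $A$, and those are needed for the restriction map to be onto.
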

\begin{proof}
    Recall that we write $R^{\T_n}$ for the edge relation of the generic $n$-hypertournament $\T_n$, and $\mc{L}_n^{\textnormal{T}}$ for the language of this structure.
    
    For each $A' \sub A$, take an arbitrary ordering $\bar{u}_{A'}$ of $A'$, and let $\mc{U} = \{\bar{u}_{A'} \mid A' \sub A\}$. For $\bar{u} \in \mc{U}$, let $n_{\bar{u}} = n - |\bar{u}|$. Let $\vv{n}_{\mc{U}} = (n_{\bar{u}})_{\bar{u} \in \mc{U}}$ and let $\mc{L}_{\mc{U}} = \{R_{\bar{u}} \mid \bar{u} \in \mc{U}\}$ be a relational language where the arity of each $R_{\bar{u}}$ is $n_{\bar{u}}$. Define an $\mc{L}_{\mc{U}}$-structure $T'$ as follows:
    \begin{itemize}
        \item $\dom(T') = \dom(\T_n) \setminus \dom(A)$;
        \item for each $\bar{u} \in \mc{U}$ we define $R^{T'}_{\bar{u}} = \{\bar{v} \in (\dom(T'))^{n_{\bar{u}}} \mid (\bar{u}, \bar{v}) \in R^{\T^n}\}$.
    \end{itemize}

    We first observe that $T'$ is an $\vv{n}_{\mc{U}}$-hypertournament: this follows from the fact that, in the standard permutation action $\Alt_n \curvearrowright n$, the pointwise-stabiliser of $X \sub n$ is the alternating group on the complement of $X$.

    To see that $T'$ is isomorphic to the generic $\vv{n}_{\mc{U}}$-hypertournament $\T_{\vv{n}_{\mc{U}}}$, we verify that the two structures have equal age and that $T'$ has the extension property. Let $B' \fin T'$ (where potentially $B' = \varnothing$) and let $f : B' \to C'$ be an embedding in $\Age(\T_{\vv{n}_{\mc{U}}})$. We may assume $f|_{B'} = \id_{B'}$ and $\dom(C') \cap \dom(A) = \varnothing$. Define an $\mc{L}_n^{\textnormal{T}}$-structure $E$ on $\dom(A) \cup \dom(C')$ by: for each $\bar{u} \in \mc{U}$ and $\bar{v} \in (\dom(C'))^{n_{\bar{u}}}$ with $\bar{v} \in R^{C'}_{\bar{u}}$, we specify $\sigma(\bar{u}, \bar{v}) \in R^E$ for all even permutations $\sigma$ of the $n$-tuple $(\bar{u}, \bar{v})$. Then $E$ is an $n$-hypertournament extending the $n$-hypertournament induced by $\T_n$ on $\dom(A) \cup \dom(B')$, which we denote by $D$. So by the extension property of $\T_n$ we may realise $E$ in $\T_n$ extending $D$, and then by the definition of $T'$ we have that the $\vv{n}_{\mc{U}}$-hypertournament induced by $T'$ on $\dom(E) \setminus \dom(A)$ is a realisation of $C'$ extending $B'$, as required. So $T' \cong \T_{\vv{n}_{\mc{U}}}$.
    
    It is straightforward to see that we have an isomorphism $\Aut(\T_n)_{(A)} \to \Aut(T')$, $g \mapsto g|_{\dom(T')}$. So $\Aut(\T_n)_{(A)} \cong \Aut(\T_{\vv{n}_{\mc{U}}})$, and the latter is simple by Proposition \ref{p:SWIR of n2tournament}. 
\end{proof}

\begin{manualthm}{A}
    Let $n \geq 3$. The automorphism group of the generic $n$-hypertournament $\mb{T}_n$ is simple.
\end{manualthm}
\begin{proof}
    This follows by Lemma \ref{l: simple G_A simple G}: it is immediate that $\T_n$ is an $(n-2)$-transitive strong \Fr structure, condition \ref{abc tps} is easily verified, and condition \ref{simple stabs} is given by Lemma \ref{l: ptstab of n-2 set in Tn is simple}.
\end{proof}

\section{The dense \texorpdfstring{$\frac{2\pi}{n}$}{2pi/n}-local order \texorpdfstring{$\mathbb{S}(n)$}{Sn}} \label{s: Sn}

Recall the language $\mc{L}^\ri_n$ and the $\mc{L}^\ri_n$-structure $\Sn$ from Definition \ref{d:Sn}.

Before giving a new proof of Fact B, we first discuss the original result of Droste, Giraudet and Macpherson (\cite{DGM95}). The reader may be familiar with local orders in the case $n = 2, 3$ defined as oriented graphs, rather than as the binary relational structures $\Sn$. Let $D$ be the set of points on the unit circle with rational argument. For $n \geq 2$, let $S(n)$ be the oriented graph with domain $D$ defined as follows: for distinct $u, v \in D$, we define $u \ra v$ if the angle subtended at the origin by the anticlockwise arc from $u$ to $v$ is $< \frac{2\pi}{n}$. The oriented graphs $S(2)$, $S(3)$ are ultrahomogeneous, via an immediate adaptation of Lemma \ref{l: Sn ultrahomog}, and $S(n)$ is not ultrahomogeneous for $n \geq 4$ (this is not difficult to see). Nonetheless, for each $n \geq 2$, it is straightforward to see that $S(n)$ and $\Sn$ are interdefinable, and thus $\Aut(S(n)) = \Aut(\Sn)$. In \cite{DGM95}, Droste, Giraudet and Macpherson showed the simplicity of $\Aut(S(n))$ for all $n \geq 2$ (without discussing the binary relational structures $\Sn$), and thus this result gives the simplicity of $\Aut(\Sn)$ for all $n \geq 2$, which is Fact B.

We now begin our new approach to showing Fact B, via the SWIR expansion method.

\begin{defn} \label{d: Sn basic notn}
    Let $U$ denote the unit circle in the complex plane. For $\theta \in \R$ we write $\rho(\theta) = e^{i\theta}$. For $u, v \in U$, we write $\alpha(u, v)$ for the angle subtended at the origin by the anticlockwise arc from $u$ to $v$, with $0 \leq \alpha(u, v) < 2\pi$. We consider $U$ as a $\mc{L}^\ri_n$-structure containing $\Sn$ in the natural manner: for $u, v \in U$ and $j < n$, we define $U \models S_j(u, v)$ if $\frac{2\pi j}{n} < \alpha(u, v) < \frac{2\pi(j+1)}{n}$. We define open intervals in $U$ as follows. For $u, u' \in U$ with $0 < \alpha(u, u') \leq \frac{2\pi}{n}$, we define the open interval $\langle u, u' \rangle := \{v \in U \mid S_0(u, v) \wedge S_0(v, u')\}$; note that $\langle u, u' \rangle$ is linearly ordered by $S_0$. We only use the notation $\langle u, u' \rangle$ specifically for open intervals of the structure $U$: when considering substructures $V \sub U$, we will be careful to write $\langle u, u' \rangle \cap V$.
    
    For $v \in U$ and $k \in \Z$, let $v^{\ua k} = e^{i \frac{2\pi k}{n}} v$, and for $A \sub U$ let $A^{\ua k} = \{v^{\ua k} \mid v \in A\}$. For $u, v \in U$ we have:
    \begin{equation} \label{Sl translate}
    S_l(u, v) \Leftrightarrow S_{l + k' - k \Mod{n}}(u^{\ua k}, v^{\ua k'}).
    \end{equation}

    For $A \sub U$, we define $\hat{A} = \bigcup_{0 \leq k < n} A^{\ua k}$. (Note that if $A \sub \Sn$, as every element of $A$ has rational argument, we have $A^{\ua k} \cap A^{\ua k'} = \varnothing$ for distinct $k, k'$ with $0 \leq k, k' < n$.) Let $f : A_0 \to A_1$ be an isomorphism of substructures of $\Sn$. Then by (\ref{Sl translate}) we have that $f$ extends to an isomorphism $\hat{f} : \hat{A}_0 \to \hat{A}_1$, $\hat{f}(a^{\ua k}) = f(a)^{\ua k}$.

    Let $A \fin \Sn$ be non-empty. For $u, u' \in \hat{A}$ with $0 < \alpha(u, u') \leq \frac{2\pi}{n}$, we say that $(u, u')$ is a \emph{cut} of $\hat{A}$ if the open interval $\langle u, u' \rangle$ is disjoint from $\hat{A}$.
\end{defn}

\begin{lem} \label{l: Sn ultrahomog}
    Let $f : A_0 \to A_1$ be an isomorphism between non-empty finite substructures of $\Sn$, and extend $f$ to the isomorphism $\hat{f} : \hat{A}_0 \to \hat{A}_1$, $\hat{f}(a^{\ua k}) = f(a)^{\ua k}$. Let $(u, u')$ be a cut of $\hat{A}_0$; observe that $(\hat{f}(u), \hat{f}(u'))$ is a cut of $\hat{A}_1$. Let $b_0, b_1 \in \Sn$ with $b_0 \in \langle u, u' \rangle$, $b_1 \in \langle \hat{f}(u), \hat{f}(u') \rangle$. Then the extension of $\hat{f}$ by $b_0 \mapsto b_1$ is an isomorphism.

    Thus, the structure $\Sn$ has the extension property, and so is ultrahomogeneous.
\end{lem}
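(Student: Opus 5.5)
The plan is to verify directly that the extended map $\hat{f}\cup\{(b_0,b_1)\}$ preserves every relation $S_l$ between $b_0$ and each point of $\hat{A}_0$, and then read off the extension property. Two observations carry the argument. By (\ref{Sl translate}), the type of $b_0$ over $\hat{A}_0$ is determined by the relations between $b_0$ and the points of $\hat{A}_0$, and $\hat{f}$ commutes with the rotations $v\mapsto v^{\ua k}$. Also, $\hat{A}_0$ is invariant under rotation by $\frac{2\pi}{n}$. Since $\langle u,u'\rangle$ has angular length at most $\frac{2\pi}{n}$ and misses $\hat{A}_0$, the rotated intervals $\langle u^{\ua k},u'^{\ua k}\rangle$ are also cuts. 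Going round the circle, $\hat{A}_0$ therefore looks like $n$ copies of a single ``fundamental'' arc of length $\frac{2\pi}{n}$.

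First I will show that $\hat{f}$ preserves the cyclic (anticlockwise) order of $\hat{A}_0$, and not just the relations $S_j$. For $x,y\in\hat{A}_0$ the relation $S_j(x,y)$ tells us $\lfloor \frac{n\alpha(x,y)}{2\pi}\rfloor$. Because $x^{\ua k}\in\hat{A}_0$ for all $k$, the position of $y$ relative to the $n$ rotated copies of $x$ is recorded by $\hat{A}_0$, and $\hat{f}$ maps $x^{\ua k}$ to $\hat{f}(x)^{\ua k}$. Within a single window $\langle x^{\ua k},x^{\ua (k+1)}\rangle$, the points are linearly ordered by $S_0$, which $\hat{f}$ preserves. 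So $\hat{f}$ preserves the cyclic order of $\hat{A}_0$, and in particular it sends cuts to cuts, which gives the observation in the statement. Here I use that arguments are rational, so no two points of $\hat{A}_0$ differ by an exact multiple of $\frac{2\pi}{n}$ except via rotations, and $\alpha$ is never an exact boundary value.

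Next I compute $S_l(b_0,a)$ for $a\in\hat{A}_0$. Let $a^{\ua k}$ be the unique rotate of $a$ with $a^{\ua k}$ in the arc from $u'$ anticlockwise to $u^{\ua 1}$ (closed at both ends). Such a rotate exists because this arc together with $\langle u,u'\rangle$ has length $\frac{2\pi}{n}$ and $\hat{A}_0$ misses $\langle u,u'\rangle$. Then $b_0\in\langle u,u'\rangle$ forces $S_0(b_0,a^{\ua k})$, and by (\ref{Sl translate}) we get $S_{-k \Mod{n}}(b_0,a)$. The index $k$ is determined purely combinatorially by the position of $a$ in the cyclic order of $\hat{A}_0$ relative to the cut $(u,u')$. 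By the previous step, the same computation with $\hat{f}(a)$, $\hat{f}(u)$, $\hat{f}(u')$ and $b_1$ gives the same index. Hence $S_l(b_0,a)\Leftrightarrow S_l(b_1,\hat{f}(a))$, and the converse direction $S_l(a,b_0)$ follows since $S_l(x,y)\Leftrightarrow S_{n-1-l}(y,x)$. So the extension is an isomorphism.

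For the extension property, take $A_0\fin\Sn$ nonempty and a one-point extension $A_0\cup\{b\}$ in the age, say realised as $A_1\cup\{b'\}$ with an isomorphism $f:A_0\to A_1$. The point $b'$ lies in some cut of $\hat{A}_1$. Pulling this cut back via $\hat{f}^{-1}$ gives a cut of $\hat{A}_0$, and the open interval is nonempty in $D$ by density of rational arguments. Choosing $b_0$ in that interval, the lemma gives $A_0\cup\{b_0\}\cong A_1\cup\{b'\}$ over $f$. The empty-base case is trivial, and the standard back-and-forth gives ultrahomogeneity. The main obstacle I expect is the first step, namely carefully justifying that $\hat{f}$ preserves the full cyclic order, including boundary cases where $\alpha(u,u')=\frac{2\pi}{n}$ exactly, i.e.\ $u'=u^{\ua 1}$.
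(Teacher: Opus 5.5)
Your proof is correct and takes essentially the paper's route: the paper also uses the fundamental window after the cut (it takes $\langle u, u^{\ua 1}\rangle$ and handles the rotates of $u$ via $S_0(u,b_0)$). It observes that $b_0$, resp.\ $b_1$, is $S_0$-below every point of $\hat{A}_0$, resp.\ $\hat{A}_1$, in that window, and then transfers the relations to all of $\hat{A}_0$ using rotations and (\ref{Sl translate}). The only difference is that full cyclic-order preservation is more than you need, so the ``obstacle'' you anticipate does not arise: since $\langle u,u'\rangle$ misses $\hat{A}_0$, a point $a^{\ua k}$ lies in your closed arc from $u'$ to $u^{\ua 1}$ if and only if $S_0(u,a^{\ua k})$ or $a^{\ua k}=u^{\ua 1}$, and both conditions are preserved by $\hat{f}$ simply because $\hat{f}$ preserves $S_0$ and commutes with rotations.
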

\begin{proof}
    Let $C_0 = \hat{A}_0 \cap \langle u, u^{\ua 1} \rangle$. The open intervals $\langle u, u^{\ua 1} \rangle$, $\langle \hat{f}(u), \hat{f}(u)^{\ua 1} \rangle$ are each linearly ordered by $S_0$, and $\hat{f}$ is order-preserving. We have $S_0(u, b_0) \wedge S_0(\hat{f}(u), b_1)$, and $S_0(b_0, c) \wedge S_0(b_1, \hat{f}(c))$ for each $c \in C_0$. For each $a \in A_0 \setminus \{u^{\ua l} \mid l < n\}$, there is a unique $k < n$ with $a^{\ua k} \in C_0$. So, using (\ref{Sl translate}), we have $\hat{f} \cdot \tp(b_0 / \hat{A}_0) = \tp(b_1 / \hat{A}_1)$ as required.
\end{proof}

\begin{lem} \label{l: Aut Sn no dense cong}
    $\Aut(\Sn)$ does not have a dense conjugacy class.
\end{lem}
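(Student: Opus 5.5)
We will use Fact \ref{f:KR dense ccl}, as in the proof of Lemma \ref{l: Aut Tn no dense conj}. It suffices to exhibit two pairs $(A, f_A), (B, f_B) \in \Age(\Sn)_{\textnormal{p}}$ which cannot be jointly embedded into any $(C, f_C) \in \Age(\Sn)_{\textnormal{p}}$. The idea is that a partial automorphism of $\Sn$ determines a "rotation amount" modulo the antipodal-type symmetries $v \mapsto v^{\ua k}$. If a partial automorphism fixes a point, it must be compatible with rotation by $0$. A different partial automorphism can force a non-trivial rotation of the $\hat{\cdot}$-structure, and these two requirements are incompatible.

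Concretely, we take $A = \{a_0, \dots, a_{n-1}\}$ to be $n$ points with $a_k \in \langle a_0^{\ua k}, a_0^{\ua k}{}^{\ua 0}\rangle$ nearby. More simply, we choose $a_k$ close to $a_0^{\ua k}$, slightly anticlockwise of it, so that the configuration is invariant under the cyclic shift. We check that $f_A : a_k \mapsto a_{k+1 \bmod n}$ is an automorphism of $A$: using (\ref{Sl translate}), the relation between $a_k$ and $a_l$ depends only on $l - k \bmod n$. For $(B, f_B)$ we take $B = \{b\}$ a single point with $f_B = \id_B$.

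Suppose both embed into some $(C, f_C)$, with $C \fin \Sn$. We extend $f_C$ to an automorphism $g$ of $\Sn$ (ultrahomogeneity), and then to the natural extension of $g$ to the closure in $U$. Here the key step, and the main obstacle, is to show that every $g \in \Aut(\Sn)$ induces a homeomorphism of the circle $U$ commuting with $v \mapsto v^{\ua 1}$. This holds because the betweenness/cyclic order is definable from the $S_j$ and $\hat{g}$ respects (\ref{Sl translate}). Such a homeomorphism has a well-defined rotation number $r \in \R/\Z$, and commuting with rotation by $1/n$ descends to a homeomorphism of $U/\langle \ua 1\rangle$ with rotation number $nr$.

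If $g$ fixes $b$, the descended map has a fixed point, so its rotation number is $0$. If $g$ cyclically shifts $a_0 \mapsto a_1 \mapsto \cdots$, where $a_1$ lies just past $a_0^{\ua 1}$ but before the next element of $\hat{A}$, then the descended map has a periodic orbit structure forcing a rotation number that is non-zero. Alternatively, and more elementarily, we can argue directly. On the interval $\langle b, b^{\ua 1}\rangle$, which is linearly ordered by $S_0$ and mapped to itself by $g$ (since $g(b)=b$), $g$ is order-preserving. However, some $a_k^{\ua m}$ lying in that interval gets mapped to a point strictly further in a way that, after $n$ iterations, returns it to itself; this contradicts order-preservation on a linear order unless the orbit is trivial. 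I would present this elementary version, choosing the positions of the $a_k$ relative to $b$ carefully, so as to avoid rotation-number machinery.
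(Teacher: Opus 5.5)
\textbf{There is a genuine gap: your $(A,f_A)$ does not lie in $\Age(\Sn)_{\textnormal{p}}$.} The rest of your strategy matches the paper's. You apply Fact~\ref{f:KR dense ccl} with $(B,f_B)=(\{b\},\id_B)$, and you get the contradiction from the fact that an automorphism fixing $b$ preserves the linear order $S_0$ on an arc of length $\frac{2\pi}{n}$ from $b$. Your ``elementary version'' of that last step is essentially the paper's. The problem is that no $n$-element substructure of $\Sn$ has an automorphism which is an $n$-cycle. Suppose $a_k\mapsto a_{k+1 \bmod n}$ were an automorphism of $A=\{a_0,\dots,a_{n-1}\}$. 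Then there is a single $j$ with $S_j(a_k,a_{k+1 \bmod n})$ for all $k$, so each angle $\alpha(a_k,a_{k+1 \bmod n})$ lies in $(\frac{2\pi j}{n},\frac{2\pi(j+1)}{n})$. The sum of these $n$ angles around the closed cycle is a multiple of $2\pi$, yet it lies strictly between $2\pi j$ and $2\pi(j+1)$, which is impossible. In your concrete configuration the failure shows up at the wrap-around. Since $\alpha(a_0,a_1)$ is slightly more than $\frac{2\pi}{n}$, you get $S_1(a_0,a_1)$. But $\alpha(a_{n-1},a_0)=\frac{2\pi}{n}-\epsilon_{n-1}$, so you get $S_0(a_{n-1},a_0)$. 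Your appeal to (\ref{Sl translate}) does not apply, because the $a_k$ are only approximately translates of $a_0$. The exact translates $a_0^{\ua k}$ with $0<k<n$ have irrational argument and so are not in $\Sn$. In any case, $u$ and $u^{\ua k}$ satisfy no $S_j$ at all.

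\textbf{Fix: the cycle length must be coprime to $n$.} Your own rotation-number reasoning shows this. If $A$ is an orbit of size $N$ of $\hat g$, then the rotation number of $\hat g$ is $t/N$ with $\gcd(t,N)=1$. The images of the $a_k$ in $U/\langle \ua 1\rangle$ are distinct, so the descended map also has an orbit of size $N$, and its rotation number is $nt/N$. This forces $\gcd(n,N)=1$, which fails for your choice $N=n$. The paper takes $N=n+1$: set $a_k=\rho(kq)$ for $0\le k\le n$, where $q\in\Q$ and $\frac{2\pi}{n}\cdot\frac{n-1}{n}<q<\frac{2\pi}{n}$. Then $S_{l-1}(a_k,a_{k+l})$ whenever $0\le k<k+l\le n$. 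Hence the relation between $a_k$ and $a_{k'}$ depends only on $k'-k \bmod (n+1)$, the shift modulo $n+1$ is an automorphism of $A$, and $S_0(a_k,a_{k+1 \bmod n+1})$ holds for all $k$. With this $A$ your argument goes through. You can also finish without extending $g$ to a homeomorphism of $U$ or using rotation numbers. Since $f_C$ fixes $b$ and permutes the $a_k$ transitively, there is a single $l$ with $S_l(b,a_k)$ for all $k$. The cyclic $S_0$-chain $a_0,a_1,\dots,a_n,a_0$ then contradicts the fact that $S_0$ linearly orders $\{c\in C\mid S_l(b,c)\}$.
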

\begin{proof}
    Let $q \in \Q$ with $\frac{2\pi}{n} \cdot \frac{n-1}{n} < q < \frac{2\pi}{n}$. Let $a_k = \rho(kq)$ for $0 \leq k \leq n$, and let $A$ be the substructure of $\Sn$ with domain $\{a_0, \cdots, a_n\}$. Let $f_A : \dom(A) \to \dom(A)$ be the map $a_k \mapsto a_{k+1 \Mod{n+1}}$. We now show $f_A \in \Aut(A)$. For $1 \leq l \leq n$, as $\frac{l-1}{l} \leq \frac{n-1}{n}$ we have $\frac{2\pi}{n} \cdot \frac{l-1}{l} < q < \frac{2\pi}{n}$, and so $\frac{2\pi(l-1)}{n} < lq < \frac{2\pi l}{n}$. Thus for $0 \leq k < k + l \leq n$ we have $\frac{2\pi (l-1)}{n} < (k+l)q - kq < \frac{2\pi l}{n}$ and hence $S_{l-1}(a_k, a_{k+l})$. As $S_r(u, v) \Leftrightarrow S_{n-1-r}(v, u)$ for all $0 \leq r \leq n-1$ and distinct $u, v \in U$, we have $S_{k' - k - 1 \Mod{n+1}}(a_k, a_{k'})$ for all distinct $k, k'$, and so $f_A \in \Aut(A)$.

    Let $B = \{b\} \in \Age(\Sn)$, and let $f_B = \id_B$. Suppose for a contradiction there is $(C, f_C) \in \Age(\Sn)_{\textnormal{p}}$ into which $(A, f_A)$, $(B, f_B)$ both embed. Identifying $A, B$ with their images in $C$, there is $l$ with $S_l(b, a_k)$ for all $0 \leq k \leq n$. But for all $k$ we have $S_0(a_k, a_{k+1 \Mod{n+1}})$, contradicting the fact that $\{c \in C \mid S_l(b, c)\}$ is linearly ordered by $S_0$. Thus $\Aut(\Sn)$ does not have a dense conjugacy class by Fact \ref{f:KR dense ccl}.
\end{proof}

\begin{defn}
    Let $\mc{L}^<_n$ be a language consisting of a binary relation $<$ and $n$ unary predicates $\chi_0, \cdots, \chi_{n-1}$. We call an $\mc{L}^<_n$-structure $A$ an \emph{$n$-coloured linear order} if $<^A$ is a linear order and $\{\chi_i^A \mid i < n\}$ is a partition of the domain of $A$. Given an $n$-coloured linear order $A$, we define a function $\chi : \dom(A) \to n$ by: $\chi(a) = i$ if $A \models \chi_i(a)$. If $\chi(a) = i$, we say $a$ has \emph{colour} $i$. We also use this terminology when defining $n$-coloured linear orders.
    
    Let $\Q_n$ be the \Fr limit of the class of finite $n$-coloured linear orders. It is straightforward to see that the $\{<\}$-reduct of $\Q_n$ is isomorphic to $(\Q, <)$. We say that an $n$-coloured linear order $A$ is \emph{colour-dense} if, for all $a, b \in A$ with $a < b$ and each $i < n$, there is $c \in A$ of colour $i$ with $a < c < b$. Note that by \Frn's theorem any non-empty countable $n$-coloured linear order which is colour-dense and without endpoints is isomorphic to $\Q_n$.
\end{defn}

\begin{defn}
    Let $a \in \Sn$. We define an $n$-coloured linear order $I_a$ with domain $\langle a, a^{\ua 1} \rangle \cap \dom(\widehat{\Sn})$ as follows: for $u, v \in \dom(I_a)$, we define $u < v$ if $S_0(u, v)$, and for $u \in \dom(I_a)$ we define that $u$ has colour $k$ if $u \in \Sn^{\ua k}$.

    Let $g \in \Aut(\Sn)_a$. As $\dom(I_a) = \{v \in \widehat{\Sn} \mid S_0(a, v)\}$, we have $\hat{g}(\dom(I_a)) = \dom(I_a)$, and as $\hat{g}$ preserves $S_0$ and $\hat{g}(\Sn^{\ua k}) = \Sn^{\ua k}$ for $k < n$, we have $\hat{g}|_{\dom(I_a)} \in \Aut(I_a)$.
\end{defn}

\begin{lem} \label{l: Ga Ia}
    Let $a \in \Sn$. Then:
    \begin{enumerate}[label=(\roman*)]
        \item \label{Ga Aut Ia iso} the map $\zeta : \Aut(\Sn)_a \to \Aut(I_a)$ given by $\zeta(g) = \hat{g}|_{\dom(I_a)}$ is an isomorphism;
        \item \label{Ia Qn iso} $I_a$ is isomorphic to $\Q_n$.
    \end{enumerate}
\end{lem}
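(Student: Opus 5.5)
For \ref{Ga Aut Ia iso}, the map $\zeta$ is well defined by the remark preceding the lemma, and it is a homomorphism because $\widehat{gh} = \hat{g}\hat{h}$; this follows directly from the formula $\hat{g}(v^{\ua k}) = g(v)^{\ua k}$.

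For injectivity, suppose $\hat{g}|_{\dom(I_a)} = \id$. Every $v \in \Sn \setminus \{a\}$ satisfies $S_j(a, v)$ for a unique $j<n$, and this places $v^{\ua (-j)}$ in $\langle a, a^{\ua 1}\rangle$. It then lies in $\dom(I_a)$ with colour $-j \bmod n$. Since $\hat{g}$ commutes with $\ua$, we get $g(v) = \hat{g}(v^{\ua(-j)})^{\ua j} = v$.

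For surjectivity, take $h \in \Aut(I_a)$. I define $g$ on $\Sn$ by $g(a)=a$ and, for $v$ with $S_j(a,v)$, $g(v) = h(v^{\ua(-j)})^{\ua j}$. Because $h$ preserves colours, $h(v^{\ua(-j)}) \in \Sn^{\ua(-j)}$, so $g(v)\in \Sn$. Also $S_j(a, g(v))$ holds by (\ref{Sl translate}), because $S_0(a,h(v^{\ua(-j)}))$.

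The main step is to check that $g$ preserves each $S_l$. Take $u,v \neq a$ with $S_j(a,u)$ and $S_k(a,v)$, and write $u' = u^{\ua(-j)}$, $v' = v^{\ua(-k)}$, both in $I_a$. By (\ref{Sl translate}), $S_l(u,v)$ is equivalent to $S_{l-k+j}(u',v')$ with the index taken mod $n$. Since $u', v' \in \langle a, a^{\ua 1}\rangle$, the angle $\alpha(u',v')$ lies in $(0,2\pi/n)$ or in $(2\pi - 2\pi/n, 2\pi)$. So $S_0(u',v')$ or $S_{n-1}(u',v')$ holds according as $u' < v'$ or $v'<u'$ in $I_a$ (the case $u'=v'$ is impossible because $u \ne v$ and the rational-argument sets $\Sn^{\ua k}$ are disjoint). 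Hence the relation $S_l(u,v)$ is determined by $j$, $k$ and the $<$-comparison of $u',v'$. The automorphism $h$ preserves this comparison, and $g$ preserves $j$ and $k$, so $g$ preserves $S_l$. The same argument applied to $h^{-1}$ gives an inverse, so $g \in \Aut(\Sn)_a$. Finally, $\hat{g}$ agrees with $h$ on $I_a$: an element of $I_a$ of colour $c$ is $w^{\ua c}$ with $w \in \Sn$, and unwinding the definitions gives $\hat{g}(w^{\ua c}) = h(w^{\ua c})$.

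For \ref{Ia Qn iso}, I use the criterion stated after the definition of $\Q_n$: a non-empty countable colour-dense $n$-coloured linear order without endpoints is isomorphic to $\Q_n$. The set $I_a$ is countable and non-empty. It has no endpoints because $\langle a,a^{\ua 1}\rangle$ is an open arc and each $\Sn^{\ua k}$ is dense in $U$. For colour-density, given $u<v$ in $I_a$ and a colour $k$, the set $\Sn^{\ua k}$ (the points with rational argument rotated by $2\pi k/n$) is dense in $U$, so it meets the open arc between $u$ and $v$.

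I expect the main obstacle to be the bookkeeping in the surjectivity step, namely verifying that the locally defined $g$ respects every $S_l$. The reduction to a single comparison inside $I_a$ described above is how I would handle it.
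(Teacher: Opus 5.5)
Your proposal is correct and follows the paper's proof: your $g$ is exactly the paper's $\eta(h)$ (your $-j$ plays the role of the paper's $k$), and part (ii) is the same colour-density and no-endpoints check. You also supply the relation-preservation verification that the paper leaves to the reader; the only case you leave unstated, where $u$ or $v$ equals $a$, follows from your observation $S_j(a, g(v))$ together with $S_r(u,v) \Leftrightarrow S_{n-1-r}(v,u)$.
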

\begin{proof}
    \ref{Ga Aut Ia iso}: Let $h \in \Aut(I_a)$. As $h$ is colour-preserving we have $h(\Sn^{\ua k} \cap \dom(I_a)) \sub \Sn^{\ua k}$. Define a map $\eta(h) : \dom(\Sn) \to \dom(\Sn)$ by $\eta(h)(a) = a$ and, for $v \in \dom(\Sn) \setminus \{a\}$, by $\eta(h)(v) = h(v^{\ua k})^{\ua -k}$, where $k$ is such that $v^{\ua k} \in I_a$. It is straightforward to check that $\eta(h) \in \Aut(\Sn)_a$ using (\ref{Sl translate}), and that $\eta$ is the inverse of $\zeta$; we leave the verification to the reader.

    \ref{Ia Qn iso}: it suffices to show that $I_a$ is colour-dense and without endpoints. Let $u, v \in I_a$ with $u < v$. Let $k < n$. We have $S_0(u^{\ua -k}, v^{\ua -k})$, and as $\Sn$ consists of the points of $U$ with rational argument, there is $w \in \Sn$ with $S_0(u^{\ua -k}, w) \wedge S_0(w, v^{\ua -k})$. Then $w^{\ua k} \in I_a$ has colour $k$, and $u < w^{\ua k} < v$. One similarly sees that $I_a$ is without endpoints -- we leave this to the reader.
\end{proof}

\begin{defn}
    We define the following subsets of $\Aut(\Q_n)$:
     \begin{align*}
        L(\Q_n) &= \{g \in \Aut(\Q_n) \mid \ex a \in \Q_n \text{ with } (-\infty, a) \text{ fixed pointwise by } g\}; \\
        R(\Q_n) &= \{g \in \Aut(\Q_n) \mid \ex a \in \Q_n \text{ with } (a, \infty) \text{ fixed pointwise by } g\}; \\
        B(\Q_n) &= L(\Q_n) \cap R(\Q_n).
    \end{align*}    

    We call the elements of $L(\Q_n)$, $R(\Q_n)$, $B(\Q_n)$ \emph{left-bounded}, \emph{right-bounded} and \emph{bounded} automorphisms of $\Q_n$ respectively. We call the elements of $\Aut(\Q_n) \setminus (R(\Q_n)\cup L(\Q_n))$ \emph{unbounded} automorphisms of $\Q_n$.
\end{defn}

We now determine the lattice of normal subgroups of $\Aut(\Q_n)$ for $n \geq 1$:

\begin{lem}\label{l: normal subgroups Q_n}
    The non-trivial proper normal subgroups of $\Aut(\Q_n)$ are precisely $L(\Q_n)$, $R(\Q_n)$ and $B(\Q_n)$.
\end{lem}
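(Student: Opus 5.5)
First I check that $L(\Q_n)$, $R(\Q_n)$ and $B(\Q_n)$ are normal subgroups. Conjugation carries "fixes $(-\infty,a)$ pointwise" to "fixes $(-\infty,ha)$ pointwise", so $L(\Q_n)$ is normal; the same argument works for $R(\Q_n)$, and then for their intersection $B(\Q_n)$. Each is a subgroup: if $g$ fixes $(-\infty,a)$ and $g'$ fixes $(-\infty,a')$, then $gg'$ and $g^{-1}$ fix $(-\infty,\min(a,a'))$. These subgroups are non-trivial, proper and pairwise distinct. To see this I build automorphisms by hand, using colour-density and back-and-forth between intervals $(a,b)$, each of which is isomorphic to $\Q_n$: a non-identity automorphism supported in $(a,b)$, one supported on $(a,\infty)$ that is unbounded on the right, and so on.

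For the main direction, I will treat $\Q_n$ as a SWIR structure. I use the SWIR of Example \ref{e:SWIRs} for $\Q$, read in the expansion $\Q_n$: $B\ind_A C$ holds if the order condition holds, with colours imposing no constraint. The axioms carry over directly, since colours are unary. By Fact \ref{f:Li main thm}, it suffices to show the following. Let $1\ne K\nrm\Aut(\Q_n)$. If $K$ contains an unbounded $g$, I aim to produce an element of $K$ that moves almost R-maximally and whose inverse moves almost L-maximally, which forces $K=\Aut(\Q_n)$. Following the classical proof for $\Aut(\Q)$ (Higman, Lloyd, Glass), I would first replace $g$ by a conjugate or commutator $[g,h]\in K$ whose set of moved points is cofinal and coinitial. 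By Fact \ref{f: move ext types} it suffices to handle exterior types. An exterior type over $A$ places each coordinate in a cut of $A$ with a prescribed colour. The SWIR condition requires that each $b_i$ lie below $g b_j$ whenever they fall in the same cut; in particular $b < gb$ pointwise in each cut. In the end cuts this is arranged by choosing $b$ far out, where $g$ moves points in the right direction. The interior cuts are the real obstacle, since $g$ may be the identity there. This is the step I expect to be hardest. My plan is to build $h$ by back-and-forth, as in the proof of Lemma \ref{l: free SWIR commutator move maximally}, so that the commutator $[g,h]$ strictly increases every point of a neighbourhood inside each relevant interval. Colour-density of $\Q_n$ supplies realisations of each colour within any interval.

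For the remaining cases, suppose $K$ contains only left- or right-bounded elements. If $K$ contains a left-bounded element $g\notin R(\Q_n)$, I show $L(\Q_n)\sub K$. Let $f\in L(\Q_n)$ fix $(-\infty,a)$. I conjugate $g$ so that its support is contained in $(a',\infty)$ and it moves points arbitrarily far right. The previous argument, applied inside the copy $(a',\infty)\cong\Q_n$, then shows that every element of $\Aut(\Q_n)_{(-\infty,a']}$ is a product of conjugates of $g$ and $g^{-1}$ by elements of that stabiliser. I use the restricted form of Fact \ref{f:Li main thm} here, because that stabiliser is isomorphic to $\Aut(\Q_n)$. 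The right-bounded case is symmetric. If $K\sub B(\Q_n)$ is non-trivial, I use the standard fact for bounded elements: any bounded $f$ is a product of a few conjugates of any non-trivial bounded $g$. To obtain this, I first conjugate $g$ so that its support interval contains the support of $f$, then iterate commutators as in the $\Q$ case. Combining the cases gives the list: $K\in\{B,L,R,\Aut(\Q_n)\}$. Indeed, $K\sub B$ forces $K=B$; $K\sub L$ with $K\not\sub B$ forces $K=L$; and if $K$ contains both a left-only and a right-only element, then it contains an unbounded product of them, so $K=\Aut(\Q_n)$.
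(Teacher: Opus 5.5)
Your case structure (unbounded, one-sided, bounded) and your final combination step, where a product of an element of $L(\Q_n)\setminus B(\Q_n)$ and one of $R(\Q_n)\setminus B(\Q_n)$ is unbounded, match the paper. However, the fact everything rests on is not proved: the normal closure of an unbounded element is all of $\Aut(\Q_n)$. You leave it as a plan, and the plan does not work as written.

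\emph{The replacement step is vacuous.} Replacing $g$ by something with cofinal and coinitial support does nothing, since that is what ``unbounded'' means. The real obstacles are the direction of movement and the intervals fixed by $g$.

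\emph{The end cuts are not automatic.} Suppose $g$ fixes $a=\max A$ and $gx<x$ for all $x>a$. Then every $b>a$ satisfies $a<gb<b$, so $b\not\ind_A gb$.

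\emph{The back-and-forth does not transfer.} The proof of Lemma \ref{l: free SWIR commutator move maximally} uses freeness to pass from $g\bar b\ind_{U\bar b}\bar d$ to independence over $gA_i$ and over $(h_i^{-1}gh_i)(A_i)$. The order SWIR is not free, as the example after Definition \ref{d:free SWIR} shows. Without a substitute, $[g,h]=g^{-1}h^{-1}gh$ can fix an interval pointwise. If $g$ is the identity on $(a_1,a_2)$ and an early finite stage of $h$ sends $a_1,a_2$ to the endpoints of an interval fixed by $g$, then $[g,h]$ is the identity on $(a_1,a_2)$. Then no realisation of the type ``$a_1<x<a_2$'' is independent from its image. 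Controlling this is the genuinely non-free content of Li's Theorem 5.2.12 and Lemma 5.2.16. The paper packages these as Lemma \ref{l: unbounded norm closure}, which you could simply have cited.

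\emph{The remaining cases also have gaps, which the paper avoids with one device.}
\begin{itemize}
\item One-sided case: conjugating $g$ so that $\supp(g)\sub(a',\infty)$ does not make $g$ unbounded as an automorphism of $(a',\infty)$. You also need $\supp(g)$ to be coinitial there, and $\inf\supp(g)$ may be an irrational cut, so a suitable $a'\in\Q_n$ need not exist.
\item Bounded case: you appeal to an unproved ``standard fact'' (Higman's theorem, for $\Q$) and an unspecified iteration of commutators.
\end{itemize}

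The paper instead lets $C$ be the convex closure of $\supp(g)$. Every point of $C$ lies strictly between two points of $\supp(g)$: for $c\in\supp(g)$, either $g^{-1}c<c<gc$ or the reverse. So $C$ is colour-dense without endpoints, hence $C\cong\Q_n$, and $g|_C$ is unbounded in $C$. By Lemma \ref{l: unbounded norm closure}, every automorphism supported in $C$ lies in the normal closure of $g$. A single conjugation then gives $B(\Q_n)$, $L(\Q_n)$, $R(\Q_n)$ or $\Aut(\Q_n)$, according to whether $C$ is bounded, bounded only below, bounded only above, or all of $\Q_n$.
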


Note that the above result in the case $n = 1$ (giving the normal subgroups of $\Aut(\Q)$) is classical, often attributed to Higman and first shown in \cite{Llo64}, \cite{Hig54}. (\cite{Hig54} shows that $B(\Q)$ is simple, and \cite{Llo64} finds the full lattice of normal subgroups of $\Aut(\Q)$. See \cite{Gla82} for an exposition of the result for $\Aut(\Q)$ via classical methods.) The result for $n > 1$ is new, though the authors imagine it could be obtained via classical methods as a straightforward generalisation of the case $n = 1$. 

We prove Lemma \ref{l: normal subgroups Q_n} by a new approach, using the below result of Li:

\begin{lem}[immediate generalisation of {\cite[Cor.\ 5.2.21]{Li21}}] \label{l: unbounded norm closure}
    Let $\mc{M}$ be a strong \Fr structure with a SWIR, and let $\mc{M}^<$ be its generic linear order expansion. Let $g\in \Aut(\mc{M}^<)$ be unbounded. Then the normal closure of $g$ is $\Aut(\mc{M}^<)$.
\end{lem}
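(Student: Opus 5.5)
The plan is to reduce to Fact \ref{f:Li main thm}. This means showing that $\mc{M}^<$ carries a SWIR and that the normal closure of $g$ contains an automorphism $k$ such that $k$ moves almost R-maximally and $k^{-1}$ moves almost L-maximally. Fact \ref{f:Li main thm} then writes every element of $\Aut(\mc{M}^<)$ as a product of conjugates of $k^{\pm1}$, hence of $g^{\pm1}$.

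\textbf{Step 1: a SWIR on $\mc{M}^<$.} Let $\ind^{\mc{M}}$ be the given SWIR on $\mc{M}$ and $\ind^{<}$ the SWIR on $\Q$ from Example \ref{e:SWIRs}. Define $B \ind_A C$ in $\mc{M}^<$ to hold iff $B \ind^{\mc{M}}_A C$ and $B \ind^<_A C$. Invariance, monotonicity and stationarity follow coordinatewise, since a type in $\mc{M}^<$ is a type in $\mc{M}$ together with an order type.

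For existence, first take $C' \equiv^{\mc{M}}_A C$ with $B \ind^{\mc{M}}_A C'$; by strong amalgamation we may take $C'$ disjoint from $B$ over $A$. Then impose on $BAC'$ the canonical order amalgam of the two orders over $A$. Since $\mc{M}^<$ is the generic order expansion, every order on a finite substructure of $\mc{M}$ occurs in its age. So this structure embeds over $BA$ by the extension property, and the copy of $C$ it gives is the required $C'$.

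\textbf{Step 2: an increasing element in the normal closure.} Since $g$ is unbounded, above every point and below every point there are points moved by $g$, and after replacing $g$ by $g^{-1}$ where necessary, points moved upward. I would build $f$ by back-and-forth so that $k_0 = g\,(g^{-1})^{f}$, or a short product of such conjugates, satisfies $k_0(x) > x$ for all $x$. At each stage we have a finite partial map; we take the next point $x$, choose a point $y$ far out that $g$ moves upward, and use the extension property of $\mc{M}^<$ (genericity of the order) to send $x$ to a suitable image. Unboundedness is exactly what guarantees such moved points exist arbitrarily far out in both directions.

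\textbf{Step 3: a maximally moving element.} With $k_0$ satisfying $k_0(x) > x$ everywhere, I would run the construction of Lemma \ref{l: free SWIR commutator move maximally} adapted to this setting, producing $h$ such that $k = [k_0, h]$ (or $k_0^{h}$) works. Enumerate the exterior types $p_i$ over $A_i$. At stage $i$, choose a realisation $\bar b$ of $p_i$ away from the current finite domain, and use the existence axiom for $\ind^{\mc{M}}$ to place the image $k(\bar b)$ so that $\bar b \ind^{\mc{M}}_{A_i} k(\bar b)$. Use the increasing behaviour of $k_0$, together with genericity of the order, to push $k(\bar b)$ above $\bar b$ inside each cut of $A_i$ while keeping it in the same cut, so that $\bar b \ind^<_{A_i} k(\bar b)$. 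By Fact \ref{f: move ext types}, exterior types suffice. The $L$-maximal condition for $k^{-1}$ is obtained symmetrically.

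\textbf{Main obstacle.} I expect the main difficulty to be Step 3, and in particular the lack of freeness: $\ind^<$ is not free, so the independence $\bar b \ind^{\mc{M}}_{U\bar b} \bar d$ obtained over the large set $U\bar b$ does not descend automatically to $A_i$, as it did in Lemma \ref{l: free SWIR commutator move maximally}. I would handle the order component by hand, choosing $\bar d$ inside the correct cuts of $A_i$. I would use Step 2 precisely to ensure that points can always be moved upward within each cut of $A_i$, which is what $\bar b \ind^<_{A_i} k(\bar b)$ requires.
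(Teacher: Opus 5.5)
\textbf{There is a genuine gap, in Step 3.} Your Step 1 is fine. The overall architecture is also reasonable: a strictly increasing element in the normal closure, then a commutator moving almost R-/L-maximally, then Fact \ref{f:Li main thm}. That is plausibly the shape of the argument the paper imports from Li. But Step 3 carries all the weight, you do not carry it out, and the mechanism you point to does not work under the lemma's hypotheses.

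Run the construction of Lemma \ref{l: free SWIR commutator move maximally} with $k_0$ in place of $g$. Once $h''$ is fixed, the type of $\bar d$ over $U\bar b$ is forced to be $h''^{-1}\cdot\tp(k_0\bar c/h''(U)\bar c)$, and $k_0A_i\subseteq U$. The only freedom left is how $\bar d$ sits relative to $k_0\bar b$. So (Ex) gives only $\bar b\ind_{k_0^{-1}(U\bar b)}k(\bar b)$, and shrinking the base to $A_i$ is exactly freeness.

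You note that $\ind^<$ is not free. But the lemma assumes only that $\mc M$ has \emph{some} SWIR, and that need not be free either. For example, take $\mc M=(\Q,\prec)$ with its SWIR, so that $\mc M^<$ is the generic $2$-linear order. Hence \emph{use the existence axiom for $\ind^{\mc M}$ to place $k(\bar b)$ with $\bar b\ind^{\mc M}_{A_i}k(\bar b)$} is not available. The $\mc M$-component has the same descent problem as the order component, and you offer nothing for it.

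For the order component, you also cannot choose $\bar d$ \emph{inside the correct cuts of $A_i$} at that stage. Its position relative to $k_0A_i\subseteq U$ is part of the forced type. It has to be arranged in advance through the choice of $\bar b$ and $\bar c=h(\bar b)$, in the spirit of conditions $(\alpha)$--$(\gamma)$ in Proposition \ref{p: Aut Srho simple}. The L-maximal half then needs its own construction (compare $h_{2i}$ with $h_{2i+1}$ there) rather than following by symmetry. Strict increase of $k_0$ supplies none of this, since $k=[k_0,h]$ need not be increasing.

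Your fallback $k=k_0^h$ cannot work in general. In the ordered random graph, a back-and-forth gives a strictly increasing $k_0$ with $x$ adjacent to $k_0(x)$ for every $x$. This property is invariant under conjugation. It rules out $b\ind_{\{a\}}k(b)$ for every realisation $b$ of the type $\{x>a,\ x\text{ not adjacent to }a\}$. The missing argument is exactly what the paper imports from Li's thesis (her Theorem 5.2.12 and Lemma 5.2.16, already stated there for $\mc M^<$) instead of reproving.

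\textbf{Step 2 is only a sketch, and the candidate you write down is wrong in general.} Suppose $g(x)\le x$ for all $x$ and $g$ has fixed points; this is compatible with $g$ being unbounded. At any fixed point $q$ of $(g^{-1})^f$ you get $g(g^{-1})^f(q)=g(q)\le q$. Likewise $g^{-1}g^f$ fails at the fixed points of $g$, so swapping $g$ and $g^{-1}$ does not help. One needs something like $g^{-1}(g^{-1})^f$, with $f$ chosen so that the two factors have disjoint fixed-point sets. Further care is needed when $g$ moves some points up and others down. This part should be repairable by a back-and-forth using genericity of the order, but as written it is not a proof.
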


Li only states and proves the above result for the case $\mc{M}^< = \Q$ (see \cite[Corollary 5.2.21]{Li21}), and in this case the result is classical (\cite{Llo64}). The proof of Li using SWIRs is however new, and her argument directly generalises to give Lemma \ref{l: unbounded norm closure}: in the proof of \cite[Corollary 5.2.21]{Li21}, Li applies Theorem 5.2.12, Lemma 5.2.16 and Corollary 4.2.4 from the same paper to $\Q$, but these results are actually stated for $\mc{M}^<$ as above -- so her argument immediately generalises, giving Lemma \ref{l: unbounded norm closure}.

\begin{proof}[Proof of Lemma \ref{l: normal subgroups Q_n}]
    Let $g \in \Aut(\Q_n) \setminus \{\id\}$. Let $C$ be the convex closure of $\supp(g)$. We first show that for each $c \in C$, there are $a, b \in \supp(g)$ with $a < c < b$: if $c \notin \supp(g)$ this is immediate, and if $c \in \supp(g)$ we have $gc, g^{-1}c \in \supp(g)$ and $(g^{-1}c < c < gc) \vee (gc < c < g^{-1}c)$. So $g|_C \in \Aut(C)$ is unbounded within $C$, and $C$ is without endpoints. As $\Q_n$ is colour-dense, so is $C$ by convexity, and thus $C \cong \Q_n$. Let $\theta : C \to \Q_n$ be an isomorphism. Then $\theta$ induces an isomorphism $\Theta : \Aut(C) \to \Aut(\Q_n)$ given by $\Theta(f) = \theta \circ f \circ \theta^{-1}$. As $g|_C \in \Aut(C)$ is unbounded, we have that $\Theta(g|_C) \in \Aut(\Q_n)$ is unbounded and so has normal closure $\Aut(\Q_n)$ by Lemma \ref{l: unbounded norm closure}. So $g|_C$ has normal closure $\Aut(C)$. Let $K$ be the normal closure of $g$ in $\Aut(\Q_n)$. Via the isomorphism $\Aut(C) \to \{h \in \Aut(\Q_n) \mid \supp(h) \sub C\}$ given by $f \mapsto f \cup \id_{\Q_n \setminus C}$ we have $\{h \in \Aut(\Q_n) \mid \supp(h) \sub C\} \sub K$.

    We now show the following:
    \begin{enumerate}[label=(\roman*)]
        \item \label{g unbd Aut Qn} if $g$ is unbounded then $K = \Aut(\Q_n)$;
        \item \label{g bd BQn} if $g$ is bounded then $K = B(\Q_n)$;
        \item \label{g unbd abv LQn} if $g$ is unbounded above and bounded below then $K = L(\Q_n)$;
        \item \label{g unbd blw RQn} if $g$ is unbounded below and bounded above then $K = R(\Q_n)$.
    \end{enumerate}

    \ref{g unbd Aut Qn}: As $g$ is unbounded we have $C = \Q_n$, and as $\{h \in \Aut(\Q_n) \mid \supp(h) \sub C\} \sub K$ we have $K = \Aut(\Q_n)$.

    \ref{g bd BQn}: Let $a, b \in C$ with $a < b$. Let $f \in B(\Q_n)$, and let $a', b' \in \Q_n$ with $\supp(f) \sub [a', b']$. Let $h \in \Aut(\Q_n)$ with $h(a) = a'$, $h(b) = b'$. Then $\supp(f^h) \sub C$, so $f^h \in K$ and thus $f \in K$. As $g$ is itself bounded and $B(\Q_n) \nrm \Aut(\Q_n)$, we have $K = B(\Q_n)$. 

    \ref{g unbd abv LQn}: The argument is similar to \ref{g bd BQn}. Let $a \in C$. As $g$ is unbounded above we have $[a, \infty) \sub C$. Let $f \in L(\Q_n)$, and let $a' \in \Q_n$ with $\supp(f) \sub [a', \infty)$. Let $h \in \Aut(\Q_n)$ with $h(a) = a'$. Then $\supp(f^h) \sub C$; the rest of the argument follows \ref{g bd BQn}. The proof of \ref{g unbd blw RQn} is likewise similar to \ref{g unbd abv LQn}.
    
    Note that for any $g \in L(\Q_n) \setminus B(\Q_n)$, $f \in R(\Q_n) \setminus B(\Q_n)$, the composition $fg$ is unbounded, and so we are done.
\end{proof}

\begin{defn} \label{d: La Ra Ba}
    Let $a \in \Sn$. We define the following subgroups of $\Aut(\Sn)_a$:
    \begin{align*}
        L_a &= \{g \in \Aut(\Sn)_a \mid \all k < n\, \ex b_k \in \langle a^{\ua k}, a^{\ua k+1} \rangle \cap \Sn \text{ with } \langle a^{\ua k}, b_k \rangle \cap \Sn \text{ fixed pointwise by } g\};\\
        R_a &= \{g \in \Aut(\Sn)_a \mid \all k < n\, \ex b_k \in \langle a^{\ua k}, a^{\ua k+1} \rangle \cap \Sn \text{ with } \langle b_k, a^{\ua k+1} \rangle \cap \Sn \text{ fixed pointwise by } g\};\\
        B_a &= L_a \cap R_a.
    \end{align*}

    Note that if $g \in \Aut(\Sn)_a$ fixes some $\langle u, v \rangle \cap \Sn \sub \langle a, a^{\ua 1} \rangle$ pointwise, then for all $k < n$, as $\Sn^{\ua k}$ is dense in $U$, we have that $g$ fixes $\langle u^{\ua k}, v^{\ua k} \rangle \cap \Sn$ pointwise, and so $L_a, R_a, B_a$ can also be described in terms of $\langle a, a^{\ua 1} \rangle$ alone:
    \begin{align*}
        L_a &= \{g \in \Aut(\Sn)_a \mid \ex b \in \langle a, a^{\ua 1} \rangle \cap \Sn \text{ with } \langle a, b \rangle \cap \Sn \text{ fixed pointwise by } g\};\\
        R_a &= \{g \in \Aut(\Sn)_a \mid \ex b \in \langle a, a^{\ua 1} \rangle \cap \Sn \text{ with } \langle b, a^{\ua 1} \rangle \cap \Sn \text{ fixed pointwise by } g\}.
    \end{align*}
\end{defn}

\begin{lem} \label{l: norm subgps of point stab}
    For each $a \in \Sn$, the non-trivial proper normal subgroups of $\Aut(\Sn)_a$ are $L_a$, $R_a$ and $B_a$.
\end{lem}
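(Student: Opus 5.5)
The plan is to transport Lemma \ref{l: normal subgroups Q_n} along the isomorphism $\zeta : \Aut(\Sn)_a \to \Aut(I_a)$ of Lemma \ref{l: Ga Ia}\ref{Ga Aut Ia iso}, combined with an isomorphism $\theta : I_a \to \Q_n$ from Lemma \ref{l: Ga Ia}\ref{Ia Qn iso}. Conjugation by $\theta$ gives a group isomorphism $\Aut(I_a) \to \Aut(\Q_n)$. Composing it with $\zeta$ yields $\Phi : \Aut(\Sn)_a \to \Aut(\Q_n)$. Since normal subgroups correspond bijectively under an isomorphism, the non-trivial proper normal subgroups of $\Aut(\Sn)_a$ are exactly $\Phi^{-1}(L(\Q_n))$, $\Phi^{-1}(R(\Q_n))$ and $\Phi^{-1}(B(\Q_n))$. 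It therefore suffices to show that
\[
\zeta(L_a) = L(I_a), \qquad \zeta(R_a) = R(I_a), \qquad \zeta(B_a) = B(I_a),
\]
where $L(I_a)$, $R(I_a)$, $B(I_a)$ are defined in $\Aut(I_a)$ exactly as for $\Q_n$. Under conjugation by $\theta$ these correspond to $L(\Q_n)$, $R(\Q_n)$, $B(\Q_n)$, because $\theta$ is order-preserving and so preserves initial and final segments.

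To check $\zeta(L_a) = L(I_a)$, I would use the second description of $L_a$ in Definition \ref{d: La Ra Ba}, which refers only to $\langle a, a^{\ua 1} \rangle$. Let $g \in L_a$, so $g$ fixes $\langle a, b \rangle \cap \Sn$ pointwise for some $b \in \langle a, a^{\ua 1}\rangle \cap \Sn$. The note in that definition shows that $g$ then fixes $\langle a^{\ua k}, b^{\ua k}\rangle \cap \Sn$ pointwise for every $k$. Hence $\hat g$ fixes pointwise each colour class of $\langle a, b\rangle \cap \widehat{\Sn}$: a point of colour $k$ there is $v^{\ua k}$ with $v \in \langle a^{\ua -k}, b^{\ua -k}\rangle \cap \Sn$, and $\hat g (v^{\ua k}) = g(v)^{\ua k}$. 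So $\zeta(g)$ fixes the initial segment $(-\infty, b)$ of $I_a$, i.e.\ $\zeta(g) \in L(I_a)$.

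Conversely, suppose $\zeta(g)$ fixes $(-\infty, c)$ in $I_a$ for some $c \in I_a$. By colour-density I can pick $b < c$ in $I_a$ of colour $0$, that is, $b \in \Sn$. The colour-$0$ points of $(-\infty, b)$ are exactly $\langle a, b\rangle \cap \Sn$, and $\hat g$ agrees with $g$ on $\Sn$, so $g$ fixes $\langle a, b \rangle \cap \Sn$ pointwise and $g \in L_a$. The case of $R_a$ is symmetric, using final segments $(b, a^{\ua 1})$, and the case of $B_a$ follows by intersecting.

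I expect the main obstacle to be bookkeeping rather than anything conceptual. One point is reconciling the $L_a$ definition's "for all $k$" clause with the colour structure of $I_a$. The other is that the endpoint $b$ in $L(I_a)$ may be of any colour, whereas $L_a$ requires $b \in \Sn$; colour-density, as used above, handles this. Non-triviality and properness transfer automatically through the isomorphism.
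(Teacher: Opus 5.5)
Your proposal is correct and follows the paper's approach: the paper simply declares the lemma immediate from Lemma \ref{l: normal subgroups Q_n} and Lemma \ref{l: Ga Ia}. Your write-up makes explicit the check the paper leaves implicit, namely that $\zeta$ (composed with conjugation by $\theta$) carries $L_a$, $R_a$, $B_a$ onto the left-bounded, right-bounded and bounded automorphisms, including the colour-density step that produces a colour-$0$ endpoint $b \in \Sn$.
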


\begin{proof}
    This is immediate by Lemma \ref{l: normal subgroups Q_n} and Lemma \ref{l: Ga Ia}.
\end{proof}

\begin{lem} \label{l: K cap Ga nontrivial}
    Let $K \nrm \Aut(\Sn)$, $K \neq 1$. Let $a \in \Sn$. Then $K \cap \Aut(\Sn)_a \neq 1$.
\end{lem}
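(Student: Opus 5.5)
We mimic the first paragraph of the proof of Lemma \ref{l: simple G_A simple G}, using a commutator to manufacture a non-trivial element of $K$ fixing a point; transitivity of $\Aut(\Sn)$ then moves that point to $a$. Write $G = \Aut(\Sn)$. Since $K$ is normal and $G$ is transitive on $\Sn$ (Lemma \ref{l: Sn ultrahomog} applied to one-point substructures), it suffices to find \emph{some} point $c \in \Sn$ with $K \cap G_c \neq 1$. Indeed, if $h \in G$ satisfies $h(c) = a$ and $1 \neq k \in K \cap G_c$, then $hkh^{-1} \in K \cap G_a$ is non-trivial.

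Fix $f \in K \setminus \{\id\}$ and choose $v \in \Sn$ with $f(v) \neq v$. We will find $c \in \Sn$ with $c \notin \{v, f(v)\}$ and $f(c) \notin \{v, f(v)\}$, and then $g \in G$ fixing $c$, $f(c)$ and $v$ but moving $f(v)$. Since $g(v) = v$, the commutator $[g,f] = g^{-1} f^{-1} g f$ satisfies $[g,f](v) = g^{-1}f^{-1}(g f v)$; as $gfv \neq fv$, we get $f^{-1}gfv \neq v$, and because $g^{-1}$ fixes $v$ this gives $[g,f](v) \neq v$. Moreover $[g,f] = g^{-1}\cdot f^{-1}gf$ lies in $K$ by normality. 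It fixes $c$: first $f(c) = c'$, then $g(c') = c'$ because $g$ fixes $f(c)$, then $f^{-1}(c') = c$, and finally $g^{-1}(c) = c$. Hence $1 \neq [g,f] \in K \cap G_c$. (Either convention for the commutator works the same way.)

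The main step is producing $g$. Since $\Sn$ is infinite, the point $c$ exists trivially. Set $A = \{c, f(c), v\}$, a finite substructure. We need a realisation of $\tp(f(v)/A)$ distinct from $f(v)$; extending the map $A \cup \{f(v)\} \to A \cup \{w\}$, with $A$ fixed and $f(v) \mapsto w$, via ultrahomogeneity then gives the required $g$. We find $w$ using Lemma \ref{l: Sn ultrahomog}. The point $f(v) \notin \hat{A}$: it is not in $A$ by the choice of $c$, and it cannot equal $x^{\ua k}$ with $k \neq 0$, since $\Sn$ is disjoint from its rotations $\Sn^{\ua k}$. So $f(v)$ lies in some open interval $\langle u, u' \rangle$ for a cut $(u,u')$ of $\hat{A}$. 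By density of rational-argument points, this interval contains another point $w \in \Sn$ with $w \neq f(v)$. Applying the lemma to $\id_A$, both extensions $f(v) \mapsto f(v)$ and $w \mapsto f(v)$ are isomorphisms, so $\tp(w/A) = \tp(f(v)/A)$, as required. The only mild subtlety is checking that $f(v)$ avoids $\hat{A}$, which is exactly the rational-argument remark above.
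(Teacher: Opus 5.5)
Your proof is correct, but it organises the argument differently from the paper. The paper works directly at the prescribed point $a$. If $f \notin \Aut(\Sn)_a$, then $S_k(a, f(a))$ holds for some $k$. After replacing $f$ by $f^{-1}$ to get $k \leq n-1-k$, it uses the local-order geometry to choose $b$ with $S_0(a,b) \wedge S_k(b, f(a))$. It then checks via the relations $S_j$ that $\{b, f(b)\} \cap \{a, f(a)\} = \varnothing$ and $f(b) \neq b$, and takes the commutator of $f$ with some $g$ fixing $a, f(a), b$ and moving $f(b)$.

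You instead let the point to be stabilised float. You need only a single point $v$ in the support of $f$, and you pick $c$ by avoiding finitely many points, with no condition relating $c$ to $\supp(f)$. You then transport the resulting non-trivial element of $K \cap \Aut(\Sn)_c$ to $a$ by transitivity and normality. This is exactly the mechanism of the first paragraph of the proof of Lemma \ref{l: simple G_A simple G}.

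What this buys you is that no geometry of $\Sn$ is needed beyond two facts: transitivity, and that the pointwise stabiliser of a finite $A$ moves every point outside $A$. You verify the second concretely from the cut lemma, Lemma \ref{l: Sn ultrahomog}, where the paper simply cites ultrahomogeneity and strong amalgamation. So the same argument works in any transitive relational strong \Fr structure. The paper's version avoids the conjugation step, at the cost of the case analysis with the $S_k$ and the $f \leftrightarrow f^{-1}$ symmetry.

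One small slip: the grouping $[g,f] = g^{-1}\cdot f^{-1}gf$ does not exhibit membership in $K$, since neither factor need lie in $K$. Write instead $[g,f] = (g^{-1}f^{-1}g)\cdot f$, a conjugate of $f^{-1}$ times $f$.
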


\begin{proof}
    Take $f \in K \setminus \{\id\}$. If $f \in \Aut(\Sn)_a$, then the claim immediately follows. So suppose $f \notin \Aut(\Sn)_a$, and thus there is $k < n$ with $S_k(a, f(a))$. Hence $S_{n-1-k}(f(a), a)$. By replacing $f$ with $f^{-1}$ if necessary, we may assume $k \leq n -1-k$. As we cannot have $S_0(a, f(a)) \wedge S_0(f(a), a)$, we therefore have $n-1-k > 0$ and thus $\neg S_0(f(a), a)$. Let $b \in \Sn$ with $S_0(a, b) \wedge S_k(b, f(a))$. Then $S_0(f(a), f(b))$, so $\{b, f(b)\} \cap \{a, f(a)\} = \varnothing$. If $f(b) = b$, then $S_0(a, b) \wedge S_0(f(a), b)$, which implies $k=0$ and hence $S_0(b, f(a))$, contradiction. So $f(b) \neq b$. 

    Take $g \in \Aut(\Sn)$ such that $g$ fixes each of $a, f(a), b$ but does not fix $f(b)$ (the existence of such $g$ is straightforward by ultrahomogeneity of $\Sn$ and the fact that $\Age(\Sn)$ has strong amalgamation). We have $[f, g](a) = a$, so $[f, g] \in \Aut(\Sn)_a$. Also $[f, g] = f^{-1} f^g \in K$, and $[f, g] \neq \id$ as $[f, g](b) \neq b$.
\end{proof}

\begin{lem} \label{l: pt stabs sub norm subgp Aut Sn}
    Let $K \nrm \Aut(\Sn)$, $K \neq 1$. Then $\Aut(\Sn)_a \sub K$ for all $a \in \Sn$.
\end{lem}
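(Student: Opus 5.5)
The plan is to show that $K \cap \Aut(\Sn)_a$ contains an element of $L_a \setminus B_a$ and an element of $R_a \setminus B_a$. Their product is then unbounded in the sense of $I_a$, and Lemma \ref{l: norm subgps of point stab} (through Lemma \ref{l: normal subgroups Q_n} and Lemma \ref{l: Ga Ia}) forces $K \cap \Aut(\Sn)_a = \Aut(\Sn)_a$.

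\textbf{Step 1.} By Lemma \ref{l: K cap Ga nontrivial}, for every $b \in \Sn$ the group $K \cap \Aut(\Sn)_b$ is a non-trivial normal subgroup of $\Aut(\Sn)_b$. By Lemma \ref{l: norm subgps of point stab} it is one of $B_b, L_b, R_b, \Aut(\Sn)_b$, and each of these contains $B_b$. Hence $B_b \sub K$ for all $b \in \Sn$. Recall that, via $\zeta$, $B_b$ corresponds to $B(I_b)$: the automorphisms of $I_b \cong \Q_n$ whose support lies in a closed subinterval $[u, v]$ with $b < u \leq v < b^{\ua 1}$.

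\textbf{Step 2 (the main construction).} Fix $a$. Choose $b \in \Sn$ with $S_0(b, a)$, and choose $c \in \Sn$ with $a < c$ in $I_b$ and $c$ close enough to $a$ that $c \in \langle a, a^{\ua 1} \rangle$ and $c < b^{\ua 1}$. The open interval $J = \langle a, c \rangle \cap \dom(\widehat{\Sn})$ is a convex subset of $I_b$ which is colour-dense without endpoints, so $J \cong \Q_n$. Take an automorphism $\phi$ of $J$ that moves points arbitrarily close to $a$, for instance an element of $\Aut(J)$ with no fixed points, and extend it by the identity to some $\psi \in \Aut(I_b)$. Then $\supp(\psi) \sub [a, c]$, which is a closed subinterval strictly inside $I_b$, so $\psi \in B(I_b)$. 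Put $h = \zeta^{-1}(\psi) \in B_b \sub K$. Since $\psi$ fixes $a$, $h$ fixes $a$, so $h \in K \cap \Aut(\Sn)_a$.

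To read off where $h$ sits in $I_a$, note that $\hat{h}$ commutes with $\cdot^{\ua k}$. So the support of $\hat{h}$ on the circle is $\bigcup_k (\langle a, c\rangle)^{\ua k}$, and its trace on $\langle a, a^{\ua 1}\rangle$ is exactly $\langle a, c \rangle \cap \dom(\widehat{\Sn})$. Consequently $\zeta_a(h)$ fixes $(c, \infty)$ in $I_a$ but moves points arbitrarily close to the left end $a$. Thus $h \in R_a \setminus L_a$, using the description of $L_a, R_a$ in Definition \ref{d: La Ra Ba}.

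\textbf{Step 3 (the symmetric element).} Repeat the construction with $b$ chosen so that $a$ lies just to the left: take $S_0(a, b)$ and $c$ with $S_0(c, a)$, working in $I_{b^{\ua -1}}$. This gives $h' \in K \cap (L_a \setminus R_a)$. Then $hh'$ is unbounded in $I_a$, as noted at the end of the proof of Lemma \ref{l: normal subgroups Q_n}. Since $K \cap \Aut(\Sn)_a$ is a normal subgroup of $\Aut(\Sn)_a$ containing an unbounded element, it equals $\Aut(\Sn)_a$.

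I expect the main obstacle to be the bookkeeping in Step 2: checking that the hat-extension of an automorphism bounded in $I_b$ really has support in $I_a$ accumulating only at the left endpoint. This requires care with the rotations $\cdot^{\ua k}$ and the colours, and with the choice of $c$ so that $[a, c]$ lies strictly inside both $\langle b, b^{\ua 1}\rangle$ and $\langle a, a^{\ua 1}\rangle$. Everything else follows directly from the lemmas already established.
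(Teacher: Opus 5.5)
Your proof is correct, but it runs in the opposite direction to the paper's. Both arguments start from $B_x \sub K$ for every $x \in \Sn$, via Lemmas \ref{l: K cap Ga nontrivial} and \ref{l: norm subgps of point stab}. Both then use the fact that whether an element of a point stabiliser is bounded depends on the base point.

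The paper takes an \emph{arbitrary} $f \in L_a$, say fixing $\langle a, b \rangle \cap \Sn$ pointwise, and picks $c \in \langle a, b\rangle \cap \Sn$. Then $f$ fixes $c$ and is the identity on $(\langle c, b \rangle \cup \langle a^{\ua 1}, c^{\ua 1}\rangle) \cap \Sn$, so $f \in B_c \sub K$. This gives $L_a \cup R_a \sub K$ directly, with no construction and no support computation.

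You instead build one explicit element of $B_b$, for a base point $b$ just before $a$, and check that relative to $a$ it lies in $R_a \setminus L_a$. This needs several ingredients, all of which you handle correctly:
\begin{itemize}
\item a non-left-bounded automorphism of $J \cong \Q_n$, extended by the identity;
\item the identification of $\zeta(B_b)$ with the bounded automorphisms of $I_b$;
\item the rotation bookkeeping showing that $\supp(\hat h)$ meets $\langle a, a^{\ua 1} \rangle$ exactly in $J$.
\end{itemize}
The paper's route is shorter and gives the stronger intermediate statement $L_a, R_a \sub K$. Yours needs only one witness on each side, which is all that the classification in Lemma \ref{l: norm subgps of point stab} requires.

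Two small points. First, in Step 3, $I_{b^{\ua -1}}$ is not defined, since $b^{\ua -1} \notin \Sn$. The easiest repair is to keep the $b$ from Step 2 and take $c \in \Sn$ with $S_0(b, c) \wedge S_0(c, a)$. The resulting $h' \in B_b$ fixes $a$, and the support of $\hat{h}'$ meets $\langle a, a^{\ua 1}\rangle$ in $\langle c^{\ua 1}, a^{\ua 1}\rangle \cap \dom(\widehat{\Sn})$, so $h' \in L_a \setminus R_a$.

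Second, the product $hh'$ is unnecessary. A normal subgroup of $\Aut(\Sn)_a$ containing an element outside $L_a$ and an element outside $R_a$ is already all of $\Aut(\Sn)_a$, by Lemma \ref{l: norm subgps of point stab}.
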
 

\begin{proof}
    By Lemma \ref{l: K cap Ga nontrivial} and Lemma \ref{l: norm subgps of point stab} we have $B_a \sub K$ for each $a \in \Sn$. Let $a \in \Sn$ and let $f \in L_a$. There is $b \in \langle a, a^{\ua 1} \rangle \cap \Sn$ such that $f$ fixes $\langle a, b \rangle \cap \Sn$ pointwise. Take $c \in \langle a, b \rangle \cap \Sn$. Then $f$ is the identity on $(\langle c, b \rangle \cup \langle a^{\ua 1}, c^{\ua 1} \rangle) \cap \Sn$, so $f \in B_c$ and thus $f \in K$. So for all $a \in \Sn$ we have $L_a \sub K$, and by an analogous argument we also have $R_a \sub K$. So for all $a \in \Sn$ we have $K \cap \Aut(\Sn)_a \supseteq L_a \cup R_a$, and as $K \cap \Aut(\Sn)_a \nrm \Aut(\Sn)_a$, we therefore have $K \cap \Aut(\Sn)_a = \Aut(\Sn)_a$ by Lemma \ref{l: norm subgps of point stab}.
\end{proof}

\begin{manualfact}{B}
    Let $n \geq 2$. The automorphism group of the dense $\frac{2\pi}{n}$-local order $\Sn$ is simple.
\end{manualfact}
\begin{proof}
    Let $K \nrm \Aut(\Sn)$, $K \neq 1$. 

    Let $f \in \Aut(\Sn) \setminus \{\id\}$ be such that there exists $a \in \Sn$ with $S_0(a, f(a)) \vee S_0(f(a), a)$. Then there exists $b \in \Sn$ with $S_0(a, b) \wedge S_0(f(a), b)$, and so by ultrahomogeneity of $\Sn$ there exists $g \in \Aut(\Sn)$ fixing $b$ with $gf(a) = a$. Then $g \in \Aut(\Sn)_b$ and $gf \in \Aut(\Sn)_a$, so by Lemma \ref{l: pt stabs sub norm subgp Aut Sn} we have $g \in K$ and $gf \in K$, and thus $f \in K$.
    
    We now show by induction on $k < n$ that for all $f \in \Aut(\Sn) \setminus \{\id\}$ such that there exists $a \in \Sn$ with $S_k(a, f(a))$, we have $f \in K$. This then immediately implies $K = \Aut(\Sn)$.

    The case $k = 0$ has already been shown. Suppose $k \geq 1$. If $a^{\ua k+1} = a$, then $S_0(f(a), a)$, and again we have already dealt with this case. Suppose $a^{\ua k+1} \neq a$. Take $b \in \Sn$ with $S_0(f(a)^{\ua 1}, b) \wedge S_0(b, a^{\ua k+2})$, and write $b = a^{\ua k+2} e^{-i \theta}$. Take $b' = a^{\ua 2}e^{-i \theta'} \in \Sn$ with $0 < \theta' < \theta$. Then $S_1(f(a), b) \wedge S_1(a, b')$, and $\alpha(b', b) = \frac{2\pi k}{n} - \theta + \theta'$, so $S_{k-1}(b', b)$. By ultrahomogeneity of $\Sn$, there is $g \in \Aut(\Sn)$ with $gf(a) = a$, $g(b) = b'$. By the induction hypothesis, we have $g^{-1} \in K$. We have $gf \in \Aut(\Sn)_a \sub K$ and thus $f \in K$, completing the induction.
\end{proof}

\section{Ultrahomogeneous oriented graphs} \label{s: ultrahomog or graphs}

We now consider several countably infinite ultrahomogeneous oriented graphs from Cherlin's list \cite{Che98}, determining the normal subgroups of their automorphism groups. (See also \cite{PS20}, \cite{JLNW14} for further discussion of the oriented graphs in this list.) We show:

\begin{manualthm}{C}
    Recall the following structures from Cherlin's list:
    \begin{itemize}
        \item $\Sth$: the double cover of the dense local order $S(2)$;
        \item $\mb{D}_n$: the generic $n$-partite tournament;
        \item $\mb{F}$: the generic $\omega$-partite $\vec{C}_4$-tournament;
        \item $\mb{P}(3)$: the twisted partial order.
    \end{itemize}
    We have the following classification of the normal subgroups of the automorphism groups of these structures:
    \begin{enumerate}[label=(\roman*)]
        \item the only non-trivial proper normal subgroup of $\Aut(\Sth)$ is the subgroup generated by the involution sending each point to its antipodal point (\ref{ss: Sth});
        \item the normal subgroups of $\Aut(\Dn)$ are precisely $\{\pi^{-1}(K) \mid K \nrm \Sym_n\}$, where $\pi$ is the map sending each automorphism to the bijection it induces on the set of labels of the $n$ parts (\ref{ss: Dn});
        \item the only non-trivial proper normal subgroup of $\Aut(\mb{F})$ is generated by the involution which, for each part, swaps the pair of vertices within it (\ref{ss: F});
        \item $\Aut(\Pthr)$ is simple (\ref{ss: Pthr}).
    \end{enumerate}
\end{manualthm}

We consider each structure occurring in Theorem C in a separate subsection of Section \ref{s: ultrahomog or graphs}, and so the proof of Theorem C is split amongst these subsections.

See \cite[Section 5]{KSW25} for a discussion of the SWIR expansions of the structures in Theorem C, which we shall not need to directly use, though these inspired the proofs in this section. The only structure not covered by \cite{KSW25} is $\Pthr$: see Remark \ref{r: Pthr SWIR exp}.

We first establish some notation.

\begin{notn} \label{n: or graphs}
    We write $\mc{L}^\ra = \{\ra\}$ for the language of oriented graphs. Let $A$ be an oriented graph (that is, the directed edge relation $\ra$ is irreflexive and antisymmetric). For $u, v \in A$ we write $u \ic v$ if there is no edge between $u, v$ in either direction, and we write $u \sim v$ if there is an edge between $u, v$ in some direction. We write $\ic^A$ for the relation on $A$ defined by $\ic$: namely $\ic^A = \{(u, v) \in A^2 \mid u \ic v\}$. Note that $\ic^A$ is reflexive and symmetric. For $U, V \sub A$, we write $U \ra V$ if $u \ra v$ for all $u \in U$, $v \in V$, and we write $U \sim V$ if $u \sim v$ for all $u \in U$, $v \in V$.
\end{notn}

\subsection{The oriented graph \texorpdfstring{$\Sth$}{S(2)-double dot}} \label{ss: Sth}

Recall the notation in Definition \ref{d: Sn basic notn}, which we use with $n = 2$. We define an oriented graph $\Sth$ with domain $\dom(\widehat{\mb{S}(2)}) = \dom(\mb{S}(2)) \cup \dom(\mb{S}(2))^{\ua 1}$ as follows: for distinct $u, v$, we define $u \ra v$ if $\alpha(u, v) < \pi$. Note that for distinct $u, v$, we have $u \ic v$ iff $v = u^{\ua 1}$. (Also observe that $\Sth$ may be alternatively defined by extending the oriented graph $S(2)$ defined at the beginning of Section \ref{s: Sn}: we add the antipode $v^{\ua 1}$ of each vertex $v \in S(2)$ and define directed edges as before using the anticlockwise arcs.) The oriented graph $\Sth$ is denoted in \cite{Che98} by $\hat{\Q}$.

Many arguments will be analogous to the case of $\mb{S}(2)$, and when there is no significant difference we only provide a sketch. The oriented graph $\Sth$ is ultrahomogeneous, as may be seen similarly to Lemma \ref{l: Sn ultrahomog}: the only change is that for $A \fin \Sth$, we define the cuts of $\hat{A}$ to consist of the cuts $(u, u')$ as previously defined together with the elements of $A^{\ua 1} \setminus A$.

For $a \in \Sth$, we define $J_a$ to be the linear order with domain $\langle a, a^{\ua 1} \rangle \cap \dom(\Sth) = \{v \in \Sth \mid a \ra v\}$ and with the order given by the directed edge relation $\ra$. It is immediate that $J_a \cong \Q$, and as in Lemma \ref{l: Ga Ia} we have an isomorphism $\Aut(\Sth)_a \to \Aut(J_a)$, $g \mapsto g|_{J_a}$. We define $L_a$, $R_a$, $B_a$ as in Definition \ref{d: La Ra Ba} (fixing elements of $\Sth$ pointwise), and by Lemma \ref{l: normal subgroups Q_n} these are the non-trivial proper normal subgroups of $\Aut(\Sth)_a$.

Let $\sigma \in \Aut(\Sth)$ be the involution defined by $\sigma(v) = v^{\ua 1}$, and let $\Sigma = \langle \sigma \rangle$. It is straightforward to see that $\Sigma \nrm \Aut(\Sth)$. The argument now differs from that of Lemma \ref{l: K cap Ga nontrivial}:

\begin{lem} \label{l: Sth not Sigma K cap Ga nontrivial}
    Let $K \nrm \Aut(\Sth)$, $K \neq 1$, with $K \neq \Sigma$. Then for all $a \in \Sth$ we have $K \cap \Aut(\Sth)_a \neq 1$.
\end{lem}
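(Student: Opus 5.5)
Since $\Sth$ is transitive and $K$ is normal, it suffices to find a single $a \in \Sth$ with $K \cap \Aut(\Sth)_a \neq 1$. Conjugating by some $h \in \Aut(\Sth)$ with $h(a) = a'$ then carries a non-trivial element of $K \cap \Aut(\Sth)_a$ to one of $K \cap \Aut(\Sth)_{a'}$. The plan follows the commutator argument of Lemma \ref{l: K cap Ga nontrivial}, using the hypothesis $K \neq \Sigma$ to obtain an element of $K$ that is not a power of $\sigma$.

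\textbf{Step 1: a suitable element of $K$.} First I check that $\Sigma$ is the centre of $\Aut(\Sth)$ and that $\Sigma \cap K \in \{1, \Sigma\}$. Since $K \neq 1$ and $K \neq \Sigma$, there is $f \in K \setminus \Sigma$: either $K \not\subseteq \Sigma$, or $K = \Sigma$, which is excluded. If $f$ fixes some point, we are done. Otherwise $f(a) \neq a$ for all $a$.

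I then claim there is $a$ with $f(a) \neq a^{\ua 1}$. Suppose instead $f(v) = v^{\ua 1}$ for every $v$. Then $f = \sigma$ pointwise, so $f \in \Sigma$, a contradiction. Fix such an $a$. Then $a \sim f(a)$, and after replacing $f$ by $f^{-1}$ if needed, we may assume $a \ra f(a)$, i.e.\ $f(a) \in J_a$.

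\textbf{Step 2: the commutator.} I look for $b \notin \{a, f(a), a^{\ua 1}, f(a)^{\ua 1}\}$ with $f(b) \neq b$ and $f(b) \notin \{a, f(a), a^{\ua 1}, f(a)^{\ua 1}, b^{\ua 1}\}$, and then for $g \in \Aut(\Sth)$ fixing $a$, $f(a)$ and $b$ but moving $f(b)$. Then $[f, g](a) = f^{-1}g^{-1}fg(a) = f^{-1}g^{-1}f(a) = a$, so $[f, g] \in K \cap \Aut(\Sth)_a$. Also $[f, g](b) = f^{-1}g^{-1}f(b) \neq b$, since $g$ moves $f(b)$. Hence $[f, g] \neq 1$.

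To obtain $g$, I use the point-stabiliser description. Fixing $a$, the stabiliser acts as $\Aut(J_a) \cong \Aut(\Q)$, with points of $J_a^{\ua 1}$ moving in step via $\sigma$. So $g$ exists exactly when $f(b)$ does not lie in $\{a, f(a), b\}^{\wedge}$, where the hat means closing under $\ua 1$. This is the reason for the non-antipodality conditions above.

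\textbf{Step 3 (main obstacle): choosing $b$.} Every point is moved by $f$, so the condition $f(b) \neq b$ is automatic. The danger is $f(b) = b^{\ua 1}$. I would take $b$ in a small interval $\langle a, c \rangle \cap \Sth$. If $f(v) = v^{\ua 1}$ held for all $v$ in some open interval, then $f\sigma$ would fix that interval pointwise. Since $f \sigma \in K\Sigma$ is non-trivial, I would then treat this case directly: replace $f$ by a commutator $[f, h]$ with $h$ supported inside that interval, which lies in $K$ and fixes points. I expect this case analysis to be the delicate part.

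Otherwise the set of $b$ with $f(b) \neq b^{\ua 1}$ is dense. Points $b$ with $f(b) \in \{a, f(a), a^{\ua 1}, f(a)^{\ua 1}\}$ are finitely many and can be avoided. So a suitable $b$ exists and the argument concludes.
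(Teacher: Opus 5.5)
Your Steps 1 and 2 are sound and follow the paper's strategy: take $f \in K \setminus \Sigma$ and $a$ with $a \ra f(a)$, then form a commutator $[f,g]$ with $g$ fixing $a$, $f(a)$, $b$ and moving $f(b)$. The gap is in Step 3, in exactly the case you flag as delicate: $f(v) = v^{\ua 1}$ for all $v$ in some open interval $I$. There your remedy produces the identity.

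Every automorphism of $\Sth$ preserves $\ic$, so it commutes with $\sigma$. Hence $k := f\sigma$ fixes $I \cup I^{\ua 1}$ pointwise, and $f = \sigma k$. Any $h$ ``supported in $I$'' is in fact supported in $I \cup I^{\ua 1}$. So $h$ and $k$ have disjoint supports and commute, giving $[f,h] = k^{-1}\sigma^{-1}h^{-1}\sigma k h = [k,h] = \id$. Also, $f\sigma$ need not lie in $K$, so its non-triviality gives you nothing in $K$. The case cannot be ruled out: for any non-identity $k$ fixing $I \cup I^{\ua 1}$ pointwise, $\sigma k$ is a fixed-point-free element of $\Aut(\Sth) \setminus \Sigma$.

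The missing observation is that $b$ need not be near $a$. The set $X = \{v \mid f(v) \neq v^{\ua 1}\}$ equals $\supp(f\sigma)$, and this is infinite because $f\sigma \neq \id$. To see that a non-identity automorphism $k$ has infinite support:
\begin{itemize}
\item if $k$ sends some $x$ to $x^{\ua 1}$, then $k$ has no fixed points;
\item if instead $x \ra k(x)$ (replacing $k$ by $k^{-1}$ if needed), then every $y$ in the open arc $\{y \mid y \ra x,\ k(x) \ra y\}$ is moved, since $k(y) \ra k(x)$.
\end{itemize}
So, with $f$ fixed-point-free, let $E = \{a, f(a), f^{-1}(a)\}$. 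Any $b \in X$ outside the finite set $E \cup E^{\ua 1}$ satisfies all your Step 2 conditions, and the argument closes.

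The paper handles this step with a two-case argument instead. If some $u$ with $a \ra u \ra f(a)$ has $f(u) = u^{\ua 1}$, then $f$ is fixed-point-free. One then takes $w$ with $w \ra a$ and $w \ra f(a)$, and $f(w) \ra f(a) \ra w^{\ua 1}$ forces $f(w) \neq w^{\ua 1}$. Otherwise a suitable $w$ with $a \ra w \ra f(a)$ works.
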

\begin{proof}
    Let $f \in K \setminus \Sigma$. Then there is $v \in \Sth$ with $f(v) \notin \{v, v^{\ua 1}\}$: if not, then as $f \neq \id$ there is $u$ with $f(u) = u^{\ua 1}$, so $f$ has no fixed points and thus $f = \sigma$, contradiction. By taking $f^{-1}$ if necessary, we may assume $v \ra f(v)$. We claim that there is $w \notin \{v, f(v)\}$ with $f(w) \notin \{w, w^{\ua 1}\}$: if there is $u$ with $v \ra u \ra f(v)$ and $f(u) = u^{\ua 1}$, then taking $w$ with $w \ra v$, $w \ra f(v)$ we have $f(w) \neq w$, and as $w \ra v$ we have $f(w) \ra f(v)$ and thus $f(w) \neq w^{\ua 1}$. If there is no such $u$, then take $w$ with $v \ra w \ra f(v)$, and we immediately have $f(w) \notin \{w, w^{\ua 1}\}$. There is $g \in \Aut(\Sth)$ fixing each of $v, f(v), w$ and moving $f(w)$; we then have $[f, g](v) = v$, $[f, g](w) \neq w$ and $[f, g] \in K$, so $K \cap \Aut(\Sth)_v \neq 1$, and by conjugation we are done.
\end{proof}

For $K \nrm \Aut(\Sth)$, $K \notin \{1, \Sigma\}$, we have $\Aut(\Sth)_a \sub K$ for all $a \in \Sth$, by the argument of Lemma \ref{l: pt stabs sub norm subgp Aut Sn}.

\begin{prop} \label{p: nrm subgps Aut Sth}
    The only non-trivial proper normal subgroup of $\Aut(\Sth)$ is $\Sigma$, the subgroup generated by the involution $\sigma : v \mapsto v^{\ua 1}$.
\end{prop}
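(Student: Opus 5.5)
We mirror the proof of Fact B, with the extra case $K = \Sigma$ allowed. First, $\Sigma$ is a non-trivial proper normal subgroup: it is normal since $\sigma$ is central ($\sigma$ is definable, as $\sigma(v)$ is the unique $u \neq v$ with $u \ic v$, so every automorphism commutes with it). It is clearly non-trivial and proper. It remains to take $K \nrm \Aut(\Sth)$ with $K \notin \{1, \Sigma\}$ and show $K = \Aut(\Sth)$. By Lemma \ref{l: Sth not Sigma K cap Ga nontrivial} and the remark after it, we already have $\Aut(\Sth)_a \sub K$ for every $a \in \Sth$. Hence every automorphism fixing some point lies in $K$.

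Now let $f \in \Aut(\Sth)$ be arbitrary, and pick any $a \in \Sth$.

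\textbf{Case $f(a) = a$.} Then $f \in K$ directly.

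\textbf{Case $f(a) = a^{\ua 1}$.} Then $\sigma f$ fixes $a$, so $\sigma f \in K$. It therefore suffices to show $\sigma \in K$. To see this, choose $g \in \Aut(\Sth)$ with $g(a) = a^{\ua 1}$ that has a fixed point; for instance, swap $a$ and $a^{\ua 1}$ while fixing a third point $b$ with $a \ra b$. This is impossible, since $a \ra b$ would force $a^{\ua 1} \ra b$. So instead we use a two-step argument. Pick $b$ with $a \ra b$. By ultrahomogeneity there is $h \in \Aut(\Sth)$ fixing $b$ with $h(a^{\ua 1}) = c$ for some $c \neq a^{\ua 1}$ with $c \ra b$ (the $1$-types over $b$ of $a^{\ua 1}$ and $c$ agree). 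Then $h \in K$, so it suffices to handle $hf$, which sends $a$ to $c \notin \{a, a^{\ua 1}\}$. This reduces to the final case.

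\textbf{Case $f(a) \notin \{a, a^{\ua 1}\}$.} Say $a \ra f(a)$; the case $f(a) \ra a$ is symmetric. Then $f(a) \in J_a$, and there is $b$ with $a \ra b$ and $f(a) \ra b$ (take $b$ in $J_a$ just after $f(a)$). Now $a$ and $f(a)$ have the same type over $b$, since both are non-antipodal to $b$ and point to it. So by ultrahomogeneity there is $g \in \Aut(\Sth)_b$ with $g f(a) = a$. Then $g \in K$ and $gf \in \Aut(\Sth)_a \sub K$, hence $f \in K$. This matches the $k=0$ step in the proof of Fact B, and because $\Sth$ has only one "non-antipodal" relation in each direction, no induction is needed.

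The main obstacle is the antipodal case: we must verify that the composition with a point-stabiliser element genuinely moves $a$ off $\{a, a^{\ua 1}\}$, and then note $\sigma = (\sigma f) f^{-1} \in K$ as a consequence. Once every $f$ is shown to lie in $K$, we get $K = \Aut(\Sth)$. This shows that $\Sigma$ is the unique non-trivial proper normal subgroup.
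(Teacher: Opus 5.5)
Your proof is correct and is essentially the paper's argument. The generic case $f(a) \notin \{a, a^{\ua 1}\}$ is exactly the paper's step: you write $f$ as a product of an element of $\Aut(\Sth)_b$ and an element of $\Aut(\Sth)_a$, where $b$ is a common out-neighbour of $a$ and $f(a)$. The only real difference is the antipodal case. You pre-compose with an element of $\Aut(\Sth)_b$, whereas the paper, for $f \notin \Sigma$, chooses a point $v$ with $f(v) \notin \{v, v^{\ua 1}\}$ and then gets $\sigma$ from $f\sigma \in K$.

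There is one orientation slip to fix. Since $a \ra b$ forces $b \ra a^{\ua 1}$, you must take $c$ with $b \ra c$ (not $c \ra b$) for $c$ to have the type of $a^{\ua 1}$ over $b$. With that choice, $c \neq a$ is automatic, and this is what guarantees $hf(a) \notin \{a, a^{\ua 1}\}$.
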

\begin{proof}
    Let $K \nrm \Aut(\Sth)$, $K \notin \{1, \Sigma\}$. We have $\Aut(\Sth)_a \sub K$ for all $a \in \Sth$. Let $f \in \Aut(\Sth) \setminus \Sigma$. By the same argument as in the proof of Lemma \ref{l: Sth not Sigma K cap Ga nontrivial}, there is $v \in \Sth$ with $f(v) \notin \{v, v^{\ua 1}\}$, and so there is $u \in \Sth$ with $u \ra v$, $u \ra f(v)$. Let $g \in \Aut(\Sth)$ with $gu = u$, $gfv = v$. Then $g \in \Aut(\Sth)_u \sub K$ and $gf \in \Aut(\Sth)_v \sub K$, so $f \in K$. We have $f\sigma \notin \Sigma$, so $f\sigma \in K$, and thus also $\sigma \in K$. Hence $K = \Aut(\Sth)$.
\end{proof}

\subsection{The generic \texorpdfstring{$n$}{n}-partite tournament} \label{ss: Dn}

\begin{defn} \label{d: n-partite tourn}
    Let $n \in \N \cup \{\omega\}$ with $n \geq 2$. We say that an oriented graph $A$ is an \emph{$n$-partite tournament} if $\ic^A$ is an equivalence relation with $\leq n$ equivalence classes. We call the equivalence classes \emph{parts}. For $a \in A$, we write $P_a$ for the part containing $a$, and for $B \sub A$ we write $P_B = \bigcup_{a \in B} P_a$.
\end{defn}

\begin{defn} \label{d: labelled n-partite tourns}
    Let $\mc{L}^\ra_n = \{\ra, (\chi_i)_{i < n}\}$ be a relational language with $\ra$ binary and each $\chi_i$ unary. We call an $\mc{L}^\ra_n$-structure $A'$ a \emph{labelled $n$-partite tournament} if the $\mc{L}^\ra$-reduct $A$ of $A'$ is an $n$-partite tournament with each part equal to $\chi_i^{A'}$ for a unique $i < n$, and where the remaining $\chi_i^{A'}$ not equal to any part are empty. We denote the classes of finite $n$-partite tournaments and finite labelled $n$-partite tournaments by $\mc{D}^{}_n$, $\mc{D}'_n$ respectively. It is straightforward to see that each class has strong amalgamation; we denote their \Fr limits by $\mb{D}^{}_n$, $\Dpn$. For each $A' \in \mc{D}'_n$ and each embedding $f : A \to B$ in $\mc{D}_n$ (where $A$ is the $\mc{L}^\ra$-reduct of $A'$), it is straightforward to see that we have an expansion $B' \in \mc{D}'_n$ of $B$ such that $f$ is an $\mc{L}^\ra_n$-embedding $f : A' \to B'$, and so $\Dpn|_{\mc{L}^\ra} \cong \Dn$; we may assume $\Dpn|_{\mc{L}^\ra} = \Dn$. For each $i < n$ we let $P(i) = \chi_i^{\Dpn}$.
\end{defn}

Let $\Sigma = \Aut(\Dpn)$. Note that $\Sigma$ is the subgroup of $\Aut(\Dn)$ consisting of automorphisms $g$ such that $gv$ is in the same part as $v$ for all $v \in \Dn$. It is straightforward to check that $\Sigma \nrm \Aut(\Dn)$.

\begin{lem} \label{l: Aut D'n simple}
    $\Sigma$ is simple.
\end{lem}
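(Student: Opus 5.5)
We will apply Proposition \ref{p:free SWIR simple} to the labelled structure $\Dpn$; recall that $\Sigma = \Aut(\Dpn)$. Since $\mc{D}'_n$ has strong amalgamation, $\Dpn$ is a strong relational \Fr structure. It therefore suffices to do two things: construct a free SWIR on $\Dpn$, and verify condition $(\ast)$.

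\textbf{Step 1: the SWIR.} For $A, B, C \fin \Dpn$, define $B \ind_A C$ to hold if $(B \setminus A) \cap (C \setminus A) = \varnothing$ and, for all $b \in B \setminus A$ and $c \in C \setminus A$ lying in \emph{different} parts, we have $b \ra c$. Pairs in the same part carry no edge, and this is forced by the labels. The axioms are then checked as for the random tournament.
\begin{itemize}
\item (Inv) holds because automorphisms of $\Dpn$ preserve labels and $\ra$.
\item (Mon) is immediate, since the condition is checked pairwise.
\item Freeness holds because the condition does not refer to $A$ at all, apart from the requirement that $b, c \notin A$. Shrinking $A$ to some $A'$ with $(BC) \cap A \sub A' \sub A$ only removes elements from $B \setminus A'$ and $C \setminus A'$ that already lie in $BC \cap A$. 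We need to check that these elements cause no trouble.
\end{itemize}
On the last point: an element $b \in (B \cap A) \setminus A'$ must lie in $BC \cap A \sub A'$, which is impossible. So $B \setminus A' = B \setminus A$, and likewise for $C$, and freeness follows.
\begin{itemize}
\item For (Ex), take the disjoint union of $BA$ and a copy of $CA$ over $A$, with labels inherited. Join $b$ and $c$ by $b \ra c$ whenever their labels differ, and leave them non-adjacent when the labels agree. The result is a finite labelled $n$-partite tournament: non-adjacency coincides with equality of label, so it is an equivalence relation. The extension property then embeds this amalgam over $BA$ (respectively over $CA$ for (LEx)).
\item (Sta) holds because the quantifier-free type of $BC$ over $A$ is determined by the types of $B/A$ and $C/A$ together with this rule.
\end{itemize}

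\textbf{Step 2: condition $(\ast)$.} This is where the real work lies. Let $g \in \Sigma \setminus \{\id\}$, and let $p$ be an exterior $1$-type over $A$. The type $p$ specifies a label $i$, which is the part $P(i)$ containing its realisations, together with edge data to the points of $A$. Pick $v$ with $gv \neq v$; note that $v$ and $gv$ lie in the same part $P(j)$.

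First we show that $g$ has infinite support. If $j'$ is a label different from $j$, then there are infinitely many $u \in P(j')$ with $u \ra v$ and $gv \ra u$, by the extension property. Each such $u$ satisfies $gu \neq u$: otherwise $u \ra v$ would give $u = gu \ra gv$, contradicting $gv \ra u$.

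Next, we want $u$ outside $A$ with $u, gu \notin A \cup P(i)$, i.e.\ with $u$ in a part of label different from $i$. Note that $gu$ lies in the same part as $u$. When $n \geq 3$, choose $j' \neq i$ as well as $j' \neq j$. The case $n = 2$ with $j \neq i$ is also fine, taking $j' = i$ only if necessary. The awkward case is $n = 2$ with $i = j$, and we handle it as follows. There, $v$ itself lies in $P(i)$, so we look for a realisation of $p$ among points adjacent to some moved point $u$ of the other part: we need a moved point $u$ in the part $P(1-i)$. Such a $u$ is produced by the infinite-support argument above, applied with the roles of the parts swapped (take $u \in P(1-i)$ with $u \ra v$ and $gv \ra u$). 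Having found $u$, realise $p$ by a point $b$ with $b \ra u$ and $gu \ra b$. This is consistent with $p$ since $u$ and $gu$ lie outside $A$ and outside $P(i)$, and $b$ can be found by the extension property. Then $gb \neq b$, because $gb \ra gu$.

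With $(\ast)$ verified, Proposition \ref{p:free SWIR simple} gives the simplicity of $\Sigma$. The main obstacle is the bookkeeping in Step 2, namely ensuring that the auxiliary moved point lies in a part different from the one prescribed by $p$; Step 1 is routine.
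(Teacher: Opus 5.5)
Your route is the paper's. You use the same free SWIR on $\Dpn$ (disjointness plus $b \ra c$ across different parts) and apply Proposition~\ref{p:free SWIR simple}. You verify $(\ast)$ the same way: realise $p$ with opposite edges to an auxiliary moved pair $u, gu$ lying outside $A$ and outside the part $P(i)$ prescribed by $p$. Step 1 is fine.

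There is a slip in Step 2 for $n = 2$: your two cases are the wrong way round.

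\begin{itemize}
\item When $i = j$, nothing is awkward. Your basic argument with $j' = 1 - i \neq j$ already gives infinitely many moved points in $P(1-i)$.
\item When $i \neq j$, the case breaks down as written. The only label $j' \neq j$ is $j' = i$, so "taking $j' = i$" produces moved points in $P(i)$. These are useless as auxiliary points, because a realisation of $p$ has no edges to them.
\end{itemize}

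In the $i \neq j$ case, $v$ itself works if $v, gv \notin A$. Otherwise you need infinitely many moved points of $P(j)$, and your text does not supply them. The fix is to apply your argument twice:
\begin{itemize}
\item take one moved $u' \in P(i)$, obtained with $j' = i$;
\item apply the same argument to $u'$ in place of $v$, with target part $P(j)$.
\end{itemize}
This gives infinitely many moved points of $P(j)$. It is exactly the paper's step showing that $\supp(g) \cap P$ is infinite for every part $P$. With that fact in hand, one uniform argument handles all $n$ with no case split: pick a moved point outside $A \cup P(i)$.
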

\begin{proof}
    For $A', B', C' \fin \Dpn$, we define $B' \ind_{A'} C'$ if $(B' \setminus A') \cap (C' \setminus A') = \emp$ and $b \ra c$ for all $b \in B' \setminus A'$, $c \in C' \setminus A'$ in different parts. It is straightforward to check that $\ind$ is a free SWIR.
    
    Let $g \in \Aut(\Dpn)$, $g \neq \id$. We first show that $g$ has infinite support in each part of $\Dpn$. Let $v \in \supp(g)$. For each part $P$ with $P \neq P_v$, there are infinitely many $u \in P$ with $\tp(u/v) \neq \tp(u/gv)$, so $\supp(g) \cap P$ is infinite, and then the same argument using a vertex in $\supp(g) \setminus P_v$ shows that $\supp(g) \cap P_v$ is infinite as well. 

    Let $p(x/A')$ be an exterior $1$-type specifying that $x$ lies in some labelled part $\chi_i$: let $Q = \chi_i^{\Dpn}$ be the corresponding part in $\Dpn$. Take $v \in \supp(g)$ with $v \notin A' \cup Q$. Then there is $u \in Q$, $u \models p$ with $\tp(u/v) \neq \tp(u/gv)$, so $u$ is not fixed by $g$. So $\Dpn$ satisfies the conditions of Proposition \ref{p:free SWIR simple}, and so $\Sigma$ is simple.
\end{proof}

\begin{lem} \label{l: norm K contains Aut D'n}
    Let $K \nrm \Aut(\Dn)$, $K \neq 1$. Then $\Sigma \sub K$.
\end{lem}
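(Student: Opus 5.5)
Since $\Sigma$ is simple by Lemma \ref{l: Aut D'n simple} and $K \cap \Sigma \nrm \Sigma$, it suffices to show $K \cap \Sigma \neq 1$. Then $K \cap \Sigma = \Sigma$, so $\Sigma \sub K$. This is Step 2 of the SWIR expansion method, and I would use a commutator trick as in Lemma \ref{l: K cap Ga nontrivial} and the proof of Lemma \ref{l: simple G_A simple G}.

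Take $f \in K \setminus \{\id\}$. If $f \in \Sigma$ we are done, so assume $f$ induces a non-trivial permutation $\pi(f)$ of the parts. The aim is to find $g \in \Sigma$ with $[f, g] = f^{-1} g^{-1} f g \neq \id$. Since $\Sigma \nrm \Aut(\Dn)$, we have $g^{-1}$ and $f^{-1} g^{-1} f$ in $\Sigma$, so $[f,g] \in \Sigma$. Also $[f,g] = f^{-1} \cdot f^{g} \in K$. Hence $[f,g] \in K \cap \Sigma \setminus \{\id\}$.

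To find $g$, note that $[f,g](x) = x$ for all $x$ means $g$ commutes with $f$. So it suffices to pick $g \in \Sigma$ not commuting with $f$. Choose $v \in \Dn$ and set $w = f(v)$; then $w$ lies in a different part, since $\pi(f) \neq \id$ for the relevant part, and in particular $w \neq v$. I would build $g \in \Sigma$ with $g(v) = v$ and $g(w) \neq w$. Such $g$ exists by ultrahomogeneity of $\Dpn$ and strong amalgamation: $\tp(w/v)$ in $\Dpn$ is realised by infinitely many points of $P_w$, so we can pick $w' \neq w$ in $P_w$ with the same type over $v$ and extend $v\mapsto v$, $w \mapsto w'$ to an automorphism of $\Dpn$, which is an element of $\Sigma$. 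Then $gf(v) = g(w) = w' \neq w = f(v) = fg(v)$, so $gf \neq fg$.

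The only point needing care, and the one I expect to be the main obstacle, is confirming that the realisations of $\tp(w/v)$ inside the labelled part of $w$ are not unique. This follows from the extension property of $\Dpn$ and strong amalgamation: a finite labelled $n$-partite tournament on $\{v, w, w''\}$ with $w'' \ic w$ and $w'' \to v$ iff $w \to v$ lies in $\mc{D}'_n$. The rest is formal.
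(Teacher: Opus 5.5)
Your proposal is correct and matches the paper's proof. Both reduce, via simplicity of $\Sigma$, to showing $K \cap \Sigma \neq 1$. Both then pick $v$ (in a part moved by $f$) with $f(v)$ in a different part, and take $g \in \Sigma$ fixing $v$ and moving $f(v)$, so that $[f,g]$ is a non-trivial element of $K \cap \Sigma$. Your check $gf(v) \neq fg(v)$ is just a rephrasing of the paper's $[f,g](v) \neq v$, and your construction of $g$ via the extension property and ultrahomogeneity of $\Dpn$ fills in a detail the paper leaves implicit.
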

\begin{proof}
    As $K \cap \Sigma \nrm \Sigma$ and $\Sigma$ is simple by Lemma \ref{l: Aut D'n simple}, it suffices to show $K \cap \Sigma \neq 1$. Let $f \in K \setminus \{\id\}$. If $f \in \Sigma$ then we are done; suppose $f \notin \Sigma$. There is then $v \in \Dn$ with $v, fv$ not in the same part. Take $g \in \Sigma$ fixing $v$ and moving $fv$. Then $[f, g](v) \neq v$, so $[f, g] \neq \id$, and $[f, g] \in K \cap \Sigma$ as $K, \Sigma \nrm \Aut(\Dn)$.
\end{proof}

\begin{defn}
    Each $f \in \Aut(\Dn)$ induces a bijection $\pi(f)$ on the set of part labels $\{i \mid i < n\}$: we define $\pi(f)(i) = j$ if $f(P(i)) = P(j)$. It is straightforward to check that $\pi : \Aut(\Dn) \to \Sym_n$ is a group homomorphism, and by definition $\pi$ has kernel $\Sigma$.
\end{defn}

\begin{lem} \label{l: Dn pi surj}
    $\pi : \Aut(\Dn) \to \Sym_n$ is surjective.
\end{lem}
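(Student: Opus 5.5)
To show that $\pi$ is surjective, I will take an arbitrary $\tau \in \Sym_n$ and build $f \in \Aut(\Dn)$ with $f(P(i)) = P(\tau(i))$ for all $i < n$. The natural tool is a back-and-forth argument, or equivalently an appeal to \Frn's theorem for the labelled structure.

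Define a relabelled structure $\Dpn^\tau$. It has the same $\mc{L}^\ra$-reduct $\Dn$ as $\Dpn$, but its unary predicates are $\chi_j^{\Dpn^\tau} = P(\tau^{-1}(j))$. In other words, the part that was labelled $i$ is now labelled $\tau(i)$.

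The key steps are as follows.

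\begin{enumerate}[label=(\arabic*)]
    \item Check that $\Dpn^\tau$ is a labelled $n$-partite tournament whose age is exactly $\mc{D}'_n$. Relabelling by a permutation is a bijection of $\mc{D}'_n$ onto itself, so the age is unchanged.
    \item Check that $\Dpn^\tau$ is ultrahomogeneous. Its automorphisms and partial isomorphisms coincide with those of $\Dpn$: a map preserves the predicates $\chi_j^{\Dpn^\tau}$ if and only if it preserves the predicates $\chi_i^{\Dpn}$, since the two families are the same sets permuted. Equivalently, the extension property transfers by relabelling the extension problem via $\tau^{-1}$.
    \item Conclude from \Frn's uniqueness theorem that there is an isomorphism $f : \Dpn \to \Dpn^\tau$. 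As an $\mc{L}^\ra$-map, $f$ is an automorphism of $\Dn$, and it maps $\chi_i^{\Dpn} = P(i)$ onto $\chi_i^{\Dpn^\tau} = P(\tau(i))$. Hence $\pi(f) = \tau$.
\end{enumerate}

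One subtlety deserves care in the case $n = \omega$, and it is the main point to check. Every part $P(i)$ must be nonempty (indeed infinite) in $\Dpn$, so that the relabelled structure really does have all $n$ labels in use and $\pi$ is well defined as a bijection on all labels. This follows from the extension property of $\Dpn$: a one-point structure labelled $i$ lies in $\mc{D}'_n$, so it embeds into $\Dpn$ over any finite set, which makes each $P(i)$ nonempty (and in fact infinite).

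Apart from this, the argument is a routine verification. Alternatively, one could run a direct back-and-forth using the extension property of $\Dpn$, maintaining at each stage a finite partial isomorphism of $\Dn$ that sends label $i$ to label $\tau(i)$.
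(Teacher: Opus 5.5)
Your argument is correct and is essentially the paper's proof. The paper runs the back-and-forth directly, using the extension property of $\Dpn$ to extend finite partial automorphisms of $\Dn$ that send points of $P(i)$ into $P(\tau(i))$; you package that same back-and-forth as Fra\"{i}ss\'{e} uniqueness for the relabelled structure ${\Dpn}^{\tau}$.

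One harmless index slip: with $\chi_j^{{\Dpn}^{\tau}} = P(\tau^{-1}(j))$, an isomorphism $f : \Dpn \to {\Dpn}^{\tau}$ sends $P(i) = \chi_i^{\Dpn}$ onto $\chi_i^{{\Dpn}^{\tau}} = P(\tau^{-1}(i))$. So $\pi(f) = \tau^{-1}$, and you should take $f^{-1}$ to realise $\tau$; surjectivity is unaffected.
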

\begin{proof}
    Let $\rho \in \Sym_n$, and suppose we are given a finite partial automorphism $g_0$ of $\Dn$ with the property that, for all $i < n$ and $v \in \dom(g_0) \cap P(i)$, we have $g_0(v) \in P(\rho(i))$. Let $v \in \Dn \setminus \dom(g_0)$, and let $j < n$, $P(j) = P_v$. Then by the extension property of $\Dpn$ there exists $w \in P(\rho(j))$ with $\tp(w / \im(g_0)) = g_0 \cdot \tp(v / \dom(g_0))$. We may therefore construct $f \in \Aut(\Dn)$ with $\pi(f) = \rho$ via back-and-forth; we leave this to the reader.
\end{proof}

\begin{prop} \label{p : nrm subgps of Aut Dn}
    The normal subgroups of $\Aut(\Dn)$ are precisely $\{\pi^{-1}(K) \mid K \nrm \Sym_n\}$, where $\pi$ is the map sending each automorphism to the bijection it induces on the set of labels of parts.
\end{prop}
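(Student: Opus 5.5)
The plan is to use the short exact sequence $1 \to \Sigma \to \Aut(\Dn) \xrightarrow{\pi} \Sym_n \to 1$, given by the definition of $\pi$ (kernel $\Sigma$) and Lemma \ref{l: Dn pi surj} (surjectivity). Here $n$ is either finite or $\omega$; in the latter case $\Sym_n$ means $\Sym_\omega$, and the argument below does not depend on which.

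First, the easy inclusion. For any $K \nrm \Sym_n$, the preimage $\pi^{-1}(K)$ is normal in $\Aut(\Dn)$, because preimages of normal subgroups under homomorphisms are normal.

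Second, the converse. Let $N \nrm \Aut(\Dn)$. There are two cases.
\begin{itemize}
\item If $N = 1$, then $N = \pi^{-1}(K)$ requires $\pi^{-1}(K) = 1$. This is impossible, since $\pi^{-1}(K) \supseteq \Sigma \neq 1$. So the statement should be read as concerning the non-trivial normal subgroups, or as allowing the trivial group in addition. I would state this explicitly.
\item If $N \neq 1$, then Lemma \ref{l: norm K contains Aut D'n} gives $\Sigma \sub N$. Since $\Sigma = \ker \pi$, we get $N = \pi^{-1}(\pi(N))$. By surjectivity of $\pi$, the image $\pi(N)$ is normal in $\Sym_n$. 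Hence $N = \pi^{-1}(K)$ with $K = \pi(N) \nrm \Sym_n$.
\end{itemize}

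Finally, the correspondence $K \mapsto \pi^{-1}(K)$ is injective, again by surjectivity of $\pi$. So the non-trivial normal subgroups of $\Aut(\Dn)$ are in bijection with the normal subgroups of $\Sym_n$. In the case $n = \omega$ these are the classical ones: the trivial group, the finitary alternating group, the finitary symmetric group, and $\Sym_\omega$ itself.

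There is no real obstacle here. The substantive work was already done in Lemmas \ref{l: Aut D'n simple} and \ref{l: norm K contains Aut D'n}. The only care needed is the trivial subgroup, which I would address by noting explicitly that it is excluded from the preimage description, or listed separately.
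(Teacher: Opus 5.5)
Your argument is correct and is essentially the paper's proof. Every non-trivial $N \nrm \Aut(\Dn)$ contains $\Sigma = \ker\pi$ by the lemma on normal subgroups containing $\Aut(\Dpn)$, and surjectivity of $\pi$ together with the correspondence theorem then finishes the proof. Your caveat about the trivial subgroup is also right: since $\pi^{-1}(1) = \Sigma \neq 1$, the statement as written omits $\{1\}$, and the accurate formulation is that the \emph{non-trivial} normal subgroups of $\Aut(\Dn)$ are precisely the $\pi^{-1}(K)$ with $K \nrm \Sym_n$.
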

\begin{proof}
    This follows immediately from Lemma \ref{l: norm K contains Aut D'n}, Lemma \ref{l: Dn pi surj} and the correspondence theorem.
\end{proof}

\subsection{The generic \texorpdfstring{$\omega$}{omega}-partite \texorpdfstring{$\vec{C}_4$}{C4}-tournament} \label{ss: F}

Let $\mc{C}_{\mb{F}}$ be the class of finite $\omega$-partite tournaments $A$ such that each part of $A$ has size $\leq 2$ and, for all distinct $u, v, v' \in A$ with $v \ic v'$, we have $u \ra v \Leftrightarrow u \la v'$. It is straightforward to see that $\mc{C}_{\mb{F}}$ is a \Fr class; denote its \Fr limit by $\mb{F}$. (This structure is denoted by $\hat{T^\infty}$ in \cite{Che98}.) For any two distinct parts of $\mb{F}$, the digraph structure between them is a directed $4$-cycle, and so we refer to $\mb{F}$ as the \emph{generic $\omega$-partite $\vec{C}_4$-tournament}. For $v \in \mb{F}$, we write $v'$ for the other vertex in the same part as $v$. We let $\sigma \in \Aut(\mb{F})$ be the involution $v \mapsto v'$ and let $\Sigma = \langle \sigma \rangle$. It is straightforward to see that $\Sigma \nrm \Aut(\mb{F})$.

\begin{lem} \label{l: C4t point stab aut T2}
    Let $a \in \mb{F}$. Then $\Aut(\mb{F})_a \cong \Aut(\T_2)$ and so is simple.
\end{lem}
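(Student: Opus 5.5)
The plan is to build an explicit tournament structure on a set of representatives of the parts other than $P_a$, show that it is the random tournament, and then identify $\Aut(\mb{F})_a$ with its automorphism group by restriction. Simplicity will then follow from Proposition \ref{p:SWIR of n2tournament} with $I$ a singleton and $n_\delta = 2$.

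\textbf{The structure.} Let $X = \{v \in \mb{F} \mid a \ra v\}$. For every part $\{v, v'\}$ other than $\{a, a'\}$, the defining condition of $\mc{C}_{\mb{F}}$ gives $a \ra v \Leftrightarrow a \la v'$. Hence $X$ contains exactly one vertex from each part other than $P_a$, and $X$ is an ordinary tournament under $\ra$. Every $g \in \Aut(\mb{F})_a$ preserves $X$ setwise, so restriction gives a homomorphism $\Aut(\mb{F})_a \to \Aut(X)$.

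\textbf{Injectivity.} Suppose $g|_X = \id$. Then $g$ fixes $a$, every $v \in X$, and therefore each part setwise. It follows that $g$ also fixes $a'$ and every $v'$, since $v'$ is the unique other member of the part of $v$. So $g = \id$.

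\textbf{Surjectivity.} Let $h \in \Aut(X)$. Define $\hat h$ by $\hat h(a) = a$, $\hat h(a') = a'$, $\hat h(v) = h(v)$ and $\hat h(v') = h(v)'$ for $v \in X$. Every edge between vertices of different parts is determined by the edges among $X \cup \{a\}$. The rule is that flipping one endpoint to its partner reverses the edge, because $u \ra v \Leftrightarrow u \la v'$, and this applies symmetrically in both arguments. Hence $\hat h$ preserves $\ra$. It also preserves $\ic$, since $\hat h$ maps parts to parts.

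\textbf{$X$ is the random tournament.} I would check the extension property directly. Take a finite $B \sub X$ and a one-point tournament extension $B \cup \{x\}$. Lift it to a structure in $\mc{C}_{\mb{F}}$ on
\[\{a, a'\} \cup B \cup B' \cup \{x, x'\},\]
where $B'$ denotes the set of partners of elements of $B$. Set $a \ra x$, keep the prescribed edges between $x$ and $B$, and determine all remaining edges by the flip rule. This lift is consistent: an edge between two parts, once fixed for one pair of representatives, determines a directed $4$-cycle, and the parts have size $2$. The extension property of $\mb{F}$ now realises $x$ over $\{a, a'\} \cup B \cup B'$. Since $a \ra x$, the realisation lies in $X$ and has the required type over $B$. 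The age condition is immediate.

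\textbf{Conclusion and main difficulty.} Thus $\Aut(\mb{F})_a \cong \Aut(X) \cong \Aut(\T_2)$, which is simple by Proposition \ref{p:SWIR of n2tournament}. The main point needing care is the consistency of the lifted finite structure, which amounts to checking that the $\vec{C}_4$ condition imposes exactly the flip rule and nothing more. That check is routine.
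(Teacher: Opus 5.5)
Your proposal is correct and follows essentially the same route as the paper. The paper also takes $T = \{v \in \mb{F} \mid a \ra v\}$, verifies $T \cong \T_2$ via the extension property, and shows that restriction $g \mapsto g|_T$ is an isomorphism $\Aut(\mb{F})_a \to \Aut(T)$ with inverse $h \mapsto h \cup h' \cup \id_{\{a,a'\}}$, where $h'(v) = h(v')'$; you simply write out the details the paper calls straightforward, namely the flip-rule edge check and the consistency of the lifted extension.
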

\begin{proof}
    Let $T = \{v \in \mb{F} \mid a \ra v\}$. It is straightforward to verify via the extension property that $T \cong \T_2$. Let $T' = \{v' \mid v \in T\}$. For $g \in \Aut(\mb{F})_a$, note that $g|_T \in \Aut(T)$, $g|_{T'} \in \Aut(T')$ and $g = (g|_T \cup \id_a) \cup (g|_{T'} \cup \id_{a'})$, and also note that for $v \in T'$ we have $g|_{T'}(v) = (g|_T(v'))'$. Defining $h' : T' \to T'$, $h'(v) = h(v')'$ for each $h \in \Aut(T)$, the map $h \mapsto h \cup h' \cup \id_{\{a, a'\}}$ is a function from $\Aut(T)$ to $\Aut(\mb{F})_a$ which is the inverse of $\Aut(\mb{F})_a \to \Aut(T)$, $g \mapsto g|_T$. So $\Aut(\mb{F})_a \cong \Aut(T) \cong \Aut(\T_2)$.
\end{proof}

\begin{lem} \label{l: C4t a fa f2a diff parts}
    Let $K \nrm \Aut(\mb{F})$. Suppose there exists $f \in K$ such that there is $a \in \mb{F}$ with $a, f(a), f^2(a)$ all in different parts. Then $\Aut(\mb{F})_v \sub K$ for all $v \in \mb{F}$.
\end{lem}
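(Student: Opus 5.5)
The plan is to run Step 2 of the SWIR expansion method, using Lemma \ref{l: C4t point stab aut T2}. Since $\Aut(\mb{F})_v$ is simple for every $v$, it suffices to find a single $v$ with $K \cap \Aut(\mb{F})_v \neq 1$. Indeed, $K \cap \Aut(\mb{F})_v$ is normal in $\Aut(\mb{F})_v$, and $\Aut(\mb{F})$ is transitive on vertices by ultrahomogeneity. So conjugating a non-trivial element of $K \cap \Aut(\mb{F})_v$ gives $K \cap \Aut(\mb{F})_w \neq 1$ for every $w$, and simplicity then gives $\Aut(\mb{F})_w \sub K$.

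To get such an element, I would use a commutator, as in Lemma \ref{l: norm K contains Aut D'n}. Take $g \in \Aut(\mb{F})$ fixing $f(a)$ and moving $f^2(a)$. Then $[f, g] = f^{-1} g^{-1} f g \in K$, since $K$ is normal, and
\[ [f,g](a) = f^{-1}g^{-1}f(a) = f^{-1}f(a) = a, \]
so $[f,g] \in \Aut(\mb{F})_a$. For non-triviality, compute $[f,g](f(a)) = f^{-1}g^{-1}f^2(a)$. This differs from $f(a)$ exactly when $g^{-1}f^2(a) \neq f^2(a)$, which holds because $g$ moves $f^2(a)$. So $[f,g](f(a)) \neq f(a)$, hence $[f,g] \neq \id$ and $K \cap \Aut(\mb{F})_a \neq 1$.

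The main obstacle is the existence of $g$: an automorphism fixing $u = f(a)$ but not fixing $w = f^2(a)$. Any automorphism fixing $u$ also fixes $u'$, since it preserves $\ic$. So we need $w \notin \{u, u'\}$, which is exactly where the hypothesis that $f(a)$ and $f^2(a)$ lie in different parts is used. Given that, $w$ and $u$ lie in different parts, and by the extension property of $\mb{F}$ there is $w_1 \neq w$ with $\tp(w_1/u) = \tp(w/u)$. Concretely, $w_1$ should be another vertex of $T$ or $T'$ from the proof of Lemma \ref{l: C4t point stab aut T2}, on the same side of $u$ as $w$; such a vertex exists since $T \cong \T_2$ is infinite. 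Ultrahomogeneity then gives $g$ fixing $u$ with $g(w) = w_1$. I would also check via Lemma \ref{l: C4t point stab aut T2} that fixing $u$ imposes nothing beyond fixing $u'$. It is possible that $a$ and its part play no role beyond ensuring $f \neq \id$, in which case the hypothesis is stronger than needed here, and is presumably used later, e.g.\ to rule out $f \in \Sigma$.
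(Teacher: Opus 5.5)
Your overall plan matches the paper's: produce a non-trivial commutator in $K \cap \Aut(\mb{F})_a$, then use simplicity of point-stabilisers (Lemma \ref{l: C4t point stab aut T2}) and conjugation. However, the key step fails as written. With $[f,g] = f^{-1}g^{-1}fg$ you have $[f,g](a) = f^{-1}g^{-1}f(g(a))$, not $f^{-1}g^{-1}f(a)$. Indeed $[f,g](a) = a$ holds if and only if $fg(a) = gf(a) = f(a)$, i.e.\ if and only if $g(a) = a$. Your $g$ is only required to fix $f(a)$. So nothing places $[f,g]$ in $\Aut(\mb{F})_a$, or a priori in any point-stabiliser at all.

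The repair, which is exactly the paper's proof, is to choose $g$ fixing both $a$ and $f(a)$ and moving $f^2(a)$. Your non-triviality computation $[f,g](f(a)) = f^{-1}g^{-1}f^2(a) \neq f(a)$ then goes through unchanged. The existence of this $g$ is where the hypothesis is genuinely used, contrary to your closing remark. Any automorphism fixing $a$ also fixes $a'$, so if $f^2(a) = a'$ no such $g$ exists; that is why this case needs the separate Lemma \ref{l: C4t f2a a'}. When $a, f(a), f^2(a)$ lie in three distinct parts, the extension property gives $w_1 \neq f^2(a)$, in a new part, with $\tp(w_1/\{a, f(a)\}) = \tp(f^2(a)/\{a, f(a)\})$. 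Ultrahomogeneity then gives $g$ fixing $a$ and $f(a)$ with $g(f^2(a)) = w_1$.
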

\begin{proof}
    We may assume $a \ra f(a)$ (if $a \la f(a)$, take $f^{-1}$). As $a, f(a), f^2(a)$ lie in different parts, there is $g \in \Aut(\mb{F})$ fixing each of $a, f(a)$ and moving $f^2(a)$. Then $[f, g]$ fixes $a$ and moves $f(a)$, and $[f, g] \in K$. So $\Aut(\mb{F})_a \cap K \neq 1$, and as $\Aut(\mb{F})_a$ is simple we have $\Aut(\mb{F})_a \sub K$; by conjugation $\Aut(\mb{F})_v \sub K$ for all $v \in \mb{F}$. 
\end{proof}

\begin{lem} \label{l: C4t f2a a'}
    Let $K \nrm \Aut(\mb{F})$. Suppose there exists $f \in K$ such that there is $a \in \mb{F}$ with $f^2(a) = a'$. Then $\Aut(\mb{F})_v \sub K$ for all $v \in \mb{F}$. 
\end{lem}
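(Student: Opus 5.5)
The plan mirrors Lemma \ref{l: C4t a fa f2a diff parts}: we produce a commutator $[f,g] \in K$ that is non-trivial and fixes a point. Then Lemma \ref{l: C4t point stab aut T2} (simplicity of point stabilisers) and conjugation give the conclusion. The difference from the previous lemma is that $f^2(a)$ now lies in the same part as $a$, so we cannot simply choose $g$ fixing $a, f(a)$ and moving $f^2(a)$. Any automorphism fixing $a$ also fixes $a'$, because it preserves parts and $a'$ is the unique vertex with $a \ic a'$, $a' \neq a$.

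First set up the configuration. Since $f^2(a) = a' \neq a$, the vertex $f(a)$ is not in $P_a$. Indeed, $f(a) = a$ would give $f^2(a) = a$, and $f(a) = a'$ would give $f^2(a) = f(a') = f(a)' = a$, since $f$ commutes with $\sigma$ (as $\sigma$ is definable). So $a$ and $b := f(a)$ lie in different parts, and $f$ maps $a \mapsto b \mapsto a' \mapsto b' \mapsto a$. This is a $4$-cycle on $P_a \cup P_b$, and $f^2$ acts as $\sigma$ on these two parts.

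Next, instead of moving $f^2(a)$, we pick an auxiliary vertex. Take $c$ in a part different from $P_a$ and $P_b$, and take $g \in \Aut(\mb{F})$ fixing $a$ (hence $a'$) and moving $c$. We then try to arrange that $[f,g] = f^{-1}g^{-1}fg$ fixes a point and is non-trivial. Here is the natural choice: pick $g \in \Aut(\mb{F})_{a} \cap \Aut(\mb{F})_{b}$, i.e.\ $g$ fixes $a, a', b, b'$. Then $[f,g](a) = f^{-1}g^{-1}f(a) = f^{-1}g^{-1}(b) = a$, so $[f,g] \in K \cap \Aut(\mb{F})_a$.

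It remains to choose such $g$ with $[f,g] \neq \id$. Equivalently, $g$ must not commute with $f$. The stabiliser of $\{a,b\}$ is isomorphic to the automorphism group of a generic structure, namely the part of $T=\{v : a \ra v\}$ lying on the appropriate side of $b$. This group is huge, whereas the centraliser condition $fgf^{-1} = g$ is very restrictive. Concretely, choose a vertex $v \notin P_a \cup P_b$. If $f(v) \notin P_a \cup P_b \cup P_v$, take $g$ fixing $a, b, v$ and moving $f(v)$; this is possible by ultrahomogeneity, since the type of $f(v)$ over $\{a,b,v\}$ has infinitely many realisations by strong amalgamation. Then $[f,g](v) = f^{-1}g^{-1}f(v) \neq v$. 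If instead $f(v) \in P_v$ for the chosen $v$, then we vary $v$. If every $v$ outside $P_a \cup P_b$ satisfies $f(v) \in P_v$, then $f$ preserves all parts except $P_a, P_b$. In that case we take $g$ fixing $a, b$ with $g(v) = w$ for some $w \neq v, v'$, and compare $f$ with $gfg^{-1}$ at $w$.

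I expect the main obstacle to be this last case analysis: guaranteeing non-commutation when $f$ fixes almost all parts setwise. Using the $\vec{C}_4$ constraint, I would show that if $f$ fixes the part $P_v$ of $v$ with $f(v) \in \{v, v'\}$, then the edges from $v$ to $a$ versus from $f(v)$ to $f(a) = b$ pin down whether $f(v) = v$ or $f(v) = v'$ according to the type of $v$ over $\{a,b\}$. We then pick $g$ fixing $a,b$ and sending $v$ to a vertex $w$ of a type over $\{a,b\}$ that forces the opposite behaviour. This gives a contradiction with commuting, and hence $[f,g] \neq \id$, finishing the proof via the simplicity of $\Aut(\mb{F})_a$.
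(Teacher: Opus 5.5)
\textbf{There is a genuine gap in your final case.} Your setup is right. On $P_a \cup P_b$, the map $f$ acts as the $4$-cycle $a \mapsto b \mapsto a' \mapsto b' \mapsto a$. Any $g$ fixing $a$ and $b$ gives $[f,g] \in K \cap \Aut(\mb{F})_a$. And whenever some $v \notin P_a \cup P_b$ has $f(v) \notin P_v$, choosing $g$ to fix $a, b, v$ and move $f(v)$ makes $[f,g]$ non-trivial.

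The problem is the remaining case, where every $v \notin P_a \cup P_b$ satisfies $f(v) \in \{v, v'\}$. Your remedy there cannot work. An automorphism $g$ fixing $a$ and $b$ preserves types over $\{a,b\}$. So $w = g(v)$ necessarily has the same type over $\{a,b\}$ as $v$, and by your own ``pin down'' observation $f$ then behaves on $w$ exactly as on $v$. Comparing $f$ with $gfg^{-1}$ at $w$ therefore yields no contradiction, and your argument never shows $[f,g] \neq \id$ in this case.

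\textbf{The missing idea is that this case is vacuous;} this is how the paper's proof begins. Suppose $f(v) \in \{v, v'\}$. Then $f^2(v) = v$, using $f(v') = f(v)'$. For $v \notin P_a$, applying $f^2$ to the edge between $v$ and $a$ gives an edge between $v$ and $f^2(a) = a'$ with the same orientation. This contradicts the $\vec{C}_4$ condition $v \ra a \Leftrightarrow v \la a'$.

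In your own terms, you only used the image of the edge $va$. That image forces one relation between the orientations of $va$ and $vb$. The image of the edge $vb$, which is the edge between $f(v)$ and $a'$, forces the opposite relation.

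Hence $f$ fixes no part setwise. For every $v \notin P_a \cup P_b$, the vertex $f(v)$ lies in a fourth part, so your first case always applies. This is essentially the paper's argument, with $g$ fixing $a, f(a), v$ and moving $f(v)$.

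A minor point: $\mb{F}$ does not have strong amalgamation, since $a'$ is algebraic over $a$. So the existence of $g$ should be justified via the extension property and ultrahomogeneity, using that $f(v)$ lies in a part disjoint from $\{a, a', b, b', v, v'\}$.
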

\begin{proof}
    We first note that $f$ has no fixed points and does not swap points within any part: if there is $v \in \mb{F}$ with $f(v) = v$ or $f(v) = v'$, then $v \neq a, a'$, so there is an edge between $v, a$. But then the edge between $v, f^2(a)$ has the same orientation, and $f^2(a) = a'$, contradiction. If there is $b \in \mb{F}$ such that $b, f(b), f^2(b)$ are all in different parts, then we are done by Lemma \ref{l: C4t a fa f2a diff parts}. The remaining case is where $v \sim f(v)$ and $f^2(v) = v'$ for all $v \in \mb{F}$. Take $b \in \mb{F} \setminus \{a, a', f(a), f(a)'\}$. Then $b \sim f(b)$, so there is $g \in \Aut(\mb{F})$ fixing each of $a, f(a), b$ and moving $f(b)$. Then $[f, g]$ fixes $a$ and moves $b$, and $[f, g] \in K$. So $\Aut(\mb{F})_a \cap K \neq 1$, giving $\Aut(\mb{F})_a \sub K$ by simplicity, and we are done by conjugation.
\end{proof}

\begin{lem} \label{l: C4t contain all pt stabs whole group}
    Let $K \nrm \Aut(\mb{F})$ with $\Aut(\mb{F})_v \sub K$ for all $v \in \mb{F}$. Then $K = \Aut(\mb{F})$.
\end{lem}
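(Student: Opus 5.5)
We will show that every $f \in \Aut(\mb{F})$ is a product of at most two elements, each fixing some point of $\mb{F}$. The pattern is the one already used for $\Sn$ (Fact B) and for $\Sth$ (Proposition \ref{p: nrm subgps Aut Sth}). Since all point-stabilisers lie in $K$, such a product lies in $K$.

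Fix $f \in \Aut(\mb{F})$. If $f$ fixes a point, then $f \in K$ directly. Otherwise take any $a \in \mb{F}$, and split into two cases according to whether $f(a)$ lies in the part of $a$.

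\emph{Case 1: $f(a) \notin \{a, a'\}$.} Then $a \sim f(a)$. We look for $u \in \mb{F}$, lying in a part different from those of $a$ and $f(a)$, with $\tp(u/a) = \tp(u/f(a))$; for instance $u \ra a$ and $u \ra f(a)$. Such a $u$ exists by the extension property. The one-point structure over $\{a,f(a)\}$ with these edges, together with the forced edges to $a'$ and $f(a)'$, satisfies the $\vec{C}_4$ condition, since $u$ is in a new part. By ultrahomogeneity the partial isomorphism $u \mapsto u$, $f(a) \mapsto a$ extends to some $g \in \Aut(\mb{F})$. Then $g \in \Aut(\mb{F})_u \sub K$ and $gf \in \Aut(\mb{F})_a \sub K$, so $f = g^{-1}(gf) \in K$.

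\emph{Case 2: $f(a) = a'$.} Here no $u$ has the same type over $a$ and $a'$, because $u \ra a \Leftrightarrow a' \ra u$. There are two routes.
\begin{itemize}
\item \emph{First route.} If some other point $b$ satisfies $f(b) \notin \{b, b'\}$, apply Case 1 at $b$ instead of $a$.
\item \emph{Second route.} Otherwise $f(v) = v'$ for every $v$, since $f$ has no fixed points, and so $f = \sigma$. It then suffices to show $\sigma \in K$. Pick any $h \in \Aut(\mb{F}) \setminus \Sigma$ that moves some point to a different part; such $h$ exists because $\Aut(\mb{F})_a \cong \Aut(\T_2)$ is non-trivial, and we may take it fixing $a$. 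Then $h\sigma$ sends $a$ to $a'$ but moves some other point $b$ to a different part, since $\sigma$ preserves parts and $h$ does not. So $h\sigma \in K$ by the first route. We also have $h \in K$, hence $\sigma = h^{-1}(h\sigma) \in K$.
\end{itemize}

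The main point requiring care is the existence of $u$ in Case 1 in the presence of the $\vec{C}_4$ constraint. We expect it to be routine via the extension property, choosing $u$ in a fresh part: the constraint only links $u$'s edges to $a$ and $a'$ (respectively $f(a)$ and $f(a)'$), and $a, f(a)$ lie in distinct parts. Combining the two cases gives $K = \Aut(\mb{F})$.
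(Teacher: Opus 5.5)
Your proof is correct and is essentially the paper's argument: your Case 1 finds a common in-neighbour $u$ of $a$ and $f(a)$ and writes $f = g^{-1}(gf)$ with both factors in point-stabilisers, which is exactly how the paper shows $\Aut(\mb{F}) \setminus \Sigma \sub K$, and your second route is the paper's final step $\sigma = h^{-1}(h\sigma)$ with $h, h\sigma \notin \Sigma$. The one claim worth a line of justification is that a non-identity $h \in \Aut(\mb{F})_a$ moves some point to a different part; this holds because $h$ also fixes $a'$, and $h$ cannot send any $c \notin \{a, a'\}$ to $c'$, since the edges $ac$ and $ac'$ have opposite orientations.
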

\begin{proof}
    Let $g \in \Aut(\mb{F}) \setminus \Sigma$. There is $v \in \mb{F}$ with $v \sim gv$, so there is $w \in \mb{F}$ with $w \ra v$, $w \ra gv$. By ultrahomogeneity of $\mb{F}$ there is $h \in \Aut(\mb{F})$ fixing $w$ with $hgv = v$. As $h \in \Aut(\mb{F})_w$ we have $h \in K$; as $hgv = v$ we have $hg \in K$ and so $g \in K$. So $\Aut(\mb{F}) \setminus \Sigma \sub K$. As $g\sigma \notin \Sigma$ we have $g\sigma \in K$, so $\sigma \in K$ and $K = \Aut(\mb{F})$.
\end{proof}

\begin{prop}
    The only non-trivial proper normal subgroup of $\Aut(\mb{F})$ is $\Sigma$.
\end{prop}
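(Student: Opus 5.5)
The plan is to show that any normal subgroup $K \nrm \Aut(\mb{F})$ with $K \notin \{1, \Sigma\}$ contains every point-stabiliser $\Aut(\mb{F})_v$. Lemma \ref{l: C4t contain all pt stabs whole group} then gives $K = \Aut(\mb{F})$. Since $\Sigma \nrm \Aut(\mb{F})$ was already noted and $\Sigma$ is clearly non-trivial and proper, this proves the proposition. So fix such $K$ and choose $f \in K \setminus \Sigma$.

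First I note, as in the proof of Lemma \ref{l: Sth not Sigma K cap Ga nontrivial}, that some $a$ satisfies $f(a) \notin \{a, a'\}$. Otherwise $f$ either fixes or swaps each part. If it fixes some point and swaps some other part, then for $v$ fixed and $u$ swapped with $u \ne v, v'$, the edge $v \ra u$ would map to $v \ra u'$, contradicting the $\vec{C}_4$ condition. Hence $f = \id$ or $f = \sigma$, which is impossible since $f \notin \Sigma$.

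Next I split into cases according to the position of $f^2(a)$. If $f^2(a)$ lies in a part different from those of $a$ and $f(a)$, Lemma \ref{l: C4t a fa f2a diff parts} applies. If $f^2(a) = a'$, Lemma \ref{l: C4t f2a a'} applies. If $f^2(a) = f(a)'$, then $f(f(a)) = f(a)'$, so $f$ swaps a part; I return to this below. The remaining case is $f^2(a) = a$. In that case I instead look for a better witness: choose $b$ in a fresh part with $f(b)$ in yet another part. More directly, I run a commutator argument. Since $a$ and $f(a)$ lie in distinct parts, pick $b$ outside $\{a, a', f(a), f(a)'\}$ with $f(b) \notin \{b, b'\}$, and take $g \in \Aut(\mb{F})$ fixing $a, f(a), b$ and moving $f(b)$ within an appropriate type. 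Then $[f,g] = f^{-1}f^{g} \in K$ fixes $a$ and moves $b$, so $K \cap \Aut(\mb{F})_a \ne 1$. Simplicity (Lemma \ref{l: C4t point stab aut T2}) and conjugation then give all point-stabilisers inside $K$.

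The main obstacle is making this commutator argument uniform. It needs a point $b$ outside the parts of $a$ and $f(a)$ with $f(b)$ also outside those parts and $f(b) \ne b$, so that the type of $f(b)$ over $\{a, f(a), b\}$ is non-algebraic and some automorphism fixing $a, f(a), b$ moves $f(b)$. Here algebraicity happens exactly when $f(b) \in \{a, a', f(a), f(a)', b'\}$. I would argue that only finitely many $b$ map into $\{a, a', f(a), f(a)'\}$. The set of $b$ with $f(b) \in \{b, b'\}$ is handled like the first step: if $f$ fixes or swaps all parts outside a finite set, then the $\vec{C}_4$ edge orientations to the infinitely many such parts force $f$ to act consistently everywhere, i.e.\ $f \in \Sigma$, a contradiction. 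Thus some $b$ outside a finite set is good. The same reasoning covers the case $f^2(a) = f(a)'$, so that case needs no separate treatment.
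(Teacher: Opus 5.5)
Your proof is correct. Its skeleton matches the paper's: reduce to $K \cap \Aut(\mb{F})_a \neq 1$ by a case split on $f^2(a)$, apply Lemmas \ref{l: C4t a fa f2a diff parts} and \ref{l: C4t f2a a'}, and finish with Lemmas \ref{l: C4t point stab aut T2} and \ref{l: C4t contain all pt stabs whole group}.

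The difference is that two of your four cases cannot occur, which is why the paper needs only those two lemmas. Since $a$ and $f(a)$ lie in different parts, there is an edge between them; if, say, $a \ra f(a)$, then $f(a) \ra f^2(a)$. Hence $f^2(a) \neq a$ by antisymmetry. Also $f^2(a) \sim f(a)$ because $f$ preserves $\ic$, so $f^2(a) \neq f(a)'$.

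Your extra commutator argument is nonetheless sound, and it never uses the case hypothesis. For any $f \in K \setminus \Sigma$ and any $a$, a point $b$ with $f(b) \notin \{a, a', f(a), f(a)', b, b'\}$ yields $1 \neq [f,g] \in K \cap \Aut(\mb{F})_a$. So on its own it would make the whole case split, and both lemmas, redundant.

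The price is the auxiliary fact that every $f \notin \Sigma$ moves infinitely many parts. Your sketch credits this to the $\vec{C}_4$ condition. That condition does force $f$ either to fix pointwise or to swap every stabilised part (in the latter case replace $f$ by $f\sigma$), and it rules out $f(x) = x'$. But consider a point $x$ in one of the finitely many remaining parts with $f(x)$ in yet another part. There you need the extension property to find $w$ in a stabilised part with $w \ra x$ and $f(x) \ra w$. This gives the contradiction $w = f(w) \ra f(x)$.
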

\begin{proof}
    Let $K \nrm \Aut(\mb{F})$ with $K \neq 1$. If $K \neq \Sigma$, there is $f \in K$ such that there is $a \in \mb{F}$ with $a, f(a)$ in different parts. As $f(a), f^2(a)$ are then in different parts, by Lemma \ref{l: C4t a fa f2a diff parts}, Lemma \ref{l: C4t f2a a'} and Lemma \ref{l: C4t contain all pt stabs whole group} we have $K = \Aut(\mb{F})$.
\end{proof}

\subsection{The twisted partial order \texorpdfstring{$\Pthr$}{P(3)}} \label{ss: Pthr}

We now discuss the twisted partial order $\Pthr$, introduced by Cherlin in \cite[Section 5.2]{Che98} (where it is denoted $\mc{P}(3)$). First we define some notation, following \cite{Che98}.
\begin{notn}
    Let $A$ be an oriented graph. We define a correspondence between the oriented graph structure on vertex pairs and elements of $\ZT$ as follows:
    \[  \tau_A(a, b) =
        \begin{cases}
            0 & a \ic b\\
            1 & a \la b\\
           -1 & a \ra b,
        \end{cases}
    \]
    and we likewise use this correspondence to define oriented graph structure (for example, when defining an oriented graph, if we specify $\tau(a, b) = -1$ then $a \ra b$).
\end{notn}

We again follow \cite[Section 5.2]{Che98} in the below definition.

\begin{defn}
    Recall $\mc{L}^\ra_3 = \{\ra, (\chi_i)_{i < 3}\}$ from Definition \ref{d: labelled n-partite tourns}. We call an $\mc{L}^\ra_3$-structure $A$ a \emph{3-coloured poset/oriented graph} if $\{\chi_i^A\}^{}_{i < 3}$ is a partition of $\dom A$ and $\ra^A$ is a partial order/oriented graph (it will be more convenient to use digraph notation: we write $a \ra b$ instead of $a < b$). Let $\mb{O}$ denote the generic $3$-coloured poset: that is, the \Fr limit of the class of finite $3$-coloured posets. For $i < 3$ we let $O_i = \chi_i^{\mb{O}}$. We define a $3$-coloured oriented graph $\mb{H}$ as follows:
    \begin{itemize}
        \item for $i < 3$ we take $H_i = O_i$, and take the oriented graph structure on $H_i$ to be the same as that on $O_i$;
        \item for all distinct $i, j < 3$ and each $(a, b) \in O_i \times O_j$, we define $\tau_{\mb{H}}(a, b) = \tau_{\mb{O}}(a, b) + j - i \Mod{3}$.
    \end{itemize}
    It is straightforward to see that $\mb{H}$ is ultrahomogeneous and $\Aut(\mb{H}) = \Aut(\mb{O})$. We then define $\Pthr$, an oriented graph (without colours), to have vertex set $\dom(\mb{H}) \cup \{v\}$, where $v$ is a new vertex, and with oriented graph structure given by taking the oriented graph reduct of $\mb{H}$ and defining $\tau(v, b) = i$ for each $b \in H_i$, $i < 3$.
\end{defn}

Note that the restriction map $g \mapsto g|_{\dom(\mb{H})}$ gives an isomorphism $\Aut(\Pthr)_v \to \Aut(\mb{H})$, and thus $\Aut(\Pthr)_v \cong \Aut(\mb{O})$. In \cite[Section 5.2]{Che98}, it is proved that $\Pthr$ is ultrahomogeneous by showing that it is transitive (this suffices by the ultrahomogeneity of $\mb{H}$). By transitivity of $\Pthr$ we thus have:

\begin{lem}
    For each $a \in \Pthr$ we have $\Aut(\Pthr)_a \cong \Aut(\mb{O})$.
\end{lem}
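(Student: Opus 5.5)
The statement follows by combining two facts already in hand: $\Aut(\Pthr)$ acts transitively on $\Pthr$, and the point-stabiliser of the added vertex $v$ is identified with $\Aut(\mb{O})$ via restriction. The plan is to transport this identification from $v$ to an arbitrary point $a$ by conjugation.

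\textbf{Step 1.} Let $a \in \Pthr$. By transitivity of $\Pthr$ (shown in \cite[Section 5.2]{Che98}), choose $h \in \Aut(\Pthr)$ with $h(v) = a$.

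\textbf{Step 2.} Observe that conjugation by $h$ maps the stabiliser of $v$ onto the stabiliser of $a$. The map
\[
\Aut(\Pthr)_v \to \Aut(\Pthr)_a, \qquad g \mapsto hgh^{-1},
\]
is a group isomorphism. Indeed, $hgh^{-1}(a) = hg(v) = h(v) = a$, and the inverse map is conjugation by $h^{-1}$.

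\textbf{Step 3.} Compose with the isomorphism $\Aut(\Pthr)_v \to \Aut(\mb{H})$, $g \mapsto g|_{\dom(\mb{H})}$, noted above, and with the equality $\Aut(\mb{H}) = \Aut(\mb{O})$. This gives $\Aut(\Pthr)_a \cong \Aut(\mb{O})$.

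The only step carrying real content is the restriction isomorphism at $v$, since it is taken as given from the remark preceding the lemma. If I had to check it, the point is that an automorphism of $\Pthr$ fixing $v$ must preserve each set $\{b \mid \tau(v,b) = i\} = H_i$. Hence it preserves the colours of $\mb{H}$. It also preserves the oriented graph reduct, and since each $\tau_{\mb{H}}(a,b)$ is a colour-dependent shift of $\tau_{\mb{O}}(a,b)$, it preserves the poset structure of $\mb{O}$.

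Conversely, any colour-preserving automorphism of $\mb{H}$ extends to an automorphism of $\Pthr$ by fixing $v$. Restriction is injective because $v$ is the only vertex not in $\dom(\mb{H})$. I expect no step to be a genuine obstacle, since transitivity is cited from Cherlin.
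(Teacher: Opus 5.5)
Your proposal is correct and follows the paper's argument: the restriction isomorphism $\Aut(\Pthr)_v \to \Aut(\mb{H}) = \Aut(\mb{O})$, combined with the transitivity of $\Pthr$ from Cherlin, conjugates the stabiliser of $v$ onto the stabiliser of an arbitrary point $a$. Your check of the restriction isomorphism is also sound: an automorphism fixing $v$ preserves each $H_i$, and $\tau_{\mb{O}}$ is recovered from $\tau_{\mb{H}}$ through the colour-dependent shift.
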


\begin{rem} \label{r: Pthr SWIR exp}
    It is not difficult to see that $\Pthr$ has a SWIR expansion given by fixing a point: the existence of a SWIR for the generic $3$-coloured poset $\mb{O}$ (defined as for the generic poset) immediately implies the existence of a SWIR for $\mb{H}$ and thus for $(\Pthr, v)$.
\end{rem}

In the below lemma, we check all conditions of Lemma \ref{l: simple G_A simple G} for $\Pthr$ except simplicity of point-stabilisers:

\begin{lem} \hfill
    \begin{enumerate}[label=(\roman*)]
        \item \label{i: P3 strong amalg} $\Pthr$ has strong amalgamation.
        \item \label{i: P3 third pt same qftp} Let $a, b \in \Pthr$ be distinct. Then there exists $c \in \Pthr$ with $\qftp(a, c) = \qftp(b, c)$.
    \end{enumerate}
\end{lem}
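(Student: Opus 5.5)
The plan is to work throughout inside the point-stabiliser picture. Since $\Pthr$ is transitive, it suffices to understand everything relative to the distinguished vertex $v$, with $\Pthr \setminus \{v\} = \dom(\mb{H})$ and $\tau(v,b) = i$ for $b \in H_i$. The key structural fact is that, by the ultrahomogeneity of $\Pthr$, the type of a finite set containing $v$ is determined by the colours (the $H_i$ in which the other points lie) together with the $\mb{H}$-structure on them. Moreover, that $\mb{H}$-structure is in turn freely prescribable by a $3$-coloured poset via the twist $\tau_{\mb{H}}(a,b) = \tau_{\mb{O}}(a,b) + j - i$.

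For \ref{i: P3 strong amalg}, I will take a finite amalgamation problem $B_0 \la A \ra B_1$ in $\Age(\Pthr)$. By ultrahomogeneity and transitivity, I may assume everything sits inside $\Pthr$ and, if $A \neq \varnothing$, move a point of $A$ to $v$. If $A = \varnothing$, I first embed $B_0, B_1$ separately, then choose a point of each and use transitivity to reduce to the case where both share the vertex $v$. The resulting amalgam is not disjoint at that point, so I need to be careful here. A better reduction is to adjoin $v$ as an extra common point to both sides: embed $B_0$ and $B_1$ into $\Pthr$ avoiding $v$, so that $B_0 \cup \{v\}$ and $B_1 \cup \{v\}$ become an amalgamation problem over $A \cup \{v\}$.

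Under the translation to $\mb{O}$, this becomes a strong amalgamation problem for finite $3$-coloured posets over a common substructure. That class has strong amalgamation: take the transitive closure of the union of the two orders, keeping the points of $B_0 \setminus A$ and $B_1 \setminus A$ distinct and colours as given. This amalgam embeds into $\mb{O}$, hence into $\mb{H}$ and $\Pthr$ over $A \cup \{v\}$. Removing $v$ gives a strong amalgam of the original problem. If $v$ was already in $A$, the same argument applies without adjoining anything.

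For \ref{i: P3 third pt same qftp}, I will use transitivity to assume $a = v$, so $b \in H_i$ for some $i$. I then want $c \in \mb{H}$ with $\tau(v,c) = \tau(b,c)$. If $c \in H_j$, then $\tau(v,c) = j$. When $j = i$, I need $\tau_{\mb{O}}(b,c) = i$. When $j \neq i$, I need $\tau_{\mb{O}}(b,c) + j - i = j$, that is $\tau_{\mb{O}}(b,c) = i$. So in every case it suffices to find $c \neq b$ in $\mb{O}$ of any chosen colour with $\tau_{\mb{O}}(b,c) = i$, meaning incomparable if $i = 0$, $c < b$ (i.e.\ $c \ra b$) if $i = 1$, and $b \ra c$ if $i = 2 \equiv -1$. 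Each of these one-point extensions of the one-point poset $\{b\}$ is a $3$-coloured poset, so it is realised in $\mb{O}$ by the extension property.

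The main obstacle is bookkeeping rather than ideas. In \ref{i: P3 strong amalg} one must check that adjoining $v$ loses nothing and that removing it preserves disjointness. In \ref{i: P3 third pt same qftp} one must get the sign conventions of $\tau$ right, which requires checking the orientation convention $\tau(v,b)=i$ carefully.
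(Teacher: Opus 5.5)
Your proposal is correct and follows essentially the same route as the paper. In (ii) you make the same reduction to $a = v$ and the same choice of $c \in O_j$ with $\tau_{\mb{O}}(b,c) = i$ via the extension property of $\mb{O}$; in (i) you use ultrahomogeneity and transitivity of $\Pthr$ to reduce to strong amalgamation of the point-stabiliser structure $\mb{H}$ (equivalently, of finite $3$-coloured posets), which is exactly what the paper's one-line proof invokes, only spelled out more explicitly, including the case $A = \varnothing$ handled by adjoining $v$ to the base.
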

\begin{proof}
    \ref{i: P3 strong amalg}: this follows immediately from the fact that $\mb{H}$ has strong amalgamation and the ultrahomogeneity of $\Pthr$.
    
    \ref{i: P3 third pt same qftp}: As $\Pthr$ is ultrahomogeneous and transitive, it suffices to show that for $b \neq v$ there is $c$ with $\qftp_{\Pthr}(v, c) = \qftp_{\Pthr}(b, c)$. Let $i < 3$ be such that $b \in O_i$, and let $j < 3$. Using the extension property of $\mb{O}$, take $c \in O_j$ with $\tau_{\mb{O}}(b, c) = i$. Then $\tau_{\Pthr}(v, c) = j$ and $\tau_{\Pthr}(b, c) = \tau_{\mb{O}}(b, c) + j - i = j$.
\end{proof}

If we can show that $\Aut(\mb{O})$ is simple, then by the above two lemmas and Lemma \ref{l: simple G_A simple G}, this will suffice to show that $\Aut(\Pthr)$ is simple. Simplicity of $\Aut(\mb{O})$ follows from a straightforward adaptation of the proof of the simplicity of the automorphism group of the generic poset in \cite{GMR93} to the generic $3$-coloured poset $\mb{O}$: the same arguments carry through without issue (as checked by the authors). We thus have:

\begin{prop}
    $\Aut(\Pthr)$ is simple.
\end{prop}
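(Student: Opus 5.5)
The argument is an application of Lemma \ref{l: simple G_A simple G} with $n = 1$. First, $\Pthr$ is $1$-transitive: it is transitive (\cite[Section 5.2]{Che98}), and this was used above to get $\Aut(\Pthr)_a \cong \Aut(\mb{O})$ for every $a$. Second, $\Pthr$ is a strong relational \Fr structure by part \ref{i: P3 strong amalg} of the preceding lemma. Third, condition \ref{abc tps} of Lemma \ref{l: simple G_A simple G} for $n=1$ asks, for distinct $a, b$, for some $c$ with $\tp(a,c) = \tp(b,c)$. This is part \ref{i: P3 third pt same qftp}, since in an ultrahomogeneous structure quantifier-free type determines type. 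We also need $c \neq a, b$. This holds because $\qftp(a,c) = \qftp(b,c)$ with $a \neq b$ forces $c \notin \{a,b\}$: if $c = a$ then $\qftp(b,a)$ would contain $x = y$. So it only remains to verify condition \ref{simple stabs}, namely that each point-stabiliser $\Aut(\Pthr)_a \cong \Aut(\mb{O})$ is simple.

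For the simplicity of $\Aut(\mb{O})$, I would follow the route the paper indicates. The plan is to adapt the Glass--McCleary--Rubin proof \cite{GMR93} for the generic poset to the generic $3$-coloured poset $\mb{O}$. The points to check are these.
\begin{itemize}
\item Their constructions of automorphisms by back-and-forth only use the extension property of the generic poset, and the extension property of $\mb{O}$ is the same with a colour attached to each new point. The colour does not constrain the order type relative to finitely many parameters, because $\mb{O}$ is the generic colouring.
\item Their key lemmas (that non-trivial elements have large support, and that conjugates can be combined to produce everything) carry over colour-by-colour. Each colour class is dense in every nonempty realisable order-type region.
\end{itemize}

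As an alternative I would also try the SWIR route. $\mb{O}$ carries the standard SWIR of the generic poset, with $B \ind_A C$ making $b < c$ exactly when forced by transitivity through $A$, and incomparable otherwise, as in Remark \ref{r: Pthr SWIR exp}. Li's Fact \ref{f:Li main thm} would then apply, provided one shows that every nontrivial normal subgroup contains an element moving almost R- and L-maximally.

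The main obstacle is this simplicity of $\Aut(\mb{O})$. The SWIR here is not free, since transitivity through the base matters, so Proposition \ref{p:free SWIR simple} does not apply directly. Either the GMR adaptation must be checked carefully, or the maximally moving elements must be built by an ad hoc back-and-forth. Granting it, Lemma \ref{l: simple G_A simple G} immediately gives simplicity of $\Aut(\Pthr)$.
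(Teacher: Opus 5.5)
Your proposal matches the paper's proof: you apply Lemma \ref{l: simple G_A simple G} with $n=1$, using the transitivity of $\Pthr$, strong amalgamation, and the common-third-point lemma for condition \ref{abc tps}. For condition \ref{simple stabs} you use the identification $\Aut(\Pthr)_a \cong \Aut(\mb{O})$ together with an adaptation of the Glass--McCleary--Rubin argument, and the paper likewise leaves this last step as an asserted adaptation, so your SWIR alternative is not needed.
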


\begin{rem}
    Of course, we would like to have a proof of the simplicity of $\Aut(\mb{O})$ using SWIRs, along the lines of Lemma \ref{l: normal subgroups Q_n}, but we did not attempt this.
\end{rem}

This concludes the proof of Theorem C.

\section{The semigeneric tournament} \label{s: sg}

We now consider the most technically involved example of this paper, the \emph{semigeneric tournament} $\mb{S}$. We show that $\Aut(\mb{S})$ is simple (Theorem D). The proof differs from the examples in the preceding sections in a number of respects. Firstly, finding a SWIR expansion $\Srho$ and showing simplicity of $\Aut(\Srho)$ is more difficult (the SWIR is not free). Secondly, the group $\Aut(\Srho)$ is isomorphic to the setwise-stabiliser of an \emph{infinite} subset of $\mb{S}$, and so in Steps 2 and 3 of the SWIR expansion method we cannot straightforwardly use ultrahomogeneity to obtain elements of $\Aut(\mb{S})$ as we did when working with pointwise-stabilisers of finite subsets; instead, we build automorphisms of $\mb{S}$ via back-and-forth constructions.

Throughout Section \ref{s: sg}, we use the notation from Notation \ref{n: or graphs} and Definition \ref{d: n-partite tourn}.

We first define the semigeneric tournament $\mb{S}$ and prove some basic facts about it. (The earliest reference we could find for this structure is \cite{Che87}, where amalgamation is proved, and Lemma \ref{l: sg induced equiv rel} is adapted from \cite{JLNW14}. See also \cite{JLNW14}, \cite{Jah22}, \cite{HJKS25} for other properties of $\mb{S}$: its coprecompact Ramsey expansion, unique ergodicity and EPPA.)

\begin{notn} \label{n: ul out-edge}
    Let $A$ be an oriented graph, and let $u, v \in A$ with $u \sim v$. We define:
    \[
        \di{uv} :=
        \begin{cases}
            1 & u \ra v\\
            0 & u \la v
        \end{cases}
    \]
    We will also use the above notation to define edge orientations in the natural fashion (for example, specifying $\di{uv} = 1$ means that we define $u \ra v$).
\end{notn}

\begin{defn}
    Let $A$ be an $\omega$-partite tournament. Let $u, u', v, v' \in A$. We say that $\{u, u'\}$ and $\{v, v'\}$ are \emph{separated pairs in $A$} if $u \ic u'$, $v \ic v'$ and $\{u, u'\} \sim \{v, v'\}$.
\end{defn}

Let $\mc{S}$ be the class of finite oriented graphs $A$ such that:
\begin{enumerate}[label=(\roman*)]
    \item $A$ is an $\omega$-partite tournament;
    \item \label{i: sg parity} for all separated pairs $\{u, u'\}$, $\{v, v'\}$ in $A$, the total number of out-edges from vertices of $\{u, u'\}$ to vertices of $\{v, v'\}$ is even.
\end{enumerate}
We call \ref{i: sg parity} the \emph{parity condition}. Using Notation \ref{n: ul out-edge}, \ref{i: sg parity} is equivalent to: for all separated pairs $\{u, u'\}$, $\{v, v'\}$ we have $\di{u v} + \di{u v'} + \di{u' v} + \di{u' v'} \equiv 0 \Mod{2}$.

\begin{lem} \label{l: sg induced equiv rel}
    Let $A \in \mc{S}$. For each $v \in A$ and each part $P$ of $A$ with $v \notin P$, define an equivalence relation $\sim_{P, v}$ on $P$ by: $u \sim_{P, v} u'$ if $\di{uv} = \di{u'v}$. Then $\sim_{P, v}$, $\sim_{P, v'}$ are equal for all $v, v' \in A$ with $v \ic v'$ and each part $P \sim \{v, v'\}$.
\end{lem}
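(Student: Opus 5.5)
The plan is to unwind the definitions and reduce everything to the parity condition applied to a single pair of separated pairs. First I will check that $\sim_{P,v}$ is well-defined and an equivalence relation. Since $v \notin P$ and $A$ is an $\omega$-partite tournament, $v$ is not in the part $P$, so $v \sim u$ for every $u \in P$. Hence $\di{uv}$ is defined for all $u \in P$. The relation $u \sim_{P,v} u'$ is then the kernel of the function $P \to \{0,1\}$, $u \mapsto \di{uv}$, and so it is an equivalence relation with at most two classes.

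For the main claim, I fix $v, v' \in A$ with $v \ic v'$ and a part $P$ with $P \sim \{v, v'\}$. If $v = v'$ there is nothing to show, so I assume they are distinct. I then take arbitrary $u, u' \in P$ and aim to show that $\di{uv} = \di{u'v}$ holds if and only if $\di{uv'} = \di{u'v'}$. If $u = u'$ this is trivial. Otherwise $u \ic u'$ (same part), $v \ic v'$, and $\{u,u'\} \sim \{v,v'\}$ because $P \sim \{v,v'\}$. So $\{u,u'\}$ and $\{v,v'\}$ are separated pairs in $A$.

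The parity condition then gives
\[\di{uv} + \di{uv'} + \di{u'v} + \di{u'v'} \equiv 0 \pmod 2,\]
which rearranges to
\[\di{uv} + \di{u'v} \equiv \di{uv'} + \di{u'v'} \pmod 2.\]
Since each term lies in $\{0,1\}$, the left side is $0$ modulo $2$ exactly when $\di{uv} = \di{u'v}$. Likewise the right side is $0$ modulo $2$ exactly when $\di{uv'} = \di{u'v'}$. This gives $u \sim_{P,v} u' \iff u \sim_{P,v'} u'$, as required.

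There is no real obstacle here. The only point needing care is confirming that every edge used in the argument actually exists, which is exactly the role of the hypotheses $v \notin P$ and $P \sim \{v,v'\}$, and that the degenerate cases $u = u'$ and $v = v'$ are handled separately.
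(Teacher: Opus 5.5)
Your proof is correct and is essentially the paper's argument: apply the parity condition to the separated pairs $\{u,u'\}$ and $\{v,v'\}$, rearrange modulo $2$, and read off $u \sim_{P,v} u' \iff u \sim_{P,v'} u'$. Your extra treatment of well-definedness and the degenerate cases $u=u'$, $v=v'$ is harmless bookkeeping that the paper leaves implicit.
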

\begin{proof}
    Let $u, u' \in P$. By the parity condition we have $\di{uv} + \di{u'v} + \di{uv'} + \di{u'v'} \equiv 0 \Mod{2}$, so $\di{uv} + \di{u'v} \equiv \di{uv'} + \di{u'v'} \Mod{2}$ and thus $(u \sim_{P, v} u') \Leftrightarrow (u \sim_{P, v'} u')$. 
\end{proof}

\begin{lem} \label{l: 3in4}
    Let $A \in \mc{S}$ have exactly two parts $P, Q$. Let $B = A \cup \{b\}, C = A \cup \{c\} \in \mc{S}$ with $b \ic P$ and $c \ic Q$. Then there is exactly one amalgam (up to isomorphism) in $\mc{S}$ of $B$, $C$ over $A$.
\end{lem}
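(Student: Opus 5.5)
The plan is to show that an amalgam of $B$ and $C$ over $A$ in $\mc{S}$ is determined by a single bit, the orientation of the edge between $b$ and $c$, and that the parity condition forces exactly one consistent value of that bit.

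First I pin down the underlying $\omega$-partite tournament of any amalgam $D$. Since $b \ic P$ and $\ic$ is an equivalence relation in $B$, the parts of $B$ are $P \cup \{b\}$ and $Q$; similarly the parts of $C$ are $P$ and $Q \cup \{c\}$. We cannot identify $b$ with $c$: $P \neq \varnothing$, $b \ic P$, but $c \sim P$. So $D$ has domain $A \cup \{b, c\}$ and parts $P \cup \{b\}$ and $Q \cup \{c\}$. In particular $b \sim c$. All other pairs of vertices already lie in $B$ or in $C$, so $D$ is determined up to isomorphism by $\di{bc}$.

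Uniqueness. Fix $p \in P$ and $q \in Q$. Then $\{b, p\}$ and $\{c, q\}$ are separated pairs in $D$: we have $b \ic p$ and $c \ic q$, and $b \sim c$, $b \sim q$, $p \sim c$, $p \sim q$. The parity condition therefore forces
\[
\di{bc} \equiv \di{bq} + \di{pc} + \di{pq} \Mod{2}.
\]
So at most one orientation of the edge $bc$ can give a member of $\mc{S}$.

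Existence. Define $\di{bc}$ by the displayed formula for some fixed choice of $p \in P$ and $q \in Q$, and check that this does not depend on the choice.
\begin{itemize}
\item Replacing $p$ by $p' \in P$: the pairs $\{p, p'\}$ and $\{c, q\}$ are separated in $C \in \mc{S}$, so parity in $C$ gives $\di{pc} + \di{pq} \equiv \di{p'c} + \di{p'q}$.
\item Replacing $q$ by $q' \in Q$: the pairs $\{b, p\}$ and $\{q, q'\}$ are separated in $B$, so parity in $B$ gives $\di{bq} + \di{pq} \equiv \di{bq'} + \di{pq'}$.
\end{itemize}

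It then remains to verify the parity condition for every pair of separated pairs in $D$. Any separated pair lies inside one part, so one pair lies in $P \cup \{b\}$ and the other in $Q \cup \{c\}$.
\begin{itemize}
\item If neither pair meets both of $b$ and $c$ (meaning $b$ and $c$ do not both appear among the four vertices), then all four vertices lie in $B$ or all lie in $C$, and parity is inherited.
\item Otherwise the pairs are $\{b, p\}$ and $\{c, q\}$ for some $p \in P$ and $q \in Q$, and parity holds by the definition of $\di{bc}$ together with its independence of $(p, q)$.
\end{itemize}
Hence $D \in \mc{S}$, and $D$ is an amalgam of $B$ and $C$ over $A$.

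The only real point is the well-definedness check in the existence step. It is where both hypotheses $B \in \mc{S}$ and $C \in \mc{S}$ are used, essentially as in Lemma~\ref{l: sg induced equiv rel}. Everything else is bookkeeping about which vertices lie in which part.
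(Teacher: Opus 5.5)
Your proposal is correct and follows essentially the paper's argument: uniqueness comes from the parity condition on the separated pairs $\{b,p\}$, $\{c,q\}$, and for existence $\di{bc}$ is defined from one choice of $(p,q)$, with parity in $B$ handling a change of $q$ and parity in $C$ a change of $p$; the paper does the same computation without phrasing it as a well-definedness check. Your explicit exclusion of the identification $b = c$ is a small piece of extra care that the paper leaves implicit.
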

\begin{proof}
    Let $u \in P$, $v \in Q$. We are given the orientations of the edges $b v$, $u c$, $u v$. If there is an amalgam of $B$, $C$ over $A$, by the parity condition we have $(\di{bv} = \di{uv}) \Rightarrow (\di{bc} = \di{uc})$ and $(\di{bv} \neq \di{uv}) \Rightarrow (\di{bc} \neq \di{uc})$, giving uniqueness. 
    
    We now show existence of the amalgam. If $\di{bv} = \di{uv}$, then $b$ and $u$ have the same orientation to each point of $Q$, by the parity condition for $A \cup \{b\}$; define $\di{bc} = \di{uc}$. If $\di{bv} \neq \di{uv}$, then $b, u$ have opposite orientations to each point of $Q$, by the parity condition for $A \cup \{b\}$; define $\di{bc} \neq \di{uc}$. In each case we have $\di{bc} + \di{bw} \equiv \di{uc} + \di{uw} \Mod{2}$ for all $w \in Q$.
    
    We must now check the parity condition for the oriented graph just defined on $A \cup \{b, c\}$. Let $u', v' \in A$ with $u' \in P$, $v' \in Q$. We have $\di{bc} + \di{bv'} + \di{u'c} + \di{u'v'} \equiv \di{uc} + \di{uv'} + \di{u'c} + \di{u'v'} \Mod{2}$, and by the parity condition on $C$ the latter expression is even, as required.
\end{proof}

\begin{lem} \label{l: sg amalg}
    The class $\mc{S}$ has strong amalgamation.
\end{lem}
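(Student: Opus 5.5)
The approach is to reduce to one-point amalgamation and then treat it part by part. Since $\mc{S}$ is closed under substructures (the defining conditions are universal), iterating one-point strong amalgams gives strong amalgamation in general. So it suffices to amalgamate $B = A \cup \{b\}$ and $C = A \cup \{c\}$ over $A$ with $b \neq c$ new points; we only need to decide whether $b \ic c$, and if not, the orientation of the edge $bc$.

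First, the case where $b$ and $c$ lie in the same part. Suppose some $a \in A$ has $a \ic b$ and $a \ic c$, or suppose $b$ and $c$ are both isolated from all existing parts and we decide to put them together. We then set $b \ic c$. The relation $\ic$ remains an equivalence relation, because $b$ and $c$ have the same $\ic$-class in $A$. No new separated pairs containing both $b$ and $c$ are created across parts, and the only new quadruples are separated pairs $\{b, c\}, \{v, v'\}$ with $v, v' \in A$. The parity condition for these follows from Lemma \ref{l: sg induced equiv rel}-style reasoning: if $A$ contains a point $a$ of that part, then $\di{bv} + \di{bv'} \equiv \di{av} + \di{av'} \equiv \di{cv} + \di{cv'}$. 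If the part is new, we instead put $b$ and $c$ in different parts, which falls under the next case.

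Second, the case where $b$ and $c$ lie in different parts. We must choose $\di{bc}$ so that every separated-pair quadruple involving both $b$ and $c$ has even parity. Such quadruples have the form $\{b, u\}, \{c, w\}$ with $u \ic b$, $w \ic c$ and $u, w \in A$. If $A$ has no point in $P_b$ or none in $P_c$, no constraint arises and we choose arbitrarily. Otherwise, fix $u_0 \in A \cap P_b$ and $w_0 \in A \cap P_c$. Lemma \ref{l: 3in4}, applied to the two-part structure on $\{u_0, w_0\}$ together with $b, c$, forces a unique value of $\di{bc}$, and we take that value.

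The main obstacle is checking that this forced value works for all other pairs $u \in A \cap P_b$, $w \in A \cap P_c$, i.e.\ that it is independent of the choice of $(u_0, w_0)$. I would verify this via the identity used at the end of the proof of Lemma \ref{l: 3in4}. The choice gives $\di{bc} + \di{bw} \equiv \di{u_0c} + \di{u_0w}$ for every $w \in A \cap P_c$. Then for arbitrary $u, w$ we have
\[
\di{bc} + \di{bw} + \di{uc} + \di{uw} \equiv \di{u_0c} + \di{u_0w} + \di{uc} + \di{uw},
\]
which is even by the parity condition in $C$, applied to the separated pairs $\{u_0, u\}$ and $\{c, w\}$. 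The step where this identity holds for every $w$ uses parity in $B$, as in Lemma \ref{l: 3in4}. This completes the verification of the parity condition, and strongness holds since $b \neq c$ throughout.
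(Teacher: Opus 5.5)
Your proof is correct and follows the paper's route. You reduce to one-point extensions, set $b \ic c$ exactly when some $a \in A$ satisfies $a \ic b$ and $a \ic c$, and otherwise take the orientation forced by Lemma~\ref{l: 3in4}, or an arbitrary one when no constraint arises. Two differences are cosmetic: your consistency check over all $u \in A \cap P_b$, $w \in A \cap P_c$ is what Lemma~\ref{l: 3in4} already gives if you apply it to $A \cap (P_b \cup P_c)$ rather than to $\{u_0, w_0\}$; and your explicit parity check for the new pairs $\{b,c\},\{v,v'\}$ when $b \ic c$ fills in a detail that the paper's reduction leaves implicit.
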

\begin{proof}
    Let $A, B, C \in \mc{S}$ with $B \cap C = A$. It is straightforward to see that it suffices to consider the case where $B = A \cup \{b\}$, $C = A \cup \{c\}$ with $b, c \notin A$ and where each point of $A$ is in the same part as $b$ or $c$: we must satisfy the parity condition for new separated pairs including $b$ and $c$, and these can only occur in the same parts as $b$ and $c$.
    
    Define the oriented graph relation between $b$ and $c$ as follows:
    \begin{itemize}
        \item if there is $v \in A$ with $(v \ic b) \wedge (v \ic c)$, define $b \ic c$;
        \item if there are $u, v \in A$ in different parts with $u \ic b$, $v \ic c$, then by Lemma \ref{l: 3in4} there is only one possibility for the orientation of the edge $bc$: give $bc$ this orientation;
        \item otherwise, any orientation of $bc$ satisfies the parity condition, so orient $bc$ arbitrarily (e.g.\ $b \ra c$). \qedhere
    \end{itemize}
\end{proof}

We denote the \Fr limit of $\mc{S}$ by $\mb{S}$, and call $\mb{S}$ the \emph{semigeneric tournament}.

We now show that one cannot apply the SWIR techniques of Li directly to $\mb{S}$.

\begin{lem} \label{l: sg no SWIR}
    The semigeneric tournament $\mb{S}$ does not have a local SWIR.
\end{lem}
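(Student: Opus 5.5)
We argue by contradiction: suppose $\ind$ is a local SWIR on $\mb{S}$. Since the relation is only defined over non-empty bases, Lemma \ref{l: SWIR implies dense ccl} cannot be quoted as stated. Instead we mimic its proof, and the direct proof of Lemma \ref{l: Tn no SWIR}, over a non-empty base $A$. The strategy is to find a non-empty $A \fin \mb{S}$ and two finite structures $B, C \supseteq A$ with partial automorphisms $f_B, f_C$ fixing $A$ pointwise such that no structure in $\mc{S}$ amalgamating $B$ and $C$ over $A$ admits a common extension of $f_B \cup f_C$.

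The abstract step is as follows. By (REx) we may place $C$ so that $B \ind_A C$. Let $g \in \Aut(\mb{S})$ extend $f_B$ (ultrahomogeneity). Then $g$ fixes $A$, so by (Inv) we get $B \ind_A gC$. We also have $gC \equiv_A C$. Now (RSta) gives that $gC \equiv_{AB} C$, via an automorphism extending $g|_{C}$ in the appropriate sense. Combining this with $f_C$, as in Lemma \ref{l: SWIR implies dense ccl}, shows that $f_B \cup f_C$ extends to a partial automorphism of $BC$. (Monotonicity is needed so that $f_B$ and $f_C$ can be defined on subsets of $B$ and $C$ rather than on all of them.)

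It remains to choose the configuration using the parity condition. Take $A = \{a\}$. Let $B = \{a, b, b'\}$ with $b \ic b'$ lying in a part different from that of $a$, and with $a \ra b$, $a \ra b'$. Let $f_B$ be the automorphism of $B$ fixing $a$ and swapping $b$ and $b'$. Let $C = \{a, c\}$ with $c \ic a$, and let $f_C = \id_C$. In any amalgam $D \in \mc{S}$ the pairs $\{a, c\}$ and $\{b, b'\}$ are separated, since $c$ cannot lie in the part of $b$: $c$ is in the part of $a$. The parity condition then gives $\di{ab} + \di{ab'} + \di{cb} + \di{cb'} \equiv 0$. Because $\di{ab} = \di{ab'} = 1$, this forces $\di{cb} = \di{cb'}$. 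Hence $c$ has the same orientation to $b$ and to $b'$, so swapping $b, b'$ while fixing $a, c$ does preserve edges. So this first attempt fails to give a contradiction, and the configuration must be adjusted.

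Adjusted attempt: take $a \ra b$ and $b' \ra a$, that is, $\di{ab} \ne \di{ab'}$. Then the swap of $b$ and $b'$ fixing $a$ is not an automorphism, so instead let $f_B$ be the partial automorphism $b \mapsto b'$ fixing $a$. This is not a valid partial isomorphism either, since the edges to $a$ differ. So we drop $a$ from the domain, taking $f_B = \{b \mapsto b'\}$, and we let $f_C$ fix $c$. The required extension would map $b \mapsto b'$ and $c \mapsto c$, so it needs $\di{bc} = \di{b'c}$. But parity with $\di{ab} \ne \di{ab'}$ forces $\di{cb} \ne \di{cb'}$, a contradiction.

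Checking the argument with $A$ not fixed: (Inv) needs $g$ to fix $A$, whereas here $f_B$ does not contain $a$. To handle this, replace the base by $A = \{d\}$, where $d$ is a point in a third part with $\di{db} = \di{db'}$. Put $B = \{d, a, b, b'\}$ and $f_B = \{d \mapsto d,\ b \mapsto b'\}$, and use $C = \{d, a, c\}$ with $c \ic a$ and $f_C = \id$ on $\{d, c\}$. Then the common extension is forced to satisfy $\di{bc} = \di{b'c}$, while parity within $BC$ forces $\di{cb} \ne \di{cb'}$. Verifying that these choices are realisable in $\mc{S}$, and that the stationarity bookkeeping goes through, is the main obstacle.
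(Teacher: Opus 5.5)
There is a genuine gap, and it is more than bookkeeping. Your final configuration yields no contradiction, and in fact no configuration of the kind you are looking for can.

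\textbf{Why the final configuration fails.} In your last attempt, $a$ lies in $B \cap C$ but not in the base $A=\{d\}$. Applying (REx) over $\{d\}$ gives $C' \equiv_d C$ with $B \ind_d C'$. Strong amalgamation then forces $(B\setminus A)\cap(C'\setminus A)=\varnothing$, so $C'$ contains a \emph{new} vertex $a'$, and $c'$ lies in the part of $a'$, not of $a$. Nothing now links $c'$ to $\{b,b'\}$. For example, put $a',c'$ in a fresh part with $\di{a'b}=\di{a'b'}$ and $\di{c'b}=\di{c'b'}$. This is a legitimate amalgam over $\{d\}$ in which $\{d\mapsto d,\ b\mapsto b',\ c'\mapsto c'\}$ is a partial isomorphism, so the hypothetical SWIR could produce exactly this.

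For parity to bite, some vertex of the base must lie in the part of $c$ and distinguish $b$ from $b'$. But then the automorphism $g$ extending $f_B$ cannot fix that vertex. This is exactly the tension you ran into twice.

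\textbf{Why no configuration of this type can work.} Your scheme only ever applies (Inv) to automorphisms fixing the base pointwise. So the only contradiction it can reach is a failure of the joint embedding property over $A$ for partial automorphisms fixing $A$ pointwise. Equivalently, it would show $\Aut(\mb{S})_{(A)}$ has no dense conjugacy class. That never happens in $\mb{S}$, for any finite $A$. One can check this by building an amalgam of $B$ and $C$ over $A$ as follows:
\begin{enumerate}
\item For each part $P$ meeting $A$, choose $a_P\in A\cap P$.
\item Between $B\setminus A$ and $C\setminus A$ in two parts that both meet $A$, orient edges as forced by parity (Lemma \ref{l: 3in4}).
\item Set $\di{xy}=\di{a_Py}$ when only the part of $x$ meets $A$, and $\di{xy}=\di{xa_R}$ when only the part of $y$ meets $A$.
\item Make the remaining parts of $B$ and $C$ distinct and put $x\ra y$ between them.
\end{enumerate}
This lies in $\mc{S}$, and $f_B\cup f_C$ preserves all these edges whenever $f_B,f_C$ fix $A$ pointwise.

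\textbf{The missing idea.} Apply (Inv) to an automorphism that fixes the base only \emph{setwise}, exploiting that $\equiv_A$ refers to the pointwise stabiliser. The paper does this as follows.
\begin{itemize}
\item Take a directed $4$-cycle $v_0\ra v_1\ra v_2\ra v_3\ra v_0$, so $v_0\ic v_2$ lie in a part $P$.
\item By (Ex), choose $u\in P\setminus\{v_0,v_2\}$ with $u\ind_{v_0v_2}v_1v_3$.
\item Parity on the separated pairs $\{u,v_0\}$ and $\{v_1,v_3\}$ forces $\di{uv_1}\neq\di{uv_3}$.
\item Extend $v_0\leftrightarrow v_2$, $v_1\leftrightarrow v_3$ to some $g\in\Aut(\mb{S})$. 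By (Inv), $gu\ind_{v_0v_2}v_1v_3$.
\item Since $u$ and $gu$ both lie in $P$, we have $gu\equiv_{v_0v_2}u$.
\item (Sta) then gives $gu\equiv_{v_0v_1v_2v_3}u$, contradicting $\di{gu\,v_1}=\di{uv_3}\neq\di{uv_1}$.
\end{itemize}
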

\begin{proof}
    This is an adaptation of \cite[Proposition 5.20]{KSW25}: we include a proof here for the convenience of the reader. Suppose for a contradiction that $\mb{S}$ has a local SWIR $\ind$. Let $\{v_0, v_1, v_2, v_3\} \sub \mb{S}$ with $v_i \ra v_{i+1}$ for $i \in \Z/4\Z$ (such a directed $4$-cycle is a substructure of $\mb{S}$ as it satisfies the parity condition). Let $P$ be the part of $\mb{S}$ containing $v_0, v_2$. As there is $w \in \mb{S}$ with $w \in P \setminus \{v_0, v_2\}$, by (Ex) there is $u \in P \setminus \{v_0, v_2\}$ with $u \ind_{v_0 v_2} v_1 v_3$. We have $u \sim \{v_1, v_3\}$, and by the parity condition on $\{u, v_0, v_1, v_3\}$, we have $\di{uv_1} \neq \di{uv_3}$. Consider the automorphism of the directed $4$-cycle $v_0v_1v_2v_3$ given by $v_0 \leftrightarrow v_2$, $v_1 \leftrightarrow v_3$, which by ultrahomogeneity of $\mb{S}$ extends to some $g \in \Aut(\mb{S})$. By (Inv) we have $gu \ind_{v_0 v_2} v_1 v_3$, and as $gu \in P$ we have $\tp(gu/v_0v_2) = \tp(u/v_0v_2)$. So by (Sta) we have $\tp(gu/v_1v_3) = \tp(u/v_1v_3)$, but $\di{uv_1} \neq \di{uv_3}$ and $\di{gu v_1} = \di{uv_3}$, contradiction.
\end{proof}
\begin{rem}
    The above proof can be straightforwardly adapted to show that any expansion of $\mb{S}$ by finitely many constants also does not have a local SWIR: let $C \sub \mb{S}$ be the set of constants, take $v_0, v_1, v_2, v_3$ with $C \ra \{v_0, v_1, v_2, v_3\}$ (so that $v_0 \leftrightarrow v_2$, $v_1 \leftrightarrow v_3$ extends to an automorphism of $\mb{S}$ fixing $C$ pointwise), and in each instance of $\ind$ add $C$ to each structure mentioned. 
\end{rem}

\begin{rem} \label{r: failure of WEI for sg}
    The structure $\mb{S}$ does not have weak elimination of imaginaries. Recall that a transitive $\omega$-categorical \Fr structure $M$ with strong amalgamation has weak elimination of imaginaries if and only if, for every open subgroup $H \leq \Aut(M)$, there is a unique finite subset $A \fin M$ with $\Aut(M)_{(A)} \sub H \sub \Aut(M)_{\{A\}}$ (see, for example, \cite[Lemma 1.3]{EH93} or \cite[Lemma 1.6]{BJJ25}). Take a part $P$ of $\mb{S}$, and consider the subgroup of $\Aut(\mb{S})$ which setwise-fixes $P$. It is not difficult to see that this subgroup is open and that it does not lie between the pointwise- and setwise-stabilisers of a finite set (this is straightforward via strong amalgamation). Theorem D gives the simplicity of $\Aut(\mb{S})$, and so the semigeneric tournament $\mb{S}$ is an example of a finite relational \Fr structure with strong amalgamation which has a simple automorphism group but does not have weak elimination of imaginaries. Thanks to Colin Jahel for pointing this out. 
\end{rem}

\subsection{Step 1: a SWIR expansion \texorpdfstring{$\Srho$}{S rho} with a simple automorphism group} \label{ss: sg step 1}

We now construct an expansion $\mb{S}_\rho$ of $\mb{S}$ which has a SWIR. Let $\mc{L}^\rho = \{\ra, \rho\}$ be an expansion of the oriented graph language $\mc{L}^\ra$ by a unary function symbol $\rho$. We let $\mc{S}_\rho$ denote the class of $\mc{L}^\rho$-structures $A$ such that $A|_{\mc{L}^\ra} \in \mc{S}$ and, for each part $P$ of $A$ (in the oriented graph structure), there is $u \in P$ such that $\rho^A(v) = u$ for all $v \in P$. (Informally, we think of $\mc{S}_\rho$ as the class of elements of $\mc{S}$ with one point in each part coloured red.) Note that $\mc{S}_\rho$ has strong amalgamation, by the same proof as in Lemma \ref{l: sg amalg} (as well as the joint embedding property); let $\mb{S}_\rho$ denote the \Fr limit of $\mc{S}_\rho$.

We will use the following lemma. We assume it is essentially folklore, but we provide a proof for the convenience of the reader.

\begin{lem} \label{l: reduct of Fr exp}
    Let $\mc{L} \sub \mc{L}'$ be first-order languages, and let $\mc{K}$, $\mc{K}'$ be \Fr classes of $\mc{L}$- and $\mc{L}'$-structures respectively. Suppose that:
    \begin{enumerate}[label=(\roman*)]
        \item \label{i: A' reduct in K} for each $A' \in \mc{K}'$, we have $A'|_{\mc{L}} \in \mc{K}$;
        \item \label{i: A embeds into exp} each $A \in \mc{K}$ embeds into some $B \in \mc{K}$ with an expansion $B' \in \mc{K}'$;
        \item \label{i: EP exp} for each $A' \in \mc{K}'$ and embedding $f_0 : A'|_{\mc{L}} \to B$ with $B \in \mc{K}$, there is an embedding $f_1 : B \to C$ with $C \in \mc{K}$ such that $C$ expands to some $C' \in \mc{K}'$ and $f_1 \circ f_0$ is an embedding $A' \to C'$.
    \end{enumerate}
    Then the $\mc{L}$-reduct of the \Fr limit of $\mc{K}'$ is equal to the \Fr limit of $\mc{K}$.
\end{lem}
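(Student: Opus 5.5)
Let $M'$ be the \Fr limit of $\mc{K}'$ and $M = M'|_{\mc{L}}$. The plan is to show that $M$ is a countable structure with $\Age(M) = \mc{K}$ that has the extension property for $\mc{K}$. Then \Frn's theorem identifies $M$ with the \Fr limit of $\mc{K}$ up to isomorphism, which is what ``equal'' means here. Countability of $M$ is inherited from $M'$.

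\textbf{Step 1: the age.} First I would show $\Age(M) = \mc{K}$.
\begin{itemize}
    \item For $\sub$: every finitely generated $\mc{L}$-substructure $A \fg M$ lies inside the $\mc{L}'$-substructure $A' \fg M'$ generated by $A$. By \ref{i: A' reduct in K} we have $A'|_{\mc{L}} \in \mc{K}$, and $A$ is a substructure of $A'|_{\mc{L}}$. So $A \in \mc{K}$ by the hereditary property.
    \item For $\supseteq$: given $A \in \mc{K}$, use \ref{i: A embeds into exp} to embed $A$ into some $B$ with an expansion $B' \in \mc{K}'$. Then $B'$ embeds into $M'$, so $B$, and hence $A$, embeds into $M$.
\end{itemize}

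\textbf{Step 2: the extension property.} This is the main step. Let $A \fg M$, and let $f_0 : A \to B$ be an embedding with $B \in \mc{K}$. I must find an embedding $g : B \to M$ with $g \circ f_0 = \id_A$.
\begin{itemize}
    \item The difficulty is that $A$ need not be an $\mc{L}'$-substructure. So I replace $A$ by $A' := \langle A \rangle_{\mc{L}'} \fg M'$, which lies in $\mc{K}'$.
    \item Amalgamate $A'|_{\mc{L}}$ and $B$ over $A$ inside $\mc{K}$, using amalgamation for the \Fr class $\mc{K}$. This gives $D \in \mc{K}$ with embeddings $e : A'|_{\mc{L}} \to D$ and $e' : B \to D$ agreeing on $A$.
    \item Apply \ref{i: EP exp} to $e$. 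This gives $f_1 : D \to C$ with $C$ expanding to $C' \in \mc{K}'$, such that $f_1 \circ e : A' \to C'$ is an $\mc{L}'$-embedding.
    \item By the extension property of $M'$, there is an $\mc{L}'$-embedding $h : C' \to M'$ with $h \circ f_1 \circ e = \id_{A'}$.
    \item Set $g = h \circ f_1 \circ e'$. This is an $\mc{L}$-embedding $B \to M$. On $A$ it agrees with $h \circ f_1 \circ e$, which is the identity, so $g \circ f_0 = \id_A$.
\end{itemize}

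The main obstacle is keeping track of generated substructures in a possibly non-relational language. Passing from $A$ to $A'$ is necessary, and one should check that the amalgam $D$ respects the identifications on $A$. The key point is that the extension property for $\mc{K}$ only needs to be verified over finitely generated $\mc{L}$-substructures, and each of these sits inside a finitely generated $\mc{L}'$-substructure of $M'$ whose reduct lies in $\mc{K}$. With this in hand, \Frn's uniqueness theorem finishes the proof.
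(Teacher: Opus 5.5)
Your proposal is correct and follows essentially the same route as the paper. You first get $\Age(M)=\mc{K}$ from (i) and (ii). You then verify the extension property by passing to the $\mc{L}'$-substructure $A'$ generated by $A$, amalgamating $A'|_{\mc{L}}$ with $B$ over $A$ in $\mc{K}$, applying (iii), and using the extension property of $M'$.
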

\begin{proof}
    Let $M'$ be the \Fr limit of $\mc{K}'$, and let $M = M'|_\mc{L}$. By \ref{i: A' reduct in K} we have $\Age(M) \sub \mc{K}$ and by \ref{i: A embeds into exp} we have $\mc{K} \sub \Age(M)$, so $\Age(M) = \mc{K}$. By \Frn's theorem, it now suffices to show that $M$ has the extension property. Let $D \fg M$, and let $f : D \to E$ be an embedding with $E \in \mc{K}$. Let $A'$ be the substructure of $M'$ generated by the domain of $D$. As $D$ is a finitely generated $\mc{L}$-structure and $M$ is the $\mc{L}$-reduct of $M'$, we have that $A'$ is a finitely generated $\mc{L}'$-structure. By \ref{i: A' reduct in K} we have $A'|_\mc{L} \in \mc{K}$, so as $\mc{K}$ has the amalgamation property, the pair of embeddings $E \xleftarrow{f} D \hookrightarrow A'|_\mc{L}$ has an amalgam $E \xrightarrow{g} B \xleftarrow{h} A'|_\mc{L}$. By applying \ref{i: EP exp} with $h : A'|_\mc{L} \to B$ and using the extension property of $M'$, there is $C' \fg M'$ with $A' \sub C'$ and an embedding $j : B \to C'|_\mc{L}$ with $j \circ h = \id_{A'}$. The embedding $j \circ g : E \to C'|_\mc{L}$ then witnesses that $M$ has the extension property as required: we have $j \circ g \circ f = j \circ h \circ \id_D = \id_{A'} \circ \id_D = \id_D$.
\end{proof}

By the above Lemma \ref{l: reduct of Fr exp}, it is straightforward to see that the oriented graph reduct of $\mb{S}_\rho$ is (isomorphic to) $\mb{S}$: conditions \ref{i: A' reduct in K}, \ref{i: A embeds into exp}, \ref{i: EP exp} in the lemma are easily checked.

\begin{defn} \label{d: Srho SWIR}
    Define a ternary relation $\ind$ on the set of finite substructures of $\Srho$ as follows. For all $A, B, C \fin \Srho$, we define $B \ind_A C$ if $(B \setminus A) \cap (C \setminus A) = \emp$ and for each $b \in B \setminus A$, $c \in C \setminus A$ we have:
    \begin{enumerate}[label=(\roman*)]
        \item \label{i: bc outside A} if $P_b \cap A = \emp$ and $P_c \cap A = \emp$, then $b \ra c$;
        \item \label{i: b in A c out} if $P_b \cap A \neq \emp$ and $P_c \cap A = \emp$, then $\di{bc} = \di{\rho(b)c}$;
        \item \label{i: c in A b out} if $P_b \cap A = \emp$ and $P_c \cap A \neq \emp$, then $\di{bc} = \di{b\rho(c)}$.
    \end{enumerate}
\end{defn}

\begin{lem} \label{l: Srho has SWIR}
    The relation $\ind$ defined in Definition \ref{d: Srho SWIR} is a SWIR for $\Srho$.
\end{lem}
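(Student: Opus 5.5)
We verify the four SWIR axioms directly from Definition \ref{d: Srho SWIR}. Throughout, $A,B,C$ are finite substructures of $\Srho$; since $\rho$ is a unary function, a substructure is closed under $\rho$, so every part meeting $A$ (resp.\ $B$, $C$) has its red point in $A$ (resp.\ $B$, $C$). This closure property carries most of the argument. The plan is to handle the easy axioms first, then stationarity, and finally existence, which is the main obstacle.

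\textbf{Invariance} is immediate, since the three clauses are expressed using only $\ra$, $\rho$ and the part relation $\ic$, all preserved by automorphisms.

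\textbf{Monotonicity.} Conditions \ref{i: bc outside A}--\ref{i: c in A b out} are conditions on pairs $(b,c)$ whose hypotheses depend only on whether $P_b\cap A$ and $P_c\cap A$ are empty. For $BD\ind_A C\Rightarrow B\ind_A C$ we simply restrict to fewer pairs. For $D\ind_{AB}C$ we enlarge the base to $AB$. Take $d\in D\setminus AB$ and $c\in C\setminus AB$. If $P_d$ meets $AB$ but not $A$, then $P_d\cap B\neq\emp$, so $\rho(d)\in B\setminus A$. The required equality $\di{dc}=\di{\rho(d)c}$ then follows from the two original conditions: $d$ and $\rho(d)$ both fall under clause \ref{i: bc outside A} (or both under \ref{i: c in A b out}) relative to $A$. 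When $P_c$ meets $A$, the parity condition via Lemma \ref{l: sg induced equiv rel} reconciles $\di{dc}$ and $\di{d\rho(c)}$. The case $P_d\cap A\neq\emp$ is unchanged. The right-hand version is symmetric.

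\textbf{Stationarity.} Assume $B\equiv_A B'$ and both are independent from $C$ over $A$. We show the map $BA\to B'A$ extended by $\id_C$ preserves $\ra$. For the pairs $(b,c)$ covered by clauses \ref{i: bc outside A}--\ref{i: c in A b out}, the orientation of $bc$ is forced by data in $BA$ or $CA$; since $\rho(b)$ corresponds to $\rho(b')$, it transfers. The remaining pairs have both $P_b$ and $P_c$ meeting $A$. Here we pick $a\in P_b\cap A$ and $a'\in P_c\cap A$ and apply Lemma \ref{l: 3in4}, the uniqueness of amalgams over two parts: the separated pairs $\{b,a\}$, $\{c,a'\}$ force $\di{bc}$ from $\di{ba'}$, $\di{ac}$ and $\di{aa'}$, all of which lie in $BA$ or $CA$. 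The same applies to pairs $b,c$ lying in a common part, which are then non-adjacent, giving $b\ic c$ on both sides. Hence the map is an isomorphism, and ultrahomogeneity gives $B\equiv_{AC}B'$.

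\textbf{Existence.} This is the main obstacle. Following Proposition \ref{p:SWIR of n2tournament}, we build an amalgam $E$ of $BA$ and $CA$ over $A$ by hand, keep new vertices of $B$ and $C$ disjoint, and then embed $E$ over $BA$ by the extension property. Parts meeting $A$ remain shared. Vertices of $B\setminus A$ and $C\setminus A$ in parts disjoint from $A$ are placed in distinct parts, so their parts are not identified; this ensures that $\rho$ remains well defined, since each new part keeps its own red point. The orientations are then defined by cases:
\begin{itemize}
\item for two new parts, set $b\ra c$;
\item for one old part and one new part, copy the orientation from the red point, as clauses \ref{i: b in A c out} and \ref{i: c in A b out} require;
\item for two old parts, use the unique orientation given by Lemma \ref{l: 3in4};
\item for a shared part, set $b\ic c$.
\end{itemize}
The key remaining step is checking the parity condition for every separated pair of pairs, which we organise by how many of the four vertices are new. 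When both pairs meet $A$ in their parts, parity holds as in Lemma \ref{l: sg amalg}. When one part is disjoint from $A$, all its vertices lie entirely on one side, either $B$ or $C$. A vertex $x$ in an old part then has orientation to the new part equal to that of its red point $\rho(x)$, adjusted by the $\sim_{P,v}$-class as in Lemma \ref{l: sg induced equiv rel}. This makes the count of out-edges even: it contributes two equal terms, or terms that pair off. When both parts are new, every edge between $B$ and $C$ points the same way, so the count is $0$ or $4$, hence even.
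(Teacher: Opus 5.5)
Your proposal is correct and follows the same route as the paper: a direct case check of (Inv), (Sta), (Mon) and (Ex), with stationarity for pairs whose parts both meet $A$ settled by the parity condition, and existence via an explicit strong amalgam satisfying (i)--(iii) realised by the extension property (you spell out this amalgam and its parity check, which the paper only says is easily checked). In Monotonicity no parity argument is actually needed. Over the base $AB$, every case is either the same clause over $A$ or your case with $\rho(d)\in B\setminus A$, except the case $P_d\cap AB=\emp$ with $P_c$ meeting $B$ but not $A$. That case is vacuous, because clause (i) for $BD\ind_A C$ forbids a part disjoint from $A$ from meeting both $B\setminus A$ and $C\setminus A$.
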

\begin{proof}
    We check the SWIR axioms (see Definition \ref{d:SWIR}). (Inv) is easily checked. For the other three axioms, we only check the left-hand versions, leaving the right-hand versions to the reader.
    
    (LSta): suppose $(B \ind_A C) \wedge (B' \ind_A C)$ with $B \equiv_A B'$. Then $B \setminus A$ and $B' \setminus A$ are both disjoint from $C \setminus A$, and for $b \in B \setminus A$, $b' \in B' \setminus A$, $c \in C \setminus A$:
    \begin{itemize}
        \item if $P_c \cap A = \emp$:
        \begin{itemize}
            \item if $P_b \cap A = \emp$, then as $B \equiv_A B'$ we have $P_{b'} \cap A = \emp$, so $b \ra c$ and $b' \ra c$,
            \item if $P_b \cap A \neq \emp$, then as $B \equiv_A B'$ we have that $b, b'$ are in the same part, so $\rho(b) = \rho(b')$ and thus $\di{bc} = \di{\rho(b)c} = \di{\rho(b')c} = \di{b'c}$;
        \end{itemize}
        \item if $P_c \cap A \neq \emp$:
        \begin{itemize}
            \item if $P_b \cap A = \emp$, then $P_{b'} \cap A = \emp$, and as $\rho(c) \in A$ we have $\di{b\rho(c)} = \di{b'\rho(c)}$, so $\di{bc} = \di{b'c}$,
            \item if $P_b \cap A \neq \emp$ and $\rho(c) \in P_b$, then $b' \ic \rho(c)$, so $b \ic c$ and $b' \ic c$,
            \item if $P_b \cap A \neq \emp$ and $\rho(c) \notin P_b$, then as $B \equiv_A B'$ we have $\rho(b) = \rho(b')$ and $\di{b\rho(c)} = \di{b'\rho(c)}$, so by the parity condition on $\{b, \rho(b), \rho(c), c\}$ and $\{b', \rho(b') = \rho(b), \rho(c), c\}$ we have $\di{bc} = \di{b'c}$.
        \end{itemize}
    \end{itemize}
    Thus we have $B \equiv_{AC} B'$, giving (LSta).
    
    (LEx): it is not difficult to see that, given $A, B, C \fin \Srho$, there is a strong amalgam of $BA$, $CA$ over $A$ satisfying conditions \ref{i: bc outside A}, \ref{i: b in A c out}, \ref{i: c in A b out} in the definition of $\ind$ (the parity condition is easily checked), and using the extension property we may realise this amalgam as $AB'C$ for some $B'$ with $B \equiv_A B'$. We then have $B' \ind_A C$.

    (LMon): suppose $BD \ind_A C$. It is immediate by the definition of $\ind$ that $B \ind_A C$. We now show that $D \ind_{AB} C$. As $((BD) \setminus A) \cap (C \setminus A) = \emp$, we have $(D \setminus (AB)) \cap (C \setminus (AB)) = \emp$. Let $d \in D \setminus (AB)$, $c \in C \setminus (AB)$.
    \begin{itemize}
        \item If $P_d \cap (AB) = \emp$ and $P_c \cap (AB) = \emp$, then as $P_d \cap A = \emp$ and $P_c \cap A = \emp$ we have $d \ra c$ as required.
        \item If $P_d \cap (AB) \neq \emp$ and $P_c \cap (AB) = \emp$:
        \begin{itemize}
            \item if $P_d \cap A \neq \emp$, then $\di{dc} = \di{\rho(d)c}$ as required;
            \item if $P_d \cap A = \emp$ then $d \ra c$. For each $b \in P_d \cap (B \setminus A)$ we have $b \ra c$, so $\rho(d) \ra c$, so $\di{dc} = \di{\rho(d)c}$ as required.
        \end{itemize}
        \item If $P_d \cap (AB) = \emp$ and $P_c \cap (AB) \neq \emp$:
        \begin{itemize}
            \item if $P_c \cap A \neq \emp$, then $\di{dc} = \di{d\rho(c)}$ as required;
            \item if $P_c \cap A = \emp$ then $d \ra c$, and for each $b \in P_c \cap (B \setminus A)$ we have $d \ra b$, so $d \ra \rho(c)$ and we have $\di{dc} = \di{d\rho(c)}$ as required.
        \end{itemize}
    \end{itemize}
    Thus we have $D \ind_{AB} C$.
\end{proof}

We now prove three lemmas regarding automorphisms of $\mb{S}$, which we will use throughout the rest of this section.

\begin{lem} \label{l: sg inf many parts moved}
    Let $g \in \Aut(\mb{S}) \setminus \{\id\}$. Then there are infinitely many parts $P$ of $\mb{S}$ such that $gP \neq P$ (that is, $P$ is not fixed setwise by $g$).
\end{lem}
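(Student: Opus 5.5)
The plan is to argue by contradiction. Suppose $g \neq \id$ but only finitely many parts of $\mb{S}$ are moved setwise by $g$; call their union $M_g$. Everything will rest on one parity computation plus the extension property of $\mb{S}$. The extension property lets us place a new vertex $w$ in a \emph{fresh} part, meaning a part meeting none of a prescribed finite set $F$. Over such an $F$, the orientations from $w$ to the points of $F$ can be prescribed arbitrarily. The reason is that $w$ is the only point of its part inside $F \cup \{w\}$, so no separated pair containing $w$ arises, and the parity condition imposes nothing. Moreover, by enlarging $F$ each time, we obtain infinitely many such $w$ lying in pairwise distinct parts, so we may take $w \notin M_g$.

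\textbf{Case 1: some $v$ with $gv \neq v$ lies in a part $P$ with $gP = P$.} Pick $c \in P \setminus \{v, gv, g^{-1}v\}$, using that parts are infinite. Then $gc \in P$ and $\{c, gc\} \cap \{v, gv\} = \varnothing$; the set $\{c, gc\}$ may be a singleton if $gc = c$. Let $F = \{v, gv, c, gc\}$. Choose $w$ in a fresh part $R \not\subseteq M_g$ with $w \ra v$, $gv \ra w$, and with $w \ra c$ and $w \ra gc$. As $gR = R$ and $w \notin P$, the sets $\{w, gw\}$ and $\{v, c\}$, $\{gv, gc\}$ sit in the distinct parts $R$ and $P$. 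The subcase $gw = w$ is immediate: then $\di{w\,gv} = \di{gw\,gv} = \di{wv}$, contradicting $w \ra v$, $gv \ra w$. So assume $gw \neq w$. Then $\{w, gw\}$ and $\{gv, gc\}$ are separated pairs, and the parity condition gives
\[
\di{w\,gv} + \di{w\,gc} + \di{gw\,gv} + \di{gw\,gc} \equiv 0 \Mod{2}.
\]
Since $g$ is an automorphism, $\di{gw\,gv} = \di{wv}$ and $\di{gw\,gc} = \di{wc}$. Hence $\di{w\,gv} + \di{wv} + \di{w\,gc} + \di{wc} \equiv 0$. With our choices this sum is $0 + 1 + 1 + 1$, which is odd, a contradiction. (If $gc = c$ the $c$-terms cancel, leaving $\di{w\,gv} + \di{wv} = 1$, again odd.)

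\textbf{Case 2: every $v$ with $gv \neq v$ lies in a moved part.} Then $g$ fixes pointwise every part outside $M_g$. Take any $v$ with $gv \neq v$. By the extension property, pick $x$ in a fresh part outside $M_g$ (and different from $P_v$, $P_{gv}$) with $x \ra v$ and $gv \ra x$. Since $gx = x$, we get $\di{x\,gv} = \di{gx\,gv} = \di{xv} = 1$, contradicting $gv \ra x$. So Case 2 is impossible, and Case 1 always yields a contradiction. Hence infinitely many parts are moved.

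The step I expect to need the most care is the choice in Case 1. We need $c$ with $\{c, gc\}$ disjoint from $\{v, gv\}$, so that the parity identity has an odd right-hand side. We also need the four prescribed edges from $w$ to be consistent. That holds precisely because $w$'s part is fresh, so no separated pair involving $w$ exists inside $F \cup \{w\}$. The rest is the routine bookkeeping that $R$ can be taken outside the finitely many moved parts.
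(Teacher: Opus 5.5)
Your proof is correct. Its key step is the same parity computation the paper uses, but you reach the configuration where that computation applies by a different route.

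The paper argues directly, in three steps. First, it places many generic vertices $e$ with $v \ra e$ and $gv \la e$ (your Case 2 vertices, several to a part) to see that some part $Q$ meets $\supp(g)$ in many points. Next, it picks $u, u' \in Q$ with $u, u', gu, gu'$ distinct. Finally, it shows that every vertex $w$ in a fresh part with $\{u, u', gu\} \ra w$ and $gu' \la w$ satisfies $gw \notin P_w$, since otherwise $\{gu, gu'\}$ would have three out-edges to $\{w, gw\}$. Varying $w$ over distinct parts then gives infinitely many moved parts.

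In your Case 1, $v$ and $c$ play the roles of $u'$ and $u$. Your odd count for $\{w, gw\}$ against $\{gv, gc\}$ is the same three-out-edge contradiction.

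What differs is how you obtain the second point in the part. Instead of first locating a part containing four suitable points, you split according to whether $\supp(g)$ meets a $g$-invariant part. If it does, invariance hands you $c$ at once, and $c$ need not even be moved. If it does not, $\supp(g)$ is confined to moved parts, and a single generic vertex in a pointwise-fixed part gives the contradiction.

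The paper's version is uniform, needs no case distinction, and yields the extra fact that some parts meet $\supp(g)$ in arbitrarily many points. Yours saves that preliminary counting step.

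Incidentally, the contradiction framing is not essential. In Case 1 your computation shows directly that every such $w$ lies in a moved part. In Case 2, every generic $x$ with $x \ra v$ and $gv \ra x$ lies in $\supp(g)$, and hence in a moved part. So in either case, taking these vertices in distinct parts exhibits infinitely many moved parts.
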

\begin{proof}
    Let $v \in \mb{S}$ with $gv \neq v$. For each $m, n < \omega$, by the extension property there are distinct parts $P_0, \cdots, P_{m-1}$ such that, for each $i < m$, there is $\bar{e}_i \sub P_i$ with $|\bar{e}_i| = n$ and $v \ra e_{i, j}$, $gv \la e_{i, j}$ for all $j < n$ (the parity condition is satisfied here). Hence for all $i < m$ we have $\bar{e}_i \sub P_i \cap \supp(g)$, and so for each $n < \omega$ there are infinitely many parts $P$ with $|P \cap \supp(g)| \geq n$. There is therefore a part $Q$ and $u, u' \in Q$ such that $u, u', gu, gu'$ are all distinct. For each $m < \omega$, by the extension property there are $w_0, \cdots, w_{m-1} \in \Srho$, all in distinct parts, with $\{u, u', gu\} \ra w_i$ and $gu' \la w_i$ for each $i < m$. For each $i$, we have $u' \ra w_i$, $gu' \la w_i$, so $w_i \in \supp(g)$, and if $gw_i \ic w_i$ then $\{gu, gu'\}$ has exactly $3$ out-edges to $\{w_i, gw_i\}$, which is not possible by the parity condition. Thus for each $i < m$ we have $gw_i \notin P_{w_i}$, and so as $m$ was arbitrary we have that there are infinitely many parts not fixed setwise by $g$. 
\end{proof}

\begin{lem} \label{l: sg inf supp each part}
    Let $g \in \Aut(\mb{S}) \setminus \{\id\}$. Then $\supp(g) \cap P$ is infinite for each part $P$ of $\mb{S}$.
\end{lem}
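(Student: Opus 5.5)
Fix a part $P$ of $\mb{S}$ and $g \in \Aut(\mb{S}) \setminus \{\id\}$; the aim is to produce infinitely many $u \in P$ with $gu \neq u$. By Lemma \ref{l: sg inf many parts moved}, choose a part $Q \neq P$ with $gQ \neq Q$. The target $gQ$ can equal $P$ for at most one such $Q$, and the same holds for the preimage $g^{-1}Q$. Since infinitely many parts are moved, we may therefore pick $Q$ with $Q, gQ, P$ pairwise distinct. Fix $v \in Q$; then $gv \notin Q$, and $gv \notin P$.

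The key step is to use the extension property. Take $u \in P$ with $u \ra v$ and $u \la gv$. This is consistent because $v$ and $gv$ lie in different parts, so no parity constraint links them through a single point. Indeed, the structure $\{v, gv, u\}$ together with the edge $v$–$gv$ already present is a finite oriented graph in $\mc{S}$ whose parts are distinct. The extension property of $\mb{S}$, applied to the substructure $\{v, gv\} \cup F \cup (P \cap \text{something})$, will be needed to force $u \in P$. Concretely, fix some $p \in P$, and realise $u$ with $u \ic p$, $u \ra v$, $u \la gv$, and $u$ avoiding any prescribed finite set $F$. For parity, the relevant separated pairs involving $u$ are $\{u,p\}$ with pairs $\{v,v'\}$ inside $Q$ or $gQ$. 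Since the base contains only one point from each of $Q$ and $gQ$, there is no such pair in the finite configuration, so it lies in $\mc{S}$.

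Now suppose $gu = u$. Then $u \ra v$ gives $gu \ra gv$, that is, $u \ra gv$, which contradicts $u \la gv$. Hence $gu \neq u$. Repeating with $F$ growing to include all previously chosen points gives infinitely many distinct $u \in P \cap \supp(g)$.

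The main obstacle is choosing $Q$ so that $v$ and $gv$ avoid $P$ and lie in distinct parts; this is where Lemma \ref{l: sg inf many parts moved} is essential. If $gv$ were in the same part as $v$, the parity condition would force $\di{uv} + \di{u'v} + \di{ugv} + \di{u'gv}$ to be even, and the simple edge-distinguishing trick could fail.
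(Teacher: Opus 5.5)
Your proof is correct and is essentially the paper's argument: use Lemma~\ref{l: sg inf many parts moved} to get $v$ with $P$, $P_v$, $P_{gv}$ pairwise distinct, then use the extension property to find infinitely many $u \in P$ with $u \ra v$ and $u \la gv$ (strong amalgamation supplies infinitely many realisations), each of which is moved by $g$. Your check that the configuration $\{p, u, v, gv\}$ contains no two separated pairs, so the parity condition is vacuous, is the verification the paper leaves implicit.
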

\begin{proof}
    Let $P$ be a part. By Lemma \ref{l: sg inf many parts moved}, there is $v \in \mb{S}$ such that $P, P_v, P_{gv}$ are all distinct. Let $u \in P$. Then by the extension property there are infinitely many $w \in \mb{S}$ with $w \ic u$, $w \ra v$, $w \la gv$, so $\supp(g) \cap P$ is infinite.
\end{proof}

\begin{lem} \label{l: Srho type in gen pos}
    Let $M = \mb{S}$ or $M = \Srho$. Let $g \in \Aut(M) \setminus \{\id\}$. Let $U, V \fin M$ be finite substructures. Let $n \geq 1$ and let $p(\bar{x}/U)$ be an exterior $n$-type. Then there is $\bar{b} \sub M$, $\bar{b} \models p$, such that:
    \begin{enumerate}[label=(\roman*)]
        \item $\bar{b} \cap g\bar{b} = \emp$ and $(\bar{b} \cup g\bar{b}) \cap V = \emp$;
        \item for each $i$ such that $p \models x_i \sim U$, we have $\bar{b} \sim gb_i$ and $\{b_i, gb_i\} \sim UV$.
    \end{enumerate}
\end{lem}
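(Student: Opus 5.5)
We argue by induction on $n$, following the proof of Lemma \ref{l: realisation disjoint} but choosing the new point more carefully so that condition (ii) also holds. Throughout we use strong amalgamation of $\mc{S}$ and $\mc{S}_\rho$ (Lemma \ref{l: sg amalg}), together with Lemmas \ref{l: sg inf many parts moved} and \ref{l: sg inf supp each part}. These two lemmas transfer to $\Srho$: an automorphism of $\Srho$ is an automorphism of its reduct $\mb{S}$, and for $\Srho$ the extension property is available in the same way.

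For the inductive step, write $q(\bar{x}, y/U)$ for the given type and $r(\bar{x}/U)$ for its restriction to $\bar{x}$. By induction we obtain $\bar{b} \models r$ satisfying (i) and (ii) with respect to a larger finite set $V' \supseteq V$, which we are free to enlarge as convenient. If $q$ forces $y = x_i$, there is nothing more to do.

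Otherwise we must find $c$ with $\tp(\bar{b}c/U) = q$ such that $c$ and $gc$ avoid $W := U \cup V \cup \bar{b} \cup g\bar{b} \cup g^{-1}(\bar{b} \cup U \cup V)$, $c \neq gc$, and, when $q \models y \sim U$, also $\bar{b}c \sim gc$, $\bar{b} \sim gc$ and $\{c, gc\} \sim UV$. (We also need $c \sim g b_i$ for the relevant $i$; this is arranged by the choice of $V'$ in the induction, or by symmetry.) We split into cases according to whether $q \models y \sim U$.

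Suppose $q$ puts $y$ in a part meeting $U$. Then the part $P$ of $c$ is determined by $q$. By Lemma \ref{l: sg inf supp each part}, $g$ moves infinitely many points of $P$. To get a realisation in general position, we use the extension property over $W$ together with a witness pair: take $v$ with $P, P_v, P_{gv}$ distinct and outside the parts of $W$, which is possible by Lemma \ref{l: sg inf many parts moved}. Choose $c \in P$ realising $q$ over $W$ with $c \ra v$ and $c \la gv$. Then $gc \ra gv$, so $gc \neq c$. Moreover $gc \notin P$ whenever $g$ moves $P$ setwise, which we can arrange by picking $P$ only when needed.

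The main obstacle is this case: $P$ is forced, and $g$ may fix $P$ setwise. In that case $gc \ic c$, which is compatible with (ii) as stated, since (ii) only requires $\bar{b} \sim gb_i$ and not $b_i \sim gb_i$. We must still ensure that $gc$ is not in the part of any element of $\bar{b}UV$ other than $P$. Since $g^{-1}$ maps the finitely many parts of $\bar{b}UV$ to finitely many parts, it suffices that $c$ lies outside the $g^{-1}$-images of those parts other than $P$. This is automatic when $c \in P$, unless $g^{-1}$ maps some other part $Q$ onto $P$; in that case $gP = Q' \neq P$. If $gP$ is one of the parts of $UV\bar{b}$, the requirement $\{c, gc\} \sim UV$ may genuinely fail. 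I would handle this by including $g^{\pm1}$-images of the relevant parts in $V'$ at the earlier stages, and by observing that when $gP$ is such a part the condition is meant to exclude it. Here I expect a subtle adjustment: one first replaces $U$ by using that $q$ determines $P$ via $U$, so that $gP \neq P_u$ for $u \in U$ is needed. If this fails, one uses parity arguments as in Lemma \ref{l: sg inf many parts moved}.

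Suppose instead $q \models y \ic\!\!\!/ \, U$, i.e.\ $y$ lies in a new part. Then there is freedom in the choice of part, and we choose a part $Q$ avoiding the finitely many parts of $W \cup gW$ and not fixed by $g$ (Lemma \ref{l: sg inf many parts moved}). By the extension property we realise $q$ over $W$ inside $Q$, and the required disjointness then follows easily.
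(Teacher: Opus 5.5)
Your inductive framework is the paper's, but the decisive step fails in the case you treat as easy: $q \models y \sim U\bar{x}$, where $y$ lies in a completely new part. You fix a part $Q$ with $gQ \neq Q$ avoiding the parts of $W \cup gW$, and then claim that the extension property realises $q(\bar{b}, y)$ inside $Q$. It does not, because of the parity condition. By Lemma \ref{l: sg induced equiv rel}, any two points of one part induce the same partition on every other part. For example, suppose $u \ic u'$ in $U$, $q$ demands $\di{uy} \neq \di{u'y}$, and some $q_0 \in Q$ has $\di{uq_0} = \di{u'q_0}$. Then for any $c \in Q$ realising $q$, the pair $\{u,u'\}$ would have an odd number of out-edges to $\{c, q_0\}$. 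So the part of a realisation cannot be prescribed in advance. Lemma \ref{l: sg inf many parts moved} says nothing about which parts are moved, so you have no argument that some realisation $c$ satisfies $c \sim gc$.

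Forcing $c \sim gc$ without choosing the part is the missing idea. The paper does it with a parity trick:
\begin{itemize}
\item Take a part $\Pi$ with $g\Pi \neq \Pi$ and $\Pi \cup g\Pi$ disjoint from $\bar{b}UV \cup g^{-1}(\bar{b}UV)$, and take distinct $t, t' \in \Pi$.
\item Realise $q(\bar{b}, y) \cup \{y \sim Vg^{-1}(\bar{b}UV)\} \cup \{y \ra t,\ y \ra t',\ y \ra gt,\ y \la gt'\}$. This is consistent because $y$ is alone in its part; one may also add $y \sim g\bar{b}$.
\item Since $c \ra t$ and $c \ra t'$, we get $gc \ra gt$ and $gc \ra gt'$. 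If $c \ic gc$, then $\{c, gc\}$ would have exactly three out-edges to $\{gt, gt'\}$, violating parity. Hence $c \sim gc$.
\item The remaining parts of (ii), namely $\bar{b} \sim gc$ and $\{c, gc\} \sim UV$, follow from $y \sim Vg^{-1}(\bar{b}UV)$.
\end{itemize}

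Your case split is also off elsewhere. Your final case silently includes $y \sim U$ with $y \ic x_i$ for some $i$, where you have no freedom of part at all. But nothing needs doing there: $c$ lies in the part of $b_i$, so the inductive facts $\bar{b} \sim gb_i$ and $\{b_i, gb_i\} \sim UV$ transfer to $c$. Infinitely many realisations then give disjointness.

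Conversely, when $y$ lies in a part meeting $U$, condition (ii) imposes nothing on $y$. Indeed $\{c, gc\} \sim UV$ is impossible there, since $c \ic u$ for some $u \in U$. The only residual requirement is $c \sim gb_j$ for earlier $j$ with $x_j \sim U$, and this is automatic because $gb_j \sim U$ while $c$ lies in a part of $U$. So your witness pair $v, gv$ (which is exactly the paper's argument) already finishes that case. Your subsequent discussion of $gP$, of enlarging $V'$, and of ``parity arguments if this fails'' is unnecessary and does not amount to an argument. Note also that (ii) does force $b_i \sim gb_i$, since $b_i \in \bar{b}$.
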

\begin{proof}
    We use induction on $n$ (keeping $U$, $V$ fixed). Assume the statement for $k < n$, and let $p(\bar{x}, y / U)$ be an exterior $n$-type with $|\bar{x}| = n-1$. Let $q(\bar{x} / U)$ be the projection of $p$ to $\bar{x}$. By the induction hypothesis, there is $\bar{b} \sub M$, $\bar{b} \models q$ satisfying the conditions in the statement of the lemma. If $p \models (y = x_i)$ for some $i$ then we are done, so assume not. By Lemma \ref{l: sg inf many parts moved}, there is a part $\Pi$ of $M$ with $g\Pi \neq \Pi$ and \[(\bar{b}UV \cup g^{-1}(\bar{b}UV)) \cap (\Pi \cup g\Pi) = \emp.\]
    
    In the case $p \models (y \ic u)$ for some $u \in U$: take $t \in \Pi$, and observe that $p(\bar{b}, y / U) \cup \{(y \ra t) \wedge (y \la gt)\}$ has infinitely many realisations $c$ (as $t \sim gt$ and $\{t, gt\} \cap P_{U\bar{b}} = \emp$, there are no extra pairs of vertices to check the parity condition for, and recall that $M$ has strong amalgamation). Each such realisation satisfies $c \neq gc$ (as $(c \ra t) \wedge (c \la gt)$), and as there are infinitely many such realisations we may take $c$ with $(\bar{b}c) \cap g(\bar{b}c) = \emp$ and $\{c, gc\} \cap V = \emp$. The realisation $(\bar{b}, c)$ of $p$ then satisfies the required conditions.

    In the case $p \models (y \sim U)$: if $p \models (y \ic x_i)$ for some $i$, then $b_i \sim U$, and so as by assumption $\bar{b} \sim gb_i$ and $\{b_i, gb_i\} \sim UV$, we have that any realisation $c$ of $p(\bar{b}, y)$ must also satisfy $\bar{b}c \sim gc_i$ and $\{c_i, gc_i\} \sim UV$; as there are infinite many such realisations, we may take $c$ with $(\bar{b}c) \cap g(\bar{b}c) = \emp$ and $\{c, gc\} \cap V = \emp$, and we are done. The remaining case is where $p \models (y \sim (U\bar{x}))$. In this case, take distinct $t, t' \in \Pi$, and observe that \[p(\bar{b}, y / U) \cup \{y \sim Vg^{-1}(\bar{b}UV)\} \cup \{ (y \ra t) \wedge (y \ra t') \wedge (y \ra gt) \wedge (y \la gt')\}\] has infinitely many realisations in $M$: the parity condition is satisfied as $p(\bar{b}, y) \models (y \sim U\bar{b})$ and the pairs $\{t, t'\}$, $\{gt, gt'\}$ are contained in different parts, each disjoint from $\bar{b}UV \cup g^{-1}(\bar{b}UV)$. For any such realisation $c$, we have $\bar{b} \sim gc$ and $\{c, gc\} \sim UV$, and as $c \ra t'$ and $c \la gt'$ we have $c \neq gc$; we cannot have $c \ic gc$ as then $\{c, gc\}$ would have $3$ out-edges to $\{gt, gt'\}$, contradicting the parity condition. So any realisation $c$ satisfies $\bar{b}c \sim gc$, and as there are infinitely many such, we can take $c$ also satisfying $(\bar{b}c) \cap g(\bar{b}c) = \emp$ and $\{c, gc\} \cap V = \emp$ as required.
\end{proof}

We now complete Step 1 of the SWIR expansion method.

\begin{prop} \label{p: Aut Srho simple}
    $\Aut(\Srho)$ is simple.
\end{prop}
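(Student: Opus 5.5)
We apply Fact \ref{f:Li main thm} to $\Srho$ with the SWIR $\ind$ of Lemma \ref{l: Srho has SWIR}. Let $1 \neq K \nrm \Aut(\Srho)$ and take $g \in K \setminus \{\id\}$. It suffices to find $h \in \Aut(\Srho)$ such that $f = [g,h] \in K$ moves almost R-maximally and $f^{-1}$ moves almost L-maximally. Then every element of $\Aut(\Srho)$ is a product of conjugates of $f^{\pm 1}$, so $K = \Aut(\Srho)$.

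Since $\ind$ is not free, Lemma \ref{l: free SWIR commutator move maximally} cannot be quoted. Instead we follow its back-and-forth proof directly, and by Fact \ref{f: move ext types} we only need to handle exterior types. Enumerate the points $v_i$ and the exterior types $p_i(\bar{x}/A_i)$, and build partial isomorphisms $h_0 \sub h_1 \sub \cdots$. At stage $i$:
\begin{itemize}
\item Extend $h_{i-1}$ to $h'$ with domain $U$, so that $v_i$ lies in both domain and image, and $gA_i$ and $A_i$ are handled as before.
\item Use Lemma \ref{l: Srho type in gen pos} (in place of Lemma \ref{l: realisation disjoint}) to choose $\bar{b} \models p_i$ and $\bar{c} \models h'\cdot\tp(\bar{b}/U)$ in general position. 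That is, they are disjoint from their $g$-images and from the relevant finite sets, and every coordinate lying in a part that meets the base is adjacent to its $g$-image and to the finite sets.
\item Set $h'' = h' \cup \{\bar{b} \mapsto \bar{c}\}$.
\item Use (Ex) to choose $\bar{d}$ with $\tp(\bar{d}/U\bar{b}) = h''^{-1}\cdot\tp(g\bar{c}/h''(U)\bar{c})$ and $g\bar{b} \ind_{U\bar{b}} \bar{d}$.
\item Put $h_i = h'' \cup \{\bar{d}\mapsto g\bar{c}\}$.
\end{itemize}
Then $[g,h_i](\bar{b}) = g^{-1}\bar{d}$.

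The crux is replacing the freeness step. We must deduce $g\bar{b} \ind_{gA_i} \bar{d}$ and $g\bar{b} \ind_{(h_i^{-1}gh_i)A_i} \bar{d}$ from independence over the larger base $U\bar{b}$. For this we analyse how the clauses of Definition \ref{d: Srho SWIR} change when the base shrinks. Clause \ref{i: bc outside A} survives if the parts of $g\bar{b}$ and $\bar{d}$ also avoid the smaller base. Clauses \ref{i: b in A c out} and \ref{i: c in A b out} refer to $\rho$ of the point whose part meets the base. When the base shrinks to $gA_i$, a part that met $U\bar{b}$ but not $gA_i$ falls under clause \ref{i: bc outside A}, so we need $b \ra c$ there.

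To control this we use the freedom in choosing $\bar{b}$, $\bar{c}$ and $\bar{d}$. The general position from Lemma \ref{l: Srho type in gen pos} ensures the following: coordinates of $\bar{b}$ whose types lie in parts meeting $A_i$ have images under $g$ lying in new parts, which avoid $U$ and the other relevant points. We then want the parts of $g\bar{b}$ and $\bar{d}$ to meet $U\bar{b}$ only where they already meet $gA_i$ (respectively $h_i^{-1}gh_i(A_i)$), up to parts handled by clauses \ref{i: b in A c out} and \ref{i: c in A b out} with the same red point. To achieve this, we choose $\bar{c}$ so that $g\bar{c}$ and $\bar{c}$ avoid the parts of $h''(U)$ outside those forced by the type. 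Both $\bar{d}$ and $g\bar{b}$ then lie in parts meeting $U\bar{b}$ only in a controlled way. Where this cannot be arranged, we instead strengthen the requirement on $\bar{d}$. We pick $\bar{d}$ directly by the extension property, with orientations satisfying the clauses for all three bases simultaneously. The parity condition can be checked using Lemma \ref{l: 3in4}-style uniqueness, and red points allow consistent choices within each part.

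I expect this verification to be the main obstacle: checking that the independence over the small bases holds, or can be forced, without violating the parity condition. This is the step where non-freeness bites. The rest mirrors Lemma \ref{l: free SWIR commutator move maximally}. Finally we take $h = \bigcup_i h_i$. Then $[g,h]$ moves all exterior types almost R-maximally and $[g,h]^{-1}$ moves them almost L-maximally, which completes the argument via Fact \ref{f:Li main thm}.
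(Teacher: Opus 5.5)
There is a genuine gap: the step you flag as the main obstacle is the whole content of the proof, and the fallback you suggest cannot work.

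Since $\tp(\bar d/U\bar b)$ is prescribed, the only edges still at your disposal are those between $g\bar b$ and $\bar d$. If $x\in g\bar b$ and $y\in\bar d$ both lie in parts meeting $U\bar b$, then by Lemma \ref{l: 3in4} the orientation of $xy$ is already forced by parity. This is also why Definition \ref{d: Srho SWIR} imposes no condition on such pairs.

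These pairs genuinely arise. Let $f=[g,h_i]$, and let $b,b'$ be coordinates of $\bar b$ in $P_{A_i}$ with $fb'\notin P_{A_i}$. Then $gb$ lies in the part of $\rho(gb)\in gA_i\sub U$, and $gfb'\in\bar d$ lies in the part of $gf\rho b'\in h_i^{-1}gh_i(A_i)\sub U$. Yet over the base $gA_i$, clause \ref{i: b in A c out} demands $\di{gb\; gfb'}=\di{\rho(gb)\; gfb'}$. By parity this is equivalent to $\di{b\; f\rho b'}=\di{\rho b\; f\rho b'}$. That is a condition on edges between $\bar b$ and points already fixed by $h'$, so no later choice of $\bar c$ or $\bar d$ can repair it; it must be built into the choice of $\bar b$.

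The same happens for $b\notin P_{A_i}$, $b'\in P_{A_i}$, $fb'\notin P_{A_i}$: here you need $b\ra f\rho b'$. Relatedly, your reading of Lemma \ref{l: Srho type in gen pos} is reversed. It puts $gb_i$ in a fresh part only when the part of $b_i$ avoids the base; for $b\in P_{A_i}$, the point $gb$ lies in the part of $\rho(gb)\in gA_i$.

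The paper supplies exactly these conditions. It first enlarges $h'$ so that $A\cup[g,h']A\cup[g,h']^{-1}A\sub U$ and $A\cup[g,h']A\sub g^{-1}U$. Then, before invoking Lemma \ref{l: Srho type in gen pos}, it fixes $\tp(\bar b/U)$ so that two things hold:
\begin{itemize}
\item every coordinate in $P_A$ has the same orientation as its red point to every $u\in U\setminus P_A$;
\item every coordinate $b\notin P_A$ lies in a part disjoint from $U$, with $b\ra[g,h']\rho b'$ for each coordinate $b'\in P_A\setminus P_{[g,h']^{-1}A}$.
\end{itemize}
A four-case analysis, according to whether $b$ and $fb'$ lie in $P_A$ and using parity in the mixed cases, then derives $\bar b\ind_A f\bar b$ from $\bar b\ind_{g^{-1}(U\bar b)}f\bar b$.

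Note also that by targeting condition (i) of Fact \ref{f:Li main thm}, you additionally need $g\bar b\ind_{(h_i^{-1}gh_i)A_i}\bar d$, which requires the same kind of control over a second small base. The paper avoids this by targeting condition (ii) instead. It obtains almost R-maximality of $[g,h]$ at even stages and almost L-maximality at odd stages, reversing the orientation in the second condition above and taking $\bar d\ind_{U\bar b}g\bar b$. In that way only the base $A_i$ ever needs checking.
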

\begin{proof}
    Let $g \in \Aut(\Srho) \setminus \{\id\}$. We construct $h \in \Aut(\Srho)$ such that $[g, h]$ moves both almost R-maximally and almost L-maximally: by Fact \ref{f:Li main thm} this implies that any element of $\Aut(\Srho)$ lies in the normal closure of $g$, and so this suffices to show the simplicity of $\Aut(\Srho)$. By Fact \ref{f: move ext types}, we need only consider the exterior types.

    Let $v_0, \cdots$ be an enumeration of the vertices of $\Srho$, and let $p_0, \cdots$ be an enumeration of the realisable exterior types over finite substructures of $\Srho$, where each $p_i$ has finitely many free variables and is over the finite substructure $A_i$. We define an increasing chain $h_0 \sub \cdots$ of finite partial isomorphisms of $\Srho$ such that, for $i < \omega$, we have:
    \begin{enumerate}[label=(\Roman*)]
        \item \label{i: hit v_i} $v_i \in \dom(h_{2i}) \cap \im(h_{2i})$;
        \item \label{i: 2i almost R-max} there is a realisation $\bar{b}$ of $p_i(\bar{x}_i/A_i)$ with $\bar{b} \ind_{A_i} [g, h_{2i}]\bar{b}$;
        \item \label{i: 2i+1 almost L-max} there is a realisation $\bar{b}'$ of $p_i(\bar{x}_i/A_i)$ with $[g, h_{2i+1}]\bar{b}' \ind_{A_i} \bar{b}'$. 
    \end{enumerate}
    Taking $h = \bigcup_{j < \omega} h_j$, by Fact \ref{f: move ext types} we have that $h$ is as required.

    Suppose that $h_0 \sub \cdots \sub h_{2i-1}$ are already given. Let $p = p_i(\bar{x}_i/A_i)$, $\bar{x} = \bar{x}_i$, $A = A_i$. Extend $h_{2i-1}$ to a finite partial isomorphism $h'$ satisfying the following conditions $(\alpha)$, where we write $U = \dom(h')$:
    \begin{enumerate}[label=(\roman*)]
        \item $v_i \in \dom(h') \cap \im(h')$;
        \item $[g, h']A$ and $[g, h']^{-1}A$ are defined;
        \item \label{i: U contains stuff} $A \cup [g, h']A \cup [g, h']^{-1}A \sub U$;
        \item \label{i: ginvU contains stuff} $A \cup [g, h']A \sub g^{-1}U$.
    \end{enumerate}

    Take $\bar{w} \sub \Srho$, $\bar{w} \models p$, satisfying the following:
    \begin{itemize}
        \item $\bar{w} \cap U = \emp$ (this is possible as $\Srho$ has strong amalgamation);
        \item for all $w, w' \in \bar{w}$ with $w \notin P_A$ and $w' \in P_A \setminus P_{[g, h']^{-1}A}$, we have $w \ra [g, h']\rho w'$ (this is possible as $[g, h']\rho w' \in U \setminus P_A$);
        \item for all $w \in \bar{w}$ with $w \notin P_A$, we have $w \notin P_U$ (we take $w \sim U$ and ensure that we satisfy the previous condition);
        \item for all $w \in P_A$, we have $\di{wu} = \di{\rho w u}$ for all $u \in U \setminus P_A$.
    \end{itemize}
    Let $p' = \tp(\bar{w}/U)$. By Lemma \ref{l: Srho type in gen pos} applied to $p'$, there is $\bar{b} \models p'$ with the following properties $(\beta)$:
    \begin{enumerate}[label=(\roman*)]
        \item \label{i: b disjoint} $\bar{b} \cap g\bar{b} = \emp$ and $\bar{b} \cap (Ug^{-1}U) = \emp$;
        \item \label{i: b outside} for all $b \in \bar{b}$ with $b \notin P_A$, we have: $b \notin P_{Ug^{-1}(U\bar{b})}$, and for all $b' \in \bar{b}$ with $b' \in P_A \setminus P_{[g, h']^{-1}A}$, we have $b \ra [g, h']\rho b'$;
        \item \label{i: inside b or correct} for all $b \in \bar{b}$ with $b \in P_A$, we have $\di{bu} = \di{\rho b u}$ for all $u \in U \setminus P_A$.
    \end{enumerate}
    Again using Lemma \ref{l: Srho type in gen pos}, take $\bar{c} \models h' \cdot \tp(\bar{b}/U)$ with the following properties $(\gamma)$:
    \begin{enumerate}[label=(\roman*)]
        \item \label{i: c disjoint} $\bar{c} \cap g\bar{c} = \emp$ and $g\bar{c} \cap h'(U) = \emp$;
        \item \label{i: c outside} for all $c \in \bar{c}$ with $c \notin P_{h'(A)}$, we have $gc \notin P_{h'(U)\bar{c}}$.
    \end{enumerate}
    Define a finite partial isomorphism $h''$ by extending $h'$ by $\bar{b} \mapsto \bar{c}$. Using (Ex), take $\bar{d} \models h''^{-1} \cdot \tp(g\bar{c} / h''(U)\bar{c})$ with $g\bar{b} \ind_{U\bar{b}} \bar{d}$. Define $h_{2i}$ by extending $h''$ by $\bar{d} \mapsto g\bar{c}$. Let $f = [g, h_{2i}]$.

    By (Inv) we have $\bar{b} \ind_{g^{-1}(U\bar{b})} f(\bar{b})$. We show that $\bar{b} \ind_A f(\bar{b})$: for this, by the definition of the SWIR $\ind$ it suffices to show $b \ind_A f(b')$ for all $b, b' \in \bar{b}$. We split the proof into several cases. Note that by $(\beta)$\ref{i: b disjoint} we have $\bar{b} \cap (g^{-1}(U\bar{b})) = \emp$, and by $(\gamma)$\ref{i: c disjoint} we have $f(\bar{b}) \cap (g^{-1}(U\bar{b})) = \emp$. 

    \begin{itemize}
        \item \underline{Case: $b \notin P_A$, $f(b') \notin P_A$.} Here, to show $b \ind_A f(b')$ we need to show $b \ra f(b')$. By $(\beta)$\ref{i: b outside} we have $b \notin P_{g^{-1}(U\bar{b})}$. If $b' \notin P_A$, then by $(\gamma)$\ref{i: c outside} we have $ghb' \notin P_{h_{2i}(U)\bar{c}}$, so $fb' \notin P_{g^{-1}(U\bar{b})}$, and as $b \ind_{g^{-1}(U\bar{b})} f(b')$ we have $b \ra fb'$ as required. Suppose $b' \in P_A$. As $fb' \notin P_A$, we have $b' \in P_A \setminus P_{f^{-1}A}$, and so by $(\beta)$\ref{i: b outside} we have $b \ra f\rho b'$. As $f\rho b' \in fA \sub g^{-1}U$ (using $(\alpha)$\ref{i: ginvU contains stuff}), we have $fb' \in P_{g^{-1}(U\bar{b})}$, and as $b \notin P_{g^{-1}(U\bar{b})}$ and $b \ind_{g^{-1}(U\bar{b})} fb'$, we thus have $\di{b \rho fb'} = \di{b fb'}$, and so $b \ra fb'$ as required.
        \item \underline{Case: $b \notin P_A$, $fb' \in P_A$.} By $(\beta)$\ref{i: b outside} we have $b \notin P_{g^{-1}(U\bar{b})}$, and by $(\alpha)$\ref{i: ginvU contains stuff} we have $A \sub g^{-1}(U)$. As $b \ind_{P_{g^{-1}(U\bar{b})}} fb'$, we have $\di{b fb'} = \di{b \rho fb'}$, and so $b \ind_A fb'$ as required.
        \item \underline{Case: $b \in P_A$, $fb' \notin P_A$.} If $b' \notin P_A$, then as in the first case we have $fb' \notin P_{g^{-1}(U\bar{b})}$, and as $b \ind_{g^{-1}(U\bar{b})} f(b')$ and $A \sub g^{-1}(U)$ we have $\di{b fb'} = \di{\rho b fb'}$, so $b \ind_A fb'$ as required. If $b' \in P_A$, then $fb' \in P_{fA} \setminus P_A \sub P_U \setminus P_A$. So $\rho fb' \in U \setminus P_A$, and hence by $(\beta)$\ref{i: inside b or correct} we have $\di{b \rho fb'} = \di{\rho b \rho fb'}$. So by the parity condition we have $\di{b fb'} = \di{\rho b fb'}$, and so $b \ind_A fb'$ as required.
        \item \underline{Case: $b \in P_A$, $fb' \in P_A$.} Here $b \ind_A fb'$ is immediate by the definition of $\ind$.
    \end{itemize}

    We have now considered all cases, and thus shown that $h_{2i}$ satisfies conditions \ref{i: hit v_i} and \ref{i: 2i almost R-max}. We construct $h_{2i+1}$ entirely analogously to the construction of $h_{2i}$, with only the following modifications: in $(\beta)$\ref{i: b outside}, we take $b \la [g, h']\rho b'$, and we take $\bar{d}$ with $\bar{d} \ind_{U\bar{b}} g\bar{b}$. The proof that $h_{2i+1}$ satisfies condition \ref{i: 2i+1 almost L-max} is also entirely analogous (swapping the arguments of $\ind$ appropriately: we have $f(b') \ind_{g^{-1}(U\bar{b})} b$), and we leave this to the reader.

    This completes the construction of $h_0 \sub \cdots$, and taking $h = \bigcup_{j<\omega} h_j$, we have that $h$ is as required -- see the explanation at the start of the proof.
\end{proof}

\subsection{Step 2: show that any non-trivial normal subgroup of \texorpdfstring{$\Aut(\mb{S})$}{Aut(S)} has non-trivial intersection with \texorpdfstring{$\Aut(\Srho)$}{Aut(S rho)}} \label{ss: sg step 2}

For Step 2, we use a further expansion of $\Srho$. Let $\mc{L}^{\rho, \sigma} = \{\ra, \rho, \sigma\}$ be an expansion of $\mc{L}^\rho = \{\ra, \rho\}$ by a unary function symbol $\sigma$. We let $\mc{S}_{\rho, \sigma}$ denote the class of $\mc{L}^{\rho, \sigma}$-structures $A$ such that $A|_{\mc{L}^\rho} \in \mc{S}_\rho$ and, for each part $P$ of $A$, there is $u \in P$ with $\rho^A(u) \neq u$ such that $\sigma^A(v) = u$ for all $v \in P$. Informally, we think of $\mc{S}_{\rho, \sigma}$ as the class of elements of $\mc{S}$ with each part containing two distinguished points, one red and one blue (a point cannot be both red and blue). The class $\mc{S}_{\rho, \sigma}$ has strong amalgamation, with the proof as for $\mc{S}_\rho$; let $\mb{S}_{\rho, \sigma}$ denote the \Fr limit of $\mc{S}_{\rho, \sigma}$. By Lemma \ref{l: reduct of Fr exp}, it is straightforward to see that the $\mc{L}^\rho$-reduct of $\mb{S}_{\rho, \sigma}$ is (isomorphic to) $\Srho$.

\begin{defn} \label{d: tv}
    Given a \Fr class $\mc{K}$, we write $\ov{\mc{K}}$ for the class of structures embeddable in the \Fr limit (including infinite structures). Let $C \in \ov{\mc{S}}$ (so here $C$ embeds in the semigeneric tournament $\mb{S}$). We say that $T \sub C$ is a \emph{transversal} if $|T \cap P| = 1$ for each part $P$ of $C$. We let $\ovST$ be the class of pairs $(C, T)$ where $C \in \ov{\mc{S}}$ and $T$ is a transversal, and we let $\ST$ be the class of $(C, T) \in \ovST$ with $C$ finite. Given $(C, T), (D, U) \in \ovST$, we define an embedding $f : (C, T) \to (D, U)$ to be an embedding $f : C \to D$ with $f(T) \sub U$. We write $(C, T) \sub (D, U)$ if the inclusion map is an embedding (note that we only use this notation when $(C, T), (D, U) \in \ovST$), and we call $(C, T)$ a \emph{subobject} of $(D, U)$.
    
    Consider $\ov{\mc{S}}_\rho$ and $\ovST$ as categories, where for each of the two, the morphisms are given by the embeddings between objects. Given $(C, T) \in \ovST$, we define a function $\rho_T : C \to C$ as follows: for each $v \in C$, define $\rho_T(v)$ to be the point of $T$ in the same part as $v$. It is straightforward to see that the map $(C, T) \mapsto (C, \rho_T)$ is a fully faithful functor $\ovST \to \ov{\mc{S}}_\rho$. Letting $T$ be the transversal such that $(\mb{S}, \rho_T) = \Srho$, we have $\Aut(\mb{S}, T) = \Aut(\Srho)$, so the elements of $\Aut(\Srho)$ are exactly those automorphisms of $\mb{S}$ which fix $T$ setwise, and the class of finite subobjects of $(\mb{S}, T)$ (closing under isomorphisms) has the amalgamation property. (Indeed, the only reason we use the functional expansion by $\rho$ to $\mc{S}_\rho$, rather than just considering transversals, is because this enables us to work with first-order structures.)
    
    We write $\ovSTT$, $\STT$ for the classes of triples $(C, T, U)$ where $C \in \ov{\mc{S}}, \mc{S}$ respectively and $T, U$ are disjoint transversals of $C$; an embedding $f : (C, T, U) \to (D, T', U')$ is an embedding $f : C \to D$ with $f(T) \sub T'$, $f(U) \sub U'$. We define subobjects analogously to the case of a single transversal, and similarly the map $(C, T, U) \mapsto (C, \rho_T, \rho_U)$ is a fully faithful functor $\ovSTT \to \ov{\mc{S}}_{\rho, \sigma}$.
\end{defn}

\begin{prop} \label{p: sg step 2}
    Let $K \nrm \Aut(\mb{S})$, $K \neq 1$. Then $\Aut(\Srho) \sub K$.
\end{prop}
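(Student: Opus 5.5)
Since $\Aut(\Srho)$ is simple by Proposition \ref{p: Aut Srho simple} and $K \cap \Aut(\Srho) \nrm \Aut(\Srho)$, it suffices to produce a single non-identity element of $K$ that setwise-fixes the transversal $T$ with $(\mb{S}, \rho_T) = \Srho$. The plan is to take $f \in K \setminus \{\id\}$ and form a commutator $[f, g] = f^{-1} f^{g} \in K$ for a suitably chosen $g \in \Aut(\mb{S})$, arranging that $[f,g]$ fixes some transversal $T'$ setwise and is non-trivial. Conjugating by an automorphism of $\mb{S}$ sending $T'$ to $T$ (which will exist because $T'$ will be built to be a ``generic'' transversal, i.e.\ $(\mb{S}, T') \cong (\mb{S}, T)$ as objects of $\ovST$) then gives a non-trivial element of $K \cap \Aut(\Srho)$.

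Equivalently, I would try to find $g$ and a transversal $T'$ such that $f(T') $ and $g f g^{-1}\ldots$ line up; the cleanest form is: find a transversal $T'$ and $g \in \Aut(\mb{S})$ with $g(T') = T'$ replaced by the condition that $f^{-1}g^{-1}fg$ preserves $T'$, i.e.\ $g(T') = f^{-1} g^{-1}\ldots$. Concretely I would set $T_1 := $ a generic transversal, $T_2 := f(T_1)$, and build $g \in \Aut(\mb{S})$ by back-and-forth with $g(T_2) = T_2$ wait—rather with $g(T_1)=T_1$ and $g(T_2)=T_2$; then $f^{-1}g^{-1}fg$ maps $T_1 \mapsto T_1 \mapsto T_2 \mapsto T_2 \mapsto T_1$, so $[g,f]$ fixes $T_1$ setwise. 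For this back-and-forth one needs $T_1$ and $T_2$ disjoint and jointly generic, which is where the class $\STT$ and the expansion $\mb{S}_{\rho,\sigma}$ enter: I will choose $T_1$ so that $(\mb{S}, T_1, f(T_1))$ is isomorphic to the \Fr limit $\mb{S}_{\rho,\sigma}$ (via the functor $\ovSTT \to \ov{\mc{S}}_{\rho,\sigma}$), whose reducts are $\Srho$ (so $T_1$ is conjugate to $T$). Then $\Aut(\mb{S}_{\rho,\sigma})$ is large, and I pick $g$ in it such that $[g,f]\neq \id$: since $f \neq \id$, by Lemma \ref{l: sg inf supp each part} pick $v$ with $fv \neq v$ and choose $g$ in $\Aut(\mb{S}_{\rho,\sigma})$ fixing $v$ but moving $fv$ (or similar), possible by strong amalgamation of $\mc{S}_{\rho,\sigma}$ provided $v, fv$ avoid the distinguished points of their parts.

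The main obstacle is constructing $T_1$: a back-and-forth over finite pieces, building $T_1$ part by part (and simultaneously its image $f(T_1)$ is determined), ensuring disjointness $T_1 \cap f(T_1) = \emp$ (using that $f$ moves infinitely many points in each part, Lemma \ref{l: sg inf supp each part}, and that $f$ moves infinitely many parts, Lemma \ref{l: sg inf many parts moved}, to handle parts with $fP \neq P$ and $fP = P$ separately) and the extension property for $\STT$: given finite $(C, T_1\cap C, f(T_1)\cap C)$ and a one-point extension in $\STT$, find the required realisation in $\mb{S}$ while choosing new transversal points in fresh parts consistently with $f$. The delicate case is when a part $P$ has $fP = P$ or $f$ relates parts in cycles, where choosing $t \in P\cap T_1$ forces $f(t) \in f(P)\cap f(T_1)$; I would use Lemma \ref{l: Srho type in gen pos} (with $M = \mb{S}$) to pick realisations in general position relative to $f$, guaranteeing the needed types and disjointness.
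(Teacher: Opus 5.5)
Your proposal is correct and follows essentially the same route as the paper. The paper also:
\begin{itemize}
\item finds a transversal $T_1$ with $(\mb{S}, \rho_{T_1}, \rho_{fT_1}) \cong \mb{S}_{\rho,\sigma}$;
\item commutes $f$ with an automorphism of that expansion which fixes a point $v$ and moves $fv$;
\item conjugates into $\Aut(\Srho)$ and uses its simplicity.
\end{itemize}
The paper takes $v$ in a part moved by $f$, so that $fv \sim v$ and $fv \notin \langle v \rangle$ automatically.

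The generic transversal is likewise built in the paper by an inductive construction using Lemmas \ref{l: sg inf many parts moved}, \ref{l: sg inf supp each part} and \ref{l: Srho type in gen pos}. There are two details your sketch should fix. First, the extension problems are ``red--blue'' extensions: a new part needs both a red and a blue point, so your ``one-point extension'' should be two-point. Second, the points forced in other parts, $f(T_n)$ and $f^{-1}(U_n)$, are tracked as an explicit invariant.
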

\begin{proof}
    Suppose we have the following (which we show later):
    \begin{enumerate}
        \item[$(\ast)$] for each $g \in \Aut(\mb{S}) \setminus \{\id\}$, there exists a transversal $T \sub \mb{S}$ such that $(\mb{S}, \rho_T^{\vphantom{g^{-1}}}, \rho_T^{g^{-1}}) \cong \mb{S}_{\rho, \sigma}$. 
    \end{enumerate}
    
    Let $k \in K \setminus \{\id\}$. By $(\ast)$ there is a transversal $T$ of $\mb{S}$ with $(\mb{S}, \rho_T^{}, \rho_T^{k^{-1}}) \cong \mb{S}_{\rho, \sigma}$. By Lemma \ref{l: sg inf many parts moved}, there is a part $P$ of $\mb{S}$ with $kP \neq P$. Take $a \in P$. As $ka \sim a$, there is $h \in \Aut(\mb{S}, \rho_T^{}, \rho_T^{k^{-1}})$ which fixes $a$ and moves $ka$. Then $[k, h]a \neq a$, so $[k, h] \neq \id$. As $h \circ \rho_T = \rho_T \circ h$, we have that $h$ fixes $T$ setwise, and as $h \circ \rho_T^{k^{-1}} = \rho_T^{k^{-1}} \circ h$, we have that $h$ fixes $kT$ setwise. So $[k, h]$ fixes $T$ setwise, and thus $[k, h] \in \Aut(\mb{S}, \rho_T)$. Let $f : \Srho \to (\mb{S}, \rho_T)$ be an isomorphism. We have $f \in \Aut(\mb{S})$. Then $[k, h]^f \in \Aut(\Srho)$, and as $[k, h]^f \in K$ and $[k, h]^f \neq \id$, we have $K \cap \Aut(\Srho) \neq 1$. As $K \cap \Aut(\Srho) \nrm \Aut(\Srho)$ and $\Aut(\Srho)$ is simple by Proposition \ref{p: Aut Srho simple}, we have $\Aut(\Srho) \sub K$ as required.

    We now show $(\ast)$. Let $g \in \Aut(\mb{S}) \setminus \{\id\}$.

    By the final paragraph of Definition \ref{d: tv} and \Frn's theorem, it suffices to show that there exists a transversal $T$ such that $(\mb{S}, T, gT)$ has the extension property for $\STT$: that is, for all $A \fin \mb{S}$ (including $A = \emp$) such that $(A, T|_A, gT|_A) \in \STT$ and each embedding $f : (A, T|_A, (gT)|_A) \to (B, T', U') \in \STT$, there is an embedding $f' : (B, T', U') \to (\mb{S}, T, gT)$ with $f' \circ f = \id_A$. (Note that $(A, T|_A, (gT)|_A) \in \STT$ implies that each part of $A$ contains an element of $T$ and an element of $gT$.) First, observe that we need only show the extension property for $(B, T', U') \in \STT$ with $B \setminus f(A) \sub T' \cup U'$: for general $(B, T', U')$, once we have embedded the two transversals $T', U'$ over $(A, T|_A, (gT)|_A)$, the remainder of $B$ can be embedded using the extension property of $\mb{S}$. Second, note that by induction it suffices to consider $(B, T', U')$ where $B \setminus f(A) \sub T' \cup U'$ consists of a single part. To recap, we have just shown that it suffices to show there exists a transversal $T$ of $\mb{S}$ such that $(\mb{S}, T, gT)$ has the extension property for all $(A, T|_A, (gT)|_A) \sub (\mb{S}, T, gT)$ and all \emph{red-blue extensions} of $(A, T|_A, (gT)|_A)$, where a \emph{red-blue extension} is an embedding $f : (A, T|_A, (gT)|_A) \to (B, T', U') \in \STT$ such that there are $r \in T'$, $b \in U'$ with $B = f(A) \cup \{r, b\}$ (note that necessarily $r \ic b$).

    Enumerate $\mb{S}$ as $v_0, v_1, \cdots$. We inductively construct an increasing chain $(A_0, T_0, U_0) \sub (A_1, T_1, U_1) \sub \cdots$ of elements of $\STT$, with $A_0 = \emp$, as well as an increasing chain of finite sequences $\mc{E}_0 \sub \mc{E}_1 \sub \cdots$ (where $\mc{E}_n \sub \mc{E}_{n+1}$ means that the sequence $\mc{E}_n$ is an initial segment of the finite sequence $\mc{E}_{n+1}$), such that, for each $n < \omega$:
    \begin{enumerate}[label=(\roman*)]
        \item \label{i: inc pts of sg} $\{v_0, \cdots, v_{n-1}\} \sub A_n \sub \mb{S}$;
        \item \label{i: gT_n U_n} $g(T_n) \cap A_n \sub U_n$, $g^{-1}(U_n) \cap A_n \sub T_n$ and $g(T_n) \cap g^{-1}(U_n) = \emp$;
        \item \label{i: E_n rb exts} each $\mc{E}_n$ is a sequence consisting of the red-blue extensions of subobjects of $(A_n, T_n, U_n)$ (where the extensions are taken up to isomorphism, so $\mc{E}_n$ is finite);
        \item \label{i: n-1 rb} $(A_n, T_n, U_n)$ contains a realisation of the $(n-1)$th element of $\mc{E}_{n-1}$.
    \end{enumerate}
    (In the above, we think of $\mc{E}_n$ as a list of ``red-blue extension problems", one of which we solve at each step.)
    
    Once the construction is complete, take $T = \bigcup_{n < \omega} T_n$: by \ref{i: inc pts of sg} we have that $T$ is a transversal of $\mb{S}$, by \ref{i: gT_n U_n} we have that $T, gT$ are disjoint transversals of $\mb{S}$ (here $gT = \bigcup_{n < \omega} U_n$), and by \ref{i: E_n rb exts}, \ref{i: n-1 rb} we have the red-blue extension property for $(\mb{S}, T, gT)$ as required.

    We begin the construction. Let $A_0 = T_0 = U_0 = \emp$, and let $\mc{E}_0$ be the one-element sequence consisting of the single red-blue extension of $(\emp, \emp, \emp)$. Suppose $(A_k, T_k, U_k)$, $\mc{E}_k$ have already been constructed for $k < n$. We now construct $(A_n, T_n, U_n)$ and $\mc{E}_n$ satisfying conditions \ref{i: inc pts of sg}--\ref{i: n-1 rb} above (with $(A_{n-1}, T_{n-1}, U_{n-1}) \sub (A_n, T_n, U_n)$ and $\mc{E}_{n-1} \sub \mc{E}_n$).

    We first construct $(\tld{A}, \tld{T}, \tld{U}) \supseteq (A_{n-1}, T_{n-1}, U_{n-1})$ such that $v_{n-1} \in \tld{A}$ and condition \ref{i: gT_n U_n} is satisfied (with $\tld{A}, \tld{T}, \tld{U}$ in place of $A_n$, $T_n$, $U_n$).

    Let $v = v_{n-1}$, $P = P_v$. If $P \cap A_{n-1} \neq \emp$, let $\tld{A} = A_{n-1} \cup \{v\}$, $\tld{T} = T_{n-1}$, $\tld{U} = U_{n-1}$. Otherwise $P \cap A_{n-1} = \emp$. First consider the case $gP = P$. By Lemma \ref{l: sg inf supp each part}, there is $w \in P$ with $gw \neq w$. Let $\tld{A} = A_{n-1} \cup \{w, gw, v\}$, $\tld{T} = T_{n-1} \cup \{w\}$, $\tld{U} = U_{n-1} \cup \{gw\}$.
    
    It remains to consider the case $gP \neq P$. We first take points $r$ and $b$ as follows:
    \begin{itemize}
        \item If $(g^{-1}(U_{n-1}) \cap P \neq \emp) \wedge (g(T_{n-1}) \cap P \neq \emp)$, take $r \in g^{-1}(U_{n-1}) \cap P$ and $b \in g(T_{n-1}) \cap P$.
        \item If $(g^{-1}(U_{n-1}) \cap P \neq \emp) \wedge (g(T_{n-1}) \cap P = \emp)$, take $r \in g^{-1}(U_{n-1}) \cap P$ and $b \in P \setminus \{r\}$ with $g^{-1}(b) \notin g(T_{n-1} \cup \{r\})$.
        \item If $(g^{-1}(U_{n-1}) \cap P = \emp) \wedge (g(T_{n-1}) \cap P \neq \emp)$, take $b \in g(T_{n-1}) \cap P$ and $r \in P \setminus \{b\}$ with $g(r) \notin g^{-1}(U_{n-1} \cup \{b\})$.
        \item If $(g^{-1}(U_{n-1}) \cap P = \emp) \wedge (g(T_{n-1}) \cap P = \emp)$, take $r \in P$ with $g(r) \notin g^{-1}(U_{n-1})$ and $b \in P \setminus \{r\}$ with $g^{-1}(b) \notin g(T_{n-1} \cup \{r\})$.
    \end{itemize}
    Let $(\tld{A}, \tld{T}, \tld{U}) = (A_{n-1} \cup \{r, b, v\}, T_{n-1} \cup \{r\}, U_{n-1} \cup \{b\})$. (It is possible that $v = r, b$.)
    Note that $\tld{A}$, $\tld{T}$, $\tld{U}$ satisfy condition \ref{i: gT_n U_n} in all four cases. This completes the construction of $(\tld{A}, \tld{T}, \tld{U})$ with $v_{n-1} \in \tld{A}$ and satisfying condition \ref{i: gT_n U_n}.

    The $(n-1)$th element of $\mc{E}_{n-1}$ is an embedding from a subobject of $(A_{n-1}, T_{n-1}, U_{n-1})$ to a red-blue extension, and by amalgamating this embedding with the inclusion $(A_{n-1}, T_{n-1}, U_{n-1}) \hookrightarrow (\tld{A}, \tld{T}, \tld{U})$, we may assume that the red-blue extension problem to solve is an inclusion $f : (\tld{A}, \tld{T}, \tld{U}) \to (\tld{A} \cup \{r, b\}, \tld{T} \cup \{r\}, \tld{U} \cup \{b\}) \in \STT$. By Lemma \ref{l: Srho type in gen pos}, there is $\tld{r} \in \mb{S}$ with $\tp_{\mb{S}}(\tld{r}/\tld{A}) = \tp_{\mb{S}}(r/\tld{A})$ such that, letting $P = P_{\tld{r}}$, we have $gP \neq P$ and $(P \cup g(P) \cup g^{-1}(P)) \cap (\tld{A} \cup g(\tld{A}) \cup g^{-1}(\tld{A})) = \emp$. We then use the extension property of $\mb{S}$ (and the fact that $\mb{S}$ has strong amalgamation) to obtain $\tld{b} \in \mb{S}$ with $\tp_{\mb{S}}(\tld{r}, \tld{b}/\tld{A}) = \tp_{\mb{S}}(r, b/\tld{A})$ and $g\tld{r} \neq g^{-1}\tld{b}$ (regarding this last condition, note that it is possible that $gP = g^{-1}P$). Let $(A_n, T_n, U_n) = (\tld{A} \cup \{\tld{r}, \tld{b}\}, \tld{T} \cup \{\tld{r}\}, \tld{U} \cup \{\tld{b}\})$; then $(A_n, T_n, U_n)$ satisfies condition \ref{i: n-1 rb}, and by how we took $\tld{r}$, $\tld{b}$ it is straightforward to verify that condition \ref{i: gT_n U_n} holds. Also recall that $v_{n-1} \in \tld{A} \sub A_n$, so condition \ref{i: inc pts of sg} holds. Let $\mc{E}_n$ be the sequence given by concatenating $\mc{E}_{n-1}$ with a sequence consisting of all red-blue extensions of subobjects of $(A_n, T_n, U_n)$ (taken up to isomorphism) which do not occur in $\mc{E}_{n-1}$; immediately \ref{i: E_n rb exts} holds. This concludes the inductive step of the construction, and thus the proof of $(\ast)$.
\end{proof}

\subsection{Step 3: show that any normal subgroup of \texorpdfstring{$\Aut(\mb{S})$}{Aut(S)} containing \texorpdfstring{$\Aut(\Srho)$}{Aut(S rho)} is the whole group} \label{ss: sg step 3}

\begin{prop} \label{p: sg step 3}
    Let $K \nrm \Aut(\mb{S})$ with $\Aut(\Srho) \sub K$. Then $K = \Aut(\mb{S})$.
\end{prop}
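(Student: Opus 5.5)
Since $K$ is normal and contains $\Aut(\Srho) = \Aut(\mb{S}, T_0)$ for a fixed transversal $T_0$, and any two transversals $T, T'$ with $(\mb{S}, \rho_T) \cong \Srho \cong (\mb{S}, \rho_{T'})$ are conjugate under $\Aut(\mb{S})$, the subgroup $K$ contains the setwise stabiliser $\Aut(\mb{S}, T)$ of every \emph{generic} transversal $T$. Here we call $T$ generic if $(\mb{S}, \rho_T) \cong \Srho$. The plan is to show that every $g \in \Aut(\mb{S})$ is a product of two elements, each stabilising some generic transversal. Concretely, given $g$, we want a generic transversal $T$ and an $h \in \Aut(\mb{S}, T)$ such that $h^{-1} g$ stabilises another generic transversal $T'$. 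The simplest version is to find a single generic transversal $T$ with $gT$ also generic and a generic $T'$ which together with $T$ and $gT$ forms a suitably free configuration. Then, using genericity, we choose $k \in \Aut(\mb{S}, T')$ with $kgT = T$, so that $kg \in \Aut(\mb{S}, T) \sub K$ and $k \in K$, giving $g \in K$.

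The concrete steps are as follows. First, we prove a three-transversal analogue of $(\ast)$ from Proposition \ref{p: sg step 2}. For $g \in \Aut(\mb{S})$, we want transversals $T, T'$ such that $(\mb{S}, \rho_T, \rho_{T'})$ and $(\mb{S}, \rho_{gT}, \rho_{T'})$ are both isomorphic to $\mb{S}_{\rho,\sigma}$, or to a version where the two distinguished points may coincide. This would be done by the same back-and-forth with red--blue extension problems as in the proof of $(\ast)$, now solving extension problems for both pairs simultaneously. The key tool for placing new parts freely relative to $g$ is again Lemma \ref{l: Srho type in gen pos}, together with Lemmas \ref{l: sg inf many parts moved} and \ref{l: sg inf supp each part}. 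If $g = \id$ there is nothing to prove.

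Second, given such $T, T'$, the structures $(\mb{S}, \rho_{T'}, \rho_T)$ and $(\mb{S}, \rho_{T'}, \rho_{gT})$ are isomorphic expansions with the same $\rho_{T'}$-reduct. The reduct is $\Srho$ by Lemma \ref{l: reduct of Fr exp}, so the $\sigma$-reduct is likewise generic. An isomorphism $k$ between them is an automorphism of $\mb{S}$ fixing $T'$ setwise and sending $gT$ to $T$. Hence $k \in \Aut(\mb{S}, T') \sub K$ and $kg \in \Aut(\mb{S}, T) \sub K$, so $g \in K$. We must also check that $T$ and $T'$ are themselves generic, but this follows from the reduct statement.

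The main obstacle is the first step when $g$ fixes parts. If $g$ fixes some part $P$ setwise, then $T' \cap P$ must avoid both $T \cap P$ and $gT \cap P$, and the points of $T$ must be chosen so that the extension problems for $(T', T)$ and $(T', gT)$ do not interfere. A new part added for one problem constrains $g$ of it, and genericity of $(T', gT)$ concerns $g$-images. I would handle this as in the proof of $(\ast)$. For each new part $P$, we pick points in $P$, $gP$ and $g^{-1}P$, with the part chosen via Lemma \ref{l: Srho type in gen pos} so that $P$, $gP$ and $g^{-1}P$ are disjoint from everything built so far, whenever $gP \neq P$. For parts with $gP = P$, we use Lemma \ref{l: sg inf supp each part} to pick three suitable distinct points. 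The delicate verification is that the data on the parts $g^{\pm1}P$, forced by realising an extension problem on $P$, never contradicts earlier commitments. I expect disjointness, together with strong amalgamation of $\mc{S}_{\rho,\sigma}$, to resolve this.
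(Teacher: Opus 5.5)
Your proposal is correct and is essentially the paper's proof. The paper likewise builds, by a double back-and-forth over red--blue extension problems, transversals $T,U$ with $(\mb{S},\rho_T,\rho_U)\cong(\mb{S},\rho_T,\rho_{gU})\cong\mb{S}_{\rho,\sigma}$; its $T$ is your $T'$ and its $U$ is your $T$. It then takes an isomorphism $h$ between these two expansions, so that $h$ stabilises $T$, $hg$ stabilises $U$, and $g=h^{-1}(hg)\in K$. The differences are cosmetic: the paper tracks $gU$ as a separate chain $W_n$ under the lazy consistency condition $g(U_n)\cap A_n\sub W_n$, $g^{-1}(W_n)\cap A_n\sub U_n$, rather than committing points in $gP$ and $g^{-1}P$ up front. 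For $g$-invariant parts it also allows $b'=gb=b$, so Lemma \ref{l: sg inf supp each part} is not needed in that case.
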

\begin{proof}
    Suppose we have the following (which we show later):
    \begin{enumerate}
        \item[$(\ast)$] for each $g \in \Aut(\mb{S}) \setminus \{\id\}$, there are transversals $T$, $U$ of $\mb{S}$ with $(\mb{S}, \rho_T^{\vphantom{g^{-1}}}, \rho_U^{\vphantom{g^{-1}}}) \cong (\mb{S}, \rho_T^{\vphantom{g^{-1}}}, \rho_U^{g^{-1}}) \cong \mb{S}_{\rho, \sigma}$.
    \end{enumerate}

    Let $g \in \Aut(\mb{S}) \setminus \{\id\}$, and let $T, U$ be transversals given by $(\ast)$. Let $h : (\mb{S}, \rho_T^{\vphantom{g^{-1}}}, \rho_U^{g^{-1}}) \to (\mb{S}, \rho_T^{\vphantom{g^{-1}}}, \rho_U^{\vphantom{g^{-1}}})$ be an isomorphism. We have $h \in \Aut(\mb{S}, \rho_T)$, so as $(\mb{S}, \rho_T) \cong \Srho$ and $\Aut(\Srho) \sub K \nrm \Aut(\mb{S})$, we have $h \in K$. As $hgU = U$, we have $hg \in \Aut(\mb{S}, \rho_U)$, and as $(\mb{S}, \rho_U) \cong \mb{S}_{\sigma} \cong \Srho$ and $\Aut(\Srho) \sub K \nrm \Aut(\mb{S})$, we have $hg \in K$, so as $h \in K$ we have $g \in K$. Thus $K = \Aut(\mb{S})$ as required.

    We now show $(\ast)$. The proof is quite similar to that of Proposition \ref{p: sg step 2}, and we give a more abbreviated presentation. Enumerate $\mb{S}$ as $v_0, v_1, \cdots$. We inductively construct two increasing chains $(A_0, T_0, U_0) \sub \cdots$ and $(A_0, T_0, W_0) \sub \cdots$ of elements of $\STT$, with $A_0 = \emp$, as well as two increasing chains of finite sequences $\mc{E}^{}_0 \sub \cdots$ and $\mc{E}'_0 \sub \cdots$, such that, for each $n < \omega$:
    \begin{enumerate}[label=(\roman*)]
        \item \label{i: step 3 inc pts of sg} $\{v_0, \cdots, v_{n-1}\} \sub A_n \sub \mb{S}$;
        \item \label{i: step 3 g int} $g(U_n) \cap A_n \sub W_n$ and $g^{-1}(W_n) \cap A_n \sub U_n$;
        \item \label{i: step 3 E_n rb exts} $\mc{E}^{}_n$ consists of the red-blue extensions of subobjects of $(A_n, T_n, U_n)$, and $\mc{E}'_n$ consists of the red-blue extensions of subobjects of $(A_n, T_n, W_n)$ (taken up to isomorphism);
        \item \label{i: step 3 n-1 rb} $(A_n, T_n, U_n)$ contains a realisation of the $(n-1)$th element of $\mc{E}^{}_{n-1}$, and $(A_n, T_n, W_n)$ contains a realisation of the $(n-1)$th element of $\mc{E}'_{n-1}$.
    \end{enumerate}
    
    Once the construction is complete, take $T = \bigcup_{n < \omega} T_n$, $U = \bigcup_{n < \omega} U_n$; we then have the red-blue extension property for $(\mb{S}, T, U)$ and for $(\mb{S}, T, gU)$ (here $gU = \bigcup_{n < \omega} W_n$), and hence by the same argument as in the proof of Proposition \ref{p: sg step 2} we have the extension property in general, as required.

    We begin the construction. Let $A_0 = T_0 = U_0 = W_0 = \emp$, and let $\mc{E}^{}_0$ and $\mc{E}'_0$ each be the one-element sequence consisting of the single red-blue extension of $(\emp, \emp, \emp)$. Suppose $(A_k, T_k, U_k)$, $(A_k, T_k, W_k)$, $\mc{E}^{}_k$, $\mc{E}'_k$ have already been constructed for $k < n$. We now construct $(A_n, T_n, U_n)$, $(A_n, T_n, W_n)$, $\mc{E}^{}_n$, $\mc{E}'_n$.

    Let $v = v_{n-1}$, $P = P_v$. If $P \cap A_{n-1} \neq \emp$, let $(\tld{A}, \tld{T}, \tld{U}, \tld{W}) = (A_{n-1} \cup \{v\}, T_{n-1}, U_{n-1}, W_{n-1})$. Suppose $P \cap A_{n-1} = \emp$. We now define points $r, b, b'$. In the case $gP = P$, take $b \in P$, $b' = gb$ and $r \in P \setminus \{b, b'\}$. Now consider the case $gP \neq P$. If $g^{-1}(W_{n-1}) \cap P \neq \emp$ take $b \in g^{-1}(W_{n-1}) \cap P$, and otherwise take $b \in P$. If $g(U_{n-1}) \cap P \neq \emp$ take $b' \in g(U_{n-1}) \cap P$, and otherwise take $b' \in P$. Take $r \in P \setminus \{b, b'\}$. Let $(\tld{A}, \tld{T}, \tld{U}, \tld{W}) = (A_{n-1} \cup \{v, r, b, gb\}, T_{n-1} \cup \{r\}, U_{n-1} \cup \{b\}, W_{n-1} \cup \{b'\})$. So we have $v \in \tld{A}$, and it is straightforward to check that condition \ref{i: step 3 g int} is satisfied by $\tld{A}, \tld{U}, \tld{W}$.

    We now construct realisations of the $(n-1)$th elements of $\mc{E}^{}_{n-1}$, $\mc{E}'_{n-1}$. As in the proof of Proposition \ref{p: sg step 2}, by amalgamating with an inclusion map if necessary, we may assume the $(n-1)$th element of $\mc{E}^{}_{n-1}$ is an inclusion $f : (\tld{A}, \tld{T}, \tld{U}) \to (\tld{A} \cup \{r, b\}, \tld{T} \cup \{r\}, \tld{U} \cup \{b\}) \in \STT$, and by Lemma \ref{l: Srho type in gen pos} there are $\tld{r}, \tld{b} \in \mb{S}$ with $\tp_{\mb{S}}(\tld{r}, \tld{b}/\tld{A}) = \tp_{\mb{S}}(r, b/\tld{A})$ such that, letting $P = P_{\{\tld{r}, \tld{b}\}}$, we have $gP \neq P$ and $(P \cup g(P) \cup g^{-1}(P)) \cap (\tld{A} \cup g(\tld{A}) \cup g^{-1}(\tld{A})) = \emp$. Take $\tld{b}' \in P \setminus \{\tld{r}\}$ with $g^{-1}\tld{b}' \neq g\tld{b}$ (note that it is possible that $gP = g^{-1}P$), and let $(\hat{A}, \hat{T}, \hat{U}, \hat{W}) = (\tld{A} \cup \{\tld{r}, \tld{b}, \tld{b}'\}, \tld{T} \cup \{\tld{r}\}, \tld{U} \cup \{\tld{b}\}, \tld{W} \cup \{\tld{b}'\})$. It is straightforward to check that condition \ref{i: step 3 g int} is still satisfied. We now construct a realisation of the $(n-1)$th element of $\mc{E}'_{n-1}$, which as before we may assume is an inclusion $f' : (\hat{A}, \hat{T}, \hat{W}) \to (\hat{A} \cup \{r^{}_1, b'_1\}, \hat{T} \cup \{r^{}_1\}, \hat{W} \cup \{b'_1\}) \in \STT$, and by Lemma \ref{l: Srho type in gen pos} there are $\hat{r}, \hat{b}' \in \mb{S}$ with $\tp_{\mb{S}}(\hat{r}, \hat{b}' / \hat{A}) = \tp_{\mb{S}}(r^{}_1, b'_1 / \hat{A})$ such that, letting $P' = P_{\{\hat{r}, \hat{b}'\}}$, we have $gP' \neq P'$ and $(P' \cup g(P') \cup g^{-1}(P')) \cap (\hat{A} \cup g(\hat{A}) \cup g^{-1}(\hat{A})) = \emp$. Take $\hat{b} \in P' \setminus \{\hat{r}\}$ with $g\hat{b} \neq g^{-1}\hat{b}'$, and let $(A_n, T_n, U_n, W_n) = (\hat{A} \cup \{\hat{r}, \hat{b}, \hat{b}'\}, \hat{T} \cup \{\hat{r}\}, \hat{U} \cup \{\hat{b}\}, \hat{W} \cup \{\hat{b}'\})$. We then have conditions \ref{i: step 3 n-1 rb}, \ref{i: step 3 g int} and \ref{i: step 3 inc pts of sg} as required. Letting $\mc{E}_n$ be the concatenation of $\mc{E}_{n-1}$ with a sequence consisting of all red-blue extensions of subobjects of $(A_n, T_n, U_n)$ that do not occur in $\mc{E}_{n-1}$ and letting $\mc{E}'_n$ be the concatenation of $\mc{E}'_{n-1}$ with a sequence consisting of all red-blue extensions of subobjects of $(A_n, T_n, W_n)$ that do not occur in $\mc{E}'_{n-1}$, we immediately have condition \ref{i: step 3 E_n rb exts}, completing the inductive step of the construction and thus the proof of $(\ast)$.
\end{proof}

We thus have:
\begin{manualthm}{D}
    The automorphism group $\Aut(\mb{S})$ of the semigeneric tournament $\mb{S}$ is simple.
\end{manualthm}
\begin{proof}
    Let $K \nrm \Aut(\mb{S})$, $K \neq 1$. By Proposition \ref{p: sg step 2}, we have $\Aut(\Srho) \sub K$. By Proposition \ref{p: sg step 3}, we have $K = \Aut(\mb{S})$.
\end{proof}

\section{Further questions} \label{s: qns}

We would like to know if there are any examples of $\omega$-categorical \Fr structures $M$ for which the SWIR expansion method of the current paper cannot be applied to determine the normal subgroups of $\Aut(M)$. More concretely, we ask:

\begin{qn} \label{q: SWIR exp}
    Does every $\omega$-categorical \Fr structure have an $\omega$-categorical \Fr expansion with a SWIR?
\end{qn}

Note that for any \Fr structure $M$, the expansion $M^+$ in an infinite language resulting from labelling each point of $M$ with a distinct unary predicate trivially gives an ultrahomogeneous structure with a SWIR (namely $B \ind_A C$ for all $A, B, C \fin M^+$), so in Question \ref{q: SWIR exp} the $\omega$-categoricity assumption is key. We also note that \cite[Prop.\ 5.21, 5.22]{KSW25} show that there are linearly ordered structures without a (local) SWIR, so it is not enough to just expand by a linear order. One could also generalise Question \ref{q: SWIR exp} to the context of \Fr limits of non-hereditary \Fr classes (see \cite{HN19}, \cite{EHN21}): note that the $\omega$-categorical Hrushovski constructions featuring in \cite{EHN19} have free amalgamation for $\leq_d$-closed substructures and were considered in \cite{EGT16}. The question may also make sense in the context of locally $\omega$-categorical structures -- see \cite{BT26}.

Question \ref{q: SWIR exp} is inspired by the very well-known \emph{finite Ramsey expansion conjecture}: the conjecture being that every \Fr structure in a finite relational language has a coprecompact Ramsey expansion (see \cite{NVT13}). The relationship between the Ramsey property and the presence of a SWIR (if any) is somewhat unclear. There are examples of \Fr structures with a SWIR but without the Ramsey property: for example, the random graph. There are examples of \Fr structures with the Ramsey property but without a SWIR: the lexicographically-ordered dense meet-tree, which does not even have a CIR, a weaker notion of independence relation due to Kaplan and Simon -- see \cite[Corollary 6.10]{KS19}). However, the lexicographically-ordered dense meet-tree does have a local SWIR, by a straightforward extension of \cite[Prop.\ 5.31]{KSW25}. Also note that the expansion $\Srho$ of the semigeneric tournament $\mb{S}$ which we use in Section \ref{s: sg} somewhat resembles the Ramsey expansion of $\mb{S}$ in \cite[Sect.\ 10]{JLNW14}. See also \cite[Subsect.\ 3.1]{KS19}.

One example of a \Fr structure for which we do not know the answer to Question \ref{q: SWIR exp} is the following.

\begin{eg}
    Recall the $3$-hypertournament $H_4$ from Figure \ref{f: H_4}. We say that a $3$-hypertournament $A$ is \emph{$H_4$-free} if $H_4$ does not embed in $A$. The class $\mc{W}$ of finite $H_4$-free $3$-hypertournaments has strong amalgamation: it suffices to consider $A, B, C \in \mc{W}$ with $B \cap C = A$ and $B \setminus A = \{b\}$, $C \setminus A = \{c\}$, and we give an amalgam $B \cup C \in \mc{W}$ by adding edges $(a, b, c)$ for all $a \in A$. Let $\mb{W}$ denote the \Fr limit of $\mc{W}$.
\end{eg}
\begin{qn}
    Does the generic $H_4$-free $3$-hypertournament $\mb{W}$ have a finite-language \Fr expansion with a SWIR? Is $\Aut(\mb{W})$ simple?
\end{qn}

It is also unknown whether $\mb{W}$ has a finite-language Ramsey expansion -- see \cite{CHKN21}, \cite{Mig25} for more information about this structure, which was originally discovered by Cherlin.

In \cite[Conjecture 7.1]{KS19}, Kaplan and Simon asked a version of Question \ref{q: SWIR exp} with a weaker notion of independence relation, which they called a \emph{canonical independence relation (CIR)} (see \cite[Definition 3.10]{KS19}): a SWIR is a CIR on finitely generated substructures together with the assumption of stationarity for all finitely generated substructures, rather than just for $\emp$ (\cite[Remark 2.5]{KSW25}). They conjecture that each $\omega$-categorical structure admits an $\omega$-categorical expansion with a CIR. To the best of our knowledge this is still open. We also do not know an example of a \Fr structure with a CIR but without a SWIR (though we suppose one should exist).

The SWIR expansion method could also fail for other reasons: it may be that an $\omega$-categorical expansion of $M$ with a SWIR exists, but that it is not useful for determining the normal subgroups of $\Aut(M)$.

\begin{qn}
    Is there a way of characterising the SWIR expansions $N$ of $M$ that enable us to determine the normal subgroups of $\Aut(M)$, or to give other useful information about $\Aut(M)$? Does this method only work when $\Aut(N)$ is isomorphic to a stabiliser subgroup of $\Aut(M)$?
\end{qn}

We also ask a question inspired by Lemma \ref{l: normal subgroups Q_n}:
\begin{qn}
    Let $M$ be a \Fr structure, and let $M_n$ be its generic $n$-coloured expansion. Is every $K \nrm \Aut(M_n)$ equal to $K' \cap \Aut(M_n)$ for some $K' \nrm \Aut(M)$?
\end{qn}

Finally, we would like to know if there are any further general results not using the SWIR machinery which give us information about what the normal subgroups of $\Aut(M)$ can be. An example of such a result is the below proposition (from the first author's PhD thesis \cite{Ber26}):

\begin{prop} \label{p: no reg nrm subgp for finite lang}
    Let $M$ be a \Fr structure in a finite relational language. Then $\Aut(M)$ has no regular normal subgroup. (Recall that a subgroup of a permutation group is \emph{regular} if its permutation action is regular, i.e.\ free and transitive.)
\end{prop}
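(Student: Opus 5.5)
Suppose for a contradiction that $N \nrm \Aut(M)$ is a regular normal subgroup. Write $G = \Aut(M)$ and fix a base point $a \in M$. Regularity gives a bijection $N \to M$, $n \mapsto n(a)$. Under this bijection the conjugation action of the point-stabiliser $G_a$ on $N$ corresponds to the action of $G_a$ on $M$: for $g \in G_a$ we have $(gng^{-1})(a) = g(n(a))$. So $G_a$ acts on $N$ by group automorphisms, and these automorphisms are exactly the permutations of $M$ induced by $G_a$. The whole plan is to use $\omega$-categoricity together with the finiteness of the relational language to produce a contradiction from this algebraic structure.

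\textbf{Step 1: $N$ is infinite.} Since $M$ is countably infinite and $N$ is regular, $N$ is infinite. As $M$ is $\omega$-categorical, $G_a$ has only finitely many orbits on $M$, and hence only finitely many orbits on $N \setminus \{1\}$ under conjugation. Consequently $N$ has only finitely many $G$-conjugacy classes. Moreover, conjugation by $G_a$ preserves element orders, so there are only finitely many element orders in $N$ and $N$ has finite exponent.

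\textbf{Step 2: a definable group operation.} Use the first-order structure to define the group law. For $x, y \in M$, let $n_x$ be the unique element of $N$ with $n_x(a) = x$. The product $n_x n_y(a) = n_x(y)$ is then a function of $(x, y)$ over $a$ that is invariant under $G_a$, and by $\omega$-categoricity it is $a$-definable. Thus $N$ becomes an $\omega$-categorical group, interpreted on $M$ over $a$, on which $G_a$ acts by automorphisms with finitely many orbits.

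\textbf{Step 3: exploiting the finite relational language (main obstacle).} The key point is that $G_a$ must act compatibly with the relations of $M$. Each relation $R$ of arity $k$ is $G$-invariant, and in particular invariant under every $n \in N$, so the structure is preserved by ``translations''. With finitely many relations of bounded arity $r$, the type of a tuple is determined by its $r$-element subtypes. My attempt here is the following. Take $x \neq a$ in $M$, and use ultrahomogeneity and the extension property to find many $y$ with the same type over $a$ but different types over $\{a, x\}$. Translating by $n_x^{-1}$ shows that the type of $(a, y)$ over $x$ equals the type of $(n_x^{-1}(a), n_x^{-1}(y))$ over $a$. Combining this with the finite exponent, I would derive a bound on the number of $2$-types, and more generally on the complexity of $r$-types, that grows too slowly compared with what transitivity of the translations forces. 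The aim is to conclude that all points are in a single type relative to each other, so that $M$ is a pure set, and then deal with that case directly: for the pure set, $\Sym_\omega$ has no regular normal subgroup, because its normal subgroups are the finitary ones, which are not transitive.

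I expect this last combinatorial step, extracting the contradiction from finiteness of the language, to be the real difficulty. The part I am least sure of is whether a simpler route exists. One possibility is a direct argument that some nontrivial $n \in N$ fixes a point, via the extension property applied to the finitely many relations in which $a$ and $n(a)$ appear, together with a commutator trick as in Lemma \ref{l: simple G_A simple G}: an element $[g, n]$ with $g \in G_a$ that lies in $N$ and fixes $a$ but is nontrivial would contradict regularity.
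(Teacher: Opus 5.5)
The core of your Steps 1--2 is exactly the paper's argument. The map $N \to M$, $n \mapsto n(a)$, is a $G_a$-equivariant bijection, with $G_a$ acting on $N$ by conjugation and on $M$ naturally. By Ryll-Nardzewski the graph of multiplication is therefore an $a$-definable subset of $M^3$, so $N$ becomes an infinite group definable in $M$ over $a$.

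The gap is Step 3. It carries all the content, and you do not actually carry it out: ``bounding the number of $2$-types'' and ``deducing that $M$ is a pure set'' give no mechanism, and nothing in the sketch produces a contradiction. The missing ingredient is Macpherson's theorem \cite[Theorem~1.1]{Mac91}: no infinite group is interpretable in a structure homogeneous over a finite relational language. The paper quotes it immediately after your Step 2, and that finishes the proof. Doing Step 3 by hand amounts to proving a special case of that theorem. Any such proof must use finiteness of the language essentially, since Steps 1--2 use only $\omega$-categoricity. For example, the countable-dimensional affine space over $\mathbb{F}_2$ (ultrahomogeneous after Morleyisation) is $\omega$-categorical, yet its translation group is a regular normal subgroup of $\mathrm{AGL}(\omega,2)$.

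Your commutator fallback is, in general, just a restatement of the problem. If $g \in G_a$ also fixes $n(a)$, then $g^{-1}ng$ and $n$ are elements of $N$ agreeing at $a$, so they are equal by regularity. Hence every commutator of this kind that fixes $a$ is trivial. Equivalently, regularity forces $n(v) \in \mathrm{dcl}(a, n(a), v)$ for all $v$; in the affine example, $n(v) = v + n(a) + a$. Your trick yields a contradiction exactly when some $n(v)$ escapes this definable closure. If $M$ has strong amalgamation this is automatic for any $n \neq 1$ and any $v \notin \{a, n^{-1}(a)\}$, and you then get a short proof that does not even need a finite language. But the proposition allows algebraicity, and ruling out the bad case in general is precisely where the finite language, i.e.\ Macpherson's theorem, is needed.

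Two smaller slips:
\begin{itemize}
\item The finitary symmetric and alternating groups on $\omega$ \emph{are} transitive. They fail to be regular because their point stabilisers are non-trivial.
\item In Step 1, finitely many orbits gives finitely many element orders, but does not by itself exclude elements of infinite order.
\end{itemize}
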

\begin{proof}
    Suppose for a contradiction that $K \nrm \Aut(M)$ is regular. Fix $c \in M$. Then the map $\theta : K \to M$, $\theta(k) = k(c)$, is a bijection, so $K$ is infinite. It is straightforward to check that $\theta$ is an isomorphism of left $\Aut(M)_c$-actions from the conjugation action $\Aut(M)_c \curvearrowright K$, $g \cdot k = k^{g^{-1}}$, to the permutation action $\Aut(M)_c \curvearrowright M$, $g \cdot a = g(a)$ (that is, the bijection $\theta$ is $\Aut(M)_c$-equivariant for these actions). Thus the image $\theta(\Gamma)$ of the graph $\Gamma \sub K^3$ of the group composition map $\circ : K \times K \to K$, $(g, h) \mapsto g \circ h$, is an $\Aut(M)_c$-invariant set (in the coordinate-wise diagonal action $\Aut(M)_c \curvearrowright M^3$), and so by the Ryll-Nardzewski theorem $\theta(\Gamma)$ is $c$-definable. Thus we have defined an infinite group in $M$ using the parameter $c$. But this contradicts a theorem of Macpherson, \cite[Theorem 1.1]{Mac91}, which states that no infinite group is interpretable in a finite relational \Fr structure.
\end{proof}

\section*{Acknowledgements}

The first author would like to thank Dugald Macpherson, his PhD supervisor, for helpful comments regarding Proposition \ref{p: no reg nrm subgp for finite lang}. The second author would like to thank David Evans for a number of helpful conversations during this project: his suggestion to consider point-stabilisers for the dense local order $\mb{S}(2)$ was the key initial idea which inspired the rest of this paper. He would like to thank Aleksandra Kwiatkowska for her work with him on understanding amalgamation in $\mb{S}(2)$ and hypertournaments while writing the paper \cite{KSW25} -- this was important background knowledge for the current paper, and took a considerable amount of time. He would also like to thank Dugald Macpherson for his interest in the project and for introducing him to the first author, who had independently and contemporaneously shown the simplicity of $\Aut(\Sn)$ as part of his PhD thesis \cite{Ber26} (via a different approach, using Proposition \ref{p: no reg nrm subgp for finite lang}). We would also like to thank Colin Jahel for enlightening comments on an initial version of this paper (see Remark \ref{r: failure of WEI for sg}).

\bibliographystyle{alpha}
\bibliography{references}

\end{document}